\newcommand{\sgn}{\mathop{\mathrm{sign}}}
\newcommand{\nn}{\nonumber}
\newcommand \btt{\bbeta}
\newcommand \hbt{\hat{\btt}}
\newcommand \bttc{\bbeta^*}
\newcommand \tbt{\tilde{\btt}}
\newcommand \blam{\blambda}
\def\##1\#{\begin{align}#1\end{align}}
\def\$#1\${\begin{align*}#1\end{align*}}
\newcommand{\T}{{\mathsmaller {\rm T}}}
\def\sn{\sum_{i=1}^n}
\newcommand{\BB}{\mathbb{B}}
\newcommand{\wt}{\widetilde}
\newcommand{\bfsym}[1]{\ensuremath{\boldsymbol{#1}}}
       \def \bbeta    {\bfsym{\beta}}
       \def \bdelta   {\bfsym{\delta}}
\def \soft   {\textnormal{soft}}     
\newcommand{\ora}{{\rm ora}}
\newcommand{\Rom}[1]{\text{\uppercase\expandafter{\romannumeral #1\relax}}}
\newcommand{\hlasso}{\mbox{{\tiny H-Lasso}}}
\newcommand{\lasso}{\mbox{{\tiny Lasso}}}
\newcommand{\cc}{{\rm c}}
\begin{document}

\begin{frontmatter}

\title{Iteratively Reweighted $\ell_1$-Penalized Robust Regression}
\runtitle{Iteratively Reweighted Penalized Robust Regression}

\author{\fnms{Xiaoou} \snm{Pan}\ead[label=e1]{xip024@ucsd.edu}}
\address{Department of Mathematics, University of California, San Diego \\ La Jolla, CA 92093, USA \\ \printead{e1}}

\author{\fnms{Qiang} \snm{Sun}\ead[label=e2]{qsun@utstat.toronto.edu}}
\address{Department of Statistical Sciences, University of Toronto \\ Toronto, ON M5S 3G3, Canada \\ \printead{e2}}
\and
\author{\fnms{Wen-Xin} \snm{Zhou}\corref{} \ead[label=e3]{wez243@ucsd.edu}}
\address{Department of Mathematics, University of California, San Diego \\ La Jolla, CA 92093, USA \\ \printead{e3}}

\runauthor{Pan, Sun and Zhou}

\begin{abstract}
This paper investigates tradeoffs among optimization errors, statistical rates of convergence and the effect of heavy-tailed  errors for high-dimensional robust regression with nonconvex regularization. When the additive errors in linear models have only bounded second moment,  we show that iteratively reweighted $\ell_1$-penalized adaptive Huber regression estimator satisfies exponential deviation bounds and oracle properties,  including the oracle convergence rate and variable selection consistency, under a weak beta-min condition. 
Computationally, we need as many as $\cO( \log s + \log\log d )$ iterations to reach such an oracle estimator, where $s$ and $d$ denote the sparsity and ambient dimension, respectively. Extension to a general class of robust loss functions is also considered.
Numerical studies lend strong support to our methodology and theory. 
\end{abstract}

\begin{keyword}[class=MSC]
\kwd[Primary ]{62A01}
\kwd[; secondary ]{62J07}
\end{keyword}

\begin{keyword}
\kwd{Adaptive Huber regression}
\kwd{convex relaxation}
\kwd{heavy-tailed noise}
\kwd{nonconvex regularization}
\kwd{optimization error}
\kwd{oracle property}
\kwd{oracle rate}
\end{keyword}


\tableofcontents

\end{frontmatter}

\section{Introduction}
\label{sec:1}

Suppose we observe independent and identically distributed (i.i.d.) data vectors  $\{ (y_i, \bx_i): 1\leq i \leq n \}$ from $(y,\bx)$ that follows the linear model
\#
	 y   =   \bx^\T  \bbeta^*   + \varepsilon  = \sum_{j=1}^d \beta^*_j x_j + \varepsilon  ,  \label{linear.model}
\#
where  $\bx= (x_1, \ldots, x_d)^\T \in \RR^d$ with $x_1\equiv 1$ is the predictor, $\bbeta^* = (\beta^*_1,\ldots, \beta_d^*)^\T \in \RR^{d}$ is the vector of regression coefficients with $\beta^*_1$ denoting the intercept, and $\varepsilon$ is an error term satisfying $\EE(\varepsilon  | \bx ) =0$. This setting includes  the location-scale model in which $\varepsilon= \sigma(\bx ) e$,  $\sigma(\cdot): \RR^d \mapsto \RR$ is an unknown function,  and  $e$ is independent of $\bx$ and satisfies $\EE(e)=0$.  We considers the high-dimensional regime, where the number of features $d$ exceeds the sample size $n$ and $\bbeta^*$ is $s$-sparse.   Of particular interest is the case where  the error variable is  {\it asymmetric} and {\it heavy-tailed} with only bounded second moment.

Since the invention of Lasso  two decades ago \citep{Tib1996}, a variety of variable selection methods have been developed for finding a small group of covariates that are associated with the response from a large pool. The Lasso estimator $\hat \bbeta^{\lasso}$  solves the convex optimization problem $\min_{ \bbeta   \in \RR^d } \, (2n)^{-1} \sn (y_i - \bx_i^\T \bbeta)^2 + \lambda \| \bbeta \|_1$, where $\lambda >0$ is the regularization parameter.  The Lasso is an $\ell_1$-penalized least squares method in nature: the quadratic loss is used as a goodness of fit measure and the $\ell_1$-norm induces sparsity.
To achieve better performance under different circumstances, several Lasso variants have been proposed and studied; see,  \cite{FL2001}, \cite{ZH2005}, \cite{Z2006}, \cite{YL2006}, \cite{BCW2011}, \cite{SZ2012} and \cite{BvdBSSC2015}, to name a few. We refer to \cite{BvG2011}, \cite{HTW2015} and \cite{W2019} for comprehensive and systematic introductions of high-dimensional statistical methods and theory.

As a general regression analysis method, the Lasso, along with many of its variants, has two potential downsides. First, the regularized least squares methods are sensitive to the tails of error distributions, even though various alternative penalties have been proposed to achieve better model selection performance. Consider a Lasso-type estimator that solves the penalized  empirical risk minimization  $ \min_{\bbeta \in \RR^p } \{  (1/n) \sn \ell(y_i -  \bx_i^\T \bbeta  ) + \lambda \| \bbeta  \|_1 \}$, 
where $\ell(x): \RR \mapsto [0,\infty)$ is a general loss function. The effects of the loss and noise on estimation error are coded in the vector $\{ \ell'(\varepsilon_i)\}_{i=1}^n$.  If $\ell$ is the quadratic loss, this vector is likely to have relatively many large coordinates when $\varepsilon$ is heavy-tailed. As a result, the combination of the rapid growth of $\ell$ with heavy-tailed sampling distribution inevitably leads to outliers, which will eventually be translated into spurious discoveries. 
Secondly,   the $\ell_1$-penalty introduces nonnegligible estimation bias \citep{FL2001, Z2006}.   For correlated designs, the bias of the Lasso may offset true signals and creates spurious effects,  leading to inconsistency in support recovery.   Technically, this is expressed as the irrepresentable condition for the selection consistency of the Lasso \citep{ZY2006}. Under restricted eigenvalue type conditions, the Lasso and its sorted variant Slope \citep{BLT2018, ACL2019} do achieve rate optimality for prediction and coefficient estimation. However, they do not benefit much from strong signals because the bias does not diminish as signal strengthens.  Under the restricted isometry property (on the design) and Gaussian errors,   \cite{N2018} derived the lower bound for the minimax risk: $\inf_{\hat \bbeta } \sup_{\bbeta^* \in \Omega(s,a) } \mathbb E \| \hat \bbeta - \bbeta^* \|_2^2 \gtrsim \sigma^2 s / n$ when $a\gtrsim  \sigma \sqrt{ \log(ed/s)/n}$,  where $\Omega(s,a) = \{ \bbeta\in \RR^d: \| \bbeta\|_0 \leq s, \min_{j: \beta_j\neq 0} |\beta_j | \geq a \}$.  For estimating such sparse vectors with sufficiently strong signals,  Lasso can not achieve the oracle rate without the strong irrepresentable condition \citep{MB2006,ZY2006}, which is a condition on how strongly the important and unimportant variables can be correlated. This condition, however, is in general very restrictive; see \cite{Z2006} for counterexamples and numerical demonstrations.

In the presence of heavy-tailed noise, outliers occur more frequently and may have a significant impact on (regularized) empirical risk minimization when the loss grows quickly.  When the regression error $\varepsilon$ only has finite second moment,  the Lasso still achieves the minimax rate $\sqrt{s\log(d)/n}$ (under $\ell_2$-norm) but with worse deviations \citep{LM2018}. 
To reduce the ill-effects of outliers, a widely recognized strategy is to use a robust loss function that is globally Lipschitz continuous and locally quadratic. A prototypical example is the Huber loss \citep{H1964}:
\begin{align} \label{Huber.loss}
	\ell_\tau(x) =
	\left\{\begin{array}{ll}
	 x^2 /2     ~~& \mbox{if } | x | \leq  \tau ,  \\
	  \tau | x | -  \tau^2/2     ~~&  \mbox{if }  | x | > \tau ,
	\end{array}  \right.
\end{align} 
where $\tau>0$ is a robustification parameter that controls the tradeoff between the robustness and  bias.
The second important issue is the choice of sparsity-inducing penalty. In order to eliminate the nonnegligible estimation bias introduced  by convex regularization, \cite{FL2001} introduced a family of folded-concave penalties, including the smoothly clipped absolute deviation (SCAD) penalty \citep{FL2001}, minimax concave (MC+) penalty \citep{Z2010}, and the capped $\ell_1$-penalty \citep{Z2010b, SPZ2012}.  These ideas motivate the following nonconvex (folded concave) regularized $M$-estimator
\# \label{nonconvex.huber}
	\hat \bbeta  \in \argmin_{ \bbeta \in \RR^d }  \bigg\{ \hat  \cL_\tau(\bbeta ) + \sum_{j=1}^d p_\lambda(\beta_j)  \bigg\} ,
\#
where $ \hat \cL_\tau (\bbeta) := (1/n) \sn \ell_\tau(y_i -   \bx_i^\T \bbeta   )$ is the empirical loss, $\tau>0$ is a robustification parameter, and $p_\lambda : \RR  \mapsto [0, \infty)$ is a  concave penalty function with a regularization parameter $\lambda>0$.  We refer to \cite{ZZ2012} for a comprehensive survey of folded concave regularized methods.

In practice, it is inherently difficult to solve the nonconvex optimization problem \eqref{nonconvex.huber} directly.  Statistical properties, such as the rate of convergence under various norms and oracle properties, are  established for either the hypothetical global optimum that is unobtainable by any practical algorithm in polynomial time, or some local optimum that exists somewhere like a needle in a haystack.  To mitigate the gap between statistical theory and algorithmic complexity, we apply an iteratively reweighted $\ell_1$-penalized algorithm, which originates from  \cite{ZL2008},  to adaptive Huber regression.  This multi-step regularized robust regression procedure (provably) yields an estimator with desired oracle properties, and is computationally efficient because it only involves solving a sequence of (unconstrained) convex programs. Our theoretical analysis is based on and improves upon \cite{FLSZ2018}, who established the statistical and algorithmic theory for the iteratively reweighted $\ell_1$-penalized least squares regression estimator. The aim of this paper is to explore a general class of robust loss functions, typified by the Huber loss, not merely for the purpose of generality but owing to a real downside of the quadratic loss.  Typified by the Huber loss, our general principle applies to a class of robust loss functions as will be discussed in Section~\ref{sec:ext}. Software implementing the proposed procedure and reproducing our computational results is available at \href{https://github.com/XiaoouPan/ILAMM}{{\small \textsf{https://github.com/XiaoouPan/ILAMM}}}.

\subsection{Related literature}

Nonasymptotic or finite-sample analysis of regularized regression methods beyond least squares,  such as regularized empirical risk minimization (ERM) or $M$-estimation with a non-quadratic loss,  can be divided into three categories depending on the form of the regularizer/penalty.

\medskip
\noindent
{\sc $\ell_1$-regularization}: For high-dimensional sparse  linear models, \cite{minsker2015geometric} and \cite{FLW2017}, respectively, proposed a robust version of Lasso based on geometric median and $\ell_1$-penalized Huber's $M$-estimator.  Both estimators achieve sub-Gaussian deviation bounds when the regression error only has finite variance.  In such a heavy-tailed case,  \cite{LM2018} showed that the Lasso still achieves the minimax rate under expectation but with much worse deviations.
When the regression error only has finite $(1+\delta)$-th absolute moment for some $\delta\in (0,1)$, \cite{SZF2017} established exponential deviation bounds for $\ell_1$-penalized adaptive Huber regression estimator with a more delicate choice of the robustification parameter. 
For more general penalized $M$-estimators with a convex and Lipschitz continuous loss,  \cite{ACL2019} established both estimation bounds and sharp oracle inequalities. Their results do not require a local strong convexity on the loss function,  thus also including the hinge loss and quantile regression loss.
For nonconvex loss functions with a redescending derivative,  typified by Tukey's bisquare loss,  \cite{MBM2018} proved the statistical consistency of the $\ell_1$-penalized estimator subject to an $\ell_2$-constraint to stationary points.

\medskip
\noindent
{\sc Folded concave regularization}: For folded concave penalized $M$-estimators subject to a convex side constraint,  \cite{LW2015} and \cite{LW2017} were among the first to provide rigorous statistical and algorithmic theory for local optima.  
They quantified statistical accuracy by providing bounds on  $\ell_1/\ell_2$- and prediction errors between stationary points and the
population-level optimum.  They also provided conditions under which the stationary point is unique, and proposed a composite gradient algorithm for provably
solving the constrained optimization problem efficiently. In the context of generalized linear models with a sufficiently smooth link function and bounded covariates, 
 \cite{LW2017} proved under the scaling $n\gtrsim s^3 \log(d)$ that the nonconvex regularized program subject to an $\ell_1$-ball constraint has a unique stationary point given by the oracle estimator with high probability.
For linear regression with symmetric heavy-tailed errors, \cite{L2017} studied statistical consistency and asymptotic normality of nonconvex regularized robust $M$-estimators (also subject to an $\ell_1$-ball constraint) with a locally strongly convex loss.  For sub-Gaussian covariates,   \cite{L2017} proved the uniqueness of a stationary point which has $\ell_2$- and $\ell_1$-error bounds in the order of $\sqrt{ s \log(d) /n}$ and $s \sqrt{\log(d)/n}$, respectively.  
Furthermore,  under the scaling $n\gtrsim \max\{ s^2, s\log(d) \}$ and the beta-min condition $\| \bbeta^*_{\cS}\|_{\min}\gtrsim  \lambda + \sqrt{\log(s)/n}$, this stationary point coincides with the oracle estimator.

In this paper, we address nonconvex regularized robust regression from a different angle. Motivated by the local linear approximation (LLA) algorithm proposed by \cite{ZL2008},  we apply an iteratively reweighted $\ell_1$-penalized algorithm to adaptive Huber regression, which involves solving a sequence of (unconstrained) convex programs. We simultaneously analyze the statistical property  and algorithmic complexity of the solutions produced by such an iterative procedure. 
For sub-exponential covariates and asymmetric error with finite variance, we show that the multi-step  penalized estimator,  after $\cO( \log s + \log(\log d) )$ iterations,  achieves exponential deviation bounds with  $\ell_2$- and $\ell_1$-errors in the order of $\sqrt{s/n}$ and $s/\sqrt{n}$, respectively, under the scaling $n\gtrsim s \log(d)$ and the above beta-min condition.  The strong oracle property can be obtained under slightly stronger moment condition and the scaling $n\gtrsim \max\{ s^2 , s\log(d) \}$.

\medskip
\noindent
{\sc $\ell_0$-regularization}: Another popular class of sparse recovery algorithms is based on directly solving $\ell_0$-constrained or $\ell_0$-penalized empirical risk minimizations, which naturally produces  sparse solutions.  Such a formation is NP-hard,  and believed to be intractable in practice.  Despite its computational hardness, many practically useful algorithms have been proposed to solve $\ell_0$-regularized ERM,  while the statistical properties beyond least squares regression are much less studied.   We refer to \cite{BPV2020} and \cite{HTT2020} for two comprehensive survey articles on $\ell_0$-regularized regression methods.

\medskip
The idea of having the robustification parameter grow with the sample size in order to achieve exponential deviations even when the sampling distribution only has finite variance dates back to \cite{C2012} in the context of mean estimation. Therefore,  the robustness considered in this paper is primarily about nonasymptotic exponential deviation of the estimator versus polynomial tail of the error distribution.  The resulting procedure does sacrifice a fair amount of robustness to adversarial contamination of the data.
To echo the message in \cite{C2012},  the motivation of this work is different from and should not be confused with the classical notion of robust statistics.

From a variable selection perspective, this paper focuses on oracle properties of multi-step penalized robust regression estimators when the signal is sufficiently strong.  While allowing for heavy-tailed noise,  the high-dimensional feature vector $\bx\in \RR^d$ is assumed to have either sub-exponential or sub-Gaussian tails.
For more complex problems in which both  the  covariates and noise can be (i) heavy-tailed and/or (ii) adversarially contaminated,  the estimator obtained by minimizing a robust loss function is still sensitive to outliers in the feature space.  
To achieve  robustness in both feature and response spaces, recent years have witnessed a rapid development of the ``median-of-means" (MOM) principle, which dates back to  \cite{NY1983} and \cite{JVV1986},  and a variety of MOM-based procedures for regression and classification in both low- an high-dimensional settings \citep{LL2018,LM2019,CLL2019, CLL2020,LM2020,LL2020}.
We refer to \cite{LM2019b} for a recent survey.   An interesting open problem is how to efficiently incorporate the MOM principle with nonconvex regularization or iteratively reweighted $\ell_1$-regularization so as to achieve high degree of robustness and variable selection consistency simultaneously.

\subsection{Notation}
Let us summarize our notation. For every integer $k\geq 1$, we use $\RR^k$ to denote the $k$-dimensional Euclidean space. The inner and Hadamard products of any two vectors $\bu=(u_1, \ldots, u_k)^\T, \bv=(v_1, \ldots ,v_k)^\T \in \RR^k$ are defined by $\bu^\T \bv = \langle \bu, \bv \rangle= \sum_{i=1}^k u_i v_i$ and $\bu \circ \bv = (u_1 v_1, \ldots, u_k v_k )^\T$, respectively.
We use $\| \cdot \|_p$ $(1\leq p \leq \infty)$ to denote the $\ell_p$-norm in $\RR^k$: $\| \bu \|_p = ( \sum_{i=1}^k | u_i |^p )^{1/p}$ and $\| \bu \|_\infty = \max_{1\leq i\leq k} |u_i|$. Moreover, we write $\| \bu \|_{\min} = \min_{1\leq i\leq k} |u_i|$.
For $k\geq 2$, $\mathbb{S}^{k-1} = \{ \bu \in \RR^k : \| \bu \|_2 = 1 \}$ denotes the unit sphere in $\RR^k$. 
For any function $f:\RR \mapsto \RR$ and vector $\bu = (u_1,\ldots, u_k)^\T \in \RR^k$, we write $f(\bu) = (f(u_1), \ldots, f(u_k))^\T \in \RR^k$.

For $k\geq 2$, ${\rm I}_k$ represents the identity/unit matrix of size $k$. For any $k\times k$ symmetric matrix $\Sigma \in \RR^{k\times k}$, $\| \Sigma \|_2$ is the operator norm of $\Sigma$, and we use $ \lambda_{\min}(\Sigma) $ and $\lambda_{\max}(\Sigma)$ to denote the minimal and maximal eigenvalues of $ \Sigma$, respectively.
For a positive semidefinite matrix $\Sigma \in \RR^{k\times k}$,  $\| \cdot \|_{\Sigma}$ denotes the norm linked to $\Sigma$ given by $\| \bu \|_{\Sigma} = \| \Sigma^{1/2} \bu \|_2$, $\bu \in \RR^k$.
For any two real numbers $u$ and $v$, we write $u\vee v = \max(u,v)$ and $u \wedge v = \min(u,v)$. 
For any integer $d\geq 1$, we write $[d]=\{1,\ldots, d\}$. For any set $\mathcal{S}$, we use $|\mathcal{S}|$ to denote its cardinality, i.e., the number of elements in $\mathcal{S}$.

\section{Regularized Huber $M$-estimation}
\label{sec:hd}

We first revisit the $\ell_1$-penalized Huber regression estimator in Section~\ref{sec:huberlasso}, and point out two different regimes for the robustification parameter $\tau$. In Section~\ref{sec:TAC}, we propose a multi-step procedure, which is closely related to folded concave regularized Huber regression, for fitting high-dimensional sparse models with heavy-tailed noise. 
This  multi-step penalized robust regression method not only is computationally efficient, but also achieves optimal rate of convergence and oracle properties, as will be studied in Sections~\ref{sec:det.analysis} and \ref{sec:random.analysis}. Throughout, $\cS = {\rm supp}(\bbeta^*) = \{ 1\leq j \leq d : \beta_j^* \neq 0\} \subseteq [d]$ denotes the active set and $s= |\cS| $ is the sparsity.

 \subsection{$\ell_1$-penalized Huber regression}
\label{sec:huberlasso}

Given i.i.d. observations $\{(y_i, \bx_i)\}_{i=1}^n$ from  the linear model \eqref{linear.model}, consider the $\ell_1$-regularized Huber $M$-estimator, which we refer to as the {\it Huber-Lasso},
\# \label{Huber-Lasso}
	\hat \bbeta^{\hlasso } \in \argmin_{ \bbeta   \in \RR^d }  \,  \{   \hat  \cL_\tau(\bbeta ) + \lambda \| \bbeta  \|_1  \},
\# 
where $ \hat \cL_\tau(\cdot)$ is the emprical loss function defined in \eqref{nonconvex.huber}.  Statistical properties of the penalized Huber $M$-estimator have been studied by \cite{LZ2011}, \cite{FLW2017}, \cite{L2017} and \cite{ACL2019} under different assumptions.
A less-noticed problem is the connection between the robustification parameter and the error distribution, which in turn quantifies the tradeoff between robustness and unbiasedness.
Recent studies by \cite{SZF2017} reveal that the use of Huber loss is particularly suited for heavy-tailed problems in both low and high dimensions. With a properly chosen robustification parameter, calibrated by  the noise level, sample size and parametric dimension, the effects of  the heavy-tailed noise can be removed or dampened.

\begin{remark}
In practice,  it is natural to leave the intercept or a given subset of the parameters unpenalized in the penalized $M$-estimation framework.  Denote by  $\cR$ be a user-specified index set of  unpenalized parameters, which contains at least index 1.  A modified Huber-Lasso  estimator is then defined as the solution to $\min_{ \bbeta   \in \RR^d }  \{   \hat \cL_\tau(\bbeta ) + \lambda \| \bbeta_{\cR^{{\rm c}}}  \|_1  \}$, where $\| \bbeta_{\cR^{{\rm c}}}  \|_1   = \sum_{j \in \cR^{{\rm c}} } |\beta_j | $. Similar theoretical analysis can be carried out with slight modifications, and thus will be omitted for ease of exposition. 
\end{remark}

We first impose the following assumptions on the data generating process. The (random) covaraite vectors are assumed to be {\it sub-exponential/sub-gamma} \citep{BLM2013},  and we allow the regression errors to be heavy-tailed and asymmetric.

\begin{cond} \label{moment.cond}
There exist some constant $\sigma_{\bx}, c_0  \geq 1$ such that $\PP(|  \bu^\T \bx  | \geq  \sigma_{\bx}   t) \leq  c_0  e^{-t }$ for all $\bu \in \mathbb{S}^{d-1}$ and $t\geq 0$.   For simplicity,  we set $c_0=1$. Moreover, $\Sigma =\EE(\bx \bx^\T)$ is positive definite with $\rho_l = \lambda_{\min} (\Sigma) >0 $.
The regression error $\varepsilon$ satisfies $\EE(\varepsilon | \bx ) = 0$ and $ \EE(\varepsilon^2 | \bx) \leq  \sigma_2 ^2$ almost surely.
\end{cond}

\begin{theorem} \label{thm:l1huber}
Assume that Condition~\ref{moment.cond} holds for model \eqref{linear.model}.  For every $t>0$,  any optimal solution $\hat{\bbeta}^{\hlasso}$ to the convex program \eqref{Huber-Lasso} with $\tau\asymp \sigma_2  \sqrt{n/(\log d + t)}$ and $\lambda \asymp   \sigma_2  \sqrt{(\log d + t)/n}$ satisfies
\begin{equation}
\begin{aligned}
	& \| \hat \bbeta^{\hlasso} - \bbeta^* \|_2  \leq c_1  \sigma_2 (\log d + t )^{1/2} \sqrt{\frac{s}{n}} ~~\mbox{ and} \\
	& \| \hat \bbeta^{\hlasso} - \bbeta^* \|_1 \leq c_2    \sigma_2  (\log d + t )^{1/2} \frac{s}{\sqrt{n}}  \label{l1huber.bounds}
\end{aligned}
\end{equation}
with probability at least $1-3 e^{-t}$ as long as $n \geq c_3  (s \log d  +t )$, where $c_1$--$c_3$ are constants that are independent of $(n,d,s)$ and $t$.

\end{theorem}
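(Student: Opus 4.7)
The plan is to follow the now-standard three-step program for $\ell_1$-penalized convex $M$-estimators. Let $\hat{\bdelta}=\hat{\bbeta}^{\hlasso}-\bbeta^*$ and introduce the Bregman divergence $\hat D(\bdelta):=\hat\cL_\tau(\bbeta^*+\bdelta)-\hat\cL_\tau(\bbeta^*)-\langle\nabla\hat\cL_\tau(\bbeta^*),\bdelta\rangle$. First, optimality of $\hat{\bbeta}^{\hlasso}$ combined with convexity of the Huber loss yields the basic inequality $\hat D(\hat\bdelta)\le \lambda(\|\bbeta^*\|_1-\|\hat\bbeta^{\hlasso}\|_1)-\langle\nabla\hat\cL_\tau(\bbeta^*),\hat\bdelta\rangle$. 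Second, on the event $\mathcal{E}_1=\{2\|\nabla\hat\cL_\tau(\bbeta^*)\|_\infty\le\lambda\}$, standard Lasso manipulations place $\hat\bdelta$ in the cone $\mathbb{C}(\cS;3):=\{\bv\in\RR^d:\|\bv_{\cS^{\cc}}\|_1\le 3\|\bv_\cS\|_1\}$, so in particular $\|\hat\bdelta\|_1\le 4\sqrt{s}\,\|\hat\bdelta\|_2$. Third, I would establish a localized restricted strong convexity (RSC) bound $\hat D(\bdelta)\ge\kappa\|\bdelta\|_2^2$ for all $\bdelta\in\mathbb{C}(\cS;3)$ with $\|\bdelta\|_2\le r$, for some deterministic $\kappa,r>0$, and combine it with the basic inequality to solve for $\|\hat\bdelta\|_2$.

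\textbf{Score bound.} The key probabilistic ingredient on the noise side is $\|\nabla\hat\cL_\tau(\bbeta^*)\|_\infty\lesssim \sigma_2\sqrt{(\log d+t)/n}$, which calibrates $\lambda$. Writing $\psi_\tau(u):=\ell_\tau'(u)=\sgn(u)(|u|\wedge\tau)$, one has $\nabla\hat\cL_\tau(\bbeta^*)=-n^{-1}\sn\psi_\tau(\varepsilon_i)\bx_i$. The robustification introduces bias: using $\EE(\varepsilon|\bx)=0$ and $|u-\psi_\tau(u)|\le u^2/\tau$, for any $j\in[d]$,
\[
\bigl|\EE\{\psi_\tau(\varepsilon) x_j\}\bigr|=\bigl|\EE\{(\psi_\tau(\varepsilon)-\varepsilon) x_j\}\bigr|\le \tau^{-1}\EE(\varepsilon^2|x_j|)\lesssim \sigma_2^2\sigma_\bx/\tau.
\]
Each summand $\psi_\tau(\varepsilon_i)x_{ij}$ is bounded by $\tau|x_{ij}|$, hence sub-exponential with norm $\lesssim\tau\sigma_\bx$, and its variance is at most $\EE[\varepsilon^2 x_j^2]\le \sigma_2^2\EE[x_j^2]\lesssim\sigma_2^2\sigma_\bx^2$. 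Bernstein's inequality plus a union bound over $j\in[d]$ yields, with probability $\ge 1-e^{-t}$,
\[
\|\nabla\hat\cL_\tau(\bbeta^*)\|_\infty\lesssim \sigma_2\sigma_\bx\sqrt{(\log d+t)/n}+\tau\sigma_\bx(\log d+t)/n+\sigma_2^2\sigma_\bx/\tau.
\]
The choice $\tau\asymp\sigma_2\sqrt{n/(\log d+t)}$ balances all three terms at the advertised $\sqrt{(\log d+t)/n}$ rate, justifying the prescribed $\lambda$.

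\textbf{Restricted strong convexity---the main obstacle.} The chief technical hurdle is the lower bound on $\hat D$, because the Huber Hessian $n^{-1}\sn \mathbf{1}\{|y_i-\bx_i^\T\bbeta|\le\tau\}\bx_i\bx_i^\T$ activates only on small residuals, requiring careful control of both the noise size and $|\bx_i^\T\bdelta|$. I would start from the elementary identity
\[
\ell_\tau(u-v)-\ell_\tau(u)+\psi_\tau(u)\,v\;\ge\;\tfrac{v^2}{2}\,\mathbf{1}\{|u|\le\tau/2,\,|v|\le\tau/2\},
\]
(which is checked by noting that both $|u|$ and $|u-v|$ then lie below $\tau$, placing us in the quadratic region), applied with $u=\varepsilon_i$ and $v=\bx_i^\T\bdelta$, to get $\hat D(\bdelta)\ge(2n)^{-1}\sn(\bx_i^\T\bdelta)^2\mathbf{1}\{|\varepsilon_i|\le\tau/2,\,|\bx_i^\T\bdelta|\le\tau/2\}$. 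Chebyshev gives $\PP(|\varepsilon|>\tau/2)\le 4\sigma_2^2/\tau^2=o(1)$ under the prescribed $\tau$, so only a small fraction of samples are truncated on the noise side, while a peeling/truncation argument handles $\mathbf{1}\{|\bx_i^\T\bdelta|\le\tau/2\}$ uniformly over $\bdelta\in\mathbb{C}(\cS;3)$ of bounded $\ell_2$-radius. Combining with a restricted eigenvalue bound for the sub-exponential Gram matrix $n^{-1}\sn\bx_i\bx_i^\T$ over $\mathbb{C}(\cS;3)\cap\mathbb{S}^{d-1}$---proved via discretization plus Bernstein under $n\gtrsim s\log d+t$---delivers $\hat D(\bdelta)\ge(\rho_l/8)\|\bdelta\|_2^2$ on such $\bdelta$ with probability $\ge 1-e^{-t}$.

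\textbf{Assembling the bounds.} On the intersection of the good events, the basic inequality reduces to $\hat D(\hat\bdelta)\le (3\lambda/2)\|\hat\bdelta_\cS\|_1\le (3\lambda/2)\sqrt{s}\,\|\hat\bdelta\|_2$, while RSC yields $\hat D(\hat\bdelta)\ge (\rho_l/8)\|\hat\bdelta\|_2^2$ provided $\|\hat\bdelta\|_2\le r$. A standard convexity-based localization argument---restricting the Lasso objective to a ball of radius $r$ and noting its convexity forces $\hat\bdelta$ to lie strictly inside once an internal descent direction exists---secures the radius constraint under $n\gtrsim s\log d+t$. Solving the two-sided bound yields $\|\hat\bdelta\|_2\lesssim \lambda\sqrt{s}/\rho_l$, and the cone inclusion promotes this to $\|\hat\bdelta\|_1\le 4\sqrt{s}\,\|\hat\bdelta\|_2\lesssim \lambda s/\rho_l$. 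Substituting $\lambda\asymp\sigma_2\sqrt{(\log d+t)/n}$ yields \eqref{l1huber.bounds}.
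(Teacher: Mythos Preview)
Your proposal is essentially correct and follows the same three-step architecture as the paper (score bound $\Rightarrow$ cone constraint $\Rightarrow$ localized RSC $\Rightarrow$ solve). The paper packages these as Propositions~\ref{prop:contraction}, \ref{prop:RSC} and \ref{prop:score}, separating the score into a centered part $\bw^*=\nabla\hat\cL_\tau(\bbeta^*)-\nabla\cL_\tau(\bbeta^*)$ and a bias $b^*_\tau=\|\nabla\cL_\tau(\bbeta^*)\|_2$, whereas you bound $\|\nabla\hat\cL_\tau(\bbeta^*)\|_\infty$ directly; both routes lead to the same calibration of $(\tau,\lambda)$.

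One technical point deserves care: your RSC sketch appeals to ``discretization plus Bernstein'' for the Gram matrix $n^{-1}\sn\bx_i\bx_i^\T$ over the cone, but under the sub-exponential design of Condition~\ref{moment.cond} the summands $(\bx_i^\T\bdelta)^2$ are only sub-Weibull of order $1/2$ and do not satisfy Bernstein's moment condition, so this step as written would not go through cleanly. The paper sidesteps this by working from the outset with the bounded, Lipschitz truncation $\varphi_{\tau/(2r)}(\bx_i^\T\bdelta/\|\bdelta\|_2)$ (rather than the hard indicator), then applying Rademacher symmetrization and Talagrand's contraction to reduce the supremum to $\EE\|n^{-1}\sn e_i\bx_i\|_\infty$, which is handled via the $\ell_1$-cone constraint $\|\bdelta\|_1\le l\|\bdelta\|_2$ and a Bernstein bound on the \emph{linear} (hence genuinely sub-exponential) coordinates. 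Replacing your indicator with such a smoothed truncation, or invoking a small-ball argument for the lower bound, would close this gap.
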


Theorem~\ref{thm:l1huber} provides the error bounds for the one-step penalized estimator,  and paves the way for our subsequent analysis for the multi-step procedure.  
Theorem~\ref{thm:l1huber} is a modified version of Theorem~B.2 in \cite{SZF2017} (when $\delta=1$) with an explicit relation between deviation bound and confidence level under slightly relaxed moment condition on the design.
When the (conditional) distribution of $\varepsilon$ is symmetric,  $\bbeta^*$ can be identified as $\bbeta^* \in \argmin_{\bbeta\in \RR^d} \EE \hat \cL_\tau(\bbeta)$.  Then,  with a fixed $\tau$ (e.g. $\tau \asymp \sigma_2$), Theorem~\ref{thm:l1huber} can also be obtained as a special case of Theorem~2.1 in \cite{ACL2019} when the feature vector $\bx$ is sub-Gaussian.

\subsection{Iteratively reweighted $\ell_1$-penalized Huber regression}
\label{sec:TAC}

For fitting sparse regression models,  the Lasso-type estimators typically exhibit a suboptimal rate of convergence,  as compared to the oracle rate achieved by nonconvex regularization methods,  under a minimum signal strength condition \citep{ZZ2012, N2018}, also known as the beta-min condition \citep[Section 7.4]{BvG2011}.  However, as noted previously, directly solving the nonconvex optimization problem \eqref{nonconvex.huber} is computationally challenging. Moreover, statistical properties are only established for the hypothetical global optimum (or some stationary point), which is typically unobtainable by any polynomial time algorithm.

Inspired by the local linear approximation to folded concave penalties \citep{ZL2008},  we consider a multi-stage procedure that solves a sequence of convex programs up to a prespecified optimization precision.  This is an iteratively reweighted $\ell_1$-penalized algorithm, which is similar in spirit to the iteratively reweighted basis-pursuit algorithms studied in \cite{GL2011}.
Let $p_\lambda(\cdot)$ be a differentiable penalty function as in \eqref{nonconvex.huber} and recall that $ \hat \cL_\tau(\cdot)$ is the empirical loss function.
Starting with an initial estimate $\hat \bbeta^{(0)} =  ( \hat \beta_1^{(0)},\ldots,\hat \beta_d^{(0)})^\T $, consider a sequence of convex optimization programs $\{  ({\rm P}_\ell ) \}_{\ell \geq 1}$:
\#
 \min_{\bbeta = ( \beta_1, \ldots,\beta_d)^\T  } \bigg\{    \hat  \cL_\tau(\bbeta) + \sum_{j=1}^d p'_\lambda( |\hat \beta_j^{(\ell-1)} | )  | \beta_j |  \bigg\}     ~~~~~~~({\rm P}_\ell)   \label{weighted.lasso}
\#
for $\ell =1 ,2 ,\ldots$, where $\hat \bbeta^{(\ell )}  = ( \hat \beta_1^{(\ell)},\ldots,\hat \beta_d^{(\ell)})^\T$ is the optimal solution to program $({\rm P}_\ell)$.  Following \cite{ZZ2012}, we assume the following conditions on the penalty function $p_\lambda$. 

\begin{cond}  \label{cond:penalty}
The penalty function $p_\lambda$ is of the form $p_\lambda(t) = \lambda^2 p(t/\lambda)$ for $t\in \RR$, where $p: \RR \mapsto [0, \infty)$ satisfies:
(i) $p(t) = p(-t)$ for all $t$ and $p(0)=0$;  (ii) $p$ is nondecreasing on $[0,\infty)$; (iii) $p$ is differentiable almost everywhere on $(0,\infty)$ and $\lim_{t \to 0^+} p'(t) = 1$; (iv) $p'(t_1) \leq p'(t_2)$ for all $t_1 \geq t_2 > 0$.
\end{cond} 

Prototypical examples of the penalty function $p(\cdot)$ in Condition~\ref{cond:penalty} include the $\ell_1$-function,  the SCAD penalty \citep{FL2001},  MC+  penalty \citep{Z2010}, and capped-$\ell_1$ function \citep{Z2010b}.
 
\begin{enumerate}
\item (SCAD) $p(t) = \int_0^{|t|} \min\{1 , (1-\frac{u-1}{a-1})_+ \} {\rm d} u$ and $p'(t) = \sign(t)\min\{1 , (1-\frac{|t|-1}{a-1})_+ \}$ for some $a>2$.  By a Bayesian argument, \cite{FL2001} suggested the choice of $a=3.7$.

\item (MC+)   $p(t) = \int_0^{|t|}  (1-u/a)_+  {\rm d} u$ and $p'(t) = \sign(t) (1- |t|/a)_+$ for some $a>1$.  

\item (Capped-$\ell_1$) $p(t) = \min(1, |t| )$ and $p'(t) = I(|t|\leq 1)$.
\end{enumerate} 

For each $\ell\geq 1$, program $({\rm P}_\ell)$ corresponds to a weighted $\ell_1$-penalized empirical Huber loss minimization of the form
\#
	 \min_{\bbeta  \in \RR^d  } \{  \hat  \cL_\tau(\bbeta) +  \| \blambda   \circ  \bbeta  \|_1 \},   \label{general.lasso}
\#
where $\blambda = (\lambda_1,\ldots, \lambda_d)^\T$ is a $d$-vector of regularization parameters with $\lambda_j \geq 0$.  
By convex optimization theory, any optimal solution $\hat \bbeta  $ to the convex program \eqref{general.lasso} satisfies the first-order condition
\#
	 	 \nabla  \hat \cL_\tau ( \hat \bbeta ) +  \blambda  \circ \bxi = \textbf{0}_d  ~\mbox{ for some }~ \bxi  = (\xi_1 ,\ldots, \xi_d )^\T \in \partial \| \hat  \bbeta  \|_1  \subseteq [-1,1]^d  , \nn
\#
where  $\nabla  \hat \cL_\tau(\bbeta) = (-1/n )\sn \ell_\tau'(y_i -  \bx_i^\T \bbeta )\bx_i$.

\begin{definition} \label{def:opt.solution}
Following the terminology in \cite{FLSZ2018}, for a prespecified tolerance level $\epsilon >0$, we say $\wt \bbeta$ is an $\epsilon$-optimal solution to \eqref{general.lasso} if $\omega_{\blambda}(\wt \bbeta) \leq \epsilon$, where
\#
	\omega_{\blambda}(\bbeta)  :=   \min_{\bxi \in \partial \| \bbeta  \|_1 }   \|  \nabla  \cL_\tau ( \bbeta ) +  \blambda \circ \bxi  \|_{\infty}  , \ \ \bbeta \in \RR^d . 
\#
\end{definition}

In view of Definition~\ref{def:opt.solution}, for a prespecified sequence of tolerance levels $\{ \epsilon_\ell \}_{\ell \geq 1}$, we use $\wt \bbeta^{(\ell)} = (  \wt \beta_1^{(\ell)},\ldots,\wt \beta_d^{(\ell)})^\T$ to denote an $\epsilon_\ell$-optimal solution to program $({\rm P}_\ell)$, that is,
\#
	\min_{\bbeta  \in \RR^d }  \{  \hat  \cL_\tau(\bbeta) +   \| \blambda^{(\ell-1)} \circ \bbeta  \|_1 \}  , \nn
\#
where $\blambda^{(\ell-1)}   := p_\lambda'(\wt \bbeta^{(\ell-1)})$.
For simplicity, we consider a trivial initial estimator $\wt \bbeta^{(0)}   = \textbf{0}$. Since $p_\lambda'(| \wt \beta_j^{(0)}|) = p'_\lambda(0) = \lambda$ for $j=1,\ldots, d$, the program $({\rm P}_1)$ coincides with that in \eqref{Huber-Lasso}.
 In Section~\ref{sec:LAMM}, we will describe an iterative local adaptive majorize-minimization (I-LAMM) algorithm which produces an $\epsilon$-optimal solution to \eqref{general.lasso} after a few iterations.

The above procedure is sequential, and can be categorized into two stages: contraction ($\ell=1$) and tightening ($\ell\geq 2$).
As we will see in the next subsection, even starting with a trivial initial estimator that is fairly remote from the true parameter, the contraction stage will produce a reasonably good estimator whose statistical error is of the order $\sqrt{  \log(d) \cdot  s/n }$. 
Essentially, the contraction stage is equivalent to the $\ell_1$-penalized Huber regression in \eqref{Huber-Lasso}. A tightening stage further refines this coarse contraction estimator consecutively, and eventually gives rise to an estimator that achieves the oracle rate $\sqrt{s /n }$ under a weak beta-min condition.

\subsection{Deterministic analysis}
\label{sec:det.analysis}

To analyze the statistical properties of $\{ \wt \bbeta^{(\ell)} \}_{\ell \geq 1}$, we first define a ``good" event regarding the restricted strong convexity (RSC) property of the empirical Huber loss over a local $\ell_1$-cone.

\begin{definition} \label{def:l1cone}
For some $r,  l, \kappa >0$,  define the event 
\#
	\cE_1(r,l, \kappa  ) = \left\{   \inf_{\bbeta \in \bbeta^* + \BB(r) \cap \CC(l)}\frac{\langle   \nabla \hat  \cL_\tau(\bbeta ) -  \nabla \hat   \cL_\tau( \bbeta^* ) , \bbeta  - \bbeta^*  \rangle  }{\| \bbeta  - \bbeta^*  \|_{2}^2 }  \geq \kappa \right\}  ,  \label{RSC.event1}
\#
where $\BB(r) = \BB^d(r) = \{ \bdelta \in \RR^d :  \| \bdelta  \|_{2} \leq r\}$ is an $\ell_2$-ball and  $\CC( l) := \{ \bdelta \in \RR^d : \|   \bdelta   \|_1   \leq l \| \bdelta  \|_{2} \}$ is an $\ell_1$-cone. Here $\bbeta +  \BB(r) \cap  \CC(l) = \{ \bbeta + \bdelta :  \bdelta \in \BB(r) \cap  \CC(l) \}$.
\end{definition}

Throughout the following, we assume that the penalty function $p_\lambda(\cdot)$ satisfies Condition~\ref{cond:penalty}.
Moreover, define 
\#
	\bw^* =  \nabla \hat  \cL_\tau(\bbeta^*)  -  \nabla \cL_\tau(\bbeta^*)    ~~\mbox{ and }~~ b^*_\tau =  \| \cL_\tau(\bbeta^* ) \|_2  , \label{def.bias}
\#
where $\cL_\tau(\bbeta) = \EE \hat \cL_\tau(\bbeta)$ is the population loss.  Here $\bw^* \in \RR^d$ is the centered gradient vector which corresponds to the stochastic error, and $b^*_\tau $ denotes the (deterministic) approximation bias  induced by the Huber loss. See Lemma~\ref{lem:bias} in the Supplementary Material for an upper bound on the bias.

\begin{remark} \label{rmk:bias}
In this paper,  we introduce the bias term $b^*_\tau$ into the results primarily because the error distribution,  if not specified, can be asymmetric.  This term is typically nullified in the literature due to two reasons. First, under the symmetry assumption that $\varepsilon$ (conditional on $\bx$) is symmetric around zero, then for any given $\tau>0$, $\EE \{ \ell'_\tau(\varepsilon) \bx \}=0$.  Secondly,  it is sometimes assumed that $\bx_i$ and $\varepsilon_i$ are independent, and both have zero means.  Again, for any $\tau>0$, it follows that $\EE \{ \ell'_\tau(\varepsilon) \bx \} = \EE\{ \ell'_\tau(\varepsilon) \} \cdot  \EE(\bx) = 0$. In these two scenarios,  the bias $b^*_\tau$ vanishes for any given $\tau$.
\end{remark}

\begin{proposition}  \label{prop:contraction}
Let $\lambda,   r, \kappa>0$ satisfy
\# \label{scaling.1}
    \lambda \geq s^{-1/2} b^*_\tau , \quad   r >   2.5 \kappa^{-1}  s^{1/2} \lambda  .
\# 
Then,  conditioned on the event $\cE_1(r,l ,  \kappa  ) \cap \{    \lambda \geq 2   (  \| \bw^*  \|_\infty  + \epsilon_1   )\}$ with $l  = 6 s^{1/2}$,  any $\epsilon_1$-optimal solution $\wt \bbeta^{(1)}$ of program $({\rm P}_1)$ satisfies
\# \label{step1.bound}
	 \| \wt \bbeta^{(1)} - \bbeta^* \|_2 \leq   2.5 \kappa^{-1}   s^{1/2} \lambda .
\#
\end{proposition}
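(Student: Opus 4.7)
The plan is to combine an $\epsilon_1$-optimality first-order inequality with a cone containment for $\hat\bdelta := \tilde\bbeta^{(1)} - \bbeta^*$, then invoke the RSC event to extract the $\ell_2$-bound, with a convexity peeling step handling the local radius.

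First I would unpack the $\epsilon_1$-optimality of $\tilde\bbeta^{(1)}$: by Definition~\ref{def:opt.solution} there exists $\bxi \in \partial \|\tilde\bbeta^{(1)}\|_1$ with $\| \nabla \hat\cL_\tau(\tilde\bbeta^{(1)}) + \lambda \bxi \|_\infty \le \epsilon_1$. Taking the inner product with $\hat\bdelta$, splitting $\nabla \hat\cL_\tau(\bbeta^*) = \bw^* + \nabla \cL_\tau(\bbeta^*)$, applying Hölder's inequality, and using the standard bound $\langle \bxi, \hat\bdelta \rangle \ge \|\hat\bdelta_{\cS^{\rm c}}\|_1 - \|\hat\bdelta_{\cS}\|_1$, one obtains
\[
\langle \nabla \hat\cL_\tau(\tilde\bbeta^{(1)}) - \nabla \hat\cL_\tau(\bbeta^*), \hat\bdelta \rangle \;\le\; (\epsilon_1 + \|\bw^*\|_\infty) \|\hat\bdelta\|_1 + b^*_\tau \|\hat\bdelta\|_2 - \lambda\bigl(\|\hat\bdelta_{\cS^{\rm c}}\|_1 - \|\hat\bdelta_{\cS}\|_1\bigr).
\]
Since $\hat\cL_\tau$ is convex, the LHS is non-negative. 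Combining this with the inequalities $\epsilon_1 + \|\bw^*\|_\infty \le \lambda/2$, $b^*_\tau \le \lambda s^{1/2}$, and $\|\hat\bdelta_{\cS}\|_1 \le s^{1/2} \|\hat\bdelta\|_2$, rearrangement yields $\|\hat\bdelta_{\cS^{\rm c}}\|_1 \le 5 s^{1/2} \|\hat\bdelta\|_2$, and hence $\hat\bdelta \in \CC(6 s^{1/2}) = \CC(l)$.

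The RSC step is then essentially immediate: provided $\|\hat\bdelta\|_2 \le r$, the event $\cE_1(r,l,\kappa)$ applied to the LHS of the display above, combined with discarding the negative cone term, gives
\[
\kappa \|\hat\bdelta\|_2^2 \;\le\; (3\lambda/2)\|\hat\bdelta_{\cS}\|_1 + b^*_\tau \|\hat\bdelta\|_2 \;\le\; (5\lambda/2) s^{1/2} \|\hat\bdelta\|_2,
\]
hence $\|\hat\bdelta\|_2 \le 2.5\,\kappa^{-1} s^{1/2} \lambda$.

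The main obstacle is enforcing the local-ball hypothesis $\|\hat\bdelta\|_2 \le r$, because the proposition only supplies the strict bound $r > 2.5\,\kappa^{-1} s^{1/2} \lambda$. I would argue by contradiction via a convexity peel: if $\|\hat\bdelta\|_2 > r$, set $t^* := r / \|\hat\bdelta\|_2 \in (0,1)$ and $\tilde\bbeta^* := \bbeta^* + t^* \hat\bdelta$, so that $\tilde\bbeta^* - \bbeta^* \in \BB(r) \cap \CC(l)$ (the cone is closed under scaling). Viewing $g(\bbeta) := \hat\cL_\tau(\bbeta) + \lambda \|\bbeta\|_1$ along the line $f(t) := g(\bbeta^* + t\hat\bdelta)$, the monotonicity of the subdifferentials of this one-dimensional convex function transfers the $\epsilon_1$-optimality at $t = 1$ to $t = t^*$: every $\bxi^* \in \partial \|\tilde\bbeta^*\|_1$ satisfies $\langle \nabla \hat\cL_\tau(\tilde\bbeta^*) + \lambda \bxi^*, \hat\bdelta \rangle \le \epsilon_1 \|\hat\bdelta\|_1$. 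Rerunning the previous first-order inequality at $\tilde\bbeta^*$ and invoking the RSC bound $\langle \nabla \hat\cL_\tau(\tilde\bbeta^*) - \nabla \hat\cL_\tau(\bbeta^*), \hat\bdelta \rangle \ge \kappa t^* \|\hat\bdelta\|_2^2$ yields $\kappa t^* \|\hat\bdelta\|_2 \le (5\lambda/2) s^{1/2}$, i.e. $\kappa r \le (5\lambda/2) s^{1/2}$, which contradicts $r > 2.5\,\kappa^{-1} s^{1/2} \lambda$. Thus $\|\hat\bdelta\|_2 \le r$ and the conclusion of the previous paragraph applies.
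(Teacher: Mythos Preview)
Your proof is correct and follows essentially the same approach as the paper: the paper packages the cone-containment and RSC steps into Lemmas~\ref{lem:l1cone} and~\ref{lem:deterministic.error.bound} (applied with $\bbeta=\bbeta^*$, $\cE=\cS$, $\rho=1$), and handles the local-radius issue via the convexity inequality $\langle\nabla\hat\cL_\tau(\wt\bbeta_\eta)-\nabla\hat\cL_\tau(\bbeta^*),\wt\bbeta_\eta-\bbeta^*\rangle\le\eta\langle\nabla\hat\cL_\tau(\wt\bbeta)-\nabla\hat\cL_\tau(\bbeta^*),\wt\bbeta-\bbeta^*\rangle$ at the intermediate point, which is equivalent to your one-dimensional subdifferential monotonicity argument.
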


Proposition~\ref{prop:contraction} is deterministic in the sense that the error bound \eqref{step1.bound} holds conditioning on the event $\cE_1(r,l ,  \kappa ) \cap \{ \lambda \geq 2 ( \| \bw^*  \|_\infty  + \epsilon_1 ) \}$.
Under Condition~\ref{moment.cond} (sub-exponential design and heavy-tailed error with finite variance),  we will establish the delicate choices of $\lambda, \epsilon_1, r$ and sample size requirement in order that this  event occurs with high probability. Specifically, we will show that 
$$
	 \| \wt \bbeta^{(1)} - \bbeta^* \|_2  \lesssim \sigma_2  \sqrt{\frac{s\log (d)}{n}}   \mbox{ with high probability as long as }  n\gtrsim s\log (d).
$$

Next, we investigate the statistical properties of $\{ \wt \bbeta^{(\ell)} \}_{\ell \geq 2}$ in the tightening stage. 
We impose a {\it minimum signal strength condition} on $\| \bbeta^*_{\cS} \|_{\min} = \min_{j \in \cS} |\beta^*_j | $, so that the error rate obtained in Proposition~\ref{prop:contraction} is improvable \citep{ZZ2012, N2018}. Recall that $s= |\cS|$.

\begin{proposition} \label{prop:tightening}
Assume there exists some   $\gamma >0$ such that $p'(\gamma) >0$.
Let 
\#
	\lambda \geq  s^{-1/2} b^*_\tau, \quad  \kappa  \gamma >  0.5 p'(\gamma)     ,  \label{lambda.kappa}
\#
and  choose $c>0$ so that
\#
  0.5 p'(\gamma)  (c^2+1)^{1/2}   + 2  = c \kappa  \gamma . \label{def.c}
\#
Set $l =  \{ 2 +\frac{2}{p'(\gamma) } \}   (c^2+1)^{1/2} s^{1/2} + \frac{2}{p'(\gamma) }s^{1/2}$ and let $r>0$ satisfy
\#
		 r^{{\rm crude}}   :=	 c \gamma  s^{1/2} \lambda  \leq r . \label{constraint.r}
\#
Under  the minimum signal strength condition $\| \bbeta^*_{\cS} \|_{\min} \geq \gamma \lambda $,  and conditioned on event $\cE_1(r,l,\kappa ) \cap \{  \lambda \geq   \frac{2}{p'(\gamma) }  ( \| \bw^*  \|_\infty + \max_{\ell \geq 1} \epsilon_\ell  ) \}$,  the $\epsilon_\ell$-optimal solutions $\wt \bbeta^{(\ell)}$ $(\ell \geq 2)$ satisfy
\# \label{contraction.inequality}
 & \| \wt \bbeta^{(\ell)} - \bbeta^* \|_2 \nn \\
 & \leq \delta \cdot   \| \wt \bbeta^{(\ell-1)} - \bbeta^* \|_2  +  \underbrace{   \kappa^{-1}  \big\{  \|  p_\lambda'(|\bbeta^*_\cS| -    \gamma \lambda ) \|_2 +  \| \bw^*_{  \cS} \|_2 +  s^{1/2} \epsilon_\ell  + b^*_\tau   \big\}   }_{=: r^{{\rm ora}}  }.
\#    
where $\delta =  0.5 p'(\gamma)/ (\gamma \kappa) \in (0,1)$.
Furthermore, it holds
\# \label{contraction.inequality2}
  \| \wt \bbeta^{(\ell)} - \bbeta^* \|_{2}  \leq     \delta^{\ell-1}   r^{{\rm crude}}    +  (1-\delta)^{-1} r^{{\rm ora}} ~\mbox{ for any}~ \ell \geq 2. 
\#
\end{proposition}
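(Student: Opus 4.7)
The plan is induction on $\ell$: combine the approximate KKT condition for program $({\rm P}_\ell)$ with the RSC event $\cE_1(r, l, \kappa)$ to obtain a recursive bound on $\|\wt\bbeta^{(\ell)} - \bbeta^*\|_2$. By Definition~\ref{def:opt.solution}, there exists $\bxi^{(\ell)} \in \partial\|\wt\bbeta^{(\ell)}\|_1$ with $\|\nabla\hat\cL_\tau(\wt\bbeta^{(\ell)}) + \blambda^{(\ell-1)} \circ \bxi^{(\ell)}\|_\infty \leq \epsilon_\ell$, where $\blambda^{(\ell-1)} = p'_\lambda(|\wt\bbeta^{(\ell-1)}|)$. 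Writing $\bdelta = \wt\bbeta^{(\ell)} - \bbeta^*$, I take the inner product of the KKT residual with $\bdelta$, use the identity $\xi^{(\ell)}_j\delta_j = |\delta_j|$ for $j\in\cS^{{\rm c}}$, and combine with $\nabla\hat\cL_\tau(\bbeta^*) = \bw^* + \nabla\cL_\tau(\bbeta^*)$ and the RSC inequality to arrive at
$$
\kappa\|\bdelta\|_2^2 \leq \sum_{j\in\cS}\lambda_j^{(\ell-1)}|\delta_j| - \sum_{j\in\cS^{{\rm c}}}\lambda_j^{(\ell-1)}|\delta_j| + \epsilon_\ell\|\bdelta\|_1 + \|\bw^*_\cS\|_2\|\bdelta_\cS\|_2 + \|\bw^*\|_\infty\|\bdelta_{\cS^{{\rm c}}}\|_1 + b^*_\tau\|\bdelta\|_2.
$$

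The crux is bounding $\lambda_j^{(\ell-1)}$ via the previous error. Partition the index set using $\cT = \{j : |\wt\beta_j^{(\ell-1)} - \beta_j^*| > \gamma\lambda\}$; by Markov, $|\cT| \leq \|\wt\bbeta^{(\ell-1)}-\bbeta^*\|_2^2/(\gamma\lambda)^2$. On $\cS^{{\rm c}}\setminus\cT$, Condition~\ref{cond:penalty}(iv) gives $\lambda_j^{(\ell-1)} \geq \lambda p'(\gamma)$; on $\cS\setminus\cT$, the minimum signal strength $|\beta^*_j| \geq \gamma\lambda$ together with $|\wt\beta_j^{(\ell-1)}| \geq |\beta^*_j| - \gamma\lambda \geq 0$ and monotonicity of $p'$ yield $\lambda_j^{(\ell-1)} \leq p'_\lambda(|\beta^*_j| - \gamma\lambda)$; on $\cT$ the trivial bound $\lambda_j^{(\ell-1)}\leq\lambda$ is used. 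The condition $\lambda p'(\gamma) \geq 2(\|\bw^*\|_\infty + \max_\ell\epsilon_\ell)$ allows the noise and tolerance terms on $\cS^{{\rm c}}$ to absorb $\tfrac{1}{2}\lambda p'(\gamma)\|\bdelta_{\cS^{{\rm c}}}\|_1$ out of $-\sum_{j\in\cS^{{\rm c}}}\lambda_j^{(\ell-1)}|\delta_j|$, and the residual on $\cS^{{\rm c}}\cap\cT$ (via Cauchy--Schwarz and the cardinality bound) yields the contraction term $\tfrac{p'(\gamma)}{2\gamma}\|\wt\bbeta^{(\ell-1)}-\bbeta^*\|_2\|\bdelta\|_2 = \kappa\delta\|\wt\bbeta^{(\ell-1)}-\bbeta^*\|_2\|\bdelta\|_2$. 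The $\cS$-side sum is controlled by Cauchy--Schwarz as $\|p'_\lambda(|\bbeta^*_\cS|-\gamma\lambda)\|_2\|\bdelta_\cS\|_2$. Dividing by $\kappa\|\bdelta\|_2$ produces \eqref{contraction.inequality}.

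Before invoking RSC, one must verify $\bdelta \in \BB(r)\cap\CC(l)$. The cone condition comes from keeping, rather than discarding, the nonnegative slack $\tfrac{1}{2}\lambda p'(\gamma)\|\bdelta_{\cS^{{\rm c}}\setminus\cT}\|_1$ on the left side of the key inequality, which yields an $\ell_1$-companion bound that, combined with $\|\bdelta_\cS\|_1\leq s^{1/2}\|\bdelta\|_2$ and the cardinality bound on $\cT$, gives $\|\bdelta\|_1 \leq l\|\bdelta\|_2$ for the prescribed $l$. The ball constraint $\|\bdelta\|_2 \leq r$ is maintained inductively from \eqref{contraction.inequality} using $r \geq r^{{\rm crude}}$. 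The geometric decay \eqref{contraction.inequality2} then follows by iterating \eqref{contraction.inequality}: $\|\wt\bbeta^{(\ell)} - \bbeta^*\|_2 \leq \delta^{\ell-1}\|\wt\bbeta^{(1)} - \bbeta^*\|_2 + r^{{\rm ora}}\sum_{k=0}^{\ell-2}\delta^k \leq \delta^{\ell-1}r^{{\rm crude}} + (1-\delta)^{-1}r^{{\rm ora}}$, where the base estimate $\|\wt\bbeta^{(1)} - \bbeta^*\|_2 \leq r^{{\rm crude}}$ is furnished by Proposition~\ref{prop:contraction} together with the defining equation for $c$. The main obstacle is the delicate partition-based bookkeeping of the weights along $\cT$ so that the contraction factor stays exactly at $\delta$ and the cone radius at the prescribed $l$ rather than inflated multiples.
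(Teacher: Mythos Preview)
Your proposal is correct and follows essentially the same strategy as the paper: the paper packages the KKT-plus-RSC step into a separate lemma applied with the augmented set $\cE_\ell = \cS \cup \{j : \lambda_j^{(\ell-1)} < p'(\gamma)\lambda\}$ (which on $\cS^{\cc}$ coincides with your $\cT$), first establishes the crude uniform bound $\|\wt\bbeta^{(\ell)} - \bbeta^*\|_2 < r^{\rm crude}$ for all $\ell$ by induction on the cardinality claim $|\cE_\ell| < (c^2+1)s$, and then refines $\|\blambda_\cS^{(\ell-1)}\|_2$ and $\|\bw^*_{\cE_\ell}\|_2 + |\cE_\ell|^{1/2}\epsilon_\ell$ exactly via the partition you describe. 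The only organizational difference is that the paper runs the crude induction to completion first---so that the cone and ball constraints are secured before any refinement---whereas you fold the two stages together; your base-case appeal to Proposition~\ref{prop:contraction} should also be sharpened slightly, since the paper re-derives the step-one bound under the stronger event $\lambda \geq \tfrac{2}{p'(\gamma)}(\|\bw^*\|_\infty + \epsilon_1)$ to get $\kappa^{-1}\{2 + 0.5\,p'(\gamma)\}s^{1/2}\lambda < r^{\rm crude}$ rather than the $2.5\kappa^{-1}s^{1/2}\lambda$ from Proposition~\ref{prop:contraction}.
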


Proposition~\ref{prop:tightening} unveils how the tightening stage improves the statistical rate: every tightening step shrinks the estimation error from the previous step by a $\delta$-fraction. The second term on the right-hand side of \eqref{contraction.inequality} or \eqref{contraction.inequality2} dominates the $\ell_2$-error, and up to constant factors, consists of three components,
\#
	\underbrace{  \|  p_\lambda'(|\bbeta^*_\cS| - \gamma \lambda ) \|_2 }_{{\rm shrinkage~bias}} , \quad   \underbrace{  \| \bw^*_{ \cS } \|_2 + b^*_\tau }_{{\rm oracle~rate~plus~approx.~bias}} ~~\mbox{ and }~~  \underbrace{   s^{1/2} \epsilon_\ell }_{{\rm optimization~error}}.  \nn
\#
We identify $ \|  p_\lambda'(|\bbeta^*_\cS| -  \gamma \lambda ) \|_2$ as the shrinkage bias induced by the penalty function. This explains the limitation of  the $\ell_1$-penalty $p_\lambda(t) = \lambda |t|$ whose derivative $p_\lambda'(t) = \lambda \sgn(t)$ ($t\neq 0$) does not vanish regardless of the signal strength.  Intuitively, choosing a proper penalty function $p_\lambda(\cdot)$ with a  descending  derivative reduces the bias as signal strengthens.
The second term, $ \| \bw^*_{ \cS } \|_2 + b^*_\tau$, reveals the oracle property. To see this, consider the oracle estimator defined as
\#
	\hat \bbeta^{{\rm ora}} = \argmin_{\bbeta : \bbeta_{ \cS^{\cc} } = \textbf{0}  }  \hat   \cL_\tau(\bbeta) =  \argmin_{\bbeta  :   \bbeta_{ \cS^{\cc}} = \textbf{0} }\frac{1}{n} \sn \ell_\tau(y_i - \bx_{i  , \cS}^\T  \bbeta_{  \cS}   ) . \label{def:oracle}
\#
Since $s= |\cS | \ll n$, the finite sample theory for  Huber's $M$-estimation in low dimensions \citep{SZF2017} applies to $\hat \bbeta^{{\rm ora}}$, indicating that with high probability, 
$$
	\| \hat \bbeta^{{\rm ora}}  - \bbeta^* \|_2 \lesssim  \| \bw^*_{ \cS } \|_2 + b^*_\tau   .
$$
According to Definition~\ref{def:opt.solution}, the last term $s^{1/2} \epsilon_\ell$ demonstrates the optimization error, which will be discussed  in Section~\ref{sec:LAMM}.

The above results provide conditions under which the sequence of estimators $\{ \wt \bbeta^{(\ell)} \}_{\ell \geq 1}$ satisfy the contraction property and, meanwhile, fall in a local neighborhood of $\bbeta^*$.
Another important feature of the proposed procedure is that the resulting estimator satisfies the strong oracle property, as demonstrated by the following result. Let $\{ \hat \bbeta^{(\ell)} \}_{\ell\geq 1}$ be any optimal solutions to the convex programs $\{ ({\rm P}_\ell)\}_{\ell \geq 1}$ in \eqref{weighted.lasso} with $\hat \bbeta^{(0)} = \textbf{0}$. 
Similarly to Definition~\ref{def:l1cone},  we define the following event  in regard of the restricted strong convexity of the empirical Huber loss. 
For some $r,  l, \kappa >0$, 
\#
	\cE_2(r , l ,\kappa) :=  \left\{   \inf_{ (\bbeta' ,\bbeta'') \in \cC(r,l)   }  \frac{\langle  \nabla \hat  \cL_\tau(\bbeta' ) -  \nabla \hat  \cL_\tau( \bbeta'' ) , \bbeta' - \bbeta''  \rangle  }{\|  \bbeta'  - \bbeta''  \|_2^2 } 
	\geq \kappa \right\} . \label{RSC.event2}
\#
where $\cC(r,l)= \{ (\bbeta_1, \bbeta_2):  \bbeta_1  \in \bbeta_2  + \BB(r) \cap \CC(l)  ,  \bbeta_2 \in \bbeta^* + \BB_{\Sigma}(r ) ,   {\rm supp}(\bbeta_2) \subseteq  \cS \}$ and $\BB_{\Sigma}(r) = \{ \bbeta \in \RR^d : \| \bbeta \|_{\Sigma} \leq r\}$.
Moreover, define the ``oracle" score $\bw^{\ora} \in \RR^d$ as
\#
\bw^{\ora} = \nabla \hat  \cL_{\tau}(\hat \bbeta^\ora)   ,  \label{oracle.bias}
\#
which satisfies $\bw^{\ora}_{   \cS^{\cc}} = \textbf{0}$.

\begin{proposition} \label{prop:oracle}
Suppose there exist  constants $\gamma_1 > \gamma_0>0$ such that $p'(\gamma_0) \in (0, 1/2]$, $p'(\gamma_1)=0$.
For a prespecified $\delta \in (0,1)$,   let $\kappa  \geq 1.25 / (\delta \gamma_0)$ and choose $c_0> 0$ so that
\#
   1 + 0.5 p'(\gamma_0) (c_0^2+1)^{1/2}  = c_0 \kappa \gamma_0 .	  \label{def.c0}
\#
Moreover,  set $l=  \{ 2 +\frac{2}{p'(\gamma_0) } \}   (c_0^2+1)^{1/2} s^{1/2}$ and let $r\geq c_0  \gamma_0 s^{1/2} \lambda$.  Then, conditioned on  the  event 
\#
&\bigg\{     \|  \bw^{\ora}   \|_\infty  \leq \frac{p'(\gamma_0)}{2} \lambda \bigg\}  \cap  \bigg\{   \| \hat \bbeta^{{\rm ora}} - \bbeta^* \|_\infty \leq  \frac{\lambda}{5 \delta \kappa} \bigg\}  \nn \\
&~~~~~~~~~ \cap 	\big\{ \| \hat \bbeta^{{\rm ora}} - \bbeta^* \|_{\Sigma} \leq r  \big\}   \cap \cE_2(r,l,  \kappa )  ,  \label{oracle.constraint}
\#
the {\it strong oracle property} holds under the {\it minimum signal strength condition}  $\| \bbeta^*_{\cS} \|_{\min} \geq (\gamma_0 + \gamma_1 ) \lambda $:  $\hat \bbeta^{(\ell)} = \hat \bbeta^{{\rm ora}}$ for all $\ell \geq \lceil \log(s^{1/2}/\delta)/\log(1/\delta) \rceil $.
\end{proposition}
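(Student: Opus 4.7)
My strategy follows the LLA ``contraction plus fixed-point'' paradigm of \cite{ZL2008,FLSZ2018}. I decompose the argument into two ingredients: (i) whenever an iterate $\hat\bbeta^{(\ell)}$ falls in a sufficiently small $\ell_\infty$-neighborhood of $\bbeta^*$, the adaptive weights $\blambda^{(\ell)} = p_\lambda'(|\hat\bbeta^{(\ell)}|)$ become ``oracle weights'' (identically zero on $\cS$, at least $p'(\gamma_0)\lambda$ on $\cS^{\cc}$) which force the next subproblem to have $\hat\bbeta^{\ora}$ as its unique minimizer; and (ii) starting from $\hat\bbeta^{(0)} = \bzero$, the geometric contraction of Proposition~\ref{prop:tightening} drives the iterate into that neighborhood after $\lceil \log(s^{1/2}/\delta)/\log(1/\delta)\rceil$ iterations. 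Once both hold, induction propagates $\hat\bbeta^{(\ell')} = \hat\bbeta^{\ora}$ for all subsequent $\ell'$, because $\hat\bbeta^{\ora}$ trivially lies in the proximity set under the hypothesis $\|\hat\bbeta^{\ora}-\bbeta^*\|_\infty \leq \lambda/(5\delta\kappa)$, which is strictly less than $\gamma_0\lambda$ given $\kappa \geq 1.25/(\delta\gamma_0)$.

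\textbf{Step 1: oracle as a fixed point.} Suppose $\|\hat\bbeta^{(\ell)}-\bbeta^*\|_\infty \leq \gamma_0\lambda$. For $j\in\cS$, the minimum signal strength condition gives $|\hat\beta_j^{(\ell)}| \geq |\beta_j^*| - \gamma_0\lambda \geq \gamma_1\lambda$, so by monotonicity of $p'$ and the hypothesis $p'(\gamma_1) = 0$, $\lambda_j^{(\ell)} = \lambda p'(|\hat\beta_j^{(\ell)}|/\lambda) = 0$. For $j\in\cS^{\cc}$, $\beta_j^* = 0$ gives $|\hat\beta_j^{(\ell)}| \leq \gamma_0\lambda$, hence $\lambda_j^{(\ell)} \geq \lambda p'(\gamma_0)$. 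I then verify the KKT condition of $({\rm P}_{\ell+1})$ at $\hat\bbeta^{\ora}$: on $\cS$ the subgradient equation reduces to $\nabla_j \hat\cL_\tau(\hat\bbeta^{\ora}) = \bw_j^{\ora} = 0$, which holds by the oracle's definition; on $\cS^{\cc}$ it becomes $|\bw_j^{\ora}| \leq \lambda_j^{(\ell)}$, which holds with strict slack since $|\bw_j^{\ora}| \leq p'(\gamma_0)\lambda/2 < \lambda p'(\gamma_0) \leq \lambda_j^{(\ell)}$. The restricted strong convexity $\cE_2(r,l,\kappa)$ then upgrades ``$\hat\bbeta^{\ora}$ is a minimizer'' to uniqueness, giving $\hat\bbeta^{(\ell+1)} = \hat\bbeta^{\ora}$.

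\textbf{Step 2: reaching the neighborhood.} Because $|\beta_j^*|/\lambda - \gamma_0 \geq \gamma_1$ for $j\in\cS$ and $p'(\gamma_1)=0$, the shrinkage-bias component of $r^{\rm ora}$ in Proposition~\ref{prop:tightening} vanishes when instantiated with $\gamma = \gamma_0$. The contraction inequality \eqref{contraction.inequality2} then delivers $\|\hat\bbeta^{(\ell)} - \bbeta^*\|_2 \leq \delta^{\ell-1} r^{\rm crude} + (1-\delta)^{-1}(\text{oracle-sized residual})$, with $r^{\rm crude} \asymp s^{1/2}\lambda$. Combining $\|\cdot\|_\infty \leq \|\cdot\|_2$, the triangle inequality through $\hat\bbeta^{\ora}$, and the assumed bound $\|\hat\bbeta^{\ora} - \bbeta^*\|_\infty \leq \lambda/(5\delta\kappa)$, the requirement $\|\hat\bbeta^{(\ell)}-\bbeta^*\|_\infty \leq \gamma_0\lambda$ reduces to $\delta^{\ell-1} s^{1/2} \lesssim 1$, whose solution is exactly $\ell \geq \lceil \log(s^{1/2}/\delta)/\log(1/\delta) \rceil$. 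At this index Step~1 fires, and the induction completes.

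\textbf{Main obstacle.} The technical crux is the uniqueness claim in Step~1: because the empirical Huber loss is only piecewise strictly convex, strict dual feasibility on $\cS^{\cc}$ alone does not immediately yield a unique minimizer. I would handle this by a two-pronged cone argument: first use the optimality inequality to bound $\|(\wt\bbeta - \hat\bbeta^{\ora})_{\cS^{\cc}}\|_1$ by a constant multiple of $\|(\wt\bbeta - \hat\bbeta^{\ora})_{\cS}\|_1$ for any candidate minimizer $\wt\bbeta$ (exploiting the strict slack $\lambda p'(\gamma_0) - p'(\gamma_0)\lambda/2$), then invoke $\cE_2(r,l,\kappa)$ to conclude $\wt\bbeta = \hat\bbeta^{\ora}$. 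Verifying that $(\wt\bbeta, \hat\bbeta^{\ora})$ actually lies in the admissible set $\cC(r,l)$---i.e., that $\wt\bbeta - \hat\bbeta^{\ora}$ stays in $\BB(r) \cap \CC(l)$ and $\hat\bbeta^{\ora} - \bbeta^*$ stays in $\BB_{\Sigma}(r)$---is the delicate bookkeeping, and it is what forces the particular side-conditions (notably the coupling $\kappa \geq 1.25/(\delta\gamma_0)$ and the exact infinity-norm threshold $\lambda/(5\delta\kappa)$) that appear in the hypothesis.
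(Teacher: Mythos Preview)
Your Step~1 KKT/fixed-point argument is sound and would finish the proof once $\|\hat\bbeta^{(\ell)} - \bbeta^*\|_\infty \leq \gamma_0\lambda$ is established. The gap is in Step~2: you invoke Proposition~\ref{prop:tightening} and its contraction inequality \eqref{contraction.inequality2}, but that result is conditioned on the event $\cE_1(r,l,\kappa) \cap \{\lambda \geq \frac{2}{p'(\gamma)}(\|\bw^*\|_\infty + \max_\ell \epsilon_\ell)\}$, which involves the $\bbeta^*$-centered score $\bw^* = \nabla\hat\cL_\tau(\bbeta^*) - \nabla\cL_\tau(\bbeta^*)$ and, through $r^{\ora}$, the bias $b_\tau^*$. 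Proposition~\ref{prop:oracle} hypothesizes only bounds on the \emph{oracle}-centered quantities $\|\bw^{\ora}\|_\infty$ and $\|\hat\bbeta^{\ora} - \bbeta^*\|_\infty$; it says nothing about $\|\bw^*\|_\infty$, $\|\bw^*_{\cS}\|_2$, or $b_\tau^*$. Hence the residual $(1-\delta)^{-1}r^{\ora} = (1-\delta)^{-1}\kappa^{-1}(\|\bw^*_{\cS}\|_2 + b_\tau^*)$ (after the shrinkage bias vanishes) is uncontrolled, and you cannot conclude that $\delta^{\ell-1}s^{1/2} \lesssim 1$ suffices to push $\|\hat\bbeta^{(\ell)} - \bbeta^*\|_\infty$ below $\gamma_0\lambda$.

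The paper circumvents this by centering the entire analysis at $\hat\bbeta^{\ora}$ rather than $\bbeta^*$: it applies Lemma~\ref{lem:rsc.oracle} (the oracle analogue of Lemma~\ref{lem:deterministic.error.bound}, conditioned on $\cE_2$) and exploits the crucial identity $\bw^{\ora}_{\cS} = \nabla\hat\cL_\tau(\hat\bbeta^{\ora})_{\cS} = \textbf{0}$ from the oracle's first-order condition, so the residual term disappears entirely. Rather than tracking $\|\hat\bbeta^{(\ell)} - \bbeta^*\|_2$, the paper tracks the cardinality of the discrepancy set $\cS_\ell = \{j : |\hat\beta_j^{(\ell)} - \beta_j^*| \geq \gamma_0\lambda\}$: it shows $\|\hat\bbeta^{(\ell)} - \hat\bbeta^{\ora}\|_2 \leq \frac{\sqrt{17}}{4}\kappa^{-1}\lambda\,|\cS_{\ell-1}|^{1/2}$ and then $|\cS_\ell|^{1/2} < \delta\,|\cS_{\ell-1}|^{1/2}$ (this step is exactly where the constants $\kappa \geq 1.25/(\delta\gamma_0)$ and $\|\hat\bbeta^{\ora} - \bbeta^*\|_\infty \leq \lambda/(5\delta\kappa)$ are used). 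Once $\cS_\ell = \emptyset$, the $\ell_2$-bound collapses to $\hat\bbeta^{(\ell+1)} = \hat\bbeta^{\ora}$, subsuming your Step~1 without a separate KKT verification.
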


The proofs of Propositions~\ref{prop:contraction}, \ref{prop:tightening} and \ref{prop:oracle} are provided in the Supplementary Material.   

\subsection{Random analysis}
\label{sec:random.analysis}

In this section, we complement the previous deterministic results with probabilistic bounds on the random events of interest.
To be more specific, events $\cE_1(r,l,\kappa)$ and  $\cE_2(r,l,\kappa)$ correspond to the RSC properties of $\hat \cL_\tau(\cdot)$.
The order of the regularization parameter $\lambda$ depends on $\| \bw^*  \|_\infty$, where $\bw^* =  \nabla \hat  \cL_\tau(\bbeta^* ) -  \nabla \cL_\tau(\bbeta^* )$ is the centered score function evaluated at $\bbeta^*$. 
The oracle convergence rate depends on the $\ell_2$-norm of $\bw^*_{\cS} \in \RR^s$, the subvector of $\bw^*$ indexed by $\cS$.

Under Condition~\ref{moment.cond},  $\bx = (x_1, \ldots, x_d)^\T$ is sub-exponential and $\Sigma= \EE(\bx \bx^\T) = (\sigma_{jk})_{1\leq j,k\leq d}$ is positive definite.
Here we do not require the components of $\bx$ to have zero means. 
Moreover, given the true active set $\cS \subseteq [d]$ of $\bbeta^*$, we define the following $ s \times s$ principal submatrix of $\Sigma$:
\#
	{\rm S}  = \EE(\bx_{ \cS} \bx_{\cS}^\T) , ~\mbox{ where } \bx_{ \cS} \in \RR^s \mbox{ is the subvector of $\bx$ indexed by $\cS$}.  \label{def:S}
\#
Throughout, ``$\lesssim$'' and ``$\gtrsim$" stand for ``$\leq$" and ``$\geq$", respectively, up to
constants that are independent of $(n,d,s)$ but might depend on those in Condition~\ref{moment.cond}. In particular,  define 
\#
\rho_{\bx} &= \sup_{ \bu \in \RR^d}  \{ \EE(\bx^\T \bu)^4 \}^{1/4 } / \{ \EE( \bx^\T \bu)^2\}^{1/2} \nn \\
&= \sup_{\bu \in \mathbb S^{d-1}} \{ \EE(\bu^\T \Sigma^{-1/2} \bx)^4 \}^{1/4}  \geq 1, \label{def.rhox}
\#
which is a constant depending only on $\sigma_{\bx}$. 

\begin{proposition} \label{prop:RSC}
Assume Condition~\ref{moment.cond} holds, and let $\rho_u = \lambda_{\max}( \bSigma)$. Then, for any $t \geq 0$, 
\# \label{RSC.bound}
   \inf_{ \bbeta  \in   \bbeta^* +  \BB(r)  \cap  \CC(l ) }   & \frac{\langle  \nabla \hat \cL_\tau(\bbeta) -  \nabla  \hat \cL_\tau(\bbeta^*) , \bbeta - \bbeta^* \rangle  }{\| \bbeta - \bbeta^* \|_{2}^2 }   \nn \\
   &   \geq  \frac{3}{4} \rho_l  - C _0 \sigma_{\bx}\frac{  \tau  l }{r} \sqrt{\frac{\log(2d)}{n}} -    \rho_u  \rho_{\bx}^2  \sqrt{\frac{2  t}{n}} -  \bigg( \frac{\tau}{r} \bigg)^2 \frac{t}{3n}
\#
holds with probability at least $1- e^{-t}$ as long as $\tau \geq   \max\{ C_1 \sigma_2,  C_2 r \}$ and $n\geq \log(2d)$, where $C_0, C_1 $ are absolute constants and $C_2$ depends only on $\sigma_{\bx}$.
\end{proposition}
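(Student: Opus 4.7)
I would prove the restricted strong convexity via a standard three-step recipe: (i) a pointwise quadratic lower bound from the Huber second derivative, (ii) control of the population mean via tail bounds, and (iii) a uniform empirical deviation bound over the local $\ell_1$-cone.

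\textbf{Pointwise reduction.} Setting $\bdelta = \bbeta - \bbeta^*$ and using the representation $\ell_\tau'(u) - \ell_\tau'(v) = \int_v^u \mathbf{1}\{|s|\le\tau\}\,ds$ with $u = \varepsilon_i,\,v = \varepsilon_i - \bx_i^\T\bdelta$, one obtains
\[
\big\langle \nabla \hat\cL_\tau(\bbeta) - \nabla \hat\cL_\tau(\bbeta^*),\, \bdelta \big\rangle
\ \ge\ \frac{1}{n}\sum_{i=1}^n (\bx_i^\T\bdelta)^2\,\mathbf{1}\{|\varepsilon_i|\le\tau/2\}\,\mathbf{1}\{|\bx_i^\T\bdelta|\le\tau/2\},
\]
since on this event both arguments of $\ell_\tau'$ lie in $[-\tau,\tau]$ where $\ell_\tau$ is purely quadratic. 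This reduces the monotone-operator difference to a truncated quadratic form.

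\textbf{Mean lower bound.} Writing the product of indicators as $1-\mathbf{1}\{|\varepsilon_i|>\tau/2\}-\mathbf{1}\{|\bx_i^\T\bdelta|>\tau/2\}$, the dominant piece contributes $\|\bdelta\|_\Sigma^2 \ge \rho_l\|\bdelta\|_2^2$ in expectation. The heavy-noise correction $\EE[(\bx^\T\bdelta)^2\mathbf{1}\{|\varepsilon|>\tau/2\}] \le (4\sigma_2^2/\tau^2)\|\bdelta\|_\Sigma^2$ follows from the conditional Chebyshev bound $\PP(|\varepsilon|>\tau/2\mid\bx)\le 4\sigma_2^2/\tau^2$; the heavy-design correction is bounded using $(\bx^\T\bdelta)^2\mathbf{1}\{|\bx^\T\bdelta|>\tau/2\}\le 4(\bx^\T\bdelta)^4/\tau^2$ together with $\EE(\bx^\T\bdelta)^4 \le \rho_{\bx}^4 \|\bdelta\|_\Sigma^4 \le \rho_{\bx}^4\rho_u r^2\|\bdelta\|_\Sigma^2$, yielding $(4\rho_{\bx}^4\rho_u r^2/\tau^2)\|\bdelta\|_\Sigma^2$. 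Under $\tau\ge C_1\sigma_2\vee C_2 r$ (with $C_1$ absolute and $C_2$ depending only on $\sigma_{\bx}$), both corrections fall below $\|\bdelta\|_\Sigma^2/8$, so the population expectation is at least $(3/4)\rho_l\|\bdelta\|_2^2$.

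\textbf{Uniform empirical deviation.} For the uniform deviation of $n^{-1}\sum_i f_i(\bdelta)$ from its mean on $\bdelta\in\BB(r)\cap\CC(l)$, with $f_i(\bdelta):=(\bx_i^\T\bdelta)^2\mathbf{1}\{|\varepsilon_i|\le\tau/2,\,|\bx_i^\T\bdelta|\le\tau/2\}$, I would combine symmetrization with Talagrand's contraction. A smooth Lipschitz surrogate for the truncation makes $f_i$ effectively $O(\tau)$-Lipschitz in $\bx_i^\T\bdelta$, so the Rademacher complexity is reduced to that of the linear class $\{\bx_i^\T\bdelta\}$. H\"older's inequality, $\|\bdelta\|_1\le l\|\bdelta\|_2\le lr$, and the sub-exponential maximal bound on the coordinatewise symmetrized sums ($\lesssim\sigma_{\bx}\sqrt{\log(2d)/n}$) deliver the middle term $C_0\sigma_{\bx}\tau l r^{-1}\sqrt{\log(2d)/n}$ after normalization by $\|\bdelta\|_2^2$. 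The two remaining terms come from Bernstein concentration on the normalized summand $f_i(\bdelta)/\|\bdelta\|_2^2$, which is uniformly bounded by $O((\tau/r)^2)$ and has variance proxy $\lesssim\rho_u\rho_{\bx}^4$ via the kurtosis condition, yielding $\rho_u\rho_{\bx}^2\sqrt{2t/n}$ and $(\tau/r)^2 t/(3n)$.

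\textbf{Main obstacle.} The most delicate aspect is preserving the uniformity over $\bdelta\in\CC(l)\cap\BB(r)$ while the normalization by $\|\bdelta\|_2^{-2}$ threatens to blow up near the origin. I would resolve this by a peeling argument over geometric shells $\{\|\bdelta\|_2\in[r/2^{k+1},r/2^k]\}$ with a union bound, or equivalently by verifying a star-shape property of the family---since $s\mapsto\mathbf{1}\{|s\bx_i^\T\bdelta|\le\tau/2\}$ turns on as $s$ decreases from $1$, the normalized deviation is controlled by its value on the sphere $\|\bdelta\|_2=r$. The other bookkeeping challenge is tracking the constants in the mean-bound step tightly enough to yield the stated coefficient $3/4$ on $\rho_l$ rather than some weaker $(1-o(1))\rho_l$.
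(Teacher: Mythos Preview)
Your overall three-step plan (pointwise quadratic minorant, population mean bound, uniform empirical deviation via symmetrization/contraction plus Talagrand-type concentration) is exactly the route the paper takes. The difference is in how the normalization by $\|\bdelta\|_2^{-2}$ is handled, which you correctly flag as the main obstacle.

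The paper does \emph{not} use peeling or a star-shape reduction. Instead, it builds scale-invariance into the truncation from the outset: rather than truncating at the fixed level $|\bx_i^\T\bdelta|\le\tau/2$, it uses the smooth surrogate $\varphi_{\tau\|\bdelta\|_2/(2r)}(\bx_i^\T\bdelta)$, which by the homogeneity $\varphi_{bc}(bu)=b^2\varphi_c(u)$ satisfies
\[
\frac{\varphi_{\tau\|\bdelta\|_2/(2r)}(\bx_i^\T\bdelta)}{\|\bdelta\|_2^2}
=\varphi_{\tau/(2r)}\!\bigl(\bx_i^\T\bdelta/\|\bdelta\|_2\bigr).
\]
The normalized process thus depends only on the direction $\bdelta/\|\bdelta\|_2$, is uniformly bounded by $(\tau/4r)^2$, and has second moment at most $(\rho_u\rho_{\bx}^2)^2$ over the entire ball---so Bousquet's inequality applies directly with no extra work. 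Your star-shape argument (monotonicity of the indicator along rays pushes the infimum of the minorant to the sphere $\|\bdelta\|_2=r$) is valid, and on that sphere your fixed threshold $\tau/2$ coincides with the paper's normalized one; so your route arrives at the same place, just with a detour. Peeling, by contrast, would not give the stated constants cleanly, since the uniform bound on the $k$-th shell scales like $4^k(\tau/r)^2$.

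One smaller discrepancy: your mean bound for the design correction uses the fourth-moment Markov inequality $(\bx^\T\bdelta)^2\mathbf{1}\{|\bx^\T\bdelta|>\tau/2\}\le 4(\bx^\T\bdelta)^4/\tau^2$, which forces $C_2$ to depend on $\rho_u$ as well as $\sigma_{\bx}$. The paper instead pairs Cauchy--Schwarz with the sub-exponential tail from Condition~\ref{moment.cond} to get $\EE(\bx^\T\bdelta)^2\mathbf{1}\{|\bx^\T\bdelta|/\|\bdelta\|_2>\tau/(4r)\}\le\rho_{\bx}^2 e^{-\tau/(8\sigma_{\bx} r)}\|\bdelta\|_\Sigma^2$, so that $C_2$ depends only on $\sigma_{\bx}$ as the statement claims.
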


The next proposition provides high probability bounds on $  \| \bw^* \|_\infty$ and $\| \bw^*_{\cS} \|_2 $, where $\bw^* = \nabla \hat \cL_\tau(\bbeta^*) - \nabla    \cL_\tau(\bbeta^*)$.

\begin{proposition} \label{prop:score}
Assume Condition~\ref{moment.cond} holds.   For any $t>0$,    the centered score $\bw^* \in \RR^d$ satisfies
\#
	\| \bw^*  \|_\infty \leq  2 \sigma_{\bx}   \Biggl\{       \sigma_2   \sqrt{\frac{\log(2d ) + t  }{ n}} +   \tau  \frac{\log (2d) + t }{ 2n}  \Biggr\}  .  \label{score.max.bound} 
\#
with probability at least $1-   e^{-t}$,  and  
\#
	 \| \bw^*_\cS \|_2 \leq  3 \sigma_{\bx}  \Bigg(   \sigma_2 \sqrt{\frac{ 2 s +t}{ n}} +   \tau\frac{2 s+t}{2 n}  \Biggr) ,\label{score.l2.bound}
\#
with probability at least $1- e^{-  t}$.
\end{proposition}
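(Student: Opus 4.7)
The plan is to express $\bw^*$ as a normalized sum of i.i.d.\ centered random vectors and to apply a Bernstein-type concentration inequality to one-dimensional projections $\bu^\T \bw^*$; the $\ell_\infty$ bound then follows by union-bounding over the $2d$ signed coordinate directions, and the $\ell_2$ bound on the subvector $\bw^*_{\cS}$ by a standard covering argument on the unit sphere $\mathbb{S}^{s-1}$. Since $y_i - \bx_i^\T \bbeta^* = \varepsilon_i$, I can write $\bw^* = -\frac{1}{n}\sum_{i=1}^n \bz_i$ with $\bz_i := \ell_\tau'(\varepsilon_i)\bx_i - \EE[\ell_\tau'(\varepsilon)\bx]$ i.i.d.\ and mean zero. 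For a fixed $\bu \in \RR^d$, I control the scalar $\bu^\T \bz_i$ in two complementary ways via $|\ell_\tau'(t)| \leq \min(|t|,\tau)$. The first inequality $|\ell_\tau'(\varepsilon)| \leq |\varepsilon|$ combined with the tower rule gives
\[
\var(\bu^\T \bz_i) \leq \EE[\varepsilon^2(\bu^\T \bx)^2] = \EE\{\EE[\varepsilon^2\mid\bx](\bu^\T \bx)^2\} \leq \sigma_2^2 \EE[(\bu^\T \bx)^2] \leq 2\sigma_{\bx}^2\sigma_2^2 \|\bu\|_2^2,
\]
where the last step integrates the sub-exponential tail bound in Condition~\ref{moment.cond}. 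The second inequality $|\ell_\tau'(\varepsilon)|\leq \tau$ shows that $\bu^\T \bz_i$ inherits the sub-exponential tail of $\bu^\T \bx$ with scale of order $\tau\sigma_{\bx}\|\bu\|_2$; concretely, a direct moment computation yields a Bernstein-type bound $\EE|\bu^\T \bz_i|^k \leq \tfrac{k!}{2}(\sqrt{2}\sigma_{\bx}\sigma_2\|\bu\|_2)^2(\tau\sigma_{\bx}\|\bu\|_2)^{k-2}$ for every integer $k\geq 2$.

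These moment bounds feed into the standard Bernstein inequality for sub-exponentials, yielding, for each fixed $\bu \in \mathbb{S}^{d-1}$ and every $u > 0$, the one-sided tail estimate
\[
\PP\Big(\bu^\T \bw^* > 2\sigma_{\bx}\bigl\{\sigma_2\sqrt{u/n} + \tau u/(2n)\bigr\}\Big) \leq e^{-u}.
\]
For \eqref{score.max.bound}, I apply this to $\bu = \pm\mathbf{e}_j$ for $j=1,\ldots,d$ with deviation level $u=\log(2d)+t$; the complementary probability is at most $2d \cdot e^{-\log(2d)-t} = e^{-t}$, and the resulting coordinatewise bound is exactly the one claimed. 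For \eqref{score.l2.bound}, I use a $1/3$-net $\cN \subseteq \mathbb{S}^{s-1}$ of cardinality $|\cN| \leq 7^s \leq e^{2s}$, giving $\|\bw^*_{\cS}\|_2 \leq (3/2)\sup_{\bv \in \cN}\bv^\T \bw^*_{\cS}$. Embedding each $\bv \in \cN$ into $\RR^d$ by zero-padding on $\cS^c$, applying the one-sided Bernstein tail above with $u = t + 2s$, and union-bounding over the net yields \eqref{score.l2.bound}; the prefactor $3$ combines the $2$ from the variance coefficient with the $3/2$ from the net relaxation.

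The only real subtlety is that $\bu^\T \bz_i$ is unbounded, merely sub-exponential, so the sub-exponential rather than the bounded-variable form of Bernstein's inequality must be invoked. The key structural observation making the two-term bound sharp is the joint exploitation of $|\ell_\tau'(\varepsilon)| \leq |\varepsilon|$ (ensuring the variance scales with $\sigma_2^2$, independent of $\tau$) and $|\ell_\tau'(\varepsilon)|\leq\tau$ (giving a sub-exponential scale $\tau\sigma_{\bx}$ independent of $\sigma_2$). Beyond these two observations, the remainder of the argument is routine bookkeeping of constants, with the $\log(2d)$ and $s$ contributions appearing through the standard union-bound and metric-entropy estimates, respectively.
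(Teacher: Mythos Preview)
Your proposal is correct and follows essentially the same approach as the paper's proof: both establish the Bernstein moment condition $\EE|\ell_\tau'(\varepsilon_i)\,\bu^\T\bx_i|^k \leq \tfrac{k!}{2}(2\sigma_2^2\sigma_{\bx}^2)(\tau\sigma_{\bx})^{k-2}$ by combining $|\ell_\tau'(\varepsilon)|\leq|\varepsilon|$ (for the variance factor) with $|\ell_\tau'(\varepsilon)|\leq\tau$ and the sub-exponential tail of $\bu^\T\bx$ (for the higher moments), then apply Bernstein's inequality with a union bound over $\pm\be_j$ for \eqref{score.max.bound} and with a $1/3$-net of $\mathbb{S}^{s-1}$ for \eqref{score.l2.bound}. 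The choice of net parameter, the deviation level $u=2s+t$, and the resulting constants all match the paper's argument.
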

 
 Similarly to Theorem~\ref{thm:l1huber}, Propositions~\ref{prop:RSC} and \ref{prop:score} are also modified versions of Lemmas~C.4 and C.6 in \cite{SZF2017} under a weaker sub-exponential condition on the feature vector $\bx$. Therefore in the proofs,  we only provide the necessary steps that help improve upon  the existing results.
Together, Propositions~\ref{prop:RSC} and \ref{prop:score} reveal the impact of the robustification parameter on the statistical properties of the resulting estimator.
As discussed in Section~\ref{sec:det.analysis} above, the order of $\|  \bw(\bbeta^*)_{ \cS} \|_2$ determines the oracle rate of convergence.
In Theorem~\ref{thm:random}, we show that after only a small number of iterations, the proposed procedure leads to an estimator that achieves the oracle rate of convergence.
Recall from Section~\ref{sec:TAC} that  $\{ \wt \bbeta^{(\ell)} \}_{\ell = 1,2 ,\ldots}$ is a sequence of $\epsilon_\ell$-optimal solutions of the convex programs \eqref{weighted.lasso}, initialized at $\wt \bbeta^{(0)} = \textbf{0}$.


\begin{theorem} \label{thm:random}
Assume Conditions~\ref{moment.cond} and \ref{cond:penalty} hold, and there exist some $\gamma_1 > \gamma_0 >0$  such that
\#
	 \gamma_0   >   \rho_l^{-1} p'(\gamma_0)  , \quad p'(\gamma_0 ) >0,    \quad  p'(t) =0 ~\mbox{ for all } t \geq \gamma_1 .  \label{gamma0.cond}
\#
Given $t\geq 0$,  suppose the sample size satisfies $n\gtrsim  s\log d  + t$,  and $\epsilon_\ell \leq \sqrt{1/n}$ for all $\ell \geq 1$.  
Moreover,  suppose that we  choose a regularization parameter $\lambda \asymp \sigma_2 \sqrt{(\log d + t)/n}$, and let $\tau$ satisfy
\#
 \sigma_2 \lesssim \tau \lesssim \sigma_2 \sqrt{\frac{n }{\log d + t}}~~\mbox{ and }~~ b_\tau^*  =    \| \EE \{ \ell'_\tau(\varepsilon)   \bx \} \|_2 \leq   s^{1/2} \lambda.  \label{tau.constraint}
\#
Then,   under the minimum signal strength condition $\| \bbeta^*_{\cS} \|_{\min } \geq (\gamma_0 + \gamma_1) \lambda$,  the multi-stage estimator $\wt \bbeta^{(T)}$ with $T \gtrsim \lceil \frac{\log(\log d  + t)}{\log(1/\delta)} \rceil$ satisfies the bounds
\begin{equation}
\begin{aligned}
	& \| \wt \bbeta^{(T)} - \bbeta^* \|_2  \lesssim  \sigma_2 \sqrt{\frac{s  + t }{n}}  +  \tau \frac{s+t}{n}+ b^*_\tau    ~~\mbox{ and} \\
	 & \| \wt \bbeta^{(T)} - \bbeta^* \|_1 \lesssim s^{1/2} \Bigg(     \sigma_2 \sqrt{\frac{s+t}{n}}  + \tau \frac{s+t}{n}  +b^*_\tau  \Bigg)   \label{oracle1}
\end{aligned}
\end{equation}
with probability at least $1- 3 e^{-t}$.
 
\end{theorem}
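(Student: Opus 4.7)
The plan is to combine the deterministic contraction-and-tightening inequalities of Propositions~\ref{prop:contraction} and \ref{prop:tightening} with the probabilistic controls of Propositions~\ref{prop:RSC} and \ref{prop:score}, then cash in the tightening recursion after $T$ iterations. The argument naturally splits into three phases: (i) establish a single ``good event'' on which the deterministic hypotheses are met; (ii) run the contraction stage; (iii) iterate the tightening inequality $T$ times and convert the $\ell_2$-bound into an $\ell_1$-bound.

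For phase (i), I choose $\kappa = \tfrac{3}{4}\rho_l$ minus a small slack, $l = O(\sqrt{s})$ as dictated by the statements of Propositions~\ref{prop:contraction} and \ref{prop:tightening}, and $r$ of the coarse contraction order $s^{1/2}\lambda \asymp \sigma_2\sqrt{s(\log d + t)/n}$, which is the largest radius needed along the entire trajectory. Substituting into Proposition~\ref{prop:RSC} and using $\tau \gtrsim \sigma_2 \gtrsim r$ (the upper bound $\tau \lesssim \sigma_2\sqrt{n/(\log d + t)}$ in \eqref{tau.constraint} is compatible with $\tau \gtrsim r$ since $r$ is small), the residual terms are controlled by $n \gtrsim s\log d + t$, giving $\cE_1(r,l,\kappa)$ with probability $1 - e^{-t}$. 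A first application of Proposition~\ref{prop:score} yields $\|\bw^*\|_\infty \lesssim \sigma_2\sqrt{(\log d + t)/n} + \tau(\log d + t)/n$, which, under the calibration of $\tau$, is of order $\lambda$, so the event $\{\lambda \geq (2/p'(\gamma_0))(\|\bw^*\|_\infty + \max_\ell \epsilon_\ell)\}$ holds for $\epsilon_\ell \leq 1/\sqrt{n}$ after choosing the proportionality constants in $\lambda$ large enough. A second application of Proposition~\ref{prop:score} bounds $\|\bw^*_\cS\|_2 \lesssim \sigma_2\sqrt{(s+t)/n} + \tau(s+t)/n$. The bias condition $b_\tau^* \leq s^{1/2}\lambda$ is assumed directly in \eqref{tau.constraint}. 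A union bound costs at most $3e^{-t}$ and yields the exceptional probability stated in the theorem.

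For phase (ii) I pick $\gamma \in (\gamma_0, \gamma_1)$; the hypothesis $\gamma_0 > \rho_l^{-1} p'(\gamma_0)$ ensures $\kappa\gamma > \tfrac{1}{2}p'(\gamma)$, so Proposition~\ref{prop:tightening}'s constant $\delta = 0.5\,p'(\gamma)/(\gamma\kappa)$ lies in $(0,1)$. Proposition~\ref{prop:contraction} then gives $\|\wt\bbeta^{(1)} - \bbeta^*\|_2 \lesssim s^{1/2}\lambda$. The key simplification in phase (iii) is that under the beta-min condition $\|\bbeta^*_\cS\|_{\min} \geq (\gamma_0+\gamma_1)\lambda$ and the assumption $p'(t) = 0$ for $t \geq \gamma_1$, each coordinate $|\beta^*_j| - \gamma\lambda \geq \gamma_1\lambda$ forces the shrinkage-bias vector $p_\lambda'(|\bbeta^*_\cS| - \gamma\lambda)$ to vanish. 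Hence
\[
r^{\rm ora} \lesssim \kappa^{-1}\bigl(\|\bw^*_\cS\|_2 + s^{1/2}\epsilon_\ell + b_\tau^*\bigr) \lesssim \sigma_2\sqrt{(s+t)/n} + \tau(s+t)/n + b_\tau^*,
\]
and the geometric-sum bound \eqref{contraction.inequality2} becomes $\|\wt\bbeta^{(\ell)} - \bbeta^*\|_2 \leq \delta^{\ell-1}\cdot O(s^{1/2}\lambda) + O(r^{\rm ora})$. The crude term is absorbed into the oracle term as soon as $\delta^{T-1}$ is at most of order $\sqrt{(s+t)/(s(\log d + t))}$, which by taking logarithms is ensured by $T \gtrsim \log(\log d + t)/\log(1/\delta)$, matching the iteration count in the theorem statement.

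The $\ell_1$-bound is obtained by revisiting the proof of the tightening inequality: the KKT / $\epsilon_\ell$-optimality condition with weighted penalty $\blambda^{(\ell-1)} = p_\lambda'(\wt\bbeta^{(\ell-1)})$ confines the error vector $\wt\bbeta^{(\ell)} - \bbeta^*$ to a cone $\CC(O(\sqrt{s}))$ of the type appearing in Definition~\ref{def:l1cone} (this is exactly how the RSC on such a cone is used inside Proposition~\ref{prop:tightening}). Inside this cone Cauchy--Schwarz yields $\|\wt\bbeta^{(T)} - \bbeta^*\|_1 \lesssim s^{1/2}\|\wt\bbeta^{(T)} - \bbeta^*\|_2$, which combined with the $\ell_2$-rate above gives the second bound in \eqref{oracle1}. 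The main technical obstacle is the calibration in phase (i): the radius $r$ in the RSC event must cover both the coarse contraction iterate and every tightening iterate, while simultaneously satisfying $\tau \gtrsim r$ required by Proposition~\ref{prop:RSC}; this is precisely what forces the sample-size scaling $n \gtrsim s\log d + t$ and the upper bound on $\tau$ in \eqref{tau.constraint}.
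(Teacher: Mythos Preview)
Your overall strategy---combine Proposition~\ref{prop:tightening} with the probabilistic inputs from Propositions~\ref{prop:RSC} and \ref{prop:score}, then iterate the contraction inequality---is exactly what the paper does. But the calibration in phase~(i) breaks down, and this is not a cosmetic issue.

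You take $r \asymp s^{1/2}\lambda$, the coarse radius of the trajectory. Proposition~\ref{prop:RSC} then requires controlling the term $C_0\sigma_{\bx}\,\dfrac{\tau l}{r}\sqrt{\dfrac{\log(2d)}{n}}$ with $l\asymp s^{1/2}$. With your $r$ this becomes, up to constants, $\dfrac{\tau}{\lambda}\sqrt{\dfrac{\log d}{n}}\asymp\dfrac{\tau}{\sigma_2}\sqrt{\dfrac{\log d}{\log d+t}}$, which is only $O(1)$ when $\tau\asymp\sigma_2$ and blows up to order $\sqrt{n\log d}/(\log d+t)$ when $\tau$ is near its upper end $\sigma_2\sqrt{n/(\log d+t)}$. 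The term $(\tau/r)^2\,t/(3n)$ is even worse. So the event $\cE_1(r,l,\kappa)$ simply does not hold with the claimed probability under the stated sample-size scaling $n\gtrsim s\log d+t$; your sentence ``the residual terms are controlled by $n\gtrsim s\log d+t$'' is where the argument fails. The paper's remedy is to take $r\asymp\tau$ instead: then $\tau/r$ is a constant, all residual terms in \eqref{RSC.bound} are bounded by $n\gtrsim s\log d+t$, and one still has $r\gtrsim r^{\mathrm{crude}}=c\gamma_0 s^{1/2}\lambda$ because $r^{\mathrm{crude}}\lesssim\sigma_2\lesssim\tau\asymp r$ under the sample-size condition. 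This is precisely ``the main technical obstacle'' you identify at the end, but your proposed resolution of it is incorrect.

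A second, smaller error: you pick $\gamma\in(\gamma_0,\gamma_1)$, but then the claim ``$|\beta_j^*|-\gamma\lambda\geq\gamma_1\lambda$'' is false, since the beta-min condition only gives $|\beta_j^*|\geq(\gamma_0+\gamma_1)\lambda$, hence $|\beta_j^*|-\gamma\lambda\geq(\gamma_0+\gamma_1-\gamma)\lambda<\gamma_1\lambda$ whenever $\gamma>\gamma_0$. The shrinkage-bias term $\|p_\lambda'(|\bbeta^*_{\cS}|-\gamma\lambda)\|_2$ need not vanish. The correct choice, which the paper makes, is $\gamma=\gamma_0$; the hypothesis $\gamma_0>\rho_l^{-1}p'(\gamma_0)$ together with $\kappa=\rho_l/2$ then gives $\delta=p'(\gamma_0)/(\rho_l\gamma_0)\in(0,1)$ directly.
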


We refer to the conclusion of Theorem~\ref{thm:random} as the {\it weak oracle property} in the sense that the proposed estimator achieves the same rate of convergence as the oracle $\hat \bbeta^{{\rm ora}}$ which knows {\it a priori} the support $\cS$ of $\bbeta^*$. We keep the two terms $\tau(s+t)/n$ and $b^*_\tau$ in the upper bounds of \eqref{oracle1} to keep track the impact of $\tau$ on the estimator error: the former is part of the stochastic error and the latter characterizes the bias. 
Below are two cases that are of general interests. 

\begin{enumerate}
\item (Symmetry) As discussed in Remark~\ref{rmk:bias},  if $\varepsilon$ (conditional on $\bx$) is symmetric around zero, then $b^*_\tau = 0$ for any $\tau>0$. To certify \eqref{rmk:bias}, $\tau$ can be taken as a constant-multiple of $\sigma_2$, and the resulting error bounds become 
\#
	\| \wt \bbeta^{(T)} - \bbeta^* \|_2  \lesssim  \sigma_2\sqrt{ \frac{s  + t }{n}}     ~~\mbox{ and }~~ \| \wt \bbeta^{(T)} - \bbeta^* \|_1 \lesssim s^{1/2}    \sigma_2 \sqrt{\frac{s+t}{n}}    \nn
\#
with probability at least $1-3e^{-t}$.

\item (Asymmetry) When the conditional distribution of $\varepsilon_i$ is asymmetric,  there will be a bias-robustness tradeoff.  If $\varepsilon$ only has bounded second moment,  by Lemma~\ref{lem:bias} in the Supplementary Material we have $b^*_\tau \lesssim \sigma_2^2 \tau^{-1}$ although $\tau b^*_\tau \to 0$ as $\tau \to \infty$.  Then, the multi-step iterative estimator $\wt \bbeta^{(T)} $ with $\tau \asymp \sigma_2 \sqrt{n/(s+\log d + t)}$ satisfies, under the scaling $n\gtrsim s\log d + t$,  that
\#
	& \| \wt \bbeta^{(T)} - \bbeta^* \|_2  \lesssim  \sigma_2\sqrt{ \frac{s  + \log d + t }{n}}     ~~\mbox{ and} \nn \\
	& \| \wt \bbeta^{(T)} - \bbeta^* \|_1 \lesssim s^{1/2}    \sigma_2 \sqrt{\frac{s+ \log d +t}{n}}    \nn
\#
with probability at least $1-3e^{-t}$.
\end{enumerate}

A more intriguing result, as revealed by the following theorem, is that our estimator achieves the strong oracle property, namely, it coincides with the oracle with high probability. Here we need slightly stronger moment conditions than those in Condition~\ref{moment.cond}, that is, the random predictor $\bx$ is {\it sub-Gaussian} and the noise variable $\varepsilon$ satisfies an $L_{2+\eta}$-$L_2$ norm equivalence for some $\eta \in (0, 1]$.

\begin{cond} \label{moment.cond2}
There exists  $\sigma_{\bx} \geq 1$ such that $\PP(|  \bu^\T \bx  | \geq \sigma_{\bx}   t) \leq  2e^{-t^2/2 }$ for all $\bu \in \mathbb{S}^{d-1}$ and $t\geq 0$. Moreover, $\Sigma = (\sigma_{jk}) =  \EE(\bx \bx^\T)$ satisfies $ \rho_l= \lambda_{\min}(\Sigma) >0$ and 
\#
  \max_{  j \in  \cS^{\cc}} \| \Sigma_{j\cS} (\Sigma_{\cS\cS} )^{-1} \|_1  \leq A_0    \label{irr}
\#
for some $A_0 >0$. The random error $\varepsilon$ satisfies $\EE(\varepsilon | \bx ) = 0$ and $\EE(\varepsilon^2 | \bx) \leq \sigma_2^2$, and $\{ \EE(|\varepsilon|^{2 + \eta} | \bx ) \}^{1/(2+\eta) } \leq  a_\eta \{ \EE(\varepsilon^2 | \bx) \}^{1/2}   $ (almost surely) for some $\eta \in (0,1]$ and $a_\eta >1$.  Moreover, $\varepsilon$ satisfies the {\it anti-concentration} property: there exists a constant $a_0 >0$ such that 
\#
	 \PP( a \leq \varepsilon \leq b | \bx ) \leq a_0 (b-a) ~~\mbox{ for all }  a\leq b . \label{anti-concentration}
\#
\end{cond}

\begin{theorem} \label{thm:oracle}
Assume Conditions~\ref{cond:penalty} and \ref{moment.cond2} hold, and there exist some $\gamma_1 > \gamma_0 > 2.5/  \rho_l$ such that $p' (\gamma_0   ) >0$, $p'(t) = 0$ for all $t\geq \gamma_1 $.  For any $t \geq 0$ and $q \geq \max(s, \log d)$,  let $\lambda \asymp \sigma_2 \sqrt{(\log d + t )/n}$ and $\tau \asymp \sigma_2 \sqrt{n/(q+t)}$. 
Moreover,  assume the sample size satisfies $n\gtrsim \max\{  s \log d + t ,   (q+t)^{1+1/\eta} (\log d)^{-1/\eta} \}$,  and the beta-min condition $\| \bbeta^*_{\cS} \|_{\min} \geq (\gamma_0 + \gamma_1 ) \lambda$.
Then, with probability at least $1-8 e^{-t}$,  $\hat \bbeta^{(\ell)} = \hat \bbeta^{{\rm ora}}$ provided $\ell \geq  \lceil \log(s^{1/2}/\delta)/\log(1/\delta) \rceil $, where $\delta := 2.5/(\rho_l \gamma_0)  \in (0,1)$.
\end{theorem}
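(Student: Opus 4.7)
The plan is to apply the deterministic strong-oracle result, Proposition~\ref{prop:oracle}, with the choices $\kappa = \rho_l/2$ and $\delta = 2.5/(\rho_l\gamma_0)$, noting that the hypothesis $\gamma_0 > 2.5/\rho_l$ puts $\delta \in (0,1)$ and makes $\kappa = 1.25/(\delta\gamma_0)$, exactly meeting the inequality required in Proposition~\ref{prop:oracle}. The task then reduces to verifying, with probability at least $1 - 8e^{-t}$, the four events on its conditioning: (i) $\|\bw^{\ora}\|_\infty \le (p'(\gamma_0)/2)\lambda$, (ii) $\|\hat\bbeta^{\ora} - \bbeta^*\|_\infty \le \lambda/(5\delta\kappa)$, (iii) $\|\hat\bbeta^{\ora} - \bbeta^*\|_\Sigma \le r$ for some $r \asymp s^{1/2}\lambda$, and (iv) the two-point RSC event $\cE_2(r,l,\kappa)$. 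The RSC event is handled by upgrading Proposition~\ref{prop:RSC} from a one-point to a two-point statement: because $\ell_\tau'$ is $1$-Lipschitz, an interpolation--localization argument transfers the lower bound along line segments inside $\bbeta^* + \BB_{\Sigma}(r)\cap\CC(l)$, and under the sub-Gaussian design of Condition~\ref{moment.cond2} the constant $\rho_{\bx}$ appearing in Proposition~\ref{prop:RSC} is absorbed into $\sigma_{\bx}$, so for $\tau \gtrsim r$ and $n \gtrsim s\log d + t$ we obtain $\kappa = \rho_l/2$.

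For the oracle estimator, the key observation is that $\hat\bbeta^{\ora}_\cS$ is an $s$-dimensional unpenalized Huber $M$-estimator on covariates $\bx_\cS$ with covariance $\Sigma_{\cS\cS}$. The one-step analysis behind Theorem~\ref{thm:l1huber}, applied in dimension $s$, gives $\|\hat\bbeta^{\ora} - \bbeta^*\|_2 \lesssim \sigma_2\sqrt{(s+t)/n} + \tau(s+t)/n + b^*_\tau$; under $\tau \asymp \sigma_2\sqrt{n/(q+t)}$ and $n \gtrsim (q+t)^{1+1/\eta}(\log d)^{-1/\eta}$, Lemma~\ref{lem:bias} combined with the $L_{2+\eta}$--$L_2$ norm equivalence forces $b^*_\tau \lesssim \sigma_2 ((q+t)/n)^{(1+\eta)/2} \lesssim \lambda$, so the $\Sigma$-norm bound (iii) holds with $r \asymp s^{1/2}\lambda$. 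For the entrywise control (ii), I will derive a Bahadur-type representation
\[
\hat\bbeta^{\ora}_{\cS} - \bbeta^*_{\cS} = (\Sigma_{\cS\cS})^{-1}\,\frac{1}{n}\sum_{i=1}^n \ell_\tau'(\varepsilon_i)\bx_{i,\cS} \ +\ \text{remainder},
\]
where the anti-concentration hypothesis \eqref{anti-concentration} ensures that $(1/n)\sum_i \ell_\tau''(\varepsilon_i)\bx_{i,\cS}\bx_{i,\cS}^{\T}$ concentrates around a well-conditioned population Hessian, and sub-Gaussianity of $\bx$ makes the remainder of smaller order in $\ell_\infty$. The leading linear term has sub-exponential coordinates with mean of order $b^*_\tau$, so Bernstein plus a union bound over $[s]$ yields $\|\hat\bbeta^{\ora} - \bbeta^*\|_\infty \lesssim \sigma_2\sqrt{(\log s + t)/n} + \tau(\log s + t)/n + b^*_\tau \lesssim \lambda$, which is (ii) because $1/(5\delta\kappa)$ depends only on $\rho_l$ and $\gamma_0$.

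Finally, for (i), the first-order condition defining $\hat\bbeta^{\ora}$ gives $\bw^{\ora}_\cS = \bzero$, so only $\|\bw^{\ora}_{\cS^{\cc}}\|_\infty$ needs to be controlled. Substituting the Bahadur expansion into $\bw^{\ora}_j = -(1/n)\sum_i \ell_\tau'(\varepsilon_i - \bx_{i,\cS}^{\T}(\hat\bbeta^{\ora}_\cS - \bbeta^*_\cS))\, x_{ij}$ for $j \in \cS^{\cc}$ and first-order Taylor expanding in the second argument yields, modulo a uniformly negligible remainder,
\[
\bw^{\ora}_j \approx -\frac{1}{n}\sum_{i=1}^n \ell_\tau'(\varepsilon_i)\bigl(x_{ij} - \Sigma_{j\cS}\Sigma_{\cS\cS}^{-1}\bx_{i,\cS}\bigr).
\]
The partial-regression residual $z_{ij} := x_{ij} - \Sigma_{j\cS}\Sigma_{\cS\cS}^{-1}\bx_{i,\cS}$ is sub-Gaussian with norm bounded, via \eqref{irr}, by a constant multiple of $\sigma_{\bx}(1+A_0)$ and is uncorrelated with $\bx_{i,\cS}$, so Bernstein together with a union bound over the $d-s$ indices in $\cS^{\cc}$ gives $\|\bw^{\ora}_{\cS^{\cc}}\|_\infty \lesssim \sigma_2\sqrt{(\log d + t)/n} + \tau(\log d + t)/n + b^*_\tau$, which is $\le (p'(\gamma_0)/2)\lambda$ once the proportionality constant in $\lambda$ is chosen large enough.

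The main obstacle will be the uniform (in $j \in \cS^{\cc}$) control of the remainder in this two-stage Taylor expansion: one must show that $\sup_{j \in \cS^{\cc}}\bigl\|(1/n)\sum_i (\ell_\tau''(\varepsilon_i) - \EE\ell_\tau''(\varepsilon))\, z_{ij}\bx_{i,\cS}^{\T}\bigr\|_{\infty}$ is of smaller order than $\lambda$, and this is precisely where the anti-concentration property and the sharper sample-size scaling $n \gtrsim (q+t)^{1+1/\eta}(\log d)^{-1/\eta}$ enter. Once the four events above are established, a union bound assembles the $1 - 8e^{-t}$ probability, and Proposition~\ref{prop:oracle} then delivers $\hat\bbeta^{(\ell)} = \hat\bbeta^{\ora}$ for all $\ell \ge \lceil\log(s^{1/2}/\delta)/\log(1/\delta)\rceil$, completing the proof.
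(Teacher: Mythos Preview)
Your overall plan---invoke Proposition~\ref{prop:oracle} with $\kappa=\rho_l/2$, $\delta=2.5/(\rho_l\gamma_0)$, and verify the four conditioning events---matches the paper's, and your treatment of $\|\bw^{\ora}\|_\infty$ via a Bahadur representation plus partial-regression residuals $z_{ij}=x_{ij}-\Sigma_{j\cS}\Sigma_{\cS\cS}^{-1}\bx_{i,\cS}$ is essentially a reorganization of the paper's decomposition $\nabla\hat\cL_\tau(\hat\bbeta^{\ora})=[\bw(\hat\bbeta^{\ora})-\bw(\bbeta^*)]+\nabla\cL_\tau(\hat\bbeta^{\ora})+\bw^*$; condition~\eqref{irr} enters at the same point (the paper uses it to pass from $\|\Sigma_{\cdot\cS}\hat\bdelta\|_\infty$ to $A_0\|\Sigma_{\cS\cS}\hat\bdelta\|_\infty$, you use it to bound the sub-Gaussian norm of $z_{ij}$).

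There is, however, a genuine gap in your choice of localization radius. You take $r\asymp s^{1/2}\lambda$, the minimal value allowed by Proposition~\ref{prop:oracle}. But the RSC bounds---whether Proposition~\ref{prop:RSC} for $\cE_1$ or the two-point analog needed for $\cE_2$---carry a factor $\tau/r$ in front of the empirical-process deviation: in \eqref{RSC.bound} the leading stochastic term is $C_0\sigma_{\bx}(\tau l/r)\sqrt{\log(2d)/n}$, so to push it below $\rho_l/4$ one needs $n\gtrsim(\tau/r)^2\,l^2\log d$. With $\tau\asymp\sigma_2\sqrt{n/(q+t)}$ and $r\asymp s^{1/2}\lambda\asymp\sigma_2\sqrt{s(\log d+t)/n}$ this ratio is $\tau/r\asymp n/\sqrt{s(q+t)(\log d+t)}$, and the resulting constraint rearranges to an \emph{upper} bound $n\lesssim(q+t)(\log d+t)/\log d$, incompatible with the theorem's sample-size hypothesis once $n$ is large. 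The paper instead takes $r=\tau/C_1\asymp\tau$, which makes $\tau/r$ a constant and collapses the RSC requirement to $n\gtrsim s\log d+t$; this larger $r$ still dominates both $c_0\gamma_0 s^{1/2}\lambda$ and the oracle $\Sigma$-norm error. A \emph{second}, tighter radius of order $\sigma_2\sqrt{(s+t)/n}$ is then used inside the oracle-score lemma (Lemma~\ref{lem:oracle.score}) so that the local-fluctuation contribution $r'\sqrt{(s+\log d+t)/n}$ stays of order $\sigma_2(q+t)/n\le\sigma_2((q+t)/n)^{(1+\eta)/2}$.

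Separately, your ``interpolation--localization'' for $\cE_2$ is too vague to stand on its own: the two-point RSC is a supremum over a product set where both $\bbeta_1$ and $\bbeta_2$ vary, and Lipschitz continuity of $\ell_\tau'$ does not directly transfer a lower bound on $D(\bbeta_1,\bbeta^*)$ to one on $D(\bbeta_1,\bbeta_2)$ uniformly in $\bbeta_2$. The paper (Lemma~\ref{lem:oracle.RSC}) smooths with auxiliary functions $\varphi_R,\phi_R$ and applies a Sudakov--Fernique Gaussian comparison to decouple the two suprema; you will need something of comparable strength. Finally, a small correction: anti-concentration \eqref{anti-concentration} is not what makes the empirical Hessian concentrate---it is used to bound the \emph{population} increment $|\EE_{\bx}[\ell_\tau''(\varepsilon+u)-\ell_\tau''(\varepsilon)]|\le a_0|u|$ in the expansion of $\nabla\cL_\tau(\hat\bbeta^{\ora})$ around $\bbeta^*$.
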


Theorem~\ref{thm:oracle} provides a useful complement to Theorem~2 in \cite{L2017}, and differs from it in two aspects.   First,  the latter studies the estimator obtained by solving the folded concave penalized optimization program in \eqref{nonconvex.huber} subject to an $\ell_1$-ball constraint in order to ensure the existence of local/global optima.
Secondly,   Theorem~2 in \cite{L2017} establishes the strong oracle property for any stationary point $\wt \bbeta$ of the program \eqref{nonconvex.huber} (with an $\ell_1$-ball constraint) that falls inside a local neighborhood of $\bbeta^*$.  In contrast,  Theorem~\ref{thm:oracle}  concerns the strong oracle property of the proposed iteratively reweighted $\ell_1$-penalized estimator  obtained by solving a sequence of (unconstrained) convex programs \eqref{weighted.lasso}.

\begin{remark}
A direct consequence of the strong oracle property is {\it variable selection consistency}, saying that
\$
	 \PP\big\{ {\rm supp} (\hat \bbeta^{(\ell)} ) = \cS \big\} \to 1 ~~\mbox{ as }~ n, d \to \infty. 
\$
In particular, assume Condition~\ref{moment.cond2} holds with $\eta=1$,  implying that $\varepsilon$ satisfies an $L_3$-$L_2$ norm equivalence. Then,  Theorem~\ref{thm:oracle} implies that the multi-step estimator $\hat \bbeta^{(\ell)}$ with $\lambda \asymp \sigma_2 \sqrt{ (\log d)/n}$,  $\tau \asymp \sigma_2 \sqrt{n/(s + \log d)}$ and $\ell \asymp \log s$ achieves variable selection consistency as $n, d \to \infty$ under the scaling $n\gtrsim \max( s \log d,  s^2 )$ and the necessary beta-min condition $\| \bbeta^*_{\cS} \|_{\min} \gtrsim  \sigma_2 \sqrt{(\log d)/n}$ \citep{N2018}.

As discussed earlier, Lasso \citep{Tib1996} achieves desirable risk properties,  in terms of both estimation and prediction, under mild conditions, yet its variable selection consistency requires much stronger assumptions \cite{MB2006, ZY2006, W2009}.  In addition to sub-Gaussian errors, it requires a stronger beta-min condition---$\| \bbeta^*_{\cS} \|_{\min} \gtrsim  \sigma_2   \sqrt{(s \log d)/n}$, and the {\it irrepresentable condition}
\#
  \max_{  j \in  \cS^{\cc}} \| \Sigma_{j\cS} (\Sigma_{\cS\cS} )^{-1} \|_1  \leq a_0 < 1  .    \label{irr0}
\# 
See, for example,  Chapter 7 in \cite{BvG2011} and Section 7.5 in \cite{W2019}.
\end{remark}

\section{Optimization Algorithm}
\label{sec:LAMM}

In this section, we use the local adaptive majorize-minimize (LAMM) principal \citep{FLSZ2018} to derive an iterative algorithm for solving 
each subproblem $({\rm P}_\ell)$ in \eqref{weighted.lasso}: 
\$
	\min_{\bbeta \in \RR^d  }  \{  \hat  \cL_\tau(\bbeta) +   \| \blambda^{(\ell-1)} \circ \bbeta  \|_1 \} ,  \ \  \ell = 1, 2 , \ldots , 
\$
where $ \blambda^{(\ell-1)} = (\lambda^{(\ell-1)}_1, \ldots, \lambda^{(\ell-1)}_d)^\T \in \RR^d$ with $\lambda^{(\ell-1)}_j \geq 0$. Specifically, $\lambda^{(\ell-1)}_j =0$ for some $j$ means that the $j$-th coefficient is not penalized.

\subsection{LAMM algorithm}
To minimize a nonlinear function $f(\cdot)$ on $\RR^d$, at a given point $\bbeta^{(k)}$, the majorize-minimize (MM) algorithm first majorizes it by another function $g(\cdot \, |\btt^{(k)})$, which satisfies
$$
g(\btt|\btt^{(k)}) \geq f(\btt) \quad \mbox{and} \quad
g(\btt^{(k)}|\btt^{(k)}) = f(\btt^{(k)}) ~~\textnormal{for any}~ \bbeta \in \RR^d , 
$$
 and then compute $
\btt^{(k+1)} :=\argmin_{\btt\in \RR^d}  g(\btt|\btt^{(k)}) 
$  \citep{lange2000optimization}.
The objective value of such an algorithm is non-increasing in each step,  because
\#\label{0311.3}
   f(\bbeta^{(k+1)}) \stackrel{{\rm (i)}}{\leq }   g(\bbeta^{(k+1)} \,|\, \bbeta^{(k)})   \stackrel{{\rm (ii)}}{\leq } 
    g(\bbeta^{(k)}\,|\,\bbeta^{(k)}) =f(\bbeta^{(k)}) ,
\#
where inequality (i) is due to the marization property of $g(\cdot | \bbeta^{(k)})$ and inequality (ii) follows from the definition $\btt^{(k+1)}$.
\cite{FLSZ2018} observed that the global majorization requirement is not necessary.  It only requires the local properties
\begin{equation} \label{0311.b}
   f(\bbeta^{(k+1)})\leq g(\btt^{(k+1)}|\btt^{(k)})~~\text{and}~~g(\btt^{(k)}|\btt^{(k)})=f(\bbeta^{(k)})
\end{equation}
for the inequalities in \eqref{0311.3} to hold.

Using the above principle, it suffices to locally majorize  the objective function $\hat \cL_\tau(\bbeta)$ in the penalized optimization problem. At the $k$-th step with working parameter vector $\bbeta^{(\ell, k-1)}$, we use an isotropic quadratic function, that is,
\#
& F (\bbeta; \phi, \bbeta^{(\ell, k-1)} ) := \hat \cL_\tau(\bbeta^{(\ell, k-1)} ) \nn \\
&~~~~~~ +  \langle\nabla  \hat \cL_\tau(\bbeta^{(\ell,k-1)}) , \bbeta -\bbeta^{(\ell,k-1)} \rangle + \frac{\phi}{2} \|\bbeta-\bbeta^{(\ell,k-1)} \|_2^2 ,  \label{def:F}
\#
to locally majorize $\hat \cL_\tau(\bbeta)$ such that 
\#\label{eq:localmajor}
F (\bbeta^{(\ell, k)}; \phi^{(\ell, k)}, \bbeta^{(\ell, k-1)} ) \geq \hat \cL_\tau (\bbeta^{(\ell, k)} ),
\#
where $\phi^{(\ell, k)}$ is a proper quadratic coefficient at the $k$-th update, and $\bbeta^{(\ell,k)}$ is the solution to   
\$
\min_{\bbeta} \bigl\{F (\bbeta; \phi^{(\ell,k)}, \bbeta^{(\ell, k-1)} )+   \|\blambda^{(\ell-1)}\circ\bbeta   \|_1 \bigr\}. 
\$
It is easy to see that $\bbeta^{(\ell,k)}$ takes a simple explicit form
\#
\bbeta^{(\ell,k)}   =S_\textnormal{soft} \big(  \bbeta^{(\ell, k-1)}-\nabla  \hat  \cL_\tau(\bbeta^{(\ell,k-1)})/\phi^{(\ell, k)}, \blambda^{(\ell-1)}/\phi^{(\ell,k)} \big) , 
  \label{eq:iteration}
\#
where $S_\soft(\btt, \blambda) := (\sign(\beta_j) \max\{|\beta_j|-\lambda_j, 0\})_{j=1, \ldots, d}$ is the soft-thresholding operator. For simplicity, we summarize and define the above update as $\bbeta^{(\ell, k)}=T_{\blambda^{(\ell-1)}, \phi^{(\ell, k)}} (\bbeta^{(\ell, k-1)}  )$.  Using this simple update formula of $\bbeta$, we iteratively search for the pair $(\phi^{(\ell, k)}, \bbeta^{(\ell, k)})$ that ensures the local majorization \eqref{eq:localmajor}.  Starting with an initial quadratic coefficient $\phi=\phi_0$, say $10^{-4}$, we iteratively increase $\phi$ by a factor of $\gamma_u>1$ and compute 
\$
\bbeta^{(\ell, k)}=T_{\blambda^{(\ell-1)}, \phi^{(\ell, k)}}(\bbeta^{(\ell, k-1)}) ~~\textnormal{with}~~\phi^{(\ell, k)}=\gamma_u^{k-1}\phi_0, 
\$
until the local property \eqref{eq:localmajor} holds. This routine is summarized in Algorithm \ref{alg:ls}.

\begin{algorithm}[!t]
	\caption{LAMM algorithm at the $k$-th iteration of the $\ell$-th subproblem. }\label{alg:ls}
	\begin{algorithmic}[1]
		\STATE{{\bf Algorithm}: $\{\btt^{(\ell,k)},\phi^{(\ell,k)} \} \leftarrow \mbox{LAMM}(\blam^{(\ell-1)},\btt^{(\ell, k-1)}, \phi_{0},\phi^{(\ell, k-1)}) $  }
		\STATE{\textbf{Input}: $\blam^{(\ell-1)}, \btt^{(\ell, k-1)}, \phi_{0}, \phi^{(\ell,k-1)}$   }
		\STATE{\textbf{Initialize}: $\phi^{(\ell,k)}\leftarrow \max\{\phi_{0},\gamma_u^{-1}\phi^{(\ell,k-1)}\}$  }
		\STATE{\textbf{Repeat}}
		\STATE{$\quad{}$ $\btt^{(\ell,k)}\leftarrow T_{\blam^{(\ell-1)},\phi^{(\ell,k)}}(\btt^{(\ell,k-1)})$ }
		\STATE{$\quad{}$ \textbf{If} $F(\btt^{(\ell,k)},{\blam^{(\ell-1)}})  <  \hat \cL_\tau(\btt^{(\ell,k)})$ \textbf{ then } $\phi^{(\ell,k)}\leftarrow \gamma_u\phi^{(\ell,k)}$ }
		\STATE{ \textbf{Until}  $F(\btt^{(\ell,k)},\blam^{(\ell-1)})\geq\hat   \cL_\tau(\btt^{(\ell,k)})$ }
		\STATE{\textbf{Return} $\{\btt^{(\ell,k)},\phi^{(\ell,k)}\}$}
	\end{algorithmic}
\end{algorithm}

\subsection{Complexity theory}

To investigate the complexity theory of the proposed algorithm, we first impose the following standard regularity conditions on the objective function.

\begin{cond}\label{ass:lip_g}
$\nabla \hat \cL_\tau(\cdot)$ is $L$-Lipschitz continuous for some $L>0$, that is,  $
\|\nabla \hat \cL_\tau(\btt_1)-\nabla  \hat \cL_\tau(\btt_2)\|_\infty \leq  L\|\btt_1-\btt_2\|_2$ for any $\bbeta_1 ,\bbeta_2 \in \RR^d$.
\end{cond}

Our next theorem characterizes the computational complexity in the contraction stage.  Recall that $\blambda^{(0)}= (\lambda, \ldots, \lambda)^\T \in \RR^d$. 

\begin{theorem}\label{thm:comp:1}
Assume Condition~\ref{ass:lip_g} holds and the optimal solution $\hat \bbeta^{(1)}$ satisfies $\| \hat \bbeta^{(1)} - \bbeta^* \|_2 \lesssim s^{1/2} \lambda$. Then, to attain an $ \epsilon_{{\rm c}}$-optimal solution $\widetilde \bbeta^{(1)}$, i.e. $\omega_{\blambda^{(0)}} (\widetilde \bbeta^{(1)})\leq \epsilon_{{\rm c}}$, in the contraction stage, we need as many as $C_1  L^2 (1+\gamma_u)^2(\|\bbeta^*\|_2 + s^{1/2} \lambda )^2 /{\epsilon^2_{{\rm c}}}$ LAMM iterations in \eqref{eq:iteration}, where $C_1>0$ is a constant independent of $(n,d,s)$.
\end{theorem}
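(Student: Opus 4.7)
The result is a classical complexity bound for a proximal gradient / majorize-minimize scheme, so I would follow the standard four-step template adapted to the backtracking rule in Algorithm~\ref{alg:ls}. Write $F_1(\bbeta) := \hat\cL_\tau(\bbeta) + \|\blambda^{(0)}\circ\bbeta\|_1$ for the contraction-stage objective and let $\{\bbeta^{(1,k)}\}_{k \geq 0}$ denote the LAMM iterates with $\bbeta^{(1,0)} = \mathbf{0}$ and quadratic coefficients $\phi^{(1,k)}$.

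\emph{Step 1: uniform bound on $\phi^{(1,k)}$.} By Condition~\ref{ass:lip_g} and the descent lemma for $L$-smooth functions, whenever $\phi \geq L$ the quadratic surrogate $F(\,\cdot\,;\phi,\bbeta^{(1,k-1)})$ in \eqref{def:F} already majorizes $\hat\cL_\tau$ globally, so the backtracking loop exits. Combined with the $\gamma_u$-inflation rule, this gives $\phi^{(1,k)} \leq \gamma_u L$ for every $k$.

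\emph{Step 2: per-iteration descent and telescoping.} The local majorization \eqref{eq:localmajor} together with the fact that $\bbeta^{(1,k)}$ minimizes $F(\,\cdot\,;\phi^{(1,k)},\bbeta^{(1,k-1)})+\|\blambda^{(0)}\circ\cdot\|_1$ yields the sufficient-decrease inequality
\[
F_1(\bbeta^{(1,k-1)}) - F_1(\bbeta^{(1,k)}) \;\geq\; \tfrac{\phi^{(1,k)}}{2}\,\|\bbeta^{(1,k)}-\bbeta^{(1,k-1)}\|_2^2.
\]
Telescoping from $k=1$ to $K$ and using $F_1 \geq F_1(\hat\bbeta^{(1)})$ gives $\sum_{k=1}^K \|\bbeta^{(1,k)}-\bbeta^{(1,k-1)}\|_2^2 \lesssim \phi_0^{-1}\{F_1(\mathbf{0}) - F_1(\hat\bbeta^{(1)})\}$. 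The initial gap is bounded by $L$-smoothness plus convexity of the $\ell_1$ penalty,
\[
F_1(\mathbf{0}) - F_1(\hat\bbeta^{(1)}) \;\lesssim\; L\,\|\hat\bbeta^{(1)}\|_2^2 + \lambda\,\|\hat\bbeta^{(1)}\|_1 \;\lesssim\; L\,(\|\bbeta^*\|_2 + s^{1/2}\lambda)^2,
\]
where the last step uses the hypothesis $\|\hat\bbeta^{(1)}-\bbeta^*\|_2 \lesssim s^{1/2}\lambda$ and the sparsity of $\bbeta^*$ to control $\|\hat\bbeta^{(1)}\|_1$ by $s^{1/2}\|\hat\bbeta^{(1)}\|_2$ up to constants.

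\emph{Step 3: from step size to subgradient residual.} The closed form \eqref{eq:iteration} is equivalent to the inclusion $\phi^{(1,k)}(\bbeta^{(1,k-1)}-\bbeta^{(1,k)}) - \nabla\hat\cL_\tau(\bbeta^{(1,k-1)}) \in \blambda^{(0)}\circ\partial\|\bbeta^{(1,k)}\|_1$. Rearranging and invoking Condition~\ref{ass:lip_g} at $(\bbeta^{(1,k-1)},\bbeta^{(1,k)})$ yields
\[
\omega_{\blambda^{(0)}}(\bbeta^{(1,k)}) \;\leq\; (\phi^{(1,k)} + L)\,\|\bbeta^{(1,k)}-\bbeta^{(1,k-1)}\|_2 \;\leq\; (1+\gamma_u)L\,\|\bbeta^{(1,k)}-\bbeta^{(1,k-1)}\|_2,
\]
using the bound from Step~1.

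\emph{Step 4: counting iterations.} Squaring the display of Step~3, summing, and using Step~2 gives $\sum_{k=1}^K \omega_{\blambda^{(0)}}(\bbeta^{(1,k)})^2 \lesssim (1+\gamma_u)^2 L^2\cdot L(\|\bbeta^*\|_2 + s^{1/2}\lambda)^2/\phi_0$, so $\min_{k\leq K}\omega_{\blambda^{(0)}}(\bbeta^{(1,k)}) \leq \epsilon_{\rm c}$ is guaranteed once $K \gtrsim L^2(1+\gamma_u)^2(\|\bbeta^*\|_2+s^{1/2}\lambda)^2/\epsilon_{\rm c}^2$, which is the advertised bound (absorbing the $L/\phi_0$ factor into the constant $C_1$).

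The main obstacle is Step~3: one must be careful that the subgradient residual inherits the $(1+\gamma_u)L$ prefactor rather than a naive $L$, because the relevant Lipschitz-continuity estimate is applied across the backtracked step where $\phi^{(1,k)}$ can be as large as $\gamma_u L$. The other steps are routine once the uniform bound on $\phi^{(1,k)}$ and the fact that $\bbeta^{(1,0)}=\mathbf{0}$ lies within distance $\|\bbeta^*\|_2 + s^{1/2}\lambda$ of $\hat\bbeta^{(1)}$ are in hand.
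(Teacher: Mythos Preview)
Your proposal is essentially correct and shares Steps~1 and~3 with the paper (these form the paper's Lemma~\ref{lemma:c.1}). The route diverges at Steps~2 and~4: the paper first proves the $O(1/k)$ objective-value rate
\[
\Psi(\btt^{(k)},\blam) - \Psi(\hbt^{(1)},\blam) \;\leq\; \frac{\max_{j\leq k}\phi^{(j)}}{2k}\,\|\hbt^{(1)}\|_2^2
\]
via a separate telescoping lemma (Lemma~\ref{lemma:5.3}), and then combines it with the sufficient-decrease inequality to obtain the \emph{pointwise} bound $\omega_{\blambda^{(0)}}(\btt^{(k)}) \lesssim L(1+\gamma_u)\|\hbt^{(1)}\|_2/\sqrt{k}$. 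You instead telescope the sufficient-decrease inequality directly to bound $\sum_{k\leq K}\|\btt^{(k)}-\btt^{(k-1)}\|_2^2$ and finish with a $\min_{k\leq K}$ argument. Your route is slightly more elementary (it bypasses Lemma~\ref{lemma:5.3}) but yields only a best-iterate guarantee rather than the per-iterate decay the paper obtains; for the stated theorem this makes no difference.

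One small repair in your Step~2: the intermediate bound $F_1(\mathbf{0}) - F_1(\hat\bbeta^{(1)}) \lesssim L\|\hat\bbeta^{(1)}\|_2^2 + \lambda\|\hat\bbeta^{(1)}\|_1$ is loose, and the second term is awkward because $\hat\bbeta^{(1)}$ is not $s$-sparse, so the claim $\|\hat\bbeta^{(1)}\|_1 \lesssim s^{1/2}\|\hat\bbeta^{(1)}\|_2$ is unjustified as written. In fact the $\ell_1$ term cancels: the first-order optimality of $\hat\bbeta^{(1)}$ gives $-\langle\nabla\hat\cL_\tau(\hat\bbeta^{(1)}),\hat\bbeta^{(1)}\rangle = \lambda\|\hat\bbeta^{(1)}\|_1$, so the descent lemma yields directly $F_1(\mathbf{0}) - F_1(\hat\bbeta^{(1)}) \leq \tfrac{L}{2}\|\hat\bbeta^{(1)}\|_2^2$, after which the triangle inequality $\|\hat\bbeta^{(1)}\|_2 \leq \|\bbeta^*\|_2 + \|\hat\bbeta^{(1)}-\bbeta^*\|_2$ suffices with no appeal to sparsity.
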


The sublinear rate in the contraction stage is due to the lack of global strong convexity of the loss function in this stage, because we start with a naive initial value ${\bf 0}$.  
Once we enter the contracting region where the estimator is relatively closer to the underlying true parameter vector, the problem becomes strongly convex (at least with high probability).  This endows   the algorithm a linear rate of convergence. Our next theorem provides a formal statement on the geometric   convergence rate of LAMM for solving each subproblem in the tightening stage.  To this end,  we describe a variant of the sparse eigenvalue condition. 

\begin{definition}[LSE---Localized Sparse Eigenvalue]\label{lse}
Given  $r, \tau>0$ and an integer $m\geq |\cS|$,  the localized sparse eigenvalues are defined as
	\$
	\kappa_+(m,r, \tau) =\sup \big\{{  \bdelta^\T \nabla^2 \hat \cL_\tau (\btt)  \bdelta }:  \bdelta  \in \mathbb C_0  (m ), \btt \in  \bttc+ \BB (r) \big\} \\
	\mbox{ and }~\kappa_-(m,r,\tau) =\inf \big\{{  \bdelta^\T \nabla^2  \hat  \cL_\tau (\btt) \bdelta }:    \bdelta \in \mathbb C_0 (m) , \btt \in \bttc + \BB (r) \big\}  ,
\$
where $ \mathbb C_0 (m ) : = \{  \bu  \in \mathbb{S}^{d-1}  :   \cS \subseteq {\rm supp}(\bu), \, | {\rm supp}(\bu) | \leq m  \}$ denotes a sparse cone.
\end{definition}

\begin{cond}\label{assume:LSE}
	 We say an LSE$(C_0)$ condition holds for some $C_0\geq 1$ if  there exist an integer $s'  \lesssim s$ and constants $\kappa^* , \kappa_* , C_1>0$ such that
	\begin{flalign*}
	  0 < \kappa_*\leq \kappa_-( C_0 s + 2 s' ,r , \tau)< \kappa_+( C_0 s +2 s' ,r, \tau)\leq \kappa^*  \notag\\
	  \mbox{ and }~ \kappa_+(s' , r, \tau) / \kappa_-(C_0 s +2s' , r, \tau)\leq  1 + C_1 s'/s .
	\end{flalign*}
\end{cond}

Note that if a vector $\bu \in \RR^d$ belongs to the sparse cone $\CC_0(m)$ for some $m\geq 1$,  by Cauchy-Schwarz inequality we have $\| \bu \|_1 \leq m^{1/2} \| \bu \|_2$. This implies that $\bu$ also falls into the $\ell_1$-cone $\CC(m^{1/2})$ defined in \eqref{RSC.event1}.  
Proposition~\ref{prop:RSC} will remain valid,  possibly with different constants, if the $\ell_1$-cone $\CC(l)$ therein is replaced by a sparse cone.  Note that Proposition~\ref{prop:RSC} controls the minimum LSE.  Similar results can be obtained to bound the maximum  LSE from above.

\begin{theorem}\label{thm:comp:2}
Assume LSE$(C_0)$ condition  holds for a sufficiently large $C_0>1$ and $\tau\gtrsim r\gtrsim \lambda\sqrt{s}$.
To obtain an $\epsilon_{{\rm t}}$-optimal solution $\tbt^{(\ell)}$, i.e. $\omega_{\blam^{(\ell-1)}} (\tbt^{(\ell)} ) \leq  \epsilon_{{\rm t}} $,  in  the $\ell$-th subproblem for  $\ell\geq 2$, we need as many as $C_1\log(C_2 s^{1/2} {\lambda }/{\epsilon_{{\rm t}} })$ LAMM iterations in \eqref{eq:iteration}, where $C_1$ and $C_2$ are positive constants.
\end{theorem}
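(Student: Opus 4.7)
The plan is to view each tightening subproblem as a proximal gradient method with an adaptive step size (LAMM) applied to a locally strongly convex and smooth composite objective, and to argue geometric convergence via the LSE condition. I would organize the proof in four steps.

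First, I would bound the quadratic coefficient $\phi^{(\ell,k)}$ produced by Algorithm~\ref{alg:ls}. By Condition~\ref{ass:lip_g}, whenever $\phi \geq L$ the global quadratic majorization holds, so the inner loop terminates with $\phi^{(\ell,k)} \leq \gamma_u L$. Combined with the monotone descent property \eqref{0311.3}, this keeps all iterates inside the sublevel set of the initial objective. Using the warm start $\bbeta^{(\ell,0)} = \tilde\bbeta^{(\ell-1)}$, which by Proposition~\ref{prop:tightening} satisfies $\|\tilde\bbeta^{(\ell-1)} - \bbeta^*\|_2 \lesssim s^{1/2}\lambda$, the iterates remain in a ball $\bbeta^* + \BB(r)$ of radius $r\asymp s^{1/2}\lambda$, on which $\tau \gtrsim r$ is the regime where the LSE condition is assumed.

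Second, I would control the sparsity of the iterates. From the soft-thresholding update \eqref{eq:iteration} and the first-order optimality for $\hat\bbeta^{(\ell)}$, a standard argument based on the KKT conditions for the weighted $\ell_1$ penalty $\|\blambda^{(\ell-1)}\circ\cdot\|_1$ bounds the support size of $\bbeta^{(\ell,k)} - \hat\bbeta^{(\ell)}$ by $s'$ with $s' \lesssim s$ (using the LSE$(C_0)$ ratio condition). Thus every difference $\bbeta^{(\ell,k)} - \hat\bbeta^{(\ell)}$ lies in the sparse cone $\mathbb{C}_0(C_0 s + 2s')$, placing us squarely inside the region where Condition~\ref{assume:LSE} provides strong convexity $\kappa_*$ and smoothness $\kappa^*$ of the restricted quadratic form.

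Third, on this sparse cone the composite objective is $\kappa_*$-strongly convex and the surrogate $F(\cdot\,;\phi^{(\ell,k)}, \bbeta^{(\ell,k-1)})$ majorizes $\hat\cL_\tau$ with curvature $\phi^{(\ell,k)} \leq \gamma_u L$. A standard proximal-gradient contraction lemma then yields
\[
\hat\cL_\tau(\bbeta^{(\ell,k)}) + \|\blambda^{(\ell-1)}\circ\bbeta^{(\ell,k)}\|_1 - \Psi_\ell^* \leq (1-\rho)\bigl[\hat\cL_\tau(\bbeta^{(\ell,k-1)}) + \|\blambda^{(\ell-1)}\circ\bbeta^{(\ell,k-1)}\|_1 - \Psi_\ell^*\bigr],
\]
where $\Psi_\ell^*$ is the optimal value and $\rho \asymp \kappa_*/(\gamma_u L)$. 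Iterating this gives $\Psi_\ell(\bbeta^{(\ell,k)}) - \Psi_\ell^* \leq (1-\rho)^k[\Psi_\ell(\bbeta^{(\ell,0)}) - \Psi_\ell^*]$, and strong convexity converts this to $\|\bbeta^{(\ell,k)} - \hat\bbeta^{(\ell)}\|_2^2 \lesssim (1-\rho)^k s\lambda^2$.

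Fourth, I would pass from iterate convergence to the KKT residual $\omega_{\blambda^{(\ell-1)}}(\cdot)$. The soft-threshold update \eqref{eq:iteration} is exactly the proximal map of $\|\blambda^{(\ell-1)}\circ\cdot\|_1$ applied at step $1/\phi^{(\ell,k)}$, so the gradient-mapping identity yields
\[
\omega_{\blambda^{(\ell-1)}}(\bbeta^{(\ell,k)}) \leq (L + \phi^{(\ell,k)})\,\|\bbeta^{(\ell,k)} - \bbeta^{(\ell,k-1)}\|_2 \lesssim L\sqrt{s}\,\lambda\,(1-\rho)^{k/2}.
\]
Setting the right-hand side equal to $\epsilon_{\rm t}$ gives the claimed iteration count $C_1 \log(C_2 s^{1/2}\lambda/\epsilon_{\rm t})$.

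The main obstacle I anticipate is step two: maintaining the sparse-support invariant through the inner LAMM line search, because each proposed step can transiently enlarge the active set before the majorization is accepted. Handling this requires carefully exploiting that only accepted updates satisfy \eqref{eq:localmajor}, combined with the KKT characterization of $\hat\bbeta^{(\ell)}$ and the LSE$(C_0)$ ratio bound, to certify that the support growth $s'$ remains of order $s$ uniformly in $k$ so that the LSE-based local strong convexity can in fact be invoked at every iteration.
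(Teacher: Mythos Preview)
Your four-step plan matches the paper's argument closely: both establish that the LAMM iterates remain in a localized sparse region, invoke the LSE condition for restricted strong convexity, derive geometric decay of the objective gap via the majorize-minimize inequality, and then convert to the $\omega_{\blambda^{(\ell-1)}}$ residual through the first-order optimality of the proximal step. Two points of difference are worth flagging.

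First, a minor one: the paper bounds $\phi^{(\ell,k)}\leq \gamma_u\kappa^*$ using the \emph{local} maximum sparse eigenvalue $\kappa_+$ rather than the global Lipschitz constant $L$. This sharpening is possible precisely because the iterates are already known to be sparse and inside $\bbeta^*+\BB(r)$, so the Taylor remainder in the majorization check only sees the restricted Hessian. Your bound $\phi^{(\ell,k)}\leq\gamma_u L$ is correct and simpler, but yields a contraction factor governed by $L/\kappa_*$ rather than $\zeta=\kappa^*/\kappa_*$.

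Second, and more substantively, your Steps~1 and~2 are presented sequentially, but in the high-dimensional regime they cannot be decoupled. The sublevel-set argument in Step~1 does \emph{not} by itself confine the iterates to $\bbeta^*+\BB(r)$: since $\hat\cL_\tau$ is not globally strongly convex when $d>n$, the sublevel set $\{\Psi(\bbeta,\blambda^{(\ell-1)})\leq \Psi(\bbeta^{(\ell,0)},\blambda^{(\ell-1)})\}$ is unbounded. Boundedness in $\ell_2$ only follows \emph{after} you know the iterate is $(C_0 s+s')$-sparse, because then the LSE lower bound converts the objective gap into an $\ell_2$ bound. The paper handles this circularity via a single induction (its Lemma on the solution sequence): assuming $\bbeta^{(\ell,k)}$ is sparse and close to $\bbeta^*$, it first shows $\Psi(\bbeta^{(\ell,k+1)},\blambda^{(\ell-1)})-\Psi(\bbeta^*,\blambda^{(\ell-1)})\lesssim s\lambda^2$, and from this simultaneously deduces both the sparsity bound $\|\bbeta^{(\ell,k+1)}_{\cE_\ell^{\rm c}}\|_0\leq s'$ and the $\ell_2$ bound $\|\bbeta^{(\ell,k+1)}-\bbeta^*\|_2\lesssim s^{1/2}\lambda$. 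You correctly anticipate that Step~2 is the crux, but the fix is to merge Steps~1 and~2 into a joint inductive invariant rather than to treat boundedness as a prerequisite established before sparsity.
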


We summarize the above two theorems in the following result,  which characterizes the computational complexity of the whole algorithm. 
\begin{corollary}\label{thm:comp:all}
 Assume that the conditions in Theorems~\ref{thm:comp:1} and \ref{thm:comp:2} hold.
 To achieve a sequence of  approximate  solutions $\{ \tbt^{(\ell)}\}_{\ell=1}^T$ such that $\omega_{\blam^{(0)}} (\tbt^{(1)}) \leq \epsilon_{{\rm c}} \lesssim  \lambda$ and $\omega_{\blam^{( \ell-1)}}(\tbt^{( \ell)}) \leq  \epsilon_{{\rm t}} \lesssim \sqrt{1/n}$ for $2 \leq \ell \leq  T$, the required number of  LAMM iterations  is of the order $C_1  \epsilon_{{\rm c}}^{-2} + C_2 (T-1) \log( \epsilon_{{\rm t}}^{-1}  )$,
where $C_1$ and $C_2$ are positive constants independent of $(n,d,s)$.  
\end{corollary}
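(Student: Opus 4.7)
The plan is to simply add up the iteration complexities contributed by the single contraction step $\ell=1$ and the $T-1$ subsequent tightening steps, invoking Theorems~\ref{thm:comp:1} and \ref{thm:comp:2} respectively, and then absorb factors that do not scale with the target precisions $\epsilon_{\rm c}, \epsilon_{\rm t}$ into generic constants.

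First, for $\ell=1$ the subproblem $({\rm P}_1)$ coincides with the Huber-Lasso program with weight vector $\blam^{(0)} = \lambda \mathbf 1$, initialized at $\wt \bbeta^{(0)} = \mathbf{0}$. Theorem~\ref{thm:l1huber} together with the hypothesis that the tuning parameters satisfy the scaling assumed in Theorem~\ref{thm:comp:1} guarantees that an optimum $\hat\bbeta^{(1)}$ of $({\rm P}_1)$ obeys $\|\hat\bbeta^{(1)}-\bbeta^*\|_2\lesssim s^{1/2}\lambda$, which is exactly the prerequisite of Theorem~\ref{thm:comp:1}. Applying that theorem then produces an $\epsilon_{\rm c}$-optimal solution $\wt\bbeta^{(1)}$ in at most
\[
 N_1 \;\le\; C\, L^2(1+\gamma_u)^2 (\|\bbeta^*\|_2 + s^{1/2}\lambda)^2 \, \epsilon_{\rm c}^{-2}
\]
LAMM updates. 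Since $L$, $\gamma_u$, $\|\bbeta^*\|_2$ and $s^{1/2}\lambda$ do not depend on $\epsilon_{\rm c}$, we lump them into a single constant $C_1>0$ and obtain $N_1 \le C_1 \epsilon_{\rm c}^{-2}$, which accounts for the first term in the claim.

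Second, for $\ell=2,\dots,T$, we show inductively that the warm start $\wt\bbeta^{(\ell-1)}$ keeps us inside the localized region on which the LSE$(C_0)$ condition of Theorem~\ref{thm:comp:2} yields a linear rate. Indeed, Proposition~\ref{prop:tightening} (combined with the $\epsilon_{\rm c}\lesssim \lambda$ bound on the contraction output) implies that each $\wt\bbeta^{(\ell)}$ lies in $\bbeta^* + \BB(r)$ with $r\asymp s^{1/2}\lambda$, so Condition~\ref{assume:LSE} is in force at every tightening iterate. Theorem~\ref{thm:comp:2} then guarantees that solving $({\rm P}_\ell)$ to $\omega_{\blam^{(\ell-1)}}(\wt\bbeta^{(\ell)})\le \epsilon_{\rm t}$ needs at most
\[
N_\ell \;\le\; c_1 \log\!\bigl(c_2 s^{1/2}\lambda / \epsilon_{\rm t}\bigr)
\]
LAMM updates. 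Splitting $\log(c_2 s^{1/2}\lambda/\epsilon_{\rm t}) = \log(c_2 s^{1/2}\lambda) + \log(\epsilon_{\rm t}^{-1})$ and using $\epsilon_{\rm t}\lesssim n^{-1/2}$ together with the polynomial nature of $s^{1/2}\lambda$, the second term dominates (up to a universal constant), so $N_\ell \le C_2 \log(\epsilon_{\rm t}^{-1})$ for a constant $C_2$ independent of $(n,d,s)$.

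Finally, summing over the $T-1$ tightening steps yields $\sum_{\ell=2}^T N_\ell \le C_2(T-1)\log(\epsilon_{\rm t}^{-1})$, and combining with $N_1$ gives the stated bound $C_1\epsilon_{\rm c}^{-2} + C_2(T-1)\log(\epsilon_{\rm t}^{-1})$. The only genuinely non-routine part of the argument is verifying that the warm-start condition for Theorem~\ref{thm:comp:2} is maintained at every tightening stage, i.e., that each $\wt\bbeta^{(\ell)}$ remains in the $\BB(r)$-neighborhood on which the LSE condition is assumed; but this is already built into Proposition~\ref{prop:tightening} under the scaling $r\gtrsim s^{1/2}\lambda$ required by Theorem~\ref{thm:comp:2}. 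Everything else is bookkeeping on constants.
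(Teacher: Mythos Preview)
Your proof is correct and follows exactly the approach the paper intends: the corollary is presented there without proof as an immediate summary of Theorems~\ref{thm:comp:1} and \ref{thm:comp:2}, and your argument simply spells out the bookkeeping of summing $N_1 \lesssim \epsilon_{\rm c}^{-2}$ with $(T-1)$ copies of $N_\ell \lesssim \log(\epsilon_{\rm t}^{-1})$. One small remark: the localization of all inner LAMM iterates $\bbeta^{(\ell,k)}$ (not just the outer outputs $\wt\bbeta^{(\ell)}$) is what Theorem~\ref{thm:comp:2} actually needs, and in the paper this is handled by Lemma~\ref{lemma:rsc} rather than Proposition~\ref{prop:tightening}; but the substance of your verification is the same.
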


\section{Extension to General Robust Losses}
\label{sec:ext}

Thus far, we have restricted our attention to the Huber loss. As a representative robust loss function, the Huber loss has the merit of being (i) globally $\tau$-Lipschitz continuous, and (ii) locally quadratic. A natural question arises that whether similar results, both statistical and computational, remain valid for more general loss functions that possess the above two features. In this section, we introduce a class of loss functions  which, combined with folded concave regularization, leads to statistically optimal estimators that are robust against heavy-tailed errors.

\begin{cond}[Globally Lipschitz and locally quadratic loss functions] \label{def:general.loss}
Consider a general loss function $\ell_\tau(\cdot)$ that is of the form $\ell_\tau(x) = \tau^2 \ell(x/\tau)$ for $x\in \RR$, where $\ell: \RR \mapsto [0,\infty)$ is convex and satisfies: (i) $\ell'(0)=0$ and $| \ell'(x) | \leq c_1 $ for all $x\in \RR$; (ii) $\ell''(0)=1$ and $\ell''(x) \geq c_2$ for all $|x| \leq c_3$; and (iii) $|\ell'(x) - x| \leq c_4 x^2$ for all $x\in \RR$,  
where $c_1$--$c_4$ are positive constants. 
\end{cond}

Note that Condition~\ref{def:general.loss} excludes some important Lipschitz continuous functions, such as the check function  for quantile regression and the hinge loss for classification, which do not have a local strong convexity.  The recent works \cite{ACL2019},  \cite{CLL2019} and \cite{CLL2020} established optimal estimation and excess risk bounds for (regularized) empirical risk minimizers and MOM-type estimators based on general convex and Lipschitz loss functions even  without a local quadratic behavior.
Our work complements the existing results on $\ell_1$-regularized ERM by showing oracle properties of nonconvex regularized methods under stronger signals.
For this reason, we need an additional local strong convexity condition on the loss.  It remains unclear whether the oracle rates or variable selection consistency can still be achieved without such a local curvature of the loss function.

We now discuss the implications of the three properties in Condition~\ref{def:general.loss}. First, since $\ell_\tau'(x) = \tau \ell'(x/\tau)$, it follows from property (i) that $\sup_{x\in \RR }|\ell_\tau'(x)| \leq c_1 \tau$. The boundedness of $|\ell_\tau'|$ facilitates the use of Bernstein's inequality on deriving upper bounds for the random quantities $ \| \bw^*  \|_\infty$ and $\|  \bw^*_{ \cS}   \|_2$ as in Proposition~\ref{prop:score}, where $\bw^* =\nabla \hat  \cL_\tau(\bbeta^*) -  \nabla \cL_\tau(\bbeta^*)$ with $\hat \cL_\tau(\bbeta) = (1/n) \sn \ell_\tau(y_i -  \bx_i^\T \bbeta )$.  Next, note that $\ell_\tau''(x) = \ell''(x/\tau)$. Property (ii) indicates that $\ell_\tau$ is strongly convex on $[-c_3 \tau, c_3 \tau]$, which turns out to be the key factor in establishing the restricted strong convexity condition on $\hat \cL_\tau$. See Proposition~\ref{prop:RSC} and Lemma~\ref{lem:oracle.RSC}.
Lastly, property (iii) is particularly useful when the error distribution is asymmetric.  Even though it can be shown under property (i) that $\nabla \hat  \cL_\tau(\bbeta^*)$ is concentrated around its expected value $ \nabla \cL_\tau(\bbeta^*)$ with high probability,  $  \nabla \cL_\tau(\bbeta^*) =  -   \EE\{ \ell'_\tau(\varepsilon) \bx \}$ is typically nonzero when the conditional distribution of $\varepsilon$  is asymmetric.   However, since $\EE(\varepsilon|\bx) = 0$, we have $\EE \{ \ell_\tau'(\varepsilon) | \bx \} = \EE \{  \ell_\tau'(\varepsilon)  - \varepsilon  | \bx \} = \tau  \EE \{\ell'(\varepsilon/\tau )  - \varepsilon/\tau | \bx \}$. Together with property (iii), this implies
\#
 | \EE \{ \ell_\tau'(\varepsilon) | \bx \} | \leq   c_4 \tau  \EE\{ (\varepsilon/\tau)^2 | \bx \} = c_4 \sigma_2^2 \tau^{-1} . \nn
\#
We thus use $b^*_\tau = \|  \nabla \cL_\tau(\bbeta^*) \|_2$ to quantify the bias; see Lemma~\ref{lem:bias} and Theorem~\ref{thm:random}.

Below we list five examples of $\ell(\cdot )$ (including the Huber loss) that satisfy Condition~\ref{def:general.loss}.
 
\begin{enumerate}
\item (Huber loss): $\ell(x)=x^2/2 \cdot I(|x| \leq 1) + (|x| - 1/2)\cdot  I(|x| >1)$ with $\ell'(x)= x I(|x| \leq 1) + \sgn(x) I(|x|>1)$ and $\ell''(x) = I(|x|\leq 1)$. Moreover,
\#
	|\ell'(x) - x | = |x-\sgn(x) | I(|x|>1)  \leq x^2. \nn
\#

\item (Pseudo-Huber loss I): $\ell(x) = \sqrt{1+x^2} - 1$, whose first and second derivatives are
\#
	 \ell'(x) = \frac{x}{\sqrt{1+x^2}} ~~\mbox{ and }~~ 	 \ell''(x) =  \frac{1}{(1+x^2)^{3/2}} , \nn
\#
respectively. It is easy to see that $\sup_{x \in \RR}| \ell'(x)|\leq 1$ and $ \ell''(x) \geq (1+c^2)^{-3/2}$ for all $|x| \leq c$ and $c>0$. Moreover, since $\ell'''(x) = -3x (1+x^2)^{-5/2}$ satisfies $|\ell'''(x)|<0.9$ for all $x$, it follows from Taylor's theorem and Lagrange error bound that $|\ell'(x) - x | = |\ell'(x) - \ell'(0) - \ell''(0) x| \leq  0.45 x^2$.

\item (Pseudo-Huber loss II): $\ell(x) = \log\{ (e^x + e^{-x})/2 \}$, whose first and second derivatives are, respectively,
\#
	 \ell'(x) =  \frac{e^x - e^{-x}}{e^x + e^{-x}} ~~\mbox{ and }~	~ \ell''(x)  = \frac{4}{(e^x + e^{-x})^2}  .\nn
\#
It follows that $\sup_{x \in \RR}| \ell'(x)|\leq 1$ and $ \ell''(x) \geq 4( e^c+e^{-c})^{-2}$ for all $|x| \leq c$ and $c>0$.
Moreover, we calculate the third derivative $\ell'''(x) =   - 8 ( e^{x} - e^{-x} ) (e^x + e^{-x})^{-4}$ that satisfies $|\ell'''(x)| < 0.4$. Again, by Taylor's theorem and Lagrange error bound, $|\ell'(x) - x | \leq 0.2 x^2$.

\item (Smoothed Huber loss I): The Huber loss is twice differentiable in $\RR$, except at $\pm 1$. Modifying the Huber loss gives rise to the following function that is twice differentiable everywhere:
$$
\ell(x) = \begin{cases}
 x^2/2 - |x|^3/6   & \mbox{ if } |x| \leq  1, \\
  |x| /2 - 1/6   & \mbox{ if } |x| > 1 ,
\end{cases}
$$
whose first and second derivatives are
$$
\ell'(x) = \begin{cases}
 x -   \sign(x) \cdot x^2/2 & \mbox{if } |x| \leq 1 , \\
 \sgn(x)  /2   & \mbox{if } |x| > 1 ,
\end{cases} \quad  
\ell''(x) = \begin{cases}
 1 -  |x|   & \mbox{if } |x| \leq 1 , \\
 0   & \mbox{if } |x| > 1.
\end{cases}
$$
Direct calculations show that $\sup_{x \in \RR}| \ell'(x)|\leq 1/2$ and $ \ell''(x) \geq 1- c$ for all $|x| \leq c$ and $0<c<1$. Since $\ell''$ is 1-Lipschitz continuous, we have $|\ell'(x) - x| \leq  x^2/2 $.

\item (Smoothed Huber loss II): Another smoothed version of the Huber loss function is
$$
\ell(x) = \begin{cases}
 x^2/2 -  x^4/24   & \mbox{ if } |x| \leq \sqrt{2} , \\
 (2\sqrt{2}/3) |x| -1  /2 & \mbox{ if } |x| > \sqrt{2} .
\end{cases}
$$
The derivative of this function is used in \cite{CG2017} for mean vector estimation. We compute 
$$
\ell'(x) = \begin{cases}
 x -  x^3/6 & \mbox{if } |x| \leq \sqrt{2} , \\
  (2\sqrt{2}/3)  \sgn(x)     & \mbox{if } |x| > \sqrt{2} ,
\end{cases} \quad  
\ell''(x) = \begin{cases}
 1 - x^2/2   & \mbox{if } |x| \leq \sqrt{2} , \\
 0   & \mbox{if } |x| > \sqrt{2} .
\end{cases}
$$
It is easy to see that $\sup_{x \in \RR}| \ell'(x)|\leq 2\sqrt{2}/3$ and $ \ell''(x) \geq 1-c^2/2$ for all $|x| \leq c$ and $0<c<\sqrt{2}$. Noting that $\ell''$ is $\sqrt{2}$-Lipschitz continuous, it holds $|\ell'(x) - x| \leq  x^2/\sqrt{2}$.
\end{enumerate}
 
The  loss functions discussed above, along with their derivatives up to order three, are plotted in Figure~\ref{loss.der} except for the Huber loss.
Provided that the loss function $\ell_\tau(\cdot)$ satisfies Condition~\ref{def:general.loss}, all the theoretical results in Sections~\ref{sec:det.analysis} and \ref{sec:random.analysis} remain valid only with different constants.
It is worth noticing that the four loss functions described in examples 2--5 also have Lipschitz continuous second derivatives; see Figure~\ref{loss.der}.  In fact, if the function $\ell$ satisfies $\ell'(0)=0$, $\ell''(0)=1$ and has $L_2$-Lipschitz second derivative, then property (iii) in Condition~\ref{def:general.loss} holds with $c_2 = L_2/2$.
The Lipschitz continuity of $\ell''(\cdot)$ also helps remove the anti-concentration condition \eqref{anti-concentration} on $\varepsilon$.
 
\begin{figure}[!t]
  \centering
  \subfigure[Loss function $\ell$]{\includegraphics[width=0.48\textwidth]{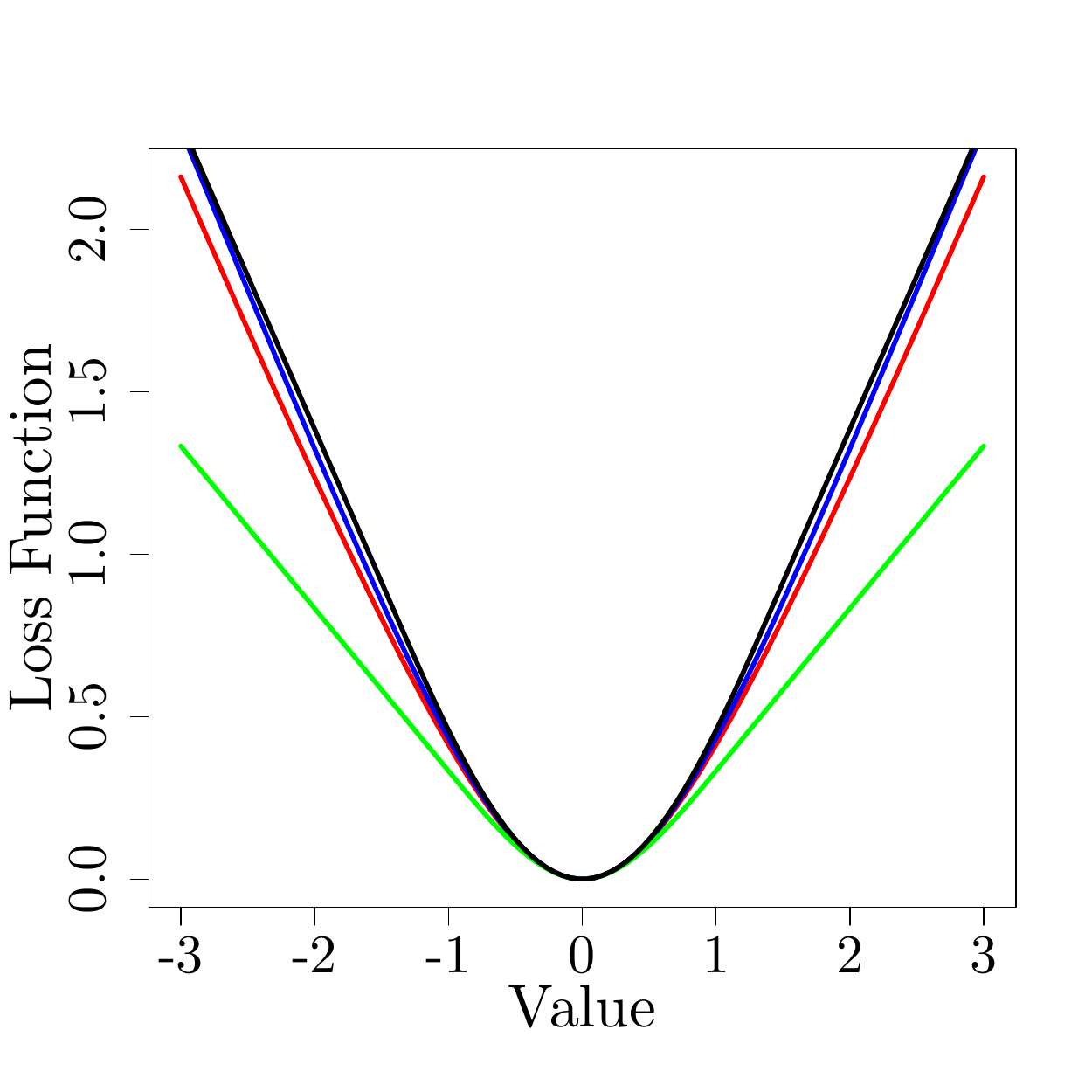}} 
  \subfigure[First derivative $\ell'$] {\includegraphics[width=0.48\textwidth]{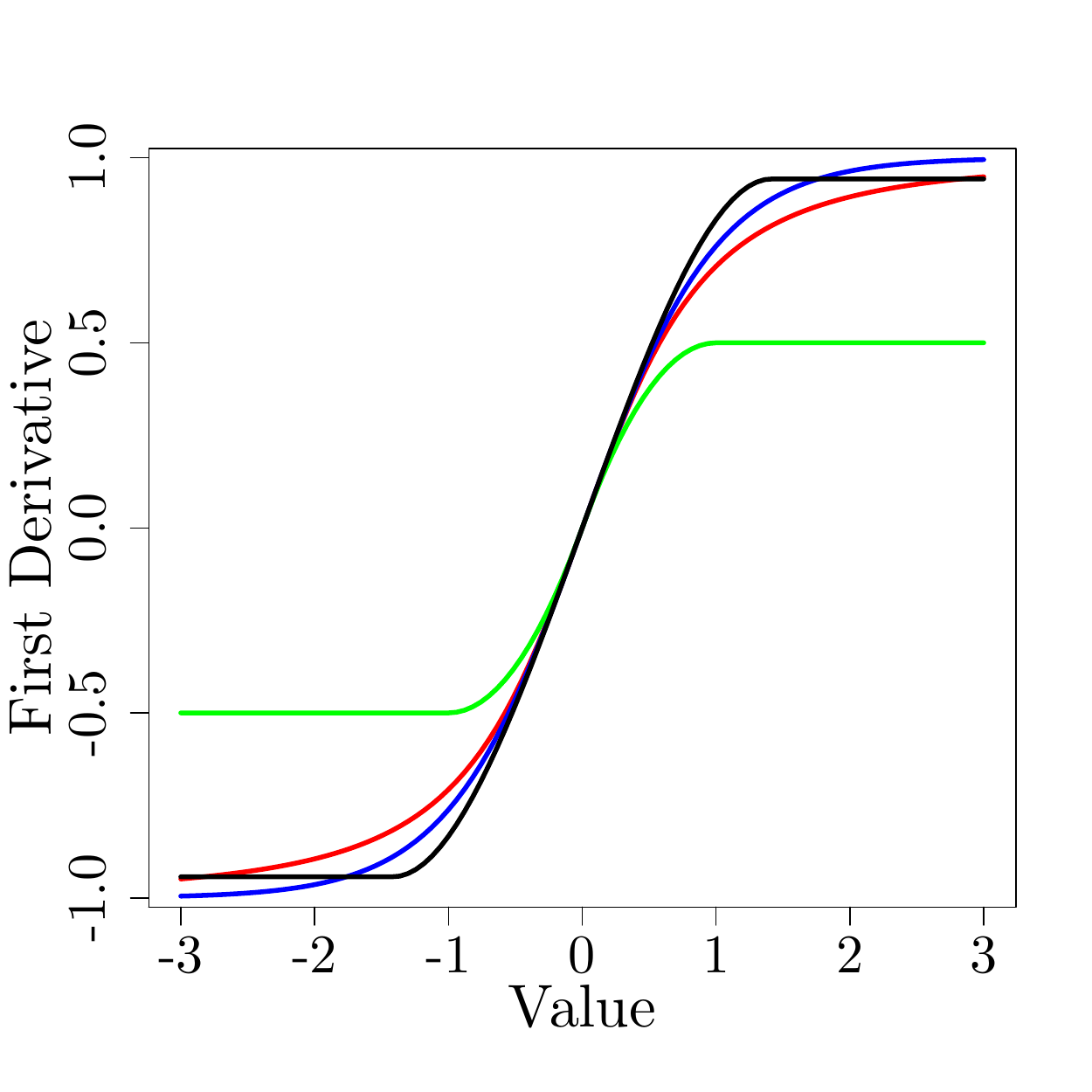}} 
  \subfigure[Second derivative $\ell''$]{\includegraphics[width=0.48\textwidth]{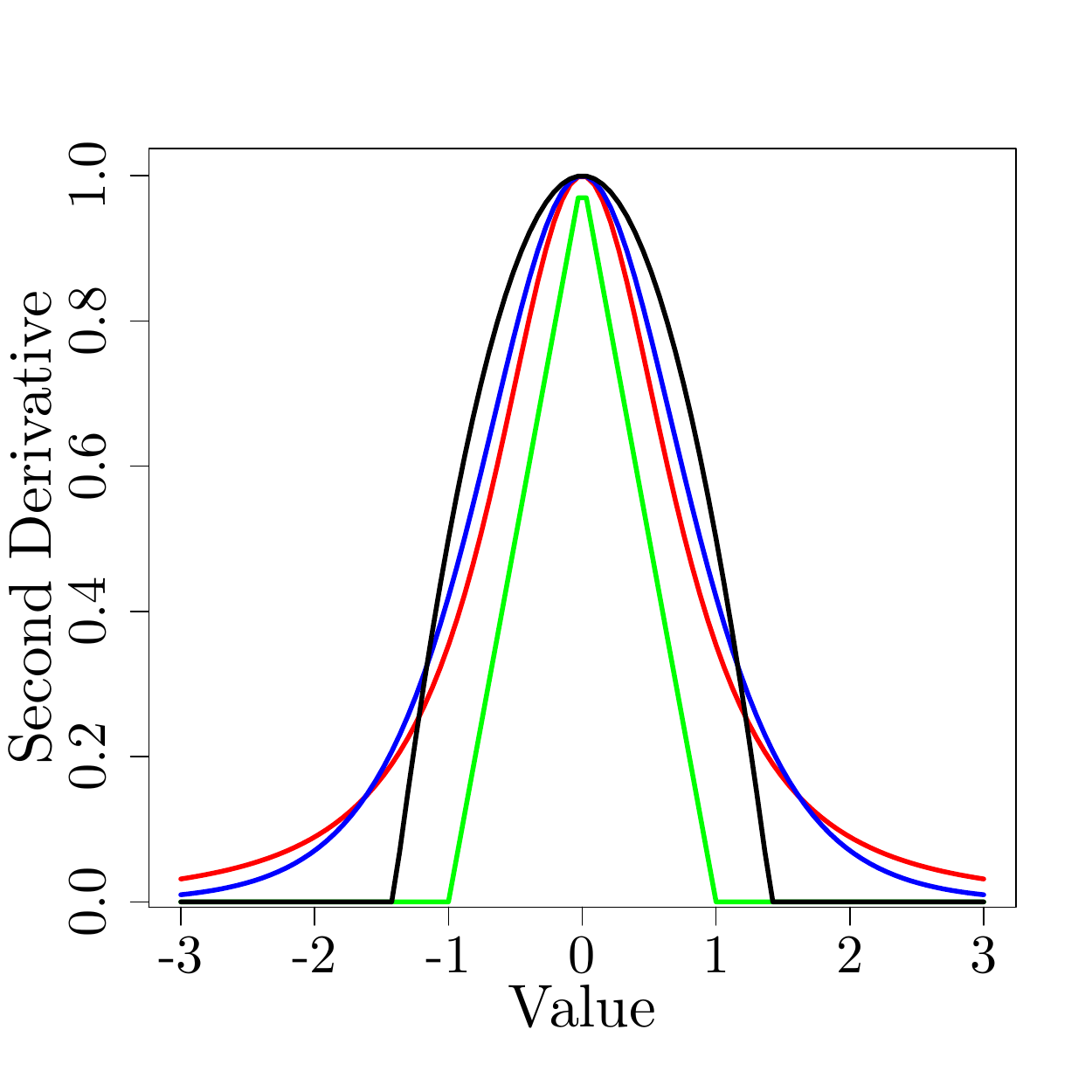}} 
  \subfigure[Third derivative $\ell'''$]{\includegraphics[width=0.48\textwidth]{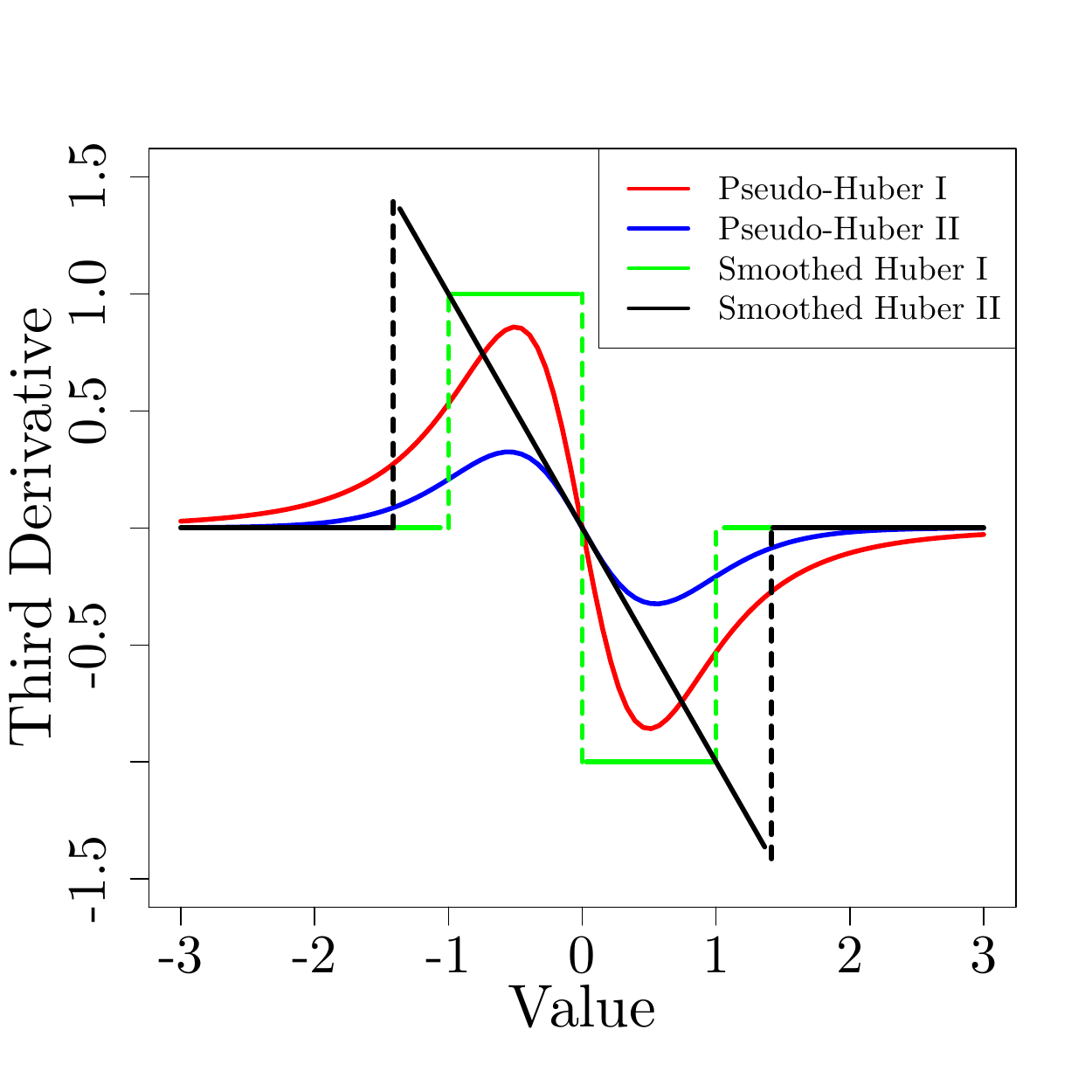}}
\caption{Examples of robust loss functions and their derivatives.}
  \label{loss.der}
\end{figure}

\section{Numerical Study}
\label{sec:sim}

In this section, we compare the empirical performance of the proposed multi-step penalized robust regression estimator with several benchmark methods, such as the Lasso \citep{Tib1996}, the SCAD and MC+ penalized least squares \citep{FL2001, Z2010}.
All the  computational results presented below are reproducible using software available at  \href{https://github.com/XiaoouPan/ILAMM}{{\small \textsf{https://github.com/XiaoouPan/ILAMM}}}.

We generate  data vectors $\{ (y_i, \bx_i) \}_{i=1}^n$ from two types of linear models: 
\begin{enumerate}
\item (Homoscedastic model): $ y_i = \bx_i^\T \bbeta^* + \varepsilon_i$ with $\bx_i \sim \mathsf{N}(0,  {\rm I}_d)$, $i=1,\ldots, n$.

\item (Heteroscedastic model): $y_i = \bx_i^\T \bbeta^* + c^{-1}( \bx_i^\T  \bbeta^*)^2 \varepsilon_i$ with $\bx_i \sim \mathsf{N}(0, {\rm I}_d)$ for $i=1,\ldots , n$,
where the constant $c$ is chosen as $c = \sqrt{3}\, \|\bbeta^*\|_2^2$ such that $\EE \{c^{-1}( \bx_i^\T  \bbeta^*)^2\}^2 = 1$, and therefore the variance of the noise is the same as that of $\varepsilon_i$. 
\end{enumerate}
In addition, we consider the following four error distributions:
\begin{enumerate}
\item Normal distribution $\mathsf{N}(\mu,\sigma^2)$ with mean $\mu = 0$ and standard deviation $\sigma = 1.5$;
\item Skewed generalized $t$ distribution ${\sf sgt}(0, 5, 0.75, 2, 2.5)$ \citep{Theo1998} with mean $\mu = 0$, variance $\sigma^2 = q / (q - 2) = 5$, $q = 2.5$, skewness parameter $\lambda = 0.75$ and shape parameter $p = 2$;
\item Lognormal distribution $\mathsf{LN}(\mu, \sigma^2)$ with log location parameter $\mu = 0$ and log shape parameter $\sigma = 1.2$;
\item Pareto distribution ${\sf Par}(x_m, \alpha)$ with scale parameter $x_m = 2$ and shape parameter $\alpha = 2.2$.
\end{enumerate}
Except for the normal distribution, all the other three  are skewed and heavy-tailed. To meet our model assumption, we subtract the mean from the lognormal and Pareto distributions.

In both homoscedastic and heteroscedastic models, the sample size $n=100$, the ambient dimension $d=1000$ and the sparsity parameter $s=6$. The true vector of regression coefficients is $\bbeta^* = (4, 3, 2, -2, -2, 2, 0, \dots, 0)^\T$,
where the first $6$ elements are non-zero and the rest are all equal to $0$. We apply the proposed TAC (\underline{T}ightening \underline{A}fter \underline{C}ontraction) algorithm to compute all the estimators with tuning parameters $\lambda$ and $\tau$ chosen via three-fold cross-validation. To be more specific, we  first choose a sequence of $\lambda$ values the same way as in the \textbf{glmnet} algorithm \citep{FHT2010}.  Guided by its theoretically ``optimal" magnitude,  the candidate set for $\tau$ is taken to be $\{ 2^j   \hat{\sigma}^{\mbox{{\tiny MAD}}} \sqrt{n / \mathrm{log}(nd)} : j=-2, -1, 0, 1, 2\}$, where $ \hat{\sigma}^{\mbox{{\tiny MAD}}} : = \mathrm{median}\{|\hat \bR - \mathrm{median}(\hat \bR)|\} / \Phi^{-1}(3/4)$ is the median absolute deviation (MAD) estimator using the residuals $\hat \bR=(\hat r_1, \ldots, \hat r_n)^\T$ obtained from the Lasso.

To highlight the tail robustness and oracle property of our algorithm, we consider the following four measurements to assess the empirical performance:
\begin{enumerate}
\item True positive, TP, which is the number of signal variables that are selected;
\item False positive, FP, which is the number of noise variables that are selected;
\item Relative error, RE$_1$ and RE$_2$, which is the relative error of an estimator $\hat \bbeta$ with respect to the Lasso under $\ell_1$- and $\ell_2$-norms:
\end{enumerate}
$$
\mathrm{RE}_1 = \frac{|| \hat \bbeta - \bbeta^*||_1}{||  \hat \bbeta^{\lasso}  - \bbeta^*||_1} ~~\mbox{ and }~~ \mathrm{RE}_2 = \frac{|| \hat \bbeta - \bbeta^*||_2}{|| \hat \bbeta^{\lasso} - \bbeta^*||_2}.
$$

\begin{table}[!h]
\centering
\begin{tabular}{c | c | c c c c c }
Error dist. & & Lasso & SCAD & Huber-SCAD & MC+ & Huber-MC+ \\
\hline
Normal & TP & 6.00(0) & 6.00(0) & 6.00(0) & 6.00(0) & 6.00(0) \\
 & FP & 24.44(14.25) & 3.11(4.53)  & 2.19(3.87) & 0.84(1.91) & 0.53(1.27)  \\
 & RE$_1$ & 1.00 & 0.23(0.12) & 0.22(0.11) & 0.19(0.09) & 0.19(0.09) \\
 & RE$_2$ & 1.00 & 0.32(0.13) & 0.33(0.13) & 0.30(0.12) & 0.30(0.12) \\
\hline
Skewed $t$ & TP & 4.74(1.37)  & 4.87(1.35) & 4.74(1.39) & 3.97(1.67) & 3.97(1.62) \\
 & FP & 20.78(17.10) & 18.49(9.65)  & 11.48(8.82)  & 4.28(4.24)  & 2.76(3.23)  \\
 & RE$_1$ & 1.00 & 0.88(0.22) & 0.73(0.23) & 0.73(0.21) & 0.65(0.22) \\
 & RE$_2$ & 1.00 & 0.91(0.17) & 0.86(0.19) & 0.94(0.20) & 0.88(0.23) \\
\hline
Lognormal  & TP & 5.68(0.87)  & 5.71(0.84) & 6.00(0.07) & 5.49(1.14) & 5.97(0.37) \\
 & FP & 29.70(16.66) & 16.75(8.70) & 3.80(4.52) & 4.32(4.62) & 0.91(1.95)  \\
 & RE$_1$ & 1.00 & 0.54(0.26) & 0.15(0.12) & 0.42(0.32) & 0.13(0.11) \\
 & RE$_2$ & 1.00 & 0.62(0.26) & 0.23(0.14) & 0.60(0.30) & 0.22(0.14) \\
\hline
Pareto & TP & 5.64(1.09) & 5.67(1.01) & 6.00(0) & 5.44(1.35) & 5.98(0.35) \\   
 & FP & 28.30(16.21)  & 14.69(8.97)  & 2.91(4.34) & 3.48(3.39)  & 0.71(1.71) \\ 
 & RE$_1$ & 1.00 & 0.51(0.30) & 0.14(0.08) & 0.40(0.25) & 0.13(0.17) \\
 & RE$_2$ & 1.00 & 0.58(0.26) & 0.21(0.11) & 0.57(0.28) & 0.22(0.22) \\
\end{tabular}
\caption{Simulation results for the Lasso, SCAD, Huber-SCAD, MC+ and Huber-MC+ estimators under the homoscedastic model.}
\label{simu.homo}
\end{table}

\begin{table}[!h]
\centering
\begin{tabular}{c | c | c c c c c }
Error dist. &   & Lasso & SCAD & Huber-SCAD & MC+ & Huber-MC+ \\
\hline
Normal & TP & 6.00(0) & 6.00(0) & 5.96(0.40) & 6.00(0) & 5.98(0.28) \\
 & FP & 22.71(16.51) & 3.29(5.76)  & 0.31(1.68)   & 0.88(2.03)  & 0.13(0.70) \\ 
 & RE$_1$ & 1.00 & 0.28(0.17) & 0.21(0.16) & 0.24(0.14) & 0.16(0.18) \\
 & RE$_2$ & 1.00 & 0.38(0.19) & 0.31(0.16) & 0.36(0.18) & 0.25(0.14) \\
\hline
Skewed $t$ & TP & 4.93(1.59) & 5.04(1.53) & 5.83(0.65) & 4.58(1.76)  & 5.52(1.17)  \\  
 & FP & 22.99(18.62)  & 18.21(10.83) & 2.71(4.29) & 4.99(5.14) & 0.92(2.42) \\
 & RE$_1$ & 1.00 & 0.83(0.30) & 0.26(0.26) & 0.69(0.29) & 0.27(0.27) \\
 & RE$_2$ & 1.00 & 0.87(0.28) & 0.34(0.26) & 0.87(0.29) & 0.35(0.28) \\
\hline
Lognormal & TP & 5.74(0.96)  & 5.77(0.91)  & 6.00(0)  & 5.65(1.14) & 6.00(0)  \\  
 & FP & 26.61(16.51) & 11.28(9.50)  & 1.23(3.55)  & 2.62(3.45) & 0.30(0.86)  \\  
 & RE$_1$ & 1.00 & 0.45(0.28) & 0.14(0.13) & 0.35(0.23) & 0.12(0.09) \\
 & RE$_2$ & 1.00 & 0.53(0.26) & 0.21(0.15) & 0.50(0.26) & 0.19(0.12) \\
\hline
Pareto & TP & 5.67(1.19) & 5.67(1.18)  & 5.97(0.42)  & 5.59(1.29)  & 5.95(0.55) \\  
 & FP & 25.56(16.04)  & 10.13(10.31) & 0.61(2.03) & 2.80(4.06) & 0.23(0.91)  \\  
 & RE$_1$ & 1.00 & 0.46(0.29) & 0.14(0.12) & 0.39(0.28) & 0.15(0.23) \\
 & RE$_2$ & 1.00 & 0.55((0.29) & 0.22(0.16) & 0.54(0.31) & 0.23(0.35) \\
\end{tabular}
\caption{Simulation results for the Lasso, SCAD, Huber-SCAD, MC+ and Huber-MC+ estimators under the heteroscedastic model.}
\label{simu.hetero}
\end{table}

Tables~\ref{simu.homo} and \ref{simu.hetero} summarize the averages of each measurement, TP, FP, RE$_1$, and RE$_2$ with their standard deviations in brackets, over 200 replications under both homoscedastic and heteroscadastic models.  
RE$_1$ and RE$_2$ for Lasso are defined to be one, so we omit their standard deviations.
Here, Huber-SCAD and Huber-MC+ signify the proposed two-stage algorithm using the SCAD and MC+ penalties, respectively.
When the noise distributions are heavy-tailed and/or skewed, we see that Huber-SCAD and Huber-MC+ outperform SCAD and MC+, respectively, with fewer spurious discoveries (false positives), smaller estimation errors and less variability. Under the homoscedastic normal model, Huber-SCAD and Huber-MC+ perform similarly to their least squares counterparts; while under heteroscedasticity, the proposed algorithm exhibits a notable advantage over existing methods on selection consistency even though the error is normally distributed.
In summary, these numerical studies validate our expectations that the proposed robust regression algorithm improves the Lasso as a general regression analysis method on two aspects: robustness against heavy-tailed (and even heteroscedastic) noise and selection consistency.

To further visualize the advantage of the multi-step penalized robust regression methods over the existing ones (e.g. Lasso, SCAD and MC+), we draw the receiver operating characteristic  (ROC) curve, which is the plot of true positive rate (TPR) against false positive rate (FPR) at various regularization parameters. Specifically, TPR and FPR are defined, respectively, as the ratio of true positive to $s$ and the ratio of false positive to $d-s$.
We generate data vectors $\{ (y_i, \bx_i) \}_{i=1}^n$ from both homoscedastic and heteroscedastic models with sample size $n=100$, dimension $d=1000$ and sparsity $s=10$. The true vector of regression coefficients is $\bbeta^* = (1.5, 1.5, \dots, 1.5, 0, \dots, 0)^\T$, where the first $10$ elements are non-zero with weaker signals than the previous experiment, and the rest are all equal to $0$. We apply the proposed TAC algorithm to implement all the five methods, Lasso, SCAD, MC+, Huber-SCAD and Huber-MC+, with a sequence of $\lambda$ values chosen as before and $\tau$ as $\hat{\sigma}^{\mbox{{\tiny MAD}}} \sqrt{n / \mathrm{log}(nd)}$. For each combination of $\lambda$ and $\tau$, the empirical FPR and TPR are computed based on 200 simulations.

\begin{figure}[!t]
  \centering
  \subfigure[$\mathsf{N}(0, 2)$]{\includegraphics[width=0.48\textwidth]{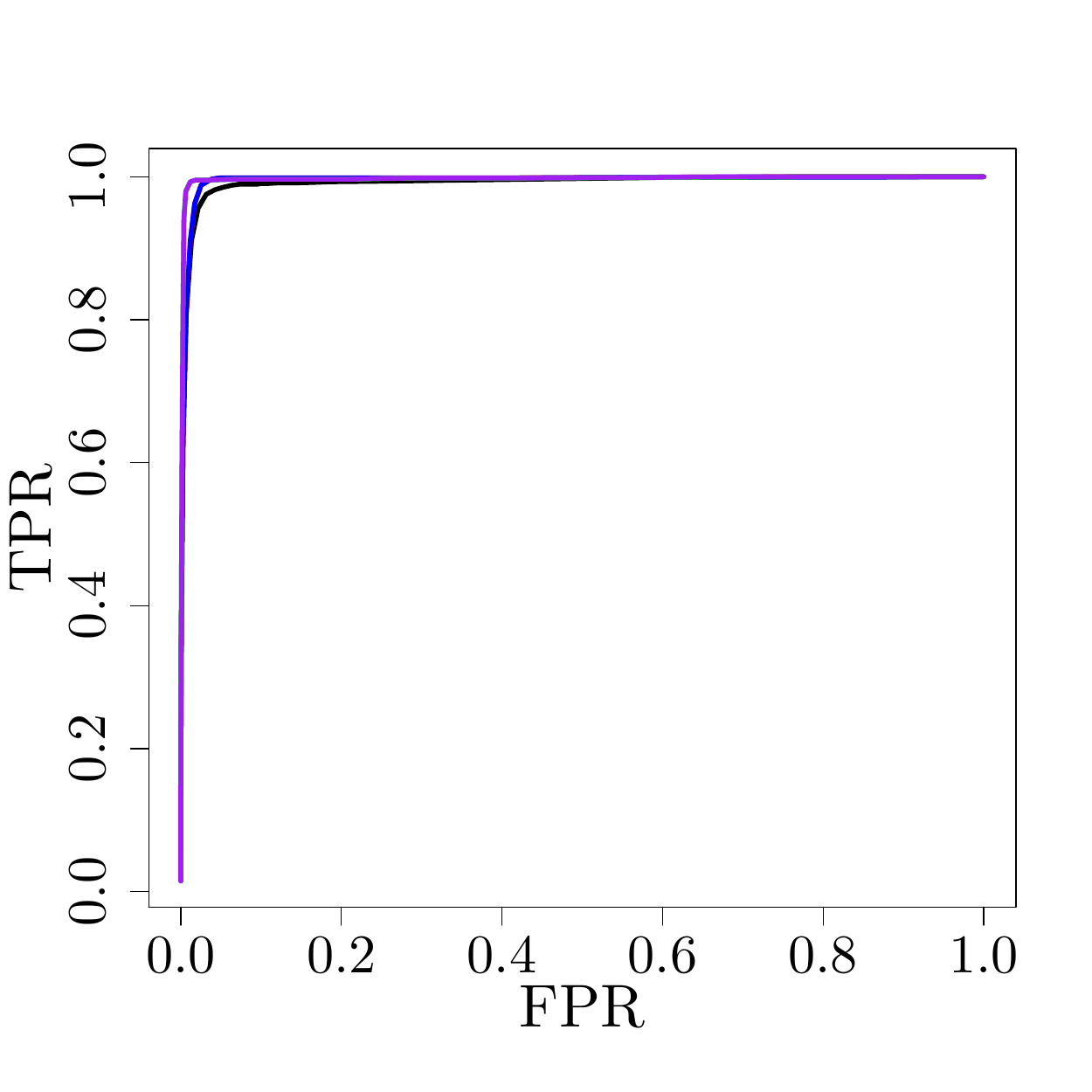}} 
  \subfigure[${\sf t}_2$] {\includegraphics[width=0.48\textwidth]{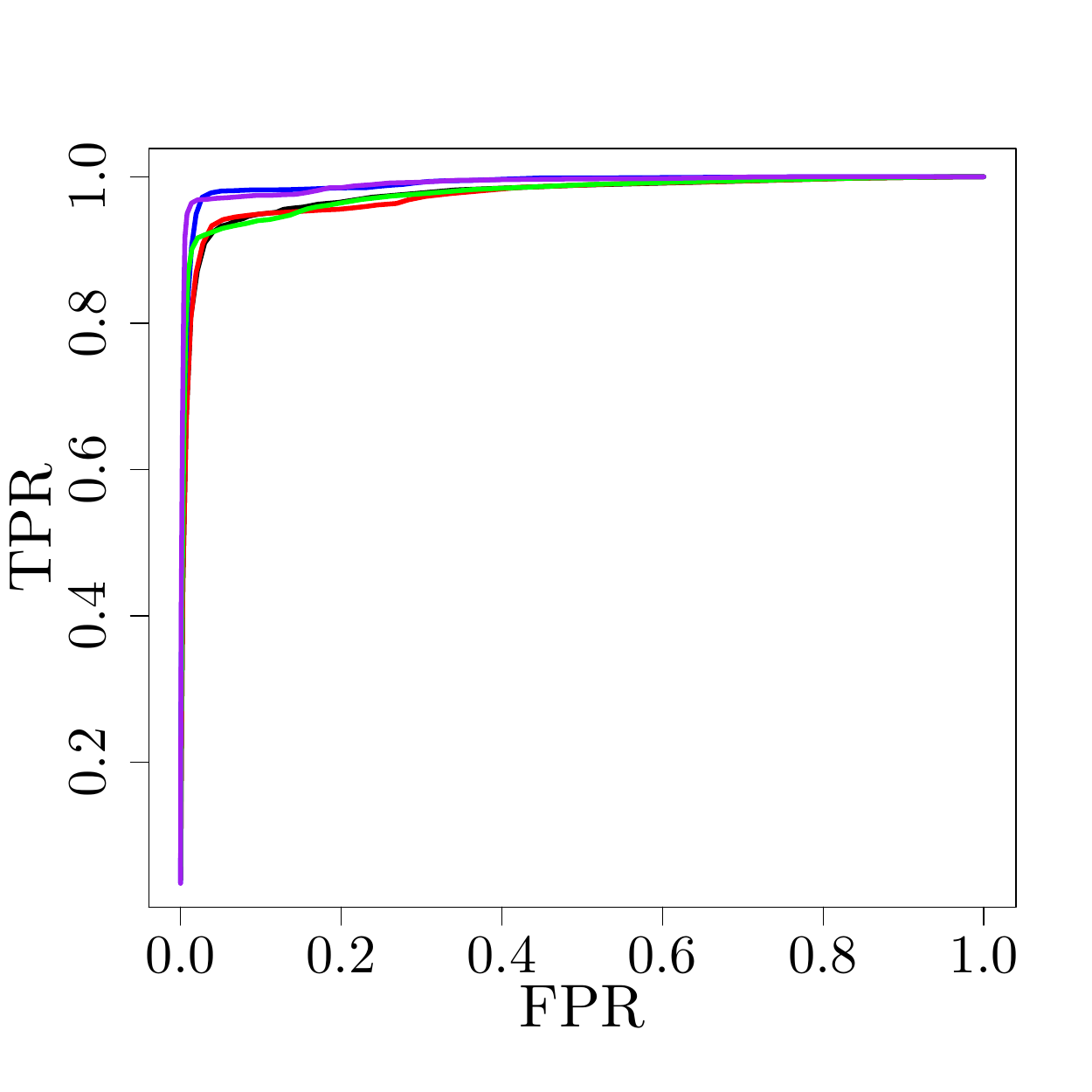}} 
  \subfigure[$\mathsf{LN}(0, 1.25^2)$]{\includegraphics[width=0.48\textwidth]{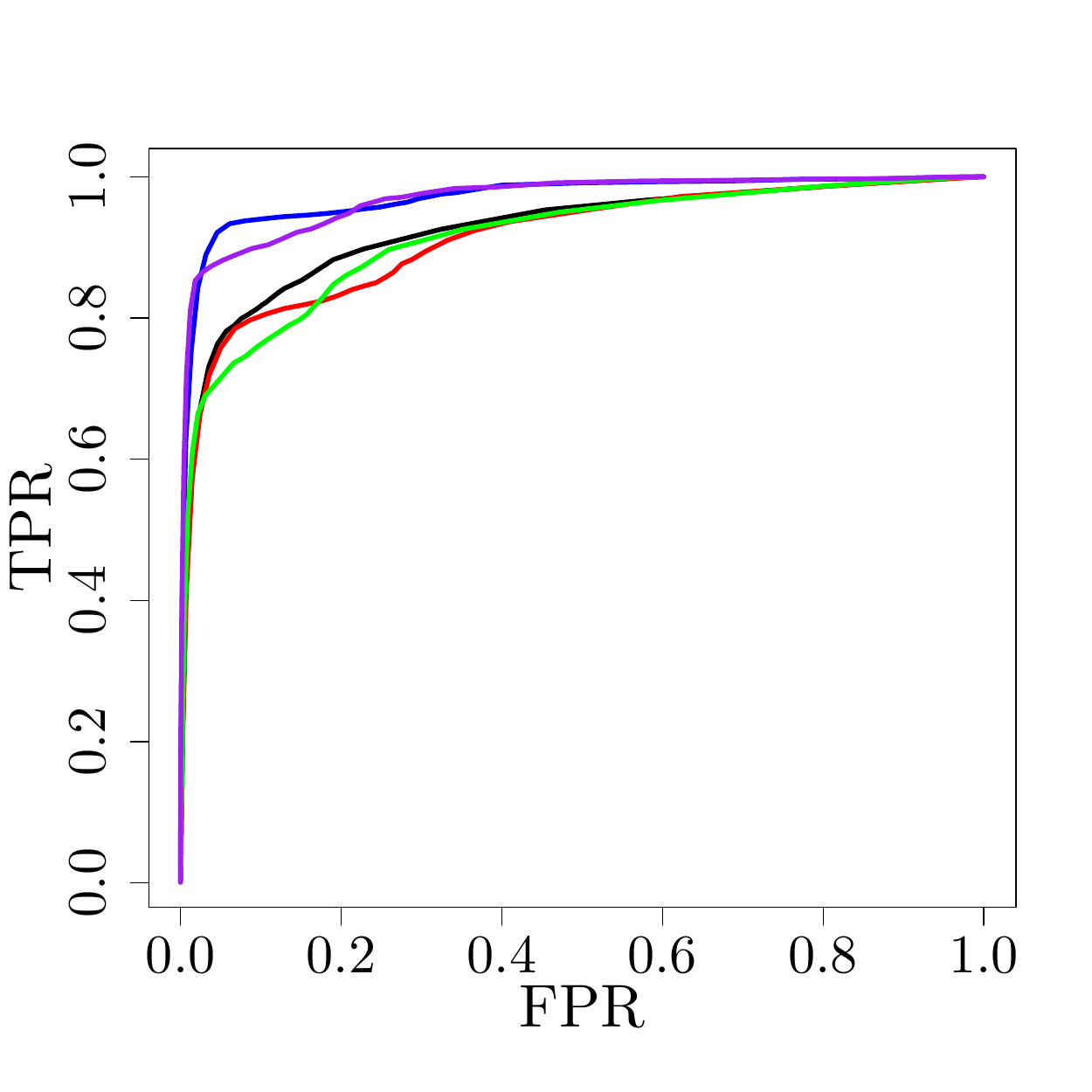}} 
  \subfigure[${\sf Par}(2, 2)$ ]{\includegraphics[width=0.48\textwidth]{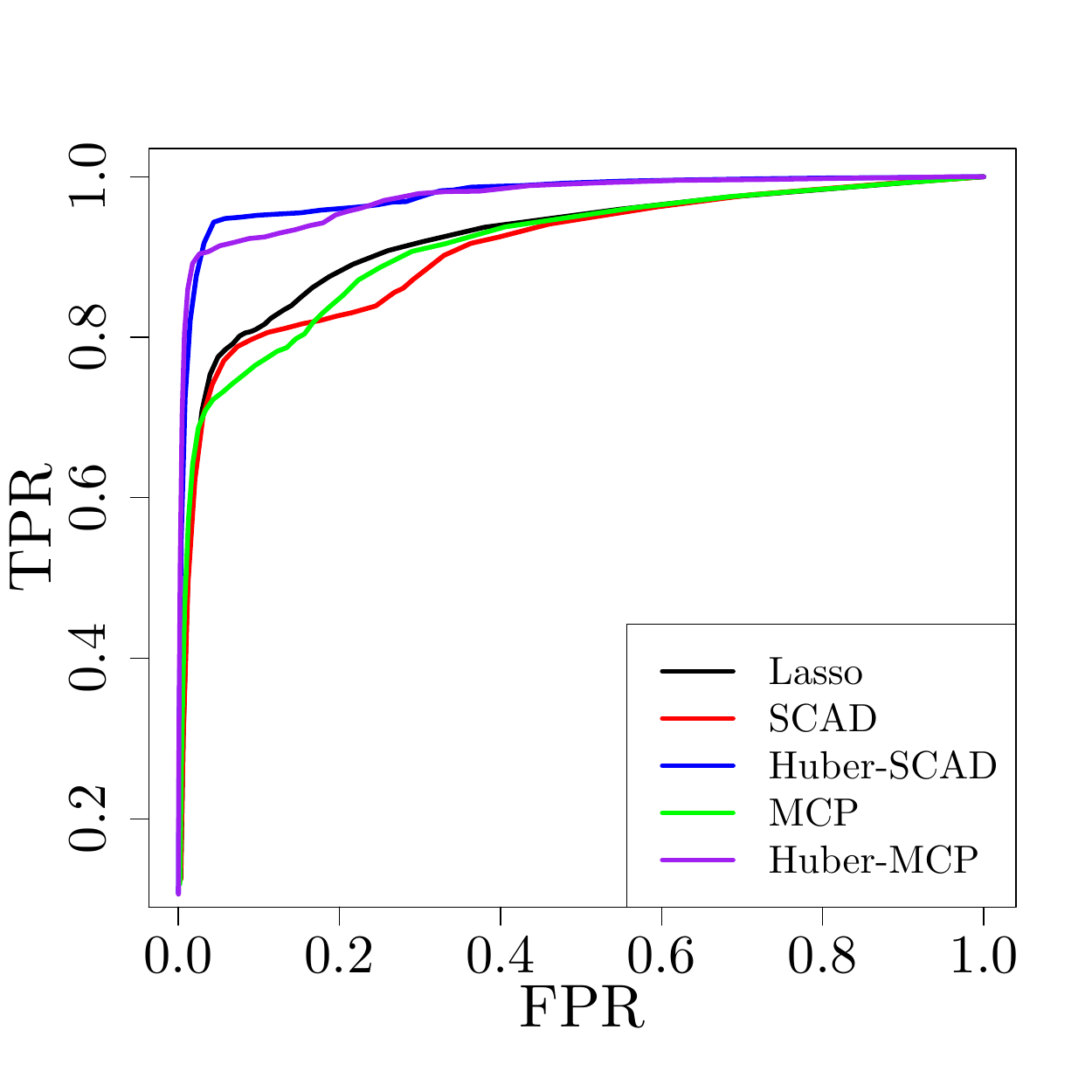}}
\caption{Plots of ROC curves of the five methods under the homoscedastic model with errors generated from four distributions: normal, Student's $t$, lognormal and Pareto.}
  \label{homo.roc}
\end{figure}

\begin{figure}[!t]
  \centering
  \subfigure[$\mathsf{N}(0, 2)$]{\includegraphics[width=0.48\textwidth]{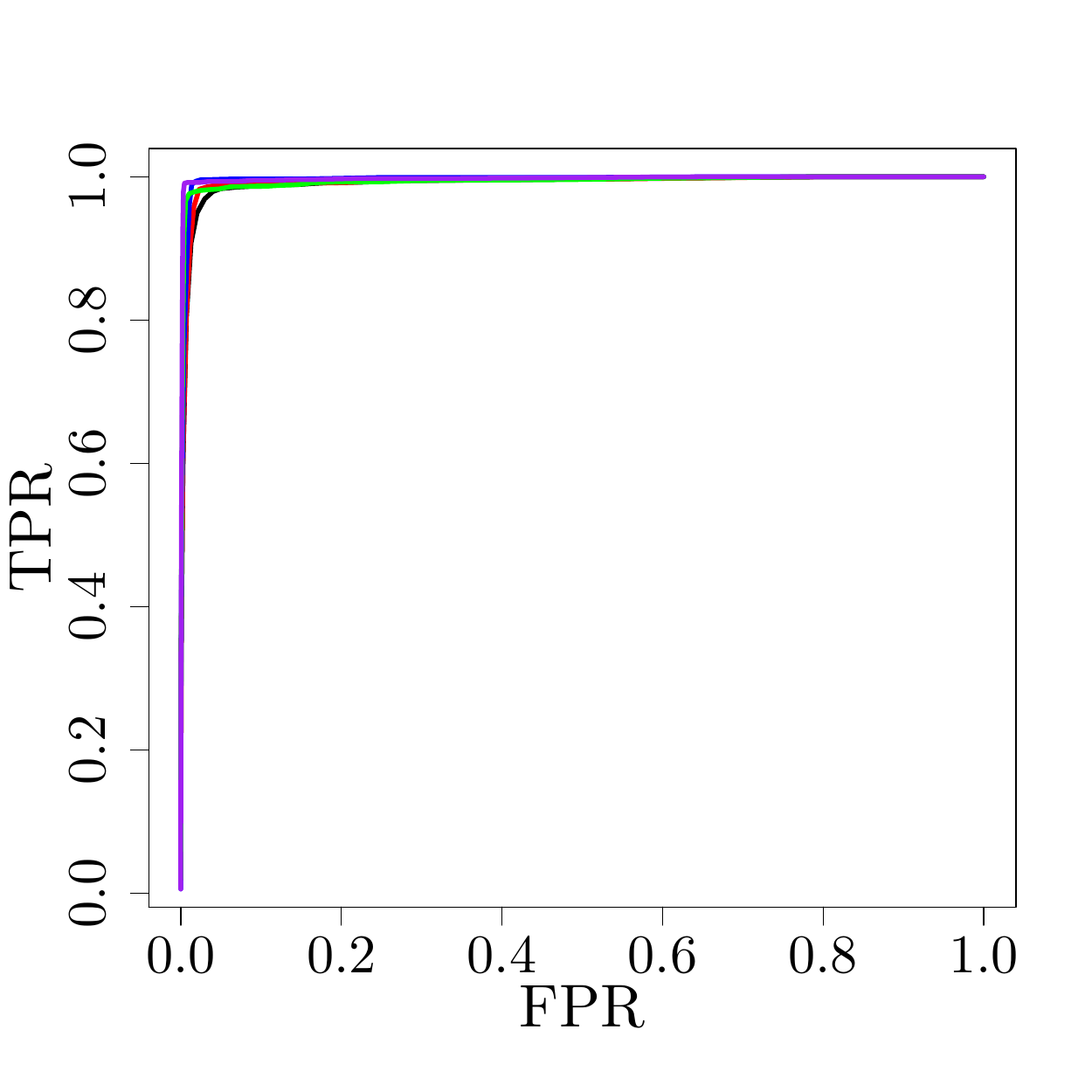}} 
  \subfigure[ ${\sf t}_2$] {\includegraphics[width=0.48\textwidth]{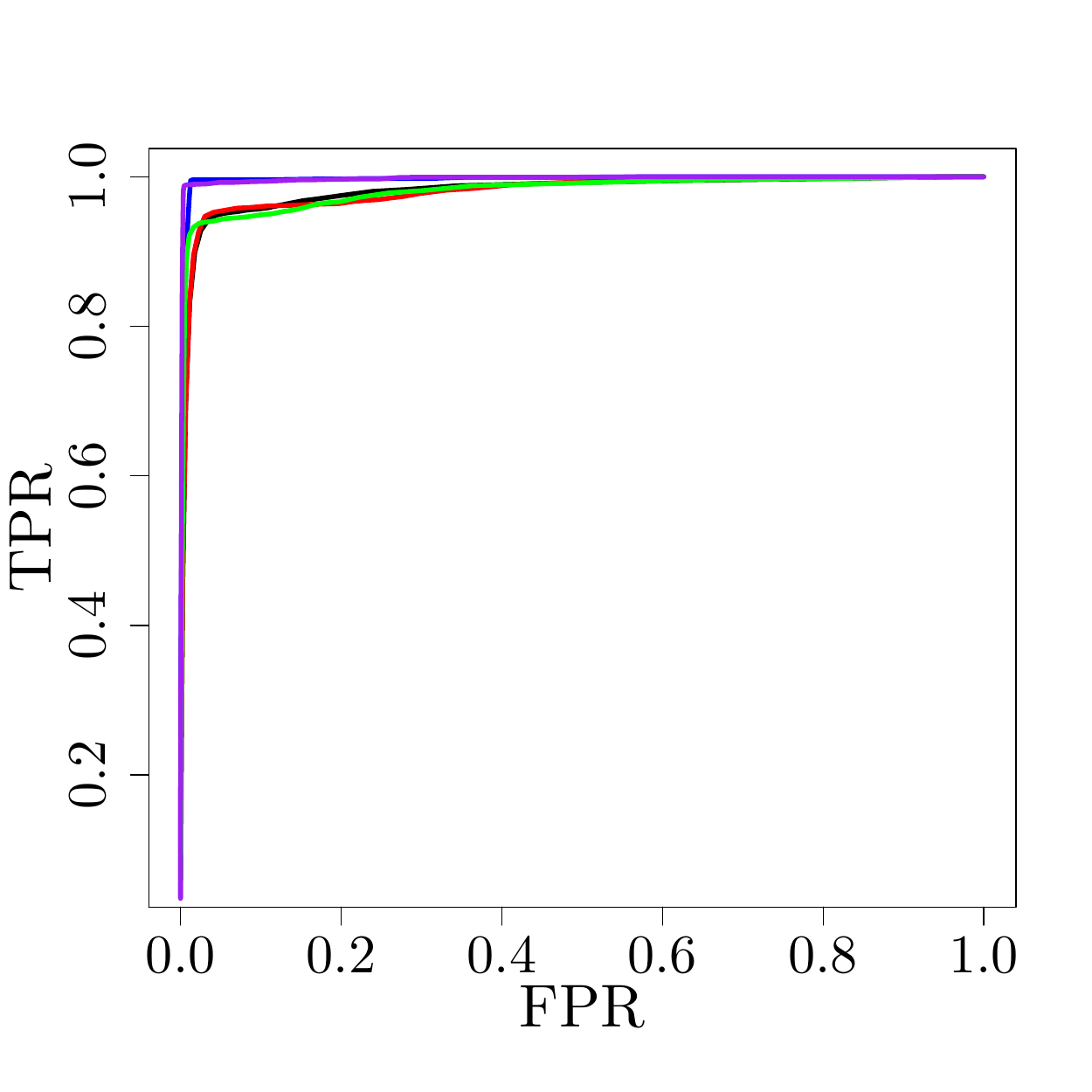}} 
  \subfigure[$\mathsf{LN}(0, 1.25^2)$]{\includegraphics[width=0.48\textwidth]{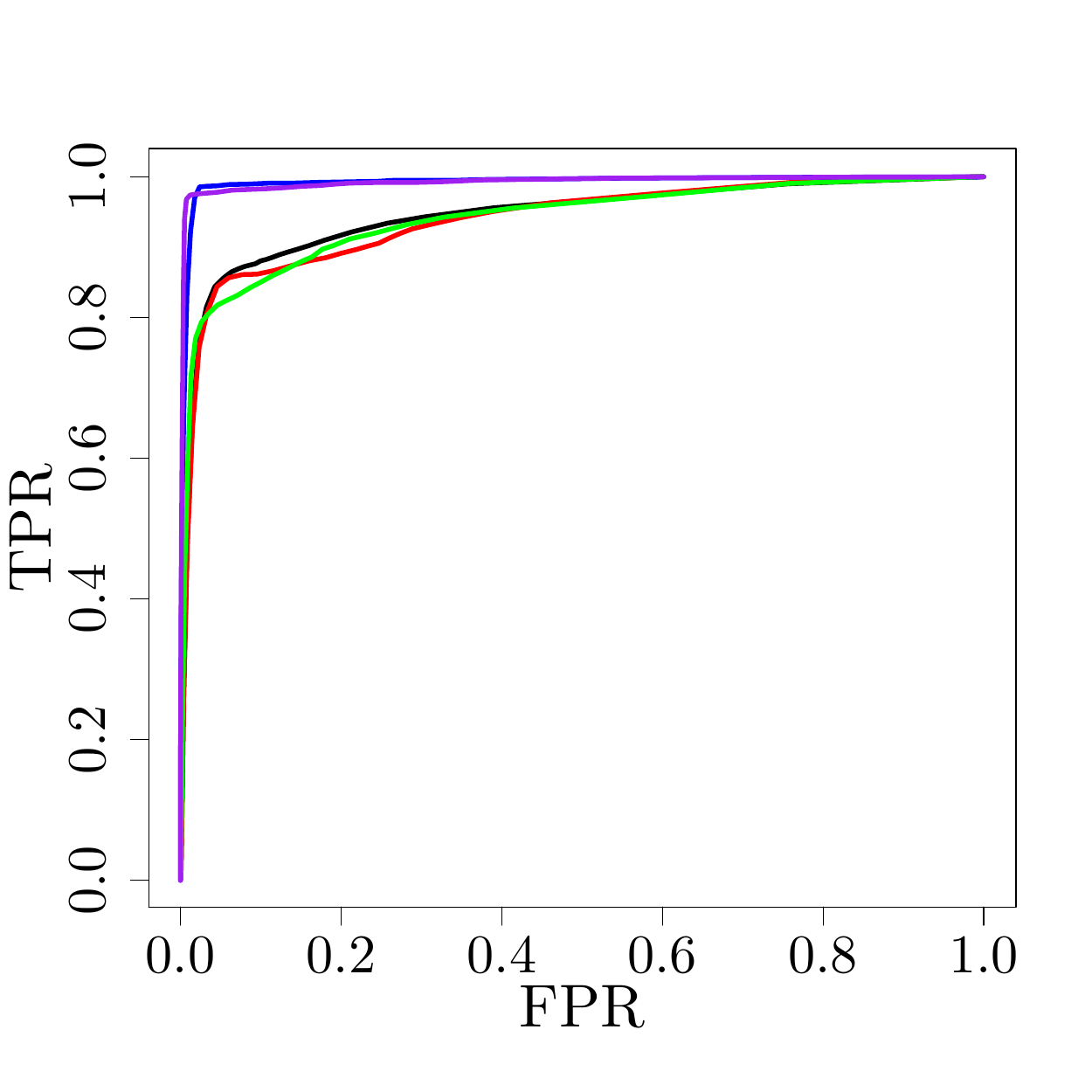}} 
  \subfigure[ ${\sf Par}(2, 2)$ ]{\includegraphics[width=0.48\textwidth]{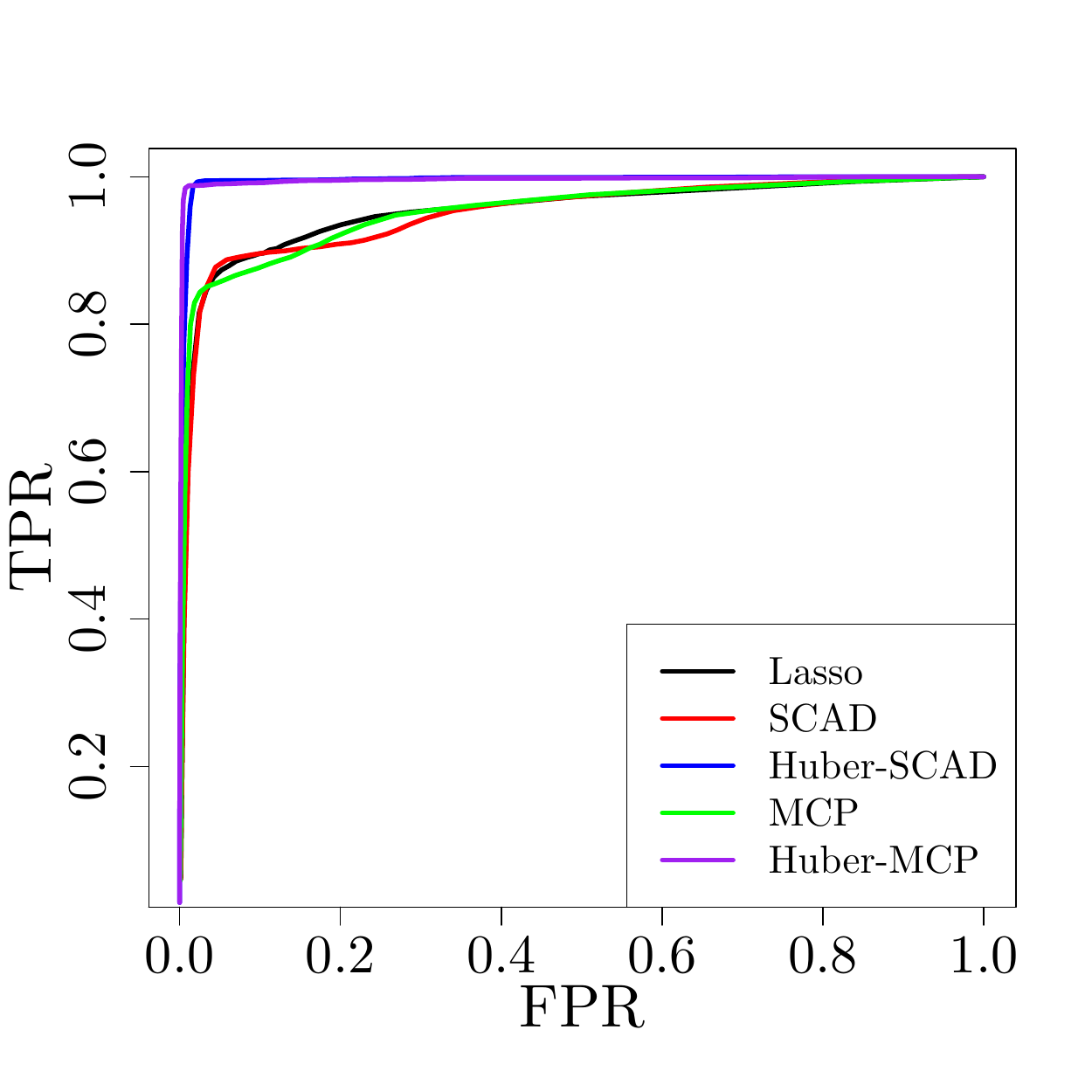}}
\caption{Plots of ROC curves of the five methods under the heteroscedastic model with errors generated from four distributions: normal, Student's $t$, lognormal and Pareto.}
  \label{hetero.roc}
\end{figure}

Figures~\ref{homo.roc} and \ref{hetero.roc}  indicate evident advantage of Huber-SCAD and Huber-MC+ over their least squares counterparts: the robust methods have a greater area under the curve (AUC) when the noise distribution is heavy-tailed and/or skewed in both homoscedastic and heteroscedastic models.
Surprisingly, even in a normal model, the proposed methods still outperform the competitors by a visible margin.

\begin{supplement}

\stitle{}

\section{Preliminaries}
Assume we observe independent data $\{ (y_i, \bx_i)\}_{i=1}^n$ from the linear model $y_i =  \bx_{ i}^\T \bbeta^* + \varepsilon_i $.  Let $\blambda = (\lambda_1,\ldots, \lambda_d)^\T$ be a $d$-vector of regularization parameters with $\lambda_j\geq 0$. Consider the optimization problem
\#
	 \min_{\bbeta  \in \RR^{d}}  \{  \hat \cL_\tau(\bbeta) +  \| \blambda   \circ  \bbeta \|_1   \},   \label{general.lasso2}
\#
where $ \hat \cL_\tau(\bbeta) = (1/n) \sn \ell_\tau(y_i -  \bx_i^\T \bbeta )$ and $ \blambda   \circ  \bbeta  = (\lambda_1 \beta_1, \ldots, \lambda_d \beta_d )^\T $. 
Moreover, define the population loss $\cL_\tau(\bbeta) = \EE \hat \cL_\tau(\bbeta)$.

The following result provides conditions under which an $\epsilon$-optimal solution to the convex program \eqref{general.lasso2} falls in an $\ell_1$-cone. Recall that $\cS = {\rm supp}(\bbeta^*)$ and $\cS^{\cc} = [d] \setminus \cS$. Moreover, define
\#
	 \bw(\bbeta) = \nabla  \hat\cL_\tau(\bbeta) -  \nabla \cL_\tau(\bbeta) ~ ~\mbox{ and }~ ~ b(\bbeta) = \|   \nabla \cL_\tau (\bbeta )\|_2 , \nn
\#
which are, respectively, the centered score function and the approximation bias. 
First, we characterize the magnitude of the bias  $b_\tau^* := b(\bbeta^*)$, as a function of $\tau$.

\begin{lemma} \label{lem:bias}
Assume $\mu_1 =  \sup_{\bu \in \mathbb{S}^{d-1}}   \EE  |  \bu^\T      \bx | <\infty$, $\EE (\varepsilon | \bx) = 0$ and $\EE(\varepsilon^2 | \bx ) \leq \sigma_2^2$ almost surely.  Then  $| b_\tau^* | \leq  \mu_1 \sigma^2_2 \tau^{-1}$ for any $\tau>0$, and $ \tau |b^*_\tau | \to 0$ as $\tau \to \infty$.
\end{lemma}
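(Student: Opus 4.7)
My plan is to represent $b^*_\tau$ through the dual form of the $\ell_2$-norm and exploit the centering hypothesis $\EE(\varepsilon\mid\bx)=0$. Differentiating $\cL_\tau(\bbeta)=\EE \ell_\tau(y-\bx^\T\bbeta)$ at $\bbeta^*$ gives $\nabla\cL_\tau(\bbeta^*)=-\EE[\ell_\tau'(\varepsilon)\bx]$. The tower identity yields $\EE[\varepsilon\bx]=\EE[\bx\,\EE(\varepsilon\mid\bx)]=\bzero$, allowing me to recenter and write
$$
b^*_\tau = \bigl\|\EE[(\ell_\tau'(\varepsilon)-\varepsilon)\bx]\bigr\|_2 = \sup_{\bu\in\mathbb{S}^{d-1}}\bigl|\EE[(\ell_\tau'(\varepsilon)-\varepsilon)\,\bu^\T\bx]\bigr|.
$$
This recentering is crucial: although $\EE[\ell_\tau'(\varepsilon)\bx]$ itself has a systematic bias under asymmetric noise, the difference $\ell_\tau'(\varepsilon)-\varepsilon$ is supported on $\{|\varepsilon|>\tau\}$ and decays as $\tau$ grows.

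The analytic core is the pointwise inequality $|\ell_\tau'(x)-x|=(|x|-\tau)_+\leq x^2/\tau$, which I obtain from the explicit Huber-derivative formula together with the elementary observation that on $|x|>\tau$ the quadratic $x^2-\tau|x|+\tau^2$ is positive (its discriminant is $-3\tau^2$), so $(|x|-\tau)\tau\leq x^2$. Substituting this bound into the display above and applying tower with $\EE(\varepsilon^2\mid\bx)\leq\sigma_2^2$ immediately gives
$$
b^*_\tau \leq \tau^{-1}\sup_{\bu\in\mathbb{S}^{d-1}}\EE\bigl[\varepsilon^2|\bu^\T\bx|\bigr] \leq \tau^{-1}\sigma_2^2\sup_{\bu\in\mathbb{S}^{d-1}}\EE|\bu^\T\bx| \leq \mu_1\sigma_2^2/\tau,
$$
which is the first assertion.

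For the limit $\tau b^*_\tau\to 0$, I would argue coordinate-wise. Writing $b^*_\tau = \|\bv(\tau)\|_2$ with $v_j(\tau) = \EE[(\ell_\tau'(\varepsilon)-\varepsilon)x_j]$, it suffices---since the ambient dimension $d$ is finite---to show $\tau v_j(\tau)\to 0$ for each $j$. The sharper bound $\tau|\ell_\tau'(\varepsilon)-\varepsilon|=\tau(|\varepsilon|-\tau)_+\leq \varepsilon^2\mathbf{1}(|\varepsilon|>\tau)$ (the same calculation) forces the integrand $\tau(\ell_\tau'(\varepsilon)-\varepsilon)x_j$ to vanish almost surely as $\tau\to\infty$: for any fixed $\varepsilon$ with $|\varepsilon|<\infty$, $\ell_\tau'(\varepsilon)=\varepsilon$ once $\tau>|\varepsilon|$. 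Meanwhile, $|\tau(\ell_\tau'(\varepsilon)-\varepsilon)x_j|\leq \varepsilon^2|x_j|$ provides a dominating envelope whose integrability follows from tower plus $\EE(\varepsilon^2\mid\bx)\leq\sigma_2^2$, namely $\EE[\varepsilon^2|x_j|]\leq\sigma_2^2\mu_1$. Dominated convergence then closes the argument.

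The only step that requires genuine care is verifying the two pointwise inequalities on $|\ell_\tau'(x)-x|$ and $\tau|\ell_\tau'(x)-x|$; everything else is routine bookkeeping. It is worth noting that no second-moment assumption on $\bx$ is invoked beyond $\sup_{\bu}\EE|\bu^\T\bx|<\infty$, which appears both in the duality step and in the integrability of the dominating envelope.
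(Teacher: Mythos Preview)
Your proof is correct and follows essentially the same route as the paper's: both recenter via $\EE(\varepsilon\mid\bx)=0$, bound $|\ell_\tau'(\varepsilon)-\varepsilon|=(|\varepsilon|-\tau)_+$ by $\varepsilon^2/\tau$, condition on $\bx$ to pull out $\sigma_2^2$, and then take the supremum over $\bu\in\mathbb{S}^{d-1}$ to recover $\mu_1$. The only cosmetic differences are that the paper factors $(|\varepsilon|-\tau)\leq(\varepsilon^2-\tau^2)/\tau$ rather than invoking the discriminant of $x^2-\tau|x|+\tau^2$, and for the limit the paper simply states that $\EE\{\varepsilon^2 I(|\varepsilon|>\tau)\mid\bx\}\to 0$, whereas you spell out the dominated-convergence argument coordinate-wise with envelope $\varepsilon^2|x_j|$---which is arguably cleaner since it makes the integrability of the envelope explicit.
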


\begin{proof}[Proof of Lemma~\ref{lem:bias}]
Note that $ \nabla \cL_\tau(\bbeta^*)  = \EE \{ \ell_\tau'(\varepsilon ) \bx \}$, where $\ell_\tau'(u) = uI(|u| \leq \tau ) + \tau \sign(u)I(|u|>\tau)$. Recalling $\EE (\varepsilon | \bx) =0$, it follows that
\#
	|  \EE \{ \ell_\tau'(\varepsilon ) | \bx \} |  & = |  \EE [ \{ \varepsilon - \tau \sign(\varepsilon)\} I\{ |\varepsilon |> \tau \}  | \bx ] |  \nn \\
& \leq  \EE [ \{  | \varepsilon |  - \tau \sign(\varepsilon)\} I\{ |\varepsilon |> \tau \}  | \bx ] \nn \\
& \leq \frac{\EE  \{ ( \varepsilon^2 - \tau^2 ) I(|\varepsilon |> \tau) | \bx  \} }{\tau} \leq \frac{\sigma_2^2 -   \EE \{ \ell'_\tau(\varepsilon )^2  | \bx \}}{\tau} ~\mbox{ almost surely.} \nn
\#
By the variational representation of the $\ell_2$-norm, we have
\#
	\|    \nabla \cL_\tau(\bbeta^*) \|_2  & = \sup_{\bu \in \mathbb{S}^{d-1}} \EE \{ \ell_\tau'(\varepsilon )  \bu^\T   \bx   \}    \leq  \sigma_2^2\tau^{-1} \cdot   \sup_{\bu \in \mathbb{S}^{d-1}}   \EE  |  \bu^\T      \bx | =  \mu_1   \sigma_2^2 \tau^{-1} ,\nn 
\#
as claimed.  The second claim  follows from the fact that $\EE\{ \varepsilon^2 I(|\varepsilon | > \tau) |\bx\} \to 0$ as $\tau \to \infty$.
\end{proof}

\begin{lemma} \label{lem:l1cone}
Let $\cE $  be a subset of $[d]$ that contains $\cS$.  For any $\bbeta  \in \RR^d$ satisfying $\bbeta_{ \cE^{\cc}} = \textbf{0}$  and $\epsilon>0$, provided $\blambda = (\lambda_1,\ldots, \lambda_d)^\T  $ satisfies $ \| \blambda_{ \cE^{\cc}} \|_{\min}  >  \|  \bw(\bbeta)   \|_{\infty} + \epsilon$, any $\epsilon$-optimal solution $\wt \bbeta $ to \eqref{general.lasso2} satisfies
\#
	  \| (\wt \bbeta - \bbeta )_{ \cE^{{\rm c}}} \|_1 \leq  \frac{    \{   \| \blambda \|_{\infty} +  \|  \bw(\bbeta)   \|_{\infty} + \epsilon  \} \| (\wt \bbeta - \bbeta)_{\cE } \|_1  +  b(\bbeta) \| \wt \bbeta - \bbeta \|_2 }{ \| \blambda_{ \cE^{\cc}} \|_{\min}   -   \|  \bw(\bbeta)  \|_{\infty} - \epsilon} . \nn
\#
\end{lemma}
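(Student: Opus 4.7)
My plan is to combine the approximate first-order optimality condition for $\wt\bbeta$ with the monotonicity of the gradient of the convex loss $\hat\cL_\tau$ and a careful split of the $\ell_1$-subgradient along $\cE$ and $\cE^{\cc}$. Throughout, write $\bDelta = \wt\bbeta - \bbeta$ and note that $\bbeta_{\cE^{\cc}} = \bzero$ implies $\bDelta_{\cE^{\cc}} = \wt\bbeta_{\cE^{\cc}}$.

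First, by Definition~\ref{def:opt.solution} there exists $\wt\bxi \in \partial\|\wt\bbeta\|_1$ with $\br := \nabla\hat\cL_\tau(\wt\bbeta) + \blambda\circ\wt\bxi$ satisfying $\|\br\|_\infty \leq \epsilon$. Taking the inner product with $\bDelta$ and applying H\"older's inequality gives the master identity
\[
\langle \nabla\hat\cL_\tau(\wt\bbeta),\bDelta\rangle + \langle \blambda\circ\wt\bxi,\bDelta\rangle = \langle \br,\bDelta\rangle, \qquad |\langle\br,\bDelta\rangle| \leq \epsilon\|\bDelta\|_1.
\]

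Next I handle the two terms on the left separately. For the gradient term, convexity of $\hat\cL_\tau$ yields the monotonicity inequality $\langle \nabla\hat\cL_\tau(\wt\bbeta) - \nabla\hat\cL_\tau(\bbeta),\bDelta\rangle \geq 0$, so
\[
\langle \nabla\hat\cL_\tau(\wt\bbeta),\bDelta\rangle \geq \langle \nabla\cL_\tau(\bbeta),\bDelta\rangle + \langle \bw(\bbeta),\bDelta\rangle \geq -b(\bbeta)\|\bDelta\|_2 - \|\bw(\bbeta)\|_\infty\|\bDelta\|_1,
\]
using $\nabla\hat\cL_\tau(\bbeta) = \nabla\cL_\tau(\bbeta) + \bw(\bbeta)$ together with Cauchy--Schwarz on the bias term and H\"older on the stochastic term. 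For the penalty term I exploit that on $\cE^{\cc}$ we have $\beta_j=0$, so $\wt\xi_j\bDelta_j = \wt\xi_j\wt\beta_j = |\wt\beta_j| = |\bDelta_j|$ (by the definition of the $\ell_1$-subdifferential), while on $\cE$ I only use $|\wt\xi_j|\leq 1$. This gives the key bound
\[
\langle \blambda\circ\wt\bxi,\bDelta\rangle \geq \|\blambda_{\cE^{\cc}}\|_{\min}\|\bDelta_{\cE^{\cc}}\|_1 - \|\blambda\|_\infty\|\bDelta_\cE\|_1.
\]

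Finally I combine the three pieces: substituting into the master identity and rearranging yields
\[
\|\blambda_{\cE^{\cc}}\|_{\min}\|\bDelta_{\cE^{\cc}}\|_1 \leq \|\blambda\|_\infty\|\bDelta_\cE\|_1 + (\|\bw(\bbeta)\|_\infty+\epsilon)\|\bDelta\|_1 + b(\bbeta)\|\bDelta\|_2.
\]
Splitting $\|\bDelta\|_1 = \|\bDelta_\cE\|_1 + \|\bDelta_{\cE^{\cc}}\|_1$ on the right and collecting the $\|\bDelta_{\cE^{\cc}}\|_1$ terms on the left produces
\[
\big(\|\blambda_{\cE^{\cc}}\|_{\min}-\|\bw(\bbeta)\|_\infty-\epsilon\big)\|\bDelta_{\cE^{\cc}}\|_1 \leq \big(\|\blambda\|_\infty+\|\bw(\bbeta)\|_\infty+\epsilon\big)\|\bDelta_\cE\|_1 + b(\bbeta)\|\bDelta\|_2.
\]
The hypothesis $\|\blambda_{\cE^{\cc}}\|_{\min} > \|\bw(\bbeta)\|_\infty+\epsilon$ makes the coefficient on the left strictly positive, and dividing through gives exactly the claimed inequality.

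The argument is essentially routine; the only step that requires genuine care is the lower bound on $\langle\blambda\circ\wt\bxi,\bDelta\rangle$, where I exploit that $\bbeta$ vanishes on $\cE^{\cc}$ to turn the ``$-|\cdot|$'' bound into a ``$+|\cdot|$'' bound on that set. Everything else is H\"older, Cauchy--Schwarz, convexity of $\hat\cL_\tau$, and rearranging.
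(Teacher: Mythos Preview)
Your proof is correct and follows essentially the same approach as the paper's: combine the approximate first-order condition, convexity of $\hat\cL_\tau$, the decomposition $\nabla\hat\cL_\tau(\bbeta)=\nabla\cL_\tau(\bbeta)+\bw(\bbeta)$, and the sign structure of the $\ell_1$-subgradient on $\cE^{\cc}$, then split $\|\bDelta\|_1$ and rearrange. The only cosmetic difference is that you fix at the outset a specific subgradient $\wt\bxi$ attaining $\omega_{\blambda}(\wt\bbeta)\leq\epsilon$ (valid since $\partial\|\wt\bbeta\|_1\subseteq[-1,1]^d$ is compact), whereas the paper carries an arbitrary $\bxi\in\partial\|\wt\bbeta\|_1$ throughout and takes the infimum at the end; both routes yield the same inequality.
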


\begin{proof}[Proof of Lemma~\ref{lem:l1cone}]
 For any $\bxi    \in \partial \| \wt \bbeta  \|_1$, define $  \bu_{\bxi}=  \nabla \hat \cL_\tau(\wt \bbeta)  + \blambda \circ \bxi   \in \RR^d$. Note that
 \#
 &  \|   \bu_{\bxi}  \|_{\infty } \| \wt \bbeta - \bbeta  \|_1 \geq 	 \langle  \bu_{\bxi}  , \wt \bbeta - \bbeta  \rangle \nn \\
 & =  \underbrace{  \langle   \nabla   \hat\cL_\tau(\wt \bbeta) - \nabla  \hat \cL_\tau( \bbeta  ) , \wt \bbeta - \bbeta  \rangle   }_{\geq \, 0} +  \langle \nabla \hat \cL_\tau(\bbeta ) -  \nabla \cL_\tau(\bbeta)  , \wt \bbeta - \bbeta  \rangle  \nn \\
& ~~~~ + \langle  \nabla \cL_\tau(\bbeta)  , \wt \bbeta - \bbeta \rangle   + \langle   \blambda \circ \bxi  , \wt \bbeta - \bbeta  \rangle    \nn \\
   & \geq  - \| \bw(\bbeta ) \|_\infty    \| \wt \bbeta - \bbeta \|_1 -  b(\bbeta) \| \wt \bbeta - \bbeta \|_2
   + \langle   \blambda \circ \bxi  , \wt \bbeta  - \bbeta  \rangle . \nn
 \#
 Moreover, we have
\#
\langle   \blambda \circ \bxi  , \wt \bbeta  - \bbeta  \rangle & = \langle   ( \blambda \circ \bxi)_{ \cE^{{\rm c}}}  ,  (\wt \bbeta - \bbeta )_{  \cE^{{\rm c}}} \rangle +  \langle   ( \blambda \circ \bxi)_{\cE}   ,  (\wt \bbeta - \bbeta  )_{\cE}  \rangle  \nn \\
& \geq \| \blambda_{ \cE^{\cc} } \|_{\min}  \|   (\wt \bbeta - \bbeta  )_{ \cE^{{\rm c}}}  \|_1  - \| \blambda_{\cE} \|_{\infty}\|   (\wt \bbeta - \bbeta )_{\cE}  \|_1 . \nn
\#
Together, the last two displays imply
\#
 \| \bu_{\bxi} \|_{\infty } \| \wt \bbeta - \bbeta \|_1 & \geq  -  \| \bw(\bbeta) \|_{\infty} \| \wt \bbeta -\bbeta  \|_1 - b(\bbeta) \| \wt \bbeta -\bbeta \|_2 \nn \\
 & ~~~~ + \| \blambda_{ \cE^{\cc} } \|_{\min}  \|   (\wt \bbeta - \bbeta )_{ \cE^{{\rm c}}}  \|_1  - \| \blambda_{\cE} \|_{\infty}\|   (\wt \bbeta - \bbeta  )_{\cE}  \|_1  . \nn
\#
Since the right-hand side of this inequality does not depend on $\bxi$, taking the infimum with respect to $\bxi    \in \partial \| \wt \bbeta  \|_1$ on both sides to reach 
\#
	\omega_{\blambda} (\wt \bbeta )  \| \wt \bbeta - \bbeta  \|_1  &\geq   -  \| \bw(\bbeta) \|_{\infty} \| \wt \bbeta -\bbeta  \|_1 - b(\bbeta) \| \wt \bbeta -\bbeta \|_2   \nn \\
	& ~~~~ +   \| \blambda_{\cE^{\cc} } \|_{\min}  \|   (\wt \bbeta - \bbeta  )_{\cE^{{\rm c}}}  \|_1  - \| \blambda_{\cE} \|_{\infty}\|   (\wt \bbeta - \bbeta  )_{\cE}  \|_1 . \nn
\#
By definition, $\wt \bbeta$ is an $\epsilon$-optimal solution so that $\omega_{\blambda} (\wt \bbeta ) \leq \epsilon$. Putting together the pieces, we obtain
\#
& \{  \epsilon + \| \bw(\bbeta)  \|_{\infty} \} \| \wt \bbeta - \bbeta  \|_1 + b(\bbeta) \| \wt \bbeta -\bbeta \|_2   \nn \\
& ~~~ \geq  \| \blambda_{ \cE^{\cc} } \|_{\min}  \|   (\wt \bbeta - \bbeta )_{ \cE^{{\rm c}}}  \|_1  - \| \blambda_{\cE} \|_{\infty}\|   (\wt \bbeta - \bbeta  )_{\cE}  \|_1 . \nn
\#
Decompose $\| \wt \bbeta - \bbeta  \|_1$ as $\|   (\wt \bbeta - \bbeta )_{\cE}  \|_1 + \|   (\wt \bbeta - \bbeta  )_{ \cE^{{\rm c}}}  \|_1 $, the stated result follows immediately. 
\end{proof}

\begin{lemma} \label{lem:deterministic.error.bound}
Consider some $\bbeta \in \RR^d$ satisfying $\bbeta_{ \cS^{\cc}} = \textbf{0}$,  and  let $\cE \subseteq [d]$ be a subset that contains $\cS$ and has cardinality $ |\cE |=k$.  
Assume  that  $\blambda=(\lambda_1, \ldots, \lambda_d)^\T$ satisfies $\| \blambda \|_\infty \leq \lambda$ and  $\| \blambda_{ \cE^{\cc} } \|_{\min} \geq \rho \lambda   >0$ for some $\rho \in (0,1 ]$ and $\lambda \geq  s^{-1/2} b(\bbeta)$.
Conditioned on event $\{   \| \bw(\bbeta) \|_\infty + \epsilon \leq 0.5 \rho \lambda  \}$,  any $\epsilon$-optimal solution $\wt \bbeta$ to \eqref{general.lasso2} satisfies $\wt \bbeta \in \bbeta+ \CC(l)$, where $l  =( 2+  \frac{2}{\rho} )   k^{1/2} + \frac{2}{\rho} s^{1/2}$.
Moreover,  let $r, \kappa >0$ satisfy
$$
   r >  \kappa^{-1}  (    0.5 \rho  k^{1/2}    +  2 s^{1/2}  )  \lambda  .
$$
Then, conditioned on the event $\cE_1(r,l,\kappa) \cap \{   \| \bw(\bbeta) \|_\infty + \epsilon \leq 0.5 \rho \lambda  \}$,  
\#
	 \| \wt \bbeta - \bbeta \|_{2}  &  \leq 
\kappa^{-1}   \bigl\{ \| \blambda_{\cS} \|_2  +  \|  \bw(\bbeta)_{  \cE}    \|_2 +  k^{1/2} \epsilon \bigr\}	 + \kappa^{-1} b(\bbeta)      \label{det.error.bound1} \\
 & \leq  \kappa^{-1}   \big(     0.5 \rho  k^{1/2}    +  2 s^{1/2}  \big) \lambda < r   . \label{det.error.bound2}
\#
\end{lemma}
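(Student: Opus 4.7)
}
The plan is to split the argument into a cone-membership step (which gives the inclusion in $\CC(l)$) and an error-size step (which combines RSC with $\epsilon$-optimality to yield the $\ell_2$-bounds).

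\textbf{Cone step.} I would apply Lemma~\ref{lem:l1cone} with the given $\cE$. Under the event $\{\|\bw(\bbeta)\|_\infty + \epsilon \leq 0.5\rho\lambda\}$, combined with $\|\blambda\|_\infty \leq \lambda$ and $\|\blambda_{\cE^\cc}\|_{\min}\geq \rho\lambda$, the numerator factor in Lemma~\ref{lem:l1cone} is at most $(1+0.5\rho)\lambda$ and the denominator is at least $0.5\rho\lambda$, yielding a ratio bounded by $1 + 2/\rho$. Using $\|(\wt\bbeta - \bbeta)_\cE\|_1 \leq k^{1/2}\|\wt\bbeta - \bbeta\|_2$ and the standing assumption $b(\bbeta)\leq s^{1/2}\lambda$, adding $\|(\wt\bbeta - \bbeta)_\cE\|_1$ to both sides gives $\|\wt\bbeta - \bbeta\|_1 \leq l\,\|\wt\bbeta - \bbeta\|_2$ with $l=(2+2/\rho)k^{1/2}+(2/\rho)s^{1/2}$, so $\wt\bbeta - \bbeta \in \CC(l)$.

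\textbf{Error step.} For the $\ell_2$ bound, I would start from $\epsilon$-optimality: choose $\bxi \in \partial\|\wt\bbeta\|_1$ with $\|\nabla\hat\cL_\tau(\wt\bbeta)+\blambda\circ\bxi\|_\infty\leq \epsilon$. Taking the inner product with $\wt\bbeta-\bbeta$, rewriting $\nabla\hat\cL_\tau(\bbeta)=\bw(\bbeta)+\nabla\cL_\tau(\bbeta)$, and invoking $\cE_1(r,l,\kappa)$ along the segment from $\bbeta$ to $\wt\bbeta$ (see localization below),
\[
\kappa\|\wt\bbeta - \bbeta\|_2^2 \,\leq\, \epsilon\|\wt\bbeta - \bbeta\|_1 \,-\, \langle \bw(\bbeta)+\nabla\cL_\tau(\bbeta),\,\wt\bbeta - \bbeta\rangle \,-\, \langle\blambda\circ\bxi,\,\wt\bbeta - \bbeta\rangle.
\]
Decomposing each inner product over $\cE$ and $\cE^\cc$: since $\bbeta_{\cS^\cc}=\bzero$ and $\cE \supseteq \cS$, for $j\in\cE^\cc$ we have $\bxi_j$ sharing the sign of $(\wt\bbeta-\bbeta)_j$, so the penalty contributes $+\|\blambda_{\cE^\cc}\|_{\min}\|(\wt\bbeta-\bbeta)_{\cE^\cc}\|_1$; this term absorbs $(\|\bw(\bbeta)\|_\infty+\epsilon)\|(\wt\bbeta-\bbeta)_{\cE^\cc}\|_1$ because $\|\bw(\bbeta)\|_\infty + \epsilon \leq 0.5\rho\lambda \leq 0.5\|\blambda_{\cE^\cc}\|_{\min}$. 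What remains is controlled on $\cE$ by Cauchy--Schwarz, yielding the bound
\[
\kappa\|\wt\bbeta-\bbeta\|_2 \,\leq\, \|\blambda_\cS\|_2 + \|\bw(\bbeta)_\cE\|_2 + k^{1/2}\epsilon + b(\bbeta),
\]
which is \eqref{det.error.bound1}. For \eqref{det.error.bound2}, plug in $\|\blambda_\cS\|_2\leq s^{1/2}\lambda$, $\|\bw(\bbeta)_\cE\|_2 + k^{1/2}\epsilon \leq k^{1/2}(\|\bw(\bbeta)\|_\infty+\epsilon)\leq 0.5\rho k^{1/2}\lambda$, and $b(\bbeta)\leq s^{1/2}\lambda$.

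\textbf{Main obstacle: localization.} The RSC event $\cE_1(r,l,\kappa)$ only guarantees strong convexity for perturbations within $\BB(r)$, yet a priori $\wt\bbeta-\bbeta$ need not lie in $\BB(r)$, so the step $\kappa\|\wt\bbeta-\bbeta\|_2^2 \leq \langle \nabla\hat\cL_\tau(\wt\bbeta)-\nabla\hat\cL_\tau(\bbeta),\wt\bbeta-\bbeta\rangle$ is not immediate. The standard remedy is to consider the truncated perturbation $\bar\bbeta_t = \bbeta + t(\wt\bbeta-\bbeta)$ with $t = \min\{1, r/\|\wt\bbeta-\bbeta\|_2\}$: by positive homogeneity of $\CC(l)$, $\bar\bbeta_t-\bbeta\in \BB(r)\cap\CC(l)$, so RSC applies. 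Convexity of $\hat\cL_\tau + \|\blambda\circ\,\cdot\,\|_1$ then propagates the $\epsilon$-optimality inequality to $\bar\bbeta_t$, and the computation above gives $\|\bar\bbeta_t-\bbeta\|_2 \leq \kappa^{-1}(0.5\rho k^{1/2}+2s^{1/2})\lambda < r$ by the hypothesis on $r$. This forces $t=1$, i.e., $\wt\bbeta-\bbeta$ itself lies in $\BB(r)$, legitimizing the direct calculation above and completing the proof.
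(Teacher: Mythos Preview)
Your proposal is correct and follows essentially the same route as the paper: both arguments invoke Lemma~\ref{lem:l1cone} for the cone inclusion, decompose the first-order inequality over $\cE$ and $\cE^{\cc}$ (the paper writes this as $\Pi_1,\Pi_2,\Pi_3$), and handle localization by truncating to the boundary of $\BB(r)$ and arguing by contradiction. One small clarification: the convexity step in your localization paragraph is really the monotonicity of $t\mapsto\langle\nabla\hat\cL_\tau(\bbeta+t\bdelta),\bdelta\rangle$ for the \emph{loss alone} (the paper cites this as Lemma~F.2 in \cite{FLSZ2018}), which gives $\langle\nabla\hat\cL_\tau(\bar\bbeta_t)-\nabla\hat\cL_\tau(\bbeta),\bar\bbeta_t-\bbeta\rangle\leq t\langle\nabla\hat\cL_\tau(\wt\bbeta)-\nabla\hat\cL_\tau(\bbeta),\wt\bbeta-\bbeta\rangle$; the penalty is then handled exactly as in your error-step decomposition at $\wt\bbeta$, not at $\bar\bbeta_t$.
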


\begin{proof}[Proof of Lemma~\ref{lem:deterministic.error.bound}]
 For some $r>0$ to be specified,  define  $\eta =  \sup\{ u \in [0,1] : (1-u)\bbeta + u  \wt \bbeta \in \BB(r) \}$, where $\BB(r) = \{ \bdelta \in \RR^d : \| \bdelta \|_2 \leq r\}$.  Note that $\eta = 1$ if $\wt \bbeta \in \bbeta+  \BB(r)$ and $\eta \in (0,1)$ otherwise.  
 Then, the intermediate estimate $\wt \bbeta_\eta := \eta \wt \bbeta + (1-\eta) \bbeta$ satisfies (i) $\wt \bbeta_\eta \in  \bbeta + \BB(r)$, (ii) $\wt \bbeta_\eta$ lies on the boundary of $\bbeta+ \BB(r)$ with $0<\eta <1$ if $\wt \bbeta  \notin \bbeta +\BB(r)$, and (iii) $\wt \bbeta_\eta = \wt \bbeta$ with $\eta=1$ if $\wt \bbeta \in \bbeta +  \BB(r)$.

By the convexity of Huber loss and Lemma~F.2 in \cite{FLSZ2018}, we have
\#
	\langle \nabla \hat  \cL_\tau(\wt \bbeta_\eta ) - \nabla \hat    \cL_\tau(\bbeta ) ,  \wt \bbeta_\eta - \bbeta   \rangle \leq \eta \langle \nabla   \hat   \cL_\tau(\wt \bbeta  ) - \nabla  \hat    \cL_\tau(\bbeta ) ,  \wt \bbeta  - \bbeta \rangle . \label{basic.inequality}
\#
First we bound the left-hand side of \eqref{basic.inequality} from below.
Conditioned on the stated event,  Lemma~\ref{lem:l1cone} indicates 
$$
  \| (\wt \bbeta - \bbeta )_{ \cE^{{\rm c}}} \|_1 \leq  (1+ 2/\rho)  \| (\wt \bbeta - \bbeta )_{\cE } \|_1   +  2 ( \rho \lambda)^{-1} b (\bbeta)  \| \wt \bbeta - \bbeta \|_2 ,
$$
from which it follows that $ \| \wt \bbeta - \bbeta  \|_1   \leq  (2+\frac{2}{\rho})   k^{1/2} \| \wt \bbeta - \bbeta  \|_2+  \frac{2}{\rho}   \lambda^{-1}  b(\bbeta)  \| \wt \bbeta - \bbeta \|_2$.  Provided that $\lambda \geq s^{-1/2} b(\bbeta)$, 
this implies $\wt \bbeta \in  \bbeta+ \CC(l)$ with $l= (2+\frac{2}{\rho})  k^{1/2} +   \frac{2}{\rho  }   s^{1/2}$. Since $\wt \bbeta_\eta - \bbeta = \eta(\wt \bbeta - \bbeta )$, we have $\wt \bbeta_\eta \in  \bbeta + \BB(r) \cap \CC(l ) $ and conditioned on event $\cE_1(r, l, \kappa)$,
\#
	\langle \nabla  \hat \cL_\tau(\wt \bbeta_\eta ) - \nabla 
 \hat  \cL_\tau(\bbeta ) ,  \wt \bbeta_\eta - \bbeta  \rangle \geq \kappa   \| \wt \bbeta_\eta - \bbeta  \|_2^2 .  \label{lbd.1}
\#

Next we upper bound the right-hand side of \eqref{basic.inequality}. For any $\bxi    \in \partial \| \wt \bbeta  \|_1$, write
\#
	& \langle \nabla \hat  \cL_\tau(\wt \bbeta  ) - \nabla  \hat \cL_\tau(\bbeta ) ,  \wt \bbeta  - \bbeta  \rangle  \nn \\
	& = \langle \bu ,  \wt \bbeta  - \bbeta \rangle   - \langle  \blambda \circ \bxi ,  \wt \bbeta   - \bbeta   \rangle   - \langle   \nabla \hat  \cL_\tau(\bbeta ) ,  \wt \bbeta  - \bbeta  \rangle  \nn \\
	& := \Pi_1 - \Pi_2 - \Pi_3, \label{ubd.1}
\#
where  $\bu =  \nabla \hat  \cL_\tau(\wt \bbeta )  +  \blambda \circ \bxi   $.
For $\Pi_3 = \langle   \nabla  \hat  \cL_\tau(\bbeta ) ,  \wt \bbeta  - \bbeta  \rangle$, by the decomposition $\nabla \hat \cL_\tau(\bbeta )= \bw(\bbeta) + \nabla \cL_\tau(\bbeta)$ we have
\#
	| \Pi_3 | & \leq  \|  \bw(\bbeta)_{  \cE}   \|_2  \| (\wt \bbeta - \bbeta  )_{ \cE}  \|_2 \nn \\
	&~~~~  +  \|   \bw(\bbeta)_{ \cE^{{\rm c} }}   \|_{\infty}  \| (\wt \bbeta - \bbeta  )_{  \cE^{{\rm c}}}  \|_1  + b(\bbeta) \| \wt \bbeta - \bbeta \|_2  .  \label{ubd.2}
\#
Turning to $\Pi_2$, decompose $\blambda \circ \bxi$ and $\wt \bbeta  - \bbeta $ according to $ \cS  \cup ( \cE \setminus \cS )  \cup  \cE^{\cc}$ to reach
$$
	\Pi_2 = \langle  (\blambda \circ \bxi)_{\cS}   ,   ( \wt \bbeta  - \bbeta )_{\cS}  \rangle  + \langle  (\blambda \circ \bxi)_{\cE \setminus \cS}   ,   ( \wt \bbeta  - \bbeta )_{\cE \setminus \cS}  \rangle  + \langle (\blambda \circ \bxi)_{ \cE^{\cc}}   , ( \wt \bbeta  - \bbeta )_{\cE^{\cc}}  \rangle .
$$
Since $\bbeta_{\cE^{\cc}}   = \textbf{0}$ and $\bxi \in  \partial \| \wt \bbeta  \|_1$, we have $\langle (\blambda \circ \bxi)_{\cE^{\cc}}   , ( \wt \bbeta  - \bbeta  )_{\cE^{\cc}}  \rangle = \langle \blambda_{\cE^{\cc}}, | \wt \bbeta_{\cE^{\cc}} | \rangle =  \langle \blambda_{\cE^{\cc}}, | (\wt \bbeta - \bbeta )_{\cE^{\cc}} | \rangle $. 
Also, $ \langle  (\blambda \circ \bxi)_{\cE \setminus \cS}   ,   ( \wt \bbeta  - \bbeta )_{\cE \setminus \cS}  \rangle = \langle  (\blambda \circ \bxi)_{\cE \setminus \cS}   ,   \wt \bbeta_{\cE \setminus \cS}  \rangle \geq 0$. Therefore,
\# \label{ubd.3}
	\Pi_2 \geq -  \| \blambda_{\cS} \|_2  \| (\wt \bbeta - \bbeta  )_{\cS} \|_2 +  \| \blambda_{\cE^{\cc} } \|_{\min}   \| (\wt \bbeta - \bbeta  )_{\cE^{\cc}} \|_1 .
\#
Similarly, $\Pi_1$ satisfies the bound
\# \label{ubd.4}
	| \Pi_1 | \leq \| \bu_{  \cE} \|_2 \| (\wt \bbeta - \bbeta )_{ \cE} \|_2 + \| \bu  \|_{\infty} \| (\wt \bbeta - \bbeta )_{  \cE^{\cc}} \|_1.
\#
Together, \eqref{ubd.1}--\eqref{ubd.4} yield
\#
	& \langle \nabla \hat \cL_\tau(\wt \bbeta  ) - \nabla  \hat \cL_\tau(\bbeta ) ,  \wt \bbeta  - \bbeta  \rangle  \nn \\
& \leq  -  \{  \| \blambda_{\cE^{\cc} } \|_{\min}  -  \|  \bw(\bbeta) \|_\infty \}  \| ( \wt \bbeta - \bbeta  )_{\cE^{\cc}} \|_1  + \| \bu  \|_{\infty} \| (\wt \bbeta - \bbeta )_{ \cE^{\cc}} \|_1 \nn \\
& ~\quad +  \{   \|  \bw(\bbeta) _{  \cE}   \|_2 +  \| \bu_{   \cE } \|_2  \} \| (\wt \bbeta - \bbeta  )_{  \cE}  \|_2 
+  \| \blambda_{\cS} \|_2 \| (\wt \bbeta - \bbeta )_{\cS} \|_2  + b(\bbeta) \| \wt \bbeta - \bbeta \|_2  . \nn
\#
Taking the infimum over $\bxi \in \partial \| \wt \bbeta \|_1$ on both sides, it follows that
\#
& \langle \nabla  \hat   \cL_\tau(\wt \bbeta  ) - \nabla \hat \cL_\tau(\bbeta ) ,  \wt \bbeta  - \bbeta  \rangle \nn \\
& \leq  -  \{  \| \blambda_{\cE^{\cc} } \|_{\min}  -  \|   \bw(\bbeta)  \|_\infty  - \epsilon  \}  \| ( \wt \bbeta - \bbeta )_{\cE^{\cc}} \|_1   \nn \\
& ~\quad +  \{   \|   \bw(\bbeta) _{   \cE}   \|_2 + k^{1/2} \epsilon \} \| (\wt \bbeta - \bbeta  )_{   \cE}  \|_2 \nn \\
& ~~~~ +  \| \blambda_{\cS} \|_2 \| (\wt \bbeta - \bbeta  )_{\cS} \|_2  + b(\bbeta) \| \wt \bbeta - \bbeta \|_2 . \label{ubd.5}
\#
It follows from \eqref{basic.inequality}, \eqref{lbd.1} and \eqref{ubd.5} that conditioned on  $\cE_1(r, l, \kappa) \cap \{  \|\bw(\bbeta) \|_{\infty } + \epsilon \leq 0.5\rho \lambda \}$,
\#
 \kappa 	 \| \wt \bbeta_\eta - \bbeta  \|_{2}^2   \leq  \{   \| \blambda_{\cS} \|_2+  \| \bw(\bbeta) _{  \cE}    \|_2+   k^{1/2}  \epsilon  \}  \|  \wt \bbeta_\eta - \bbeta  \|_{2}  + b(\bbeta) \| \wt \bbeta_\eta - \bbeta \|_{2}  , \label{l2.error.bound1}
\# 
On the same event,  note that
$$
 \| \bw(\bbeta) _{  \cE}  \|_2 +  k^{1/2}\epsilon  \leq k^{1/2}  \{   \| \bw(\bbeta) \|_\infty + \epsilon  \} \leq 0.5 \rho k^{1/2} \lambda .
$$
Moreover,  recall that $\| \blambda_{\cS} \|_2 \leq s^{1/2} \lambda$ and $b(\bbeta) \leq s^{1/2} \lambda$.
Plugging these bounds into \eqref{l2.error.bound1} yields
\#
 \| \wt \bbeta_\eta - \bbeta  \|_2 \leq   \kappa^{-1}   \{  (  s^{1/2} +  0.5 \rho  k^{1/2} )   \lambda + b(\bbeta) \} \leq   \kappa^{-1}    (  2 s^{1/2} +  0.5 \rho  k^{1/2} )    \lambda < r. \nn
\#
Hence, $\wt \bbeta_\eta$ falls in the interior of $\bbeta + \BB(r)$. Via proof by contradiction, we must have $\eta=1$ and   $\wt \bbeta_\eta = \wt \bbeta$. Consequently, \eqref{det.error.bound1} and \eqref{det.error.bound2} follow, respectively, from \eqref{l2.error.bound1} and the last display.
\end{proof}

 In Lemma~\ref{lem:deterministic.error.bound}, we need $\lambda$ to be sufficiently large in the sense that for some $s$-sparse vector $\bbeta$,
 \$
 	 \lambda \gtrsim \| \bw(\bbeta) \|_\infty~~\mbox{ and }~~ \lambda \geq   s^{-1/2} b(\bbeta),
 \$
 where $\bw(\bbeta)= \nabla \hat \cL_\tau(\bbeta) - \nabla \cL_\tau(\bbeta)$ is the centered gradient at $\bbeta$ and $b(\bbeta) = \| \nabla  \cL_\tau(\bbeta) \|_2$ quantifies the bias.   To prove the weak oracle property (Proposition~\ref{prop:tightening}), we will take $\bbeta =\bbeta^*$ and control the stochastic term $\| \bw^*\|_\infty = \|  \bw(\bbeta^*) \|_\infty$ and bias term $b(\bbeta^*)$ separately.    For some $\bbeta$ which has vanishing or negligible bias,  we will only focus on the stochastic term $\| \nabla \hat \cL_\tau(\bbeta)  \|_\infty$, as described in the next lemma. Recall the event $\cE_2(r, l,\kappa)$ defined in \eqref{RSC.event2}, and  $\BB_{\Sigma}(r) = \{ \bbeta \in \RR^d: \| \bbeta\|_{\Sigma} \leq r\}$.

\begin{lemma} \label{lem:rsc.oracle}
Consider some $\bbeta \in \bbeta^* + \BB_{\Sigma}(r)$ ($r>0$) satisfying $\bbeta_{ \cS^{\cc}} = \textbf{0}$,  and  let $\cE \subseteq [d]$ be a subset that contains $\cS$ and has cardinality $ |\cE |=k$.  
Assume  that  $\blambda=(\lambda_1, \ldots, \lambda_d)^\T$ satisfies $\| \blambda \|_\infty \leq \lambda$ and  $\| \blambda_{ \cE^{\cc} } \|_{\min} \geq \rho \lambda   >0$ for some $\rho \in (0,1 ]$.
Conditioned on event $\{   \| \hat \cL_\tau(\bbeta) \|_\infty  \leq 0.5 \rho \lambda  \}$,  any  optimal solution $\hat \bbeta$ to \eqref{general.lasso2} satisfies $\hat \bbeta \in \bbeta+ \CC(l)$, where $l  =( 2+  \frac{2}{\rho} )    k^{1/2} + \frac{2}{\rho} s^{1/2}$.
Moreover,   let $r, \kappa>0$ satisfy $   r >  \kappa^{-1}  (    0.5 \rho  k^{1/2}    +  2 s^{1/2}  )  \lambda  $. Then, conditioned on the event $\cE_2(r,l,\kappa) \cap \{   \| \hat \cL_\tau(\bbeta) \|_\infty  \leq 0.5 \rho \lambda  \}$,
\#
	 \| \hat  \bbeta - \bbeta \|_2   &  \leq 
\kappa^{-1}   \bigl\{ \| \blambda_{\cS} \|_2  +  \|  \hat \cL_\tau(\bbeta)_{  \cE}    \|_2  \bigr\}  . \nn
\#
\end{lemma}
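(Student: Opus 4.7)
The plan is to mimic the proof of Lemma~\ref{lem:deterministic.error.bound} almost verbatim, with two simplifying modifications: (i) since the reference vector is a generic $s$-sparse $\bbeta$ rather than $\bbeta^*$, I do not split $\nabla\hat\cL_\tau(\bbeta)$ into a stochastic piece plus a bias, but instead treat the whole gradient $\nabla\hat\cL_\tau(\bbeta)$ as the ``noise'' (so no $b(\bbeta)$-term appears); (ii) since $\hat\bbeta$ is an exact optimum, $\omega_{\blambda}(\hat\bbeta)=0$ and all $\epsilon$-terms drop out.

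First I would establish the cone inclusion by re-running the proof of Lemma~\ref{lem:l1cone}, replacing $\bw(\bbeta)$ by $\nabla\hat\cL_\tau(\bbeta)$ and setting $b(\bbeta)=0$. Writing $\langle\nabla\hat\cL_\tau(\hat\bbeta)+\blambda\circ\bxi,\hat\bbeta-\bbeta\rangle=0$ for some $\bxi\in\partial\|\hat\bbeta\|_1$, using monotonicity of $\nabla\hat\cL_\tau$, decomposing $\blambda\circ\bxi$ across $\cS\cup(\cE\setminus\cS)\cup\cE^{\cc}$, and invoking $\bbeta_{\cE^{\cc}}=\textbf{0}$ yields
\[
\bigl(\|\blambda_{\cE^{\cc}}\|_{\min}-\|\nabla\hat\cL_\tau(\bbeta)\|_{\infty}\bigr)\|(\hat\bbeta-\bbeta)_{\cE^{\cc}}\|_1 \leq \bigl(\|\blambda\|_{\infty}+\|\nabla\hat\cL_\tau(\bbeta)\|_{\infty}\bigr)\|(\hat\bbeta-\bbeta)_{\cE}\|_1 .
\]
On the event $\{\|\nabla\hat\cL_\tau(\bbeta)\|_{\infty}\leq 0.5\rho\lambda\}$, the prefactor is bounded by $(1+0.5\rho)/(0.5\rho)=1+2/\rho$, so $\|\hat\bbeta-\bbeta\|_1\leq (2+2/\rho)k^{1/2}\|\hat\bbeta-\bbeta\|_2$; the stated $l=(2+2/\rho)k^{1/2}+(2/\rho)s^{1/2}$ is a looser bound that preserves parallelism with Lemma~\ref{lem:deterministic.error.bound}. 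Hence $\hat\bbeta\in\bbeta+\CC(l)$.

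For the $\ell_2$-bound I would apply the standard shrinkage/truncation device: let $\eta=\sup\{u\in[0,1]:\bbeta+u(\hat\bbeta-\bbeta)\in\bbeta+\BB(r)\}$ and $\hat\bbeta_\eta=\bbeta+\eta(\hat\bbeta-\bbeta)$. Because $\hat\bbeta-\bbeta$ lies in a cone and $\hat\bbeta_\eta-\bbeta$ is a positive scalar multiple of it, the pair $(\hat\bbeta_\eta,\bbeta)$ lies in $\cC(r,l)$ from the definition of $\cE_2(r,l,\kappa)$ (using the hypotheses $\bbeta\in\bbeta^*+\BB_{\Sigma}(r)$ and $\supp(\bbeta)\subseteq\cS$). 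Thus, combining convexity (Lemma F.2 of \cite{FLSZ2018}) to reduce the gradient inner product at $\hat\bbeta_\eta$ to that at $\hat\bbeta$, the RSC event gives $\kappa\|\hat\bbeta_\eta-\bbeta\|_2^2\leq\langle\nabla\hat\cL_\tau(\hat\bbeta_\eta)-\nabla\hat\cL_\tau(\bbeta),\hat\bbeta_\eta-\bbeta\rangle$, while the KKT upper bound (mirroring \eqref{ubd.1}--\eqref{ubd.5} with $\epsilon=0$ and $\nabla\hat\cL_\tau(\bbeta)$ replacing $\bw(\bbeta)+\nabla\cL_\tau(\bbeta)$) yields
\[
\kappa\|\hat\bbeta_\eta-\bbeta\|_2 \leq \|\blambda_{\cS}\|_2 + \|\nabla\hat\cL_\tau(\bbeta)_{\cE}\|_2 .
\]

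The only remaining step is to rule out $\eta<1$ by contradiction, which I expect to be the one piece requiring care. Using $\|\blambda_{\cS}\|_2\leq s^{1/2}\lambda$ and $\|\nabla\hat\cL_\tau(\bbeta)_{\cE}\|_2\leq k^{1/2}\|\nabla\hat\cL_\tau(\bbeta)\|_{\infty}\leq 0.5\rho k^{1/2}\lambda$, the right-hand side is at most $\kappa^{-1}(s^{1/2}+0.5\rho k^{1/2})\lambda$, which is strictly less than $r$ by the hypothesis $r>\kappa^{-1}(0.5\rho k^{1/2}+2s^{1/2})\lambda$. Hence $\hat\bbeta_\eta$ lies in the interior of $\bbeta+\BB(r)$, forcing $\eta=1$ and $\hat\bbeta_\eta=\hat\bbeta$. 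The displayed bound of the lemma then follows immediately. No new technical ingredient beyond what was developed for Lemma~\ref{lem:deterministic.error.bound} is needed; the essential novelty is merely verifying that $(\hat\bbeta_\eta,\bbeta)$ is an admissible pair for the RSC set $\cC(r,l)$ appearing in $\cE_2(r,l,\kappa)$.
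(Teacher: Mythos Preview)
Your proposal is correct and matches the paper's approach exactly: the paper omits the proof and simply states that it ``is based on the same arguments from the proof of Lemmas~\ref{lem:l1cone} and \ref{lem:deterministic.error.bound}.'' You have correctly identified the two simplifications (treat $\nabla\hat\cL_\tau(\bbeta)$ as the noise term with no separate bias, and set $\epsilon=0$) and the one additional verification needed, namely that $(\hat\bbeta_\eta,\bbeta)\in\cC(r,l)$ so that the event $\cE_2(r,l,\kappa)$ applies.
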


The proof of Lemma~\ref{lem:rsc.oracle} is based on the same arguments from the proof of Lemmas~\ref{lem:l1cone} and \ref{lem:deterministic.error.bound}, and thus is omitted. To prove the strong oracle property (Proposition~\ref{prop:oracle}), we will apply Lemma~\ref{lem:rsc.oracle} with $\bbeta =\hat \bbeta^{{\ora}}$, the oracle estimator defined in \eqref{def:oracle}.

\section{Proofs of Propositions}

\subsection{Proof of Proposition~\ref{prop:contraction}}

With the initial estimate $\wt \bbeta^{(0)} = \textbf{0}_{d}$, we have $\blambda^{(0)} = p_\lambda'( \textbf{0}_d ) = (\lambda, \ldots, \lambda)^\T \in \RR^d$. Then \eqref{step1.bound} follows immediately from Lemma~\ref{lem:deterministic.error.bound} with $\bbeta = \bbeta^*$, $\cE = \cS$ and $\rho =1$. \qed

\subsection{Proof of Proposition~\ref{prop:tightening}}

In order to improve the statistical rate at step $\ell\geq 1$, we need to control the magnitude of the spurious discoveries from the last step, that is, $\max_{j \in \cS^{\cc}} | \wt \beta_j^{(\ell-1)}|$. 
Recall that $\blambda^{(\ell-1)}   = (\lambda_1^{(\ell-1)} , \ldots, \lambda_d^{(\ell-1)} )^\T = ( p_\lambda'( |\wt \beta_1^{(\ell-1)}| )  ,  \ldots  , p_\lambda'( |\wt \beta_d^{(\ell-1)}| ) )^\T$ and $p_\lambda(t) = \lambda^2 p(t/\lambda)$ for $t\in \RR$. Intuitively, the larger $ | \wt \beta_j^{(\ell-1)}|$ is, the smaller $\lambda^{(\ell-1)}_j$ is.
Motivated by this observation, we construct an augmented set $\cE_\ell$ of $\cS$ in each step and control the magnitude of $\| \blambda^{(\ell-1)}_{\cE_\ell^{{\cc}}} \|_{\min}$.

Starting from $\wt \bbeta^{(0)} = \textbf{0}$, we have $\blambda^{(0)} = (\lambda, \ldots, \lambda)^\T \in \RR^d$.
Recall from \eqref{lambda.kappa} that $\lambda \geq s^{-1/2} b^*_\tau$,  or equivalently, $\lambda^{-1} b^*_\tau   \leq s^{1/2}$.
Then, applying Lemma~\ref{lem:deterministic.error.bound} with $\cE= \cS$ and $l_0 = 6 s^{1/2}$ we obtain that,  conditioning on $\cE_1(r, l_0 , \kappa )\cap \{ \lambda \geq \frac{2}{p'( \gamma)}  ( \|  \bw^*\|_\infty + \epsilon_1 ) \}$,
\#
	\| \wt \bbeta^{(1)} - \bbeta^* \|_2  & \leq   \kappa^{-1} \big\{       \big(  \| \blambda_{\cS}^{(0)} \|_2 + \| \bw^*_{  \cS } \|_2 + s^{1/2} \epsilon_1 \big)  +  b^*_\tau   \big\}   \nn \\
	& \leq \kappa^{-1}   \{ 1 +  0.5   p'( \gamma)     \}   s^{1/2}\lambda +  \kappa^{-1} b^*_\tau  \nn \\
	& \leq \kappa^{-1}   \{ 2 +  0.5   p'( \gamma)     \}    s^{1/2}\lambda   ,   \label{bound.step1} 
\# 
where the last inequality is due to \eqref{lambda.kappa}.
For $\ell\geq 1$, define the augmented set
\#
	\cE_\ell = \cS \cup  \{1 \leq   j \leq d :  \lambda_j^{(\ell-1)}  < p'( \gamma ) \lambda  \} ,  \label{def:El}
\#
which depends on the solution $\wt \bbeta^{(\ell-1)}$ from the previous step. We claim that the above constructed sets satisfy
\#
	 | \cE_\ell | < (c^2 +1 )s ~~\mbox{ and }~~  \| \blambda^{(\ell-1)}_{\cE_\ell^{\cc}} \|_{\min} \geq  p'( \gamma) \lambda   ,  \label{scaling.stepl}
\#
where $c$ is the constant determined by \eqref{def.c}.  If these were true, 
it follows from Lemma~\ref{lem:deterministic.error.bound} with $\rho=p'( \gamma)$, $ k = (c^2+1)s$ and $l = (  2 + \frac{2}{ \rho})  (c^2+1)^{1/2} s^{1/2} + \frac{2}{ \rho }s^{1/2} $ that, conditioned on $\cE_1(r, l  , \kappa )\cap \{ \lambda \geq \frac{2}{p'( \gamma)}  ( \|  \bw^*\|_\infty + \epsilon_1 ) \}$,
\#
	\| \wt \bbeta^{(\ell)} - \bbeta^* \|_2  & \leq   \kappa^{-1} \big(  \| \blambda_{\cS}^{(\ell-1)} \|_2 + \|\bw^*_{  \cE_\ell } \|_2 + |   \cE_\ell|^{1/2} \epsilon_\ell   \big)  + \kappa^{-1} b^*_\tau  \label{bound.stepl} \\
	& <   \kappa^{-1} \big\{ 1 + 0.5  (c^2+1)^{1/2} p'(\gamma)  \big\} s^{1/2} \lambda + \kappa^{-1} s^{1/2} \lambda    \nn  \\
& =    \kappa^{-1} \big\{2 + 0.5  (c^2+1)^{1/2} p'(\gamma)  \big\} s^{1/2} \lambda   \nn \\
	& =  \underbrace{  c \gamma  s^{1/2} \lambda  }_{=: r^{{\rm crude}}}\leq r ,     \label{crude.bound.stepl}
\#
where the last two steps follow from \eqref{def.c} and  \eqref{constraint.r}.

We prove the earlier claim \eqref{scaling.stepl}  by the method of induction. For $\ell=1$, we have $\blambda^{(0)} = (\lambda, \ldots, \lambda)^\T \in \RR^{d}$. Thus, \eqref{scaling.stepl} holds with $\cE_1 =\cS$. Next, assume \eqref{scaling.stepl} holds for some $\ell\geq 1$, from which \eqref{crude.bound.stepl} follows. To bound the cardinality of $\cE_{\ell+1}$, note that for any $j\in \cE_{\ell+1} \setminus \cS$, $ p'_\lambda( | \wt \beta_j^{(\ell)}|  )  = \lambda_j^{(\ell)}   <  p'( \gamma ) \lambda = p_\lambda'( \gamma \lambda ) $. This, together with the monotonicity of $p_\lambda'$ on $\RR_+$, implies $| \wt \beta_j^{(\ell)}|  >  \gamma \lambda$.
Recalling that $\beta^*_j = 0$ for $j\in  \cE_{\ell+1} \setminus \cS$, we obtain
\#
	 | \cE_{\ell+1} \setminus \cS |^{1/2}  & <  \frac{ 1 }{   \gamma \lambda } \| \wt \bbeta^{(\ell )}_{ \cE_{\ell+1} \setminus \cS} \|_2 =\frac{ 1 }{   \gamma \lambda } \|  ( \wt \bbeta^{(\ell )} - \bbeta^* )_{ \cE_{\ell+1} \setminus \cS} \|_2  \nn \\
	&  \stackrel{({\rm i})}{\leq }   \frac{  c \gamma s^{1/2}  \lambda }{  \gamma \lambda  }   =   c  s^{1/2}, \label{set.bound}
\#
where inequality (i) applies the bound \eqref{crude.bound.stepl}.
Hence, $|\cE_{\ell+1}| \leq | \cS | + |\cE_{\ell+1} \setminus \cS| < (c^2+1)s$. By \eqref{def:El} and the property $p_\lambda'(t) = \lambda p'(t/\lambda)$, we are guaranteed that
$$
	\lambda^{(\ell)}_j  \geq p'( \gamma) \lambda \geq  2 ( \|  \bw^* \|_\infty + \epsilon_{\ell + 1} ) ~\mbox{ for }~ j \in \cE_{\ell+1}^{\cc}.
$$
The two hypotheses in \eqref{scaling.stepl} then hold for $\ell+1$, which completes the induction step. Consequently,  the bounds \eqref{bound.stepl} and \eqref{crude.bound.stepl} hold for any $\ell\geq 1$.
 
We have shown that under proper conditions, all the estimates $\wt \bbeta^{(\ell)}$ fall in a local neighborhood of $\bbeta^*$, i.e.,$\| \wt \bbeta^{(\ell)} - \bbeta^* \|_{\Sigma} \leq r^{{{\rm crude}}} =  c \gamma s^{1/2} \lambda$. To further refine this bound as signal strengthens,  on the right-hand side of  \eqref{bound.stepl}, we need to establish sharper bounds on 
$$
	\| \blambda_{\cS}^{(\ell-1)}\|_2 = \sqrt{ \sum_{j\in \cS} \{ \lambda_j^{(\ell -1)} \}^2 } ~~\mbox{ and }~~ \|\bw^*_{  \cE_\ell } \|_2 +  |  \cE_\ell|^{1/2} \epsilon_\ell , 
$$ 
and maintain the bias term $b^*_\tau $,  instead of replacing it with an upper bound $s^{1/2} \lambda$.
For each $j\in [d]$, $\lambda_j^{(\ell-1)} = p'_\lambda( | \wt \beta_j^{(\ell-1)} |)$. If $|  \wt \beta_j^{(\ell-1)}  - \beta_j^*  | \geq    \gamma \lambda$, then $\lambda_j^{(\ell-1)} \leq \lambda \leq   \gamma^{-1}|  \wt \beta_j^{(\ell-1)}  - \beta_j^*  |$; otherwise if $|  \wt \beta_j^{(\ell-1)}  - \beta_j^*  | \leq    \gamma \lambda$, $\lambda_j^{(\ell-1)} \leq p_\lambda'(|\beta_j^*| -   \gamma \lambda )$ due to monotonicity of $p_\lambda'$.
Putting together the pieces, we conclude that 
\#
	\| \blambda_{\cS}^{(\ell-1)}\|_2 \leq  \|  p_\lambda'(|\bbeta^*_\cS| -    \gamma \lambda ) \|_2 +   \gamma^{-1}  \| ( \wt \bbeta^{(\ell-1)} - \bbeta^* )_\cS \|_2 . \nn
\#
For the remaining terms that involve $\cE_\ell $, by  the triangle inequality and \eqref{set.bound} we obtain that
\#
	&   \|\bw^*_{   \cE_\ell } \|_2 +  |  \cE_\ell|^{1/2} \epsilon_\ell   \nn \\
& \leq   \| \bw^*_{  \cS } \|_2 +  s^{1/2} \epsilon_\ell +   |\cE_\ell \setminus \cS|^{1/2} \| \bw^*  \|_\infty +  |\cE_\ell \setminus \cS|^{1/2} \epsilon_\ell \nn \\
& <  \| \bw^*_{  \cS } \|_2 +  s^{1/2}\epsilon_\ell + \frac{  \| \bw^* \|_\infty + \epsilon_\ell }{   \gamma \lambda}   \|  ( \wt \bbeta^{(\ell-1 )} - \bbeta^* )_{ \cE_{\ell } \setminus \cS} \|_2  \nn \\
&  \leq \| \bw^*_{  \cS } \|_2 +  s^{1/2}\epsilon_\ell +  \frac{p'(\gamma)}{ 2 \gamma}  \|  ( \wt \bbeta^{(\ell -1)} - \bbeta^* )_{ \cE_{\ell } \setminus \cS} \|_2 . \nn
\#
Plugging the above refined bounds into \eqref{bound.stepl} yields
\#
 & \| \wt \bbeta^{(\ell)} - \bbeta^* \|_2  \nn \\
& \leq \kappa^{-1}  \big\{  \|  p_\lambda'(|\bbeta^*_\cS| -    \gamma \lambda ) \|_2 +  \| \bw^*_{  \cS} \|_2 +  s^{1/2} \epsilon_\ell  + b^*_\tau  \big\}     +  \frac{ p'(\gamma) }{  2   \gamma   \kappa  } \| ( \wt \bbeta^{(\ell-1)} - \bbeta^* )_{\cE_\ell} \|_2 . \nn
\#
Taking $\delta =  p'(\gamma)/(  2 \gamma   \kappa ) \in (0,1)$,  the contraction inequality \eqref{contraction.inequality} follows immediately. 
Finally, \eqref{contraction.inequality2} is a direct consequence of \eqref{contraction.inequality} and \eqref{bound.step1}. \qed

\subsection{Proof of Proposition~\ref{prop:oracle}}
 
By construction, the oracle estimator $\hat \bbeta^{{\rm ora}}$ is such that $\hat \bbeta^\ora_{\cS^\cc} = \textbf{0} \in \RR^{d-s}$  and $\nabla \hat  \cL_\tau(\hat \bbeta^{{\rm ora}})_{  \cS} = \textbf{0} \in \RR^s$.
With $\bw^{\ora}  = \nabla \hat  \cL_\tau(\hat \bbeta^\ora)$,  the proof strategy is similar to that in the proof of Proposition~\ref{prop:tightening} with $\epsilon_\ell=0$, because $\hat \bbeta^{(\ell)}$ are optimal solutions to $({\rm P}_\ell)$. 

Recall that  $l =  \{2 +  \frac{2}{p'(\gamma_0)}  \}  (c_0^2+1)^{1/2}  s^{1/2}$ with $c_0>0$ determined by \eqref{def.c0}.
Conditioned on the event
$$
	\big\{ \| \bw^{{\rm ora}} \|_\infty \leq 0.5 p'(\gamma_0)  \lambda  \big\} \cap \big\{ \| \hat \bbeta^{{\rm ora}}  - \bbeta^* \|_{\Sigma}  \leq r \big\} \cap \cE_2(r,    l ,  \kappa) ,
$$
following the proof of Proposition~\ref{prop:tightening} and applying Lemma~\ref{lem:rsc.oracle} with $\bbeta = \hat \bbeta^{\ora}$ and $\cE = \cE_\ell$, it can be similarly shown that 
\#
	\| \hat \bbeta^{(\ell)} - \hat \bbeta^{{\rm ora}} \|_{2} &  \leq   \kappa^{-1}  \big( \| \blambda^{(\ell-1)}_{\cS} \|_2  +    \| \bw^{\ora} _{  \cE_\ell} \|_2  \big)  \nn \\
	& \leq    \kappa^{-1}  \big(  s^{1/2} \lambda   +   |\cE_\ell|^{1/2} \| \bw^{{\rm ora}} \|_\infty  \big)  \nn \\
	 & <      \kappa^{-1} \big\{1 + 0.5 p'(\gamma_0) (c_0^2+1)^{1/2} \big\} s^{1/2}\lambda  =   c_0 \gamma_0   s^{1/2}\lambda  \leq r , \label{oracle.bound1}
\#
where similarly to \eqref{def:El} and \eqref{scaling.stepl}, $\cE_\ell = \cS \cup   \{1 \leq   j \leq d :  \lambda_j^{(\ell-1)}  < p'( \gamma_0 ) \lambda  \}$ is such that $|\cE_\ell \setminus \cS | < c_0^2 s$ and thus  $|\cE_\ell|  < (c_0^2+1)s$.
In this case, the approximation bias is hidden in $\| \bw^{\ora} \|_\infty$. Moreover, define a sequence of subsets 
$$
	\cS_\ell  = \{ 1\leq  j \leq d :  | \hat \beta_j^{(\ell)} - \beta_j^* | \geq  \gamma_0  \lambda \}, \ \ \ell =0, 1, 2, \ldots .
$$
Starting with $\hat  \bbeta^{(0)} = \textbf{0}$, it holds under the minimum signal strength condition that $\cS_0 = \cS$.

To obtain a refined upper bound on $\| \blambda^{(\ell-1)}_{\cS} \|_2 $, note that if $j\in  \cS \cap  \cS_{\ell -1}^{\cc}  $, $\lambda_j^{(\ell-1)}  = p_\lambda' ( | \hat \beta^{(\ell-1)}_j  | )   \leq p_\lambda'(|\beta_j^* | -    \gamma_0 \lambda  )$ due to monotonicity; otherwise if $j\in \cS \cap \cS_{\ell -1}$, $\lambda_j^{(\ell-1)}  \leq \lambda$. Therefore,
\#
	\| \blambda^{(\ell-1)}_{\cS} \|_2 \leq  \| p_\lambda'(|\bbeta^*_{\cS}|  - \gamma_0 \lambda  ) \|_2 + \lambda |\cS \cap \cS_{\ell - 1}|^{1/2} .  \nn
\#
Since $\| \bbeta_{\cS}^* \|_{\min} \geq (\gamma_0 + \gamma_1) \lambda$ and $p_\lambda'(t)=0$ for all $t \geq \gamma_1 \lambda$, $\| p_\lambda'(|\bbeta^*_{\cS}|  -\gamma_0 \lambda  ) \|_2$ vanishes. Turning to $ \| \bw^\ora_{ \cE_\ell }  \|_2$,  by the first-order condition of minimizing $\bbeta_{\cS} \mapsto (1/n) \sn \hat \cL_\tau(y_i - \bx_{i,\cS}^\T \bbeta_{\cS} )$, we have $\bw^\ora_{  \cS} = \textbf{0}$ and hence
$$
 \| \bw^\ora_{  \cE_\ell } \|_2  = \|\bw^\ora_{\cE_\ell \setminus \cS} \|_2 \leq \| \bw^\ora  \|_\infty |\cE_\ell \setminus \cS |^{1/2} .
$$
For each $j\in  \cE_\ell \setminus \cS$, $\beta^*_j = 0 $ and $\lambda_j^{(\ell-1)} = p_\lambda'(|\hat \beta_j^{(\ell-1)}| ) < p'(\gamma_0) \lambda =  p_\lambda'(\gamma_0 \lambda )$. Hence, $| \hat \beta_j^{(\ell-1)} - \beta^*_j| =| \hat \beta_j^{(\ell-1)} |  >  \gamma_0 \lambda$ so that $j\in   \cS_{\ell-1} \setminus \cS$. Therefore, $\cE_\ell \setminus \cS \subseteq \cS_{\ell -1} \setminus \cS$. Combined with the earlier bound, we arrive at
\#
	 \| \bw^\ora_{   \cE_\ell} \|_2  \leq   \|  \bw^\ora \|_\infty |  \cS_{\ell -1} \setminus \cS |^{1/2}   . \nn
 \#
Since $p'(\gamma_0) \leq 1/2$, substituting the above estimates into \eqref{oracle.bound1} yields
 \#
	\| \hat \bbeta^{(\ell)} - \hat \bbeta^{{\rm ora}} \|_{2} &  \leq \frac{  |\cS \cap \cS_{\ell - 1}|^{1/2} +      | \cS_{\ell -1} \setminus \cS |^{1/2}  /4   }{   \kappa  }   \lambda   \nn \\
	& \leq  \frac{\sqrt{17}}{4} \frac{   \lambda }{  \kappa } | \cS_{\ell - 1} |^{1/2}     .    \label{oracle.bound2}
\#

Next we bound $| \cS_{\ell} |$ $(\ell\geq 1)$, the cardinality of $\cS_{\ell}$.
By \eqref{oracle.constraint}, it holds  for any $j\in \cS_\ell$  that
\#
	 | \hat \beta_j^{(\ell)} - \hat \beta^{{\rm ora}}_{ j} | &  \geq    \gamma_0  \lambda - \| \hat \bbeta^{{\rm ora}} -\bbeta^*\|_\infty    \geq   1.25 \frac{\lambda}{ \delta   \kappa}  -   0.2 \frac{ \lambda}{ \delta     \kappa}    = 1.05 \frac{\lambda}{\delta   {\kappa}} . \nn
\#
In conjunction with \eqref{oracle.bound2}, this implies 
\#
	| \cS_\ell |^{1/2} \leq  \frac{  \| \hat \bbeta^{(\ell)} - \hat \bbeta^{{\rm ora}} \|_2 }{ 1.05 \lambda /( {\kappa}\delta) }   \leq   \frac{\frac{\sqrt{17}}{4} (\lambda/  \kappa) |\cS_{\ell -1 }|^{1/2}     }{  1.05 \lambda /(  {\kappa}\delta) }  <   \delta | \cS_{\ell -1} |^{1/2}      , \ \ \ell \geq 1.
\#
Recall that $\cS_0 = \cS$, we have  $| \cS_\ell |^{1/2}  <  \delta^\ell s^{1/2}$ for any $\ell\geq 1$.
As long as $\ell\geq T:=\lceil \log (s^{1/2})/\log(1/\delta) \rceil$, we are guaranteed that $| \cS_\ell |<1$, i.e. $\cS_\ell =\emptyset$. Consequently, it follows from \eqref{oracle.bound2} that $ \hat \bbeta^{(\ell)} = \hat \bbeta^{{\rm ora}} $ for all $\ell \geq T+1$. This completes the proof.  \qed

\subsection{Proof of Proposition~\ref{prop:RSC}}

The proof is based on similar  arguments that were used in the proof of Lemmas~C.3 and C.4  in \cite{SZF2017}. We only present the necessary steps in order to slightly relax the sub-Gaussian condition on $\bx_i = (x_{i1} , \ldots, x_{id})^\T$.

For any $\bbeta \in \bbeta^* + \BB(r)\cap \CC(l)$,  write $\bdelta = \bbeta - \bbeta^* $. 
Following the proof of Lemma~C.3 in \cite{SZF2017},  it can be shown under Condition~\ref{moment.cond} that 
\#
  & \langle \nabla \hat \cL_\tau(\bbeta) -  \nabla \hat \cL_\tau(\bbeta^*)  , \bbeta - \bbeta^* \rangle \nn \\
& \geq   \big\{ 1   -  (2 \sigma_2 / \tau)^2  \big\} \| \bdelta  \|_{\Sigma}^2   -   \EE (\bx^\T \bdelta )^2 I\{|\bx^\T \bdelta |  / \| \bdelta \|_2 \geq \tau  / (4r)\} - \Delta(r, l) \cdot   \| \bdelta \|_2^2 \nn \\
& \geq     \big\{ 1   -  (2 \sigma_2 / \tau)^2 -  \rho_{\bx}^2 e^{-\tau/(8 \sigma_{\bx} r )}  \big\} \cdot  \| \bdelta  \|_{\Sigma}^2   - \Delta(r, l) \cdot   \| \bdelta \|_2^2  ,   \label{bregman.div.lbd1}
\#
where the second inequality uses the bound 
\#
\EE (\bx^\T \bdelta )^2 I\{|\bx^\T \bdelta |  / \| \bdelta \|_2 \geq \tau  / (4r)\}  &\leq \big\{ \EE (\bx^\T \bdelta )^4 \big\}^{1/2} \PP\big(|\bx^\T \bdelta |  / \| \bdelta \|_2 \geq \tau  / (4r)\big)^{1/2} \nn \\
& \leq \rho_{\bx}^2  e^{-\tau/(8\sigma_{\bx} r)} \| \bdelta \|_{\Sigma}^2 \nn
\#
and $\Delta(r, l )  = \sup_{\bdelta  \in  \BB(r) \cap \CC(l) }  (1/n) \sn |f_{\bdelta}(\bx_i, \varepsilon_i) - \EE f_{\bdelta}(\bx_i, \varepsilon_i) |$ with
\$
	f_{\bdelta} (\bx_i, \varepsilon_i ) : =  I(|\varepsilon_i | \leq \tau/2) \cdot  \varphi_{\tau\| \bdelta \|_2  / (2r)}  (\bx_i^\T \bdelta )/ \| \bdelta \|_2^2  
\$
and $\varphi_c(u) := u^2 I(|u|\leq c/2 ) + (|u| - c)^2 I(c/2 < |u| \leq c )$ for $u \in \RR$ and $c \geq 0$. 
We thus let $\tau \geq     \max\{ 4 \sqrt{2} \sigma_2 ,  8 \log(8 \rho_{\bx}^2 ) \sigma_{\bx}  r \}$ so that $(2 \sigma_2 / \tau)^2 \leq 1/8$ and $ \rho_{\bx}^2 e^{-\tau/(8 \sigma_{\bx} r )}  \leq 1/8$.
It then follows from \eqref{bregman.div.lbd1} that 
\#
\langle \nabla \hat \cL_\tau(\bbeta) -  \nabla \hat \cL_\tau(\bbeta^*)  , \bbeta - \bbeta^* \rangle \geq \frac{3}{4}  \cdot \| \bdelta \|_{\Sigma} ^2 - \Delta(r, l) \cdot \|\bdelta \|_2^2  \label{RSC.lb1}
\#
holds uniformly over $\bbeta \in \bbeta^* + \BB(r) \cap \CC(l)$.

It remains to control the supremum $\Delta(r,l)$.  Note that $\varphi_c(\cdot)$ is $c$-Lipschitz continuous, and satisfies $\varphi_{b c }(b u) =b^2 \varphi_{c}(u)$ for $b, c >0$ and $u\in \RR$.  Thus,   the above $f_{\bdelta} (\bx_i, \varepsilon_i ) $ can be simplified as
\$
f_{\bdelta} (\bx_i, \varepsilon_i ) = I(|\varepsilon_i | \leq \tau/2 ) \cdot  \varphi_{\tau/ (2r)}  (\bx_i^\T \bdelta /\| \bdelta \|_2 ).
\$
Moreover, since $0\leq \varphi_c(u) \leq \min \{  (c/2)^2, u^2 \}$, we have $0\leq f_{\bdelta} (\bx_i, \varepsilon_i )  \leq \tau^2/(4r)^2$ and by \eqref{def.rhox},  $\EE f^2_{\bdelta} (\bx_i, \varepsilon_i ) \leq \EE (\bx_i^\T \bdelta/\| \bdelta \|_2 )^4 \leq \rho_{\bx} ^4 \{ \EE (\bx_i^\T \bdelta/\| \bdelta \|_2 )^2 \}^2 \leq  ( \rho_u  \rho_{\bx}^2)^2$ for all $\bdelta \in \RR^d$.
Then, applying Bousquet's version of Talagrand's inequality (see, e.g.  Theorem~7.3 in \cite{B2003}), we obtain that for every $t \geq 0$,
\#
	\Delta(r,l)  & \leq  \EE \Delta(r,l) + \sqrt{  \EE \Delta(r,l)   (\tau/ 2r)^2 t /n  +  2(  \rho_u  \rho_{\bx}^2 )^2 t /n   }  +  (\tau / 4r)^2  t / (3 n)  \nn \\
	& \leq 1.25  \EE \Delta(r,l)  +      \rho_u  \rho_{\bx}^2  \sqrt{2  t/n } + (\tau/ r)^2 t/ (3n)  \label{RSC.lb2}
\#
with probability at least $1- e^{-t}$.  For  $\EE \Delta(r,l)$,  using Rademacher symmetrization gives
\#
 \EE\Delta(r,l)   \leq 2 \EE \Bigg\{  \sup_{ \bdelta  \in  \BB(r) \cap \CC(l) }  \Bigg|   \frac{1}{n} \sn  e_i   f_{\bdelta} (\bx_i, \varepsilon_i )   \Bigg|  \Bigg\}  , \nn
\# 
where $e_1,\ldots, e_n$ are independent Rademacher random variables.
Since $\varphi_c(\cdot)$ is $c$-Lipshitz,  $ f_{\bdelta} (\bx_i, \varepsilon_i ) $ is a $(\tau /2r)$-Lipschitz function in $  \bx_i^\T \bdelta / \| \bdelta \|_2$, i.e., for any sample $(\bx_i, \varepsilon_i)$ and parameters $\bdelta , \bdelta'\in \RR^d$, 
\#
	|  f_{\bdelta} (\bx_i, \varepsilon_i ) - f_{\bdelta'} (\bx_i, \varepsilon_i ) |  \leq  \frac{\tau}{2r}    |   \bx_i^\T \bdelta / \| \bdelta \|_2 -  \bx_i^\T \bdelta' / \| \bdelta' \|_2  |  .\nn
\#
Moreover, observe that $ f_{\bdelta} (\bx_i, \varepsilon_i )=0$ for any $\bdelta$ such that $ \bx_i^\T \bdelta / \|\bdelta \|_{2} = 0$, and $I(|\varepsilon_i | \leq \tau/2 ) \in \{0, 1\}$.
Then, applying Talagrand's contraction principle (see, e.g. Theorem~4.4, Theorem~4.12 and (4.20) in \cite{LT1991}) yields
\#
 & \EE \Delta(r,l)     \leq 2 \EE \Bigg\{  \sup_{ \bdelta  \in \BB(r) \cap \CC(l) }   \Bigg|  \frac{1}{n} \sn  e_i   f_{\bdelta}(\bx_i, \varepsilon_i)   \Bigg|   \Bigg\} \nn \\
& \leq    \frac{2\tau }{r}\EE \Bigg\{  \sup_{ \bdelta  \in  \BB(r) \cap \CC(l)   }   \Bigg|  \frac{1}{n} \sn  e_i      \bx_i^\T \bdelta / \| \bdelta \|_2 \Bigg|  \Bigg\} \leq  \frac{2\tau  l }{r} \cdot  \EE  \bigg\|  \frac{1}{n} \sn e_i   \bx_i   \bigg\|_\infty  , \label{Delta.meanbound}
\# 
where  the last inequality uses the cone constraint that $\|\bdelta \|_1 \leq   l  \| \bdelta \|_{2}$.
Next, we apply a maximal inequality for sub-exponential random variables to bound the last term on the right-hand side of \eqref{Delta.meanbound}. 
For  $j=1,\ldots, d$, define partial sums  $S_j =  \sn  e_i   x_{ij}$, of which each summand satisfies  $\EE(  e_i     x_{ij} ) =0$ and $\EE(e_i x_{ij})^2 = \sigma_{jj}$.  More over, for $k=2,4,\ldots$, $\EE |e_i|^k=1$ and 
\#
	\EE |e_i     x_{ij}|^k  &\leq  \sigma_{\bx}^k   \cdot k \int_0^\infty t^{k-1} \PP(|x_{ij} | \geq \sigma_{\bx} t) \, {\rm d} t   \leq  \sigma_{\bx}^k     \cdot k \int_0^\infty t^{k-1} e^{-t} \, {\rm d} t \nn \\
	& =  k!   \sigma_{\bx}^k = \frac{k!}{2} \underbrace{ 2 \sigma_{\bx}^2 }_{\geq \sigma_{jj}} \sigma_{\bx}^{k-2}   .\nn
\#
By the symmetry of Rademacher random variables, and Bernstein's inequality (see, e.g.  Theorems~2.10 in \cite{BLM2013}), we obtain that 
$$
	 \log \EE e^{ \lambda S_j}  =  \log \EE e^{ - \lambda S_j}  \leq \psi(\lambda) :=  \frac{\nu \lambda^2 }{2(1- c \lambda)} ~~\mbox{ for all }  \lambda  \in (0, 1/c ),
$$
where $\nu = 2n \sigma_{\bx}^2$ and $c = \sigma_{\bx}$.  Following the proof of Theorems 2.5 in \cite{BLM2013},  it can be shown that 
$$
 \EE \max_{1\leq j\leq d} |S_j |   \leq   \inf_{\lambda \in (0,1/c)}  \Bigg\{  \frac{\log(2 d) + \psi(\lambda)}{\lambda} \Bigg\} = \sqrt{2v \log(2 d)} +   c \log(2 d) .
$$ 
Re-arranging terms and using \eqref{Delta.meanbound}, we find that
\#
 \EE  \Delta(r, l)  \leq    2    \sigma_{\bx}  (   \tau l / r)\Biggl\{ 2  \sqrt{  \frac{  \log(2  d)}{n}}  +   \frac{\log(2  d)}{n} \Biggr\}  . \nn
\# 
Combining this with \eqref{RSC.lb1} and \eqref{RSC.lb2} yields the bound \eqref{RSC.bound}.   \qed

\subsection{Proof of Proposition~\ref{prop:score}}

 Write $S_j = (1/n) \sn (  \xi_i x_{ij} - \EE  \xi_i x_{ij} )$ with $\xi_i := \ell_\tau'(\varepsilon_i)$ for $j=1,\ldots, d$, so that $\| \bw^* \|_\infty= \max_{1\leq j\leq d } | S_j |$.  Note that $\ell_\tau'(u) = uI(|u|\leq \tau) + \tau \sign(u) I(|u|>\tau)$, we have   $\EE(\xi_i^2 | \bx_i) \leq \sigma_2^2 $ and $|\xi_i| \leq \tau$.  It follows that $\EE(\xi_i x_{ij} )^2 \leq \sigma_2^2 \sigma_{jj}$ and under Condition~\ref{moment.cond},
\#
	 \EE |\xi_i x_{ij} |^k \leq \tau^{k-2} \sigma_2^2\cdot \EE |x_{ij} |^k \leq \tau^{k-2} \sigma_2^2 \cdot   k!  \sigma_{\bx}^k
\leq \frac{k!}{2} \cdot  2 \sigma_2^2 \sigma_{\bx}^2  \cdot ( \tau \sigma_{\bx} )^{k-2} , \nn
\#
for $k=3 ,4 ,\ldots .$
Bernstein's inequality, in conjunction with the union bound, implies that for any $x\geq 0$,
\#
\max_{1\leq j\leq d } | S_j | \leq   \sigma_{\bx}  \Biggl( 2  \sigma_2  \sqrt{\frac{2x}{n}} + \tau \frac{ x}{n} \Biggr) \nn
\#
with probability at least $1-2 d e^{-x}$. Taking $x= \log(2d) + t $ proves \eqref{score.max.bound}. Next we use a standard covering argument to prove \eqref{score.l2.bound}. For any $\epsilon \in (0,1)$, there exists an $\epsilon$-net $\cN_\epsilon$ of the unit sphere in $\RR^{s}$ with cardinality $| \cN_\epsilon | \leq (1+2/\epsilon)^s$ such that
\#
	 \|   \bw^*_{ \cS} \|_2  \leq \frac{1}{1-\epsilon}  \max_{\bu \in \cN_\epsilon } \frac{1}{n} \sn \bigl( \xi_i   \bu^\T  \bx_{i , \cS}    - \EE \xi_i  \bu^\T  \bx_{i ,  \cS}   \bigr). \label{cover.bound}
\#
For every $\bu \in \cN_\epsilon$, Bernstein's condition holds: $\EE ( \xi_i   \bu^\T  \bx_{i , \cS} )^2\leq 2 \sigma_2^2\sigma_{\bx}^2$ and for $k=3,4,\ldots $,
\#
\EE  | \xi_i    \bu^\T  \bx_{i , \cS}  |^k \leq \frac{k!}{2}  \cdot 2 \sigma_2^2 \sigma_{\bx}^2  \cdot ( \tau   \sigma_{\bx}   )^{k-2} . \nn
\#	 
Again, applying (one-sided) Bernstein's inequality yields, for any $x>0$,
\#
	 \frac{1}{n} \sn \bigl( \xi_i   \bu^\T  \bx_{i , \cS}  - \EE \xi_i   \bu^\T  \bx_{i , \cS}  \bigr)  \leq  \sigma_{\bx} \Biggl( 2 \sigma_2\sqrt{\frac{ x }{n}} + \tau\frac{ x}{n} \Biggr)  \nn
\#
with probability at least $1-e^{-x}$. Consequently, from the union bound and \eqref{cover.bound}, we have
\#
	 \| \bw(\bbeta)_{  \cS} \|_2 \leq \frac{\sigma_{\bx}}{1-\epsilon} \Biggl(2  \sigma_2\sqrt{\frac{ x }{n}} + \tau\frac{ x}{n} \Biggr) \nn
\#
with probability at least $1-e^{\log(1+2/\epsilon) s  -x}$. Taking $\epsilon=1/3$ and $x=2s  +t$ proves \eqref{score.l2.bound}.
\qed

\section{Proofs of Theorems}

\subsection{Proof of Theorem~\ref{thm:l1huber}}

We will apply Propositions~\ref{prop:contraction},  \ref{prop:RSC} and \ref{prop:score} to prove Theorem~\ref{thm:l1huber}.
To begin with, let $\lambda, r , \kappa>0$ satisfy \eqref{scaling.1}, that is,   $\lambda \geq  s^{-1/2} b^*_\tau $ and $r > 2.5 \kappa^{-1} s^{1/2} \lambda$. Applying Proposition~\ref{prop:contraction} with $\epsilon_1=0$ yields that, conditioned on the event $\cE_1(r,6 s^{1/2} ,\kappa)\cap \{ \lambda \geq 2 \| \bw^* \|_\infty\}$,   the Huber-Lasso estimator defined in \eqref{Huber-Lasso} satisfies
\#
	\| \hat \bbeta^{\hlasso} - \bbeta^* \|_2  \leq \kappa^{-1} \big( 1.5 s^{1/2} \lambda + b^*_\tau  \big) \leq  2.5\kappa^{-1} s^{1/2} \lambda .\label{thm1-2}
\#

It remains to control the above event of interest.
Taking $l=6s^{1/2}$ and $\kappa = \rho_l/2$,  it follows from Proposition~\ref{prop:RSC} that $\PP \{ \cE_1(r, l, \kappa) \} \geq 1-e^{-t}$ as long as 
$  \tau \gtrsim \max(\sigma_2, r )$ and $n  \gtrsim   (\tau  / r)^2   (s \log d +   t  )$.
Furthermore,  Proposition~\ref{prop:score} ensures that  event $\{  \lambda \geq 2 \|  \bw^* \|_\infty \}$ occurs with probability at least $1- e^{-t}$ if the  regularization parameter satisfies
\#
	\lambda \geq      \sigma_{\bx}   \Bigg(   4 \sigma_2 \sqrt{\frac{\log(2d)+ t }{  n}} +   2 \tau  \frac{\log (2d) + t }{  n}  \Biggr) .  \nn
\#

Finally,  we take $\tau \asymp \sigma_2 \sqrt{n/(\log d + t)}$ and $r \asymp \tau$.
By Lemma~\ref{lem:bias} and Condition~\ref{moment.cond},  $b^*_\tau \leq \sigma_{\bx} \sigma_2^2  \tau^{-1} \asymp \sigma_2 \sqrt{(\log d + t)/n}$.
Putting together the pieces, we conclude that under the scaling $n \gtrsim  s \log d + t $, and if $\lambda$ has magnitude of the order within the range
\#
	  \sigma_2 \sqrt{\frac{\log d+ t }{ n}}  \lesssim \lambda  \lesssim \sigma_2 s^{-1/2} \sqrt{\frac{n}{ \log d + t  }}  , \nn
\#
the event  $\cE_1(r,6s^{1/2},  \rho_l/2)\cap \{ \lambda \geq 2 \| \bw^* \|_\infty\}$ occurs with probability at least $1-2 e^{-t}$. Combined with \eqref{thm1-2}, this proves the claimed bound. \qed

\subsection{Proof of Theorem~\ref{thm:random}}

We will apply Propositions~\ref{prop:tightening}, \ref{prop:RSC} and \ref{prop:score} to prove \eqref{oracle1}.
The key is to control the random events from Proposition~\ref{prop:tightening}, which relies on a delicate combination of all the parameters.
Similarly to the proof of Theorem~\ref{thm:l1huber}, we take $\kappa = \rho_l/2$, and let $ \lambda  \geq s^{-1/2} b^*_\tau $, where $b^*_\tau  = \| \nabla \cL_\tau(\bbeta^* ) \|_2  \leq \sigma_{\bx}  \sigma_2^2 \tau^{-1}$ due to Condition~\ref{moment.cond}.
Given $\gamma_1 > \gamma_0>0$ satisfying \eqref{gamma0.cond},   define  $\delta= p'(\gamma_0)/(\rho_l\gamma_0) \in (0,1)$,  and let $c>0$ be the constant determined by $ 0.5 p'(\gamma_0) (c^2+1)^{1/2} + 4   =  c   \rho_l\gamma_0$,  which verifies \eqref{def.c} with $\kappa = \rho_l/2$.
Moreover, set $l = \{ 2 + \frac{2}{p'(\gamma_0)} \} (c^2+1)^{1/2} s^{1/2} + \frac{2}{p'(\gamma_0) }s^{1/2}$,  and let 
\#
 r \geq   c  \gamma_0  s^{1/2} \lambda =: r^{{\rm crude}}. \nn
\#
Consequently,  we apply Proposition~\ref{prop:tightening} to conclude that, conditioned on event $\cE_1(r,l,  \rho_l/2 ) \cap \{   \| \bw^*  \|_\infty + n^{-1/2} \leq 0.5 p'(\gamma_0) \lambda  \}$, the $\epsilon_\ell$-optimal solutions $\wt \bbeta^{(\ell)}$ satisfy
\#
 \| \wt \bbeta^{(\ell)} - \bbeta^* \|_2 &\leq  \delta \cdot  \| \wt \bbeta^{(\ell-1)} - \bbeta^* \|_2 \nn \\
 &~~~ +    2\rho_l^{-1} \big\{  \|  p_\lambda'(|\bbeta^*_\cS| -    \gamma_0 \lambda ) \|_2 +  \| \bw^*_{  \cS} \|_2 + (s/n)^{1/2}  + b^*_\tau \big\}       \nn
\#	
for all $\ell \geq 2$.  Under the minimum signal strength condition $\|  \bbeta^*_\cS \|_{\min} \geq (\gamma_0+\gamma_1) \lambda$, and due to the fact that $p'_\lambda(t) = 0$ for all $t\geq \gamma_1 \lambda$,  the deterministic term $\|  p_\lambda'(|\bbeta^*_\cS| -    \gamma_0 \lambda ) \|_2$ vanishes,  thus implying
\#
\| \wt \bbeta^{(\ell)} - \bbeta^* \|_2  \leq \delta^{\ell-1} r^{{\rm crude}}  +2  (1-\delta)^{-1}\rho_l^{-1} \big\{    \| \bw^*_{  \cS} \|_2 + (s/n)^{1/2} + b^*_\tau   \big\}      \label{tightening.ubd1}
\#
for all $\ell \geq 2$. 

Next, we control the event $\cE_1(r,l,  \rho_l/2 ) \cap \{   \| \bw^*  \|_\infty + n^{-1/2} \leq 0.5 p'(\gamma_0) \lambda  \}$ and the oracle error term $\| \bw^*_{  \cS} \|_2$.
Given $t\geq 0$, it follows from Proposition~\ref{prop:RSC} with the above $l \asymp s^{1/2}$ and $\kappa =\rho_l/2$ that event $\cE_1(r, l ,\kappa )$ occurs with probability at least $1- e^{-t}$ as long as  $\tau \gtrsim \max(\sigma_2, r)$ and  $n \gtrsim (\tau/r)^2 (  s \log  d  +   t)$.
We therefore take $r \asymp \tau$ throughout the proof. 
Turning to the gradient vector $\bw^* \in \RR^d$,  applying Proposition~\ref{prop:score} yields that with probability at least $1-2 e^{-t}$,
\$
 \| \bw^* \|_\infty \lesssim \sigma_2  \sqrt{\frac{\log d + t}{n}} + \tau \frac{\log d + t}{n }  ~~\mbox{ and }~~  \| \bw_{\cS}^* \|_2 \lesssim \sigma_2  \sqrt{\frac{s + t}{n}} + \tau \frac{s + t}{n } . 
\$

Based on the above analysis, we choose the regularization parameter $\lambda = C \sigma_2 \sqrt{(\log d + t) /n}$ for a sufficiently large $C$.  Under the scaling $n\gtrsim   s \log d  + t $, and if $\tau$ has magnitude of the order within the range of 
$ \sigma_2 $ to $\sigma_2\sqrt{n/(\log d + t )}$,   it follows from \eqref{tightening.ubd1} that with probability at least $1-3 e^{-t}$,
\#
\| \wt \bbeta^{(\ell)} - \bbeta^* \|_{2}  \lesssim  \delta^{\ell-1}  s^{1/2} \lambda  + \frac{1}{1-\delta}  \Bigg( \sigma_2\sqrt{\frac{s +t}{n}} + \tau \frac{s+t}{n} + b^*_\tau \Bigg) ~\mbox{ for all } \ell \geq 1  .  \nn
\# 
This leads to the claimed bound by letting $\ell \gtrsim \lceil \log(\log d +t) / \log(1/\delta) \rceil$.    \qed

\subsection{Proof of Theorem~\ref{thm:oracle}}

The proof is based primarily on Proposition~\ref{prop:oracle}, combined with complementary probabilistic analysis. 
For $t\geq 0$ and a prescribed $q\geq \max(s, \log d)$,  set $\tau = \sigma_2 \sqrt{n / (q + t)}$.
In order to apply the high-level result in Proposition~\ref{prop:oracle}, we need the following two technical lemmas to control the events in \eqref{oracle.constraint}.  
The former controls the event $\cE_2(r,  l , \kappa)$ defined in \eqref{RSC.event2} under proper sample size requirement, and the latter provides upper bounds on the $\ell_\infty$-error terms 
$\| \hat \bbeta^{{\rm ora}} - \bbeta^* \|_\infty$  and  $\| \bw^{\ora} \|_\infty = \| \nabla \hat \cL_\tau(\hat \bbeta^{{\rm ora}} ) \|_\infty$.

\begin{lemma} \label{lem:oracle.RSC}
Under the conditions of the theorem,  let $\tau, r,  l>0$ satisfy
\#   \label{oracle.scaling}
	 \tau \geq  \max(C_0 \sigma_2, C_1 r )  ~\mbox{ and }~ n \gtrsim     (\tau/ r)^2 (s+  l^2 \log d + t )  ,
\#
where $C_0$ is an absolute constant and $C_1$ depends only on $\sigma_{\bx}$.  Then, with probability at least $1- e^{-t}$, 
\# \label{oracle.rsc}
 \langle  \nabla \hat  \cL_\tau(\bbeta_1 ) -  \nabla \hat  \cL_\tau( \bbeta_2 ) , \bbeta_1 - \bbeta_2  \rangle   \geq \frac{\rho_l}{2} \| \bbeta_1 -\bbeta_2 \|_2^2   
\#
holds uniformly over $ (\bbeta_1 , \bbeta_2 ) \in \cC(r,l)$, where $\cC(r,l)$ is defined in \eqref{RSC.event2}.

\end{lemma}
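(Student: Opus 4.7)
The plan is to mimic the strategy of Proposition~\ref{prop:RSC} but with the reference point $\bbeta^*$ replaced by an arbitrary $s$-sparse vector $\bbeta_2 \in \bbeta^* + \BB_\Sigma(r)$, so that uniform control is required jointly over $(\bbeta_1, \bbeta_2) \in \cC(r,l)$. Given such a pair, I would write $\bdelta = \bbeta_1 - \bbeta_2 \in \BB(r) \cap \CC(l)$ and $\bgamma = \bbeta_2 - \bbeta^*$, noting that $\bgamma$ is $s$-sparse with $\| \bgamma \|_{\Sigma} \leq r$. Using $\ell_\tau''(u) = I(|u| \leq \tau)$ and the fundamental theorem of calculus,
\$
  \langle \nabla \hat \cL_\tau(\bbeta_1) - \nabla \hat \cL_\tau(\bbeta_2), \bdelta \rangle = \frac{1}{n} \sn (\bx_i^\T \bdelta)^2 \int_0^1 I\big( | \varepsilon_i - \bx_i^\T \bgamma - s\, \bx_i^\T \bdelta | \leq \tau \big) \, ds .
\$
The integrand is lower bounded pointwise in $s$ by $I(|\varepsilon_i| \leq \tau/3)\, I(|\bx_i^\T \bgamma| \leq \tau/3)\, I(|\bx_i^\T \bdelta| \leq \tau/3)$, which removes the integral and reduces the problem to a quadratic empirical process in $\bdelta$ modulated by two indicator factors.

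Next, following the smoothing argument from Proposition~\ref{prop:RSC}, I would replace the indicator in $\bx_i^\T \bdelta$ by a Lipschitz surrogate $\varphi_{\tau \|\bdelta\|_2 /(Cr)}(\bx_i^\T \bdelta)/\|\bdelta\|_2^2$, so that the summands are Lipschitz in $\bx_i^\T \bdelta/\|\bdelta\|_2$. In expectation, the ``bad'' tails $\EE (\bx^\T \bdelta)^2 I(|\bx^\T \bdelta|/\|\bdelta\|_2 > \tau/(Cr))$ and $\EE (\bx^\T \bdelta)^2 I(|\bx^\T \bgamma | > \tau/3)$ are controlled by Cauchy--Schwarz together with the sub-Gaussian tails of $\bx$ and the bound $\|\bgamma\|_{\Sigma} \leq r$, each producing a factor of order $e^{-c\tau/r}$; the $\varepsilon$-tail contributes an $O(\sigma_2^2/\tau^2)$ factor via Markov's inequality. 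Choosing $\tau \gtrsim \sigma_2 \vee r$ with sufficiently large constants yields a uniform lower bound
\$
  \EE \big[ (\bx^\T \bdelta)^2 \cdot (\text{all three indicators}) \big] \geq \tfrac{3}{4}\, \rho_l \|\bdelta\|_2^2 \quad \text{for every } (\bdelta, \bgamma) .
\$

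The final step is uniform concentration. Applying Bousquet's version of Talagrand's inequality to the empirical process indexed by $(\bdelta, \bgamma)$ controls the deviation from the mean by $\EE(\mathrm{sup}) + C \sqrt{t/n} + C(\tau/r)^2 t/n$. For the expected supremum, Rademacher symmetrization followed by Talagrand's contraction principle, applied successively to the $\bdelta$-Lipschitz surrogate (with Lipschitz constant $\tau/r$) and then to the $\bgamma$-indicator, reduces the problem to two decoupled Rademacher processes: one of order $(\tau/r) \cdot l \sqrt{\log(2d)/n}$ from the $\bdelta$-cone (exactly as in Proposition~\ref{prop:RSC}), and one of order $(\tau/r)\sqrt{s/n}$ from the $s$-sparse $\bgamma$ direction via a standard covering of an $s$-dimensional ball. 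Balancing these contributions against the $\tfrac{3}{4}\rho_l$ lower bound on the expectation yields the sample-size requirement $n \gtrsim (\tau/r)^2 (s + l^2 \log d + t)$. The main obstacle will be the joint contraction step: since the integrand couples $\bdelta$ and $\bgamma$ through the product of a quadratic in $\bx^\T \bdelta$ and an indicator in $\bx^\T \bgamma$, one cannot apply contraction in one variable without first conditioning on the other; keeping the respective Lipschitz constants independent of the ``frozen'' variable is what makes the two complexities decouple cleanly in the final bound.
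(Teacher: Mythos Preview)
Your overall architecture matches the paper's: lower-bound $\langle \nabla\hat\cL_\tau(\bbeta_1)-\nabla\hat\cL_\tau(\bbeta_2),\bbeta_1-\bbeta_2\rangle$ by a truncated quadratic via three indicator events, replace the hard truncations by Lipschitz surrogates (the paper smooths \emph{both} indicators, using $\varphi_R$ for the quadratic in $\bdelta=\bbeta_1-\bbeta_2$ and a second surrogate $\phi_R$ for the indicator in $\bgamma=\bbeta_2-\bbeta^*$), bound the expectation from below by $\tfrac34\rho_l\|\bdelta\|_2^2$, and control the fluctuation with Bousquet's inequality plus a bound on the symmetrized expected supremum. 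The divergence, and the genuine gap in your plan, is in this last step.

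``Successive Talagrand contraction'' does not deliver the decoupled bound you state. For fixed $\bgamma$, the map $u\mapsto\varphi(u)\,\phi(\bx_i^\T\bgamma)\,I_{|\varepsilon_i|\le\tau/4}$ is indeed $(\tau/r)$-Lipschitz uniformly in $\bgamma$, so contraction bounds $\EE_e\sup_\bdelta|\sum_i e_i f_{\bdelta,\bgamma}|$ by $(\tau/r)\,\EE_e\sup_\bdelta|\sum_i e_i\,\bx_i^\T\bdelta/\|\bdelta\|_2|$. But this controls only $\sup_\bgamma\EE_e\sup_\bdelta$, not $\EE_e\sup_{\bdelta,\bgamma}$; the $\bgamma$-dependence was absorbed into the contracting functions and has disappeared from the right-hand side, so there is nothing left to which a ``second'' contraction could be applied, and the $\sqrt{s/n}$ term is simply lost. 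Uniformity of the Lipschitz constant in the frozen variable does not fix this: scalar contraction collapses the process to a linear one in a single argument, and $\sup_\bgamma\EE_e\sup_\bdelta$ can be strictly smaller than $\EE_e\sup_{\bdelta,\bgamma}$.

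The paper resolves the coupling by a different mechanism. After symmetrization it passes from Rademacher to Gaussian multipliers and then applies the Sudakov--Fernique comparison: the conditional increments satisfy
\[
\EE^*\bigl(\GG_{\bbeta_1,\bbeta_2}-\GG_{\bbeta_1',\bbeta_2'}\bigr)^2
\ \le\ \frac{2}{n^2}\Bigl(\frac{\tau}{2r^2}\Bigr)^{2}\sum_i\{\bx_i^\T(\bbeta_2-\bbeta_2')\}^2
\ +\ \frac{2}{n^2}\Bigl(\frac{\tau}{2r}\Bigr)^{2}\sum_i\bigl(\bx_i^\T\bdelta/\|\bdelta\|_2-\bx_i^\T\bdelta'/\|\bdelta'\|_2\bigr)^2,
\]
which is exactly the increment variance of a \emph{sum} $\ZZ_{\bbeta_1,\bbeta_2}$ of two independent linear Gaussian processes, one in $(\bbeta_2-\bbeta^*)_\cS$ and one in $\bdelta/\|\bdelta\|_2$. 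Sudakov--Fernique gives $\EE\sup\GG\le\EE\sup\ZZ$, and the two pieces of $\ZZ$ are then bounded separately by $(\tau/r)\sqrt{s/n}$ and $(\tau/r)\,l\,\EE\|n^{-1}\sum_i g_i\bx_i\|_\infty$. This bivariate Gaussian comparison is the ingredient your proposal is missing; Rademacher contraction has no analogue that turns an additive increment bound into an additive complexity bound.
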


The next lemma provides statistical properties of the oracle estimator $\hat \bbeta^{{\rm ora}}$ defined in \eqref{def:oracle} with $\tau = \sigma_2 \sqrt{n/( q + t)}$.  Since the oracle $\hat \bbeta^{{\rm ora}}$ has access to the true active set $\cS$, it is essentially an unpenalized Huber estimator based on $\{ (y_i, \bx_{i,  \cS}) \}_{i=1}^n$.

\begin{lemma} \label{lem:oracle.score}
Under the sample size scaling $n\gtrsim  q + t$, the following bounds
\begin{equation}
\begin{aligned}
& \| \hat \bbeta^\ora - \bbeta^* \|_{\Sigma} \lesssim \sigma_2  \sqrt{\frac{s +t }{n}}, \\
& \| \hat \bbeta^\ora - \bbeta^* \|_{\infty} \lesssim   \sigma_2   \sqrt{\frac{\log s  + t}{n}}  +  \sigma_2  \bigg(  \frac{ q + t}{n}  \bigg)^{(1+\eta)/2} ,   \label{oracle.error}
\end{aligned}
\end{equation}
and 
\#
\| \nabla \hat  \cL_\tau(\hat \bbeta^{{\rm ora}}) \|_\infty  \lesssim        \sigma_2   \sqrt{\frac{\log s  + t}{n}}  +   \sigma_2  \bigg(  \frac{q+ t}{n}  \bigg)^{(1+\eta)/2}   \label{score.oracle.ubd}
\#
hold with probability at least $1- 7 e^{-t}$.
\end{lemma}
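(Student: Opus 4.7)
The plan is to treat $\hat\bbeta^\ora$ as an unpenalized Huber $M$-estimator confined to the $s$-dimensional subspace $\{\bbeta\in\RR^d:\bbeta_{\cS^\cc}=\bzero\}$, and then to exploit the sub-Gaussian design in Condition~\ref{moment.cond2} together with the $L_{2+\eta}$-$L_2$ norm equivalence on the noise to sharpen the crude bias estimate of Lemma~\ref{lem:bias}. The whole argument is essentially a finite-sample Bahadur expansion in $\RR^s$, and the weaker sample size requirement $n\gtrsim q+t$ (compared to the $s\log d+t$ that appears elsewhere) reflects that only $s$-dimensional concentration is needed.

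First I would establish the $\Sigma$-norm bound. An $s$-dimensional analog of Proposition~\ref{prop:RSC}, in which the cone constraint $\CC(l)$ is vacuous because $\|\bu\|_1\leq s^{1/2}\|\bu\|_2$ automatically on $\RR^s$, gives
\[
\langle \nabla\hat\cL_\tau(\bbeta)-\nabla\hat\cL_\tau(\bbeta^*),\,\bbeta-\bbeta^*\rangle\ \geq\ \tfrac{1}{2}\rho_l \|\bbeta-\bbeta^*\|_2^2
\]
uniformly over $\{\bbeta\in\bbeta^*+\BB_\Sigma(r):\bbeta_{\cS^\cc}=\bzero\}$ with probability at least $1-e^{-t}$ as soon as $n\gtrsim s+t$ and $\tau\gtrsim r$. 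The score $\|\bw^*_\cS\|_2$ is bounded by $\sigma_2\sqrt{(s+t)/n}+\tau(s+t)/n$ via Proposition~\ref{prop:score}; combined with the bias estimate of the next paragraph, a localization argument identical to Lemma~\ref{lem:deterministic.error.bound} (taken with $\blambda=\bzero$) yields $\|\hat\bbeta^\ora-\bbeta^*\|_\Sigma\lesssim\sigma_2\sqrt{(s+t)/n}$.

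The bias is where the $L_{2+\eta}$-$L_2$ equivalence enters. Since $\EE(\varepsilon\mid\bx)=0$, I can rewrite $\EE[\ell_\tau'(\varepsilon)\mid\bx]=-\EE\{(\varepsilon-\tau\,\mathrm{sign}(\varepsilon))I(|\varepsilon|>\tau)\mid\bx\}$ and bound its absolute value by $\EE\{|\varepsilon|I(|\varepsilon|>\tau)\mid\bx\}\leq \tau^{-(1+\eta)}\EE\{|\varepsilon|^{2+\eta}\mid\bx\}\lesssim \sigma_2(\sigma_2/\tau)^{1+\eta}$. Combined with $\EE|x_j|\lesssim\sigma_{\bx}$, this produces the coordinatewise bias $|[\nabla\cL_\tau(\bbeta^*)]_j|\lesssim \sigma_2((q+t)/n)^{(1+\eta)/2}$ under $\tau\asymp\sigma_2\sqrt{n/(q+t)}$, whence the $\ell_2$-bias on $\cS$ is $\lesssim s^{1/2}\sigma_2((q+t)/n)^{(1+\eta)/2}$, which the stated scaling dominates by $\sigma_2\sqrt{(s+t)/n}$.

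For the $\ell_\infty$ bound on $\hat\bbeta^\ora-\bbeta^*$ I would derive a Bahadur representation by expanding the first-order condition $[\nabla\hat\cL_\tau(\hat\bbeta^\ora)]_\cS=\bzero$ around $\bbeta^*$:
\[
\hat\bbeta^\ora_\cS-\bbeta^*_\cS\ =\ -\mathbf{H}_\cS^{-1}[\nabla\hat\cL_\tau(\bbeta^*)]_\cS\ +\ \mathbf{R},
\]
where $\mathbf{H}_\cS=\EE[\ell_\tau''(\varepsilon)\bx_\cS\bx_\cS^\T]$ satisfies $\lambda_{\min}(\mathbf{H}_\cS)\gtrsim \rho_l$ once $\tau$ exceeds a constant (thanks to anti-concentration \eqref{anti-concentration}), and $\|\mathbf{R}\|_\infty$ is of the smaller order $\sigma_2^2(s+t)/n$. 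Splitting the score into its centered and mean parts, the centered piece has $\ell_\infty$-norm $\lesssim \sigma_2\sqrt{(\log s+t)/n}$ by Bernstein plus a union bound over the $s$ coordinates of $\cS$, while the mean piece contributes the per-coordinate bias just derived. For the gradient-norm bound, the KKT condition kills the $\cS$-block, so $\|\nabla\hat\cL_\tau(\hat\bbeta^\ora)\|_\infty=\max_{j\in\cS^\cc}|[\nabla\hat\cL_\tau(\hat\bbeta^\ora)]_j|$; for each $j\in\cS^\cc$ I would decompose the summand as $-(1/n)\sum_i\ell_\tau'(\varepsilon_i)x_{ij}$ plus a remainder that is $1$-Lipschitz in $\hat\bbeta^\ora-\bbeta^*$ (hence controlled by the $\Sigma$-norm bound), and handle the leading piece by the orthogonalization $x_{ij}=\Sigma_{j\cS}\Sigma_{\cS\cS}^{-1}\bx_{i,\cS}+z_{ij}$ together with KKT to kill the $\bx_{i,\cS}$-part and leave a residual controlled through Condition~\ref{moment.cond2}. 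The hardest step will be this bias control: the crude $\tau^{-1}$ rate of Lemma~\ref{lem:bias} is insufficient, and sharpening to $\tau^{-(1+\eta)}$ via the $L_{2+\eta}$-$L_2$ equivalence is the crucial ingredient that balances the stochastic and bias contributions at the chosen $\tau$.
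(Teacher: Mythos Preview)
Your plan for the $\Sigma$-norm bound, the sharpened bias $|\EE\{\ell_\tau'(\varepsilon)\mid\bx\}|\lesssim\sigma_2(\sigma_2/\tau)^{1+\eta}$, and the Bahadur expansion on $\cS$ is essentially what the paper does (it cites the Bahadur bound $\|{\rm S}^{1/2}\hat\bdelta_\cS-{\rm S}^{-1/2}(1/n)\sum_i\ell_\tau'(\varepsilon_i)\bx_{i,\cS}\|_2\lesssim\sigma_2(s+t)/n$ from \cite{CZ2020}). Two points deserve care. First, for $\|\hat\bbeta^\ora-\bbeta^*\|_\infty$ you cannot simply bound $\|[\nabla\hat\cL_\tau(\bbeta^*)]_\cS\|_\infty$ and then multiply by $\mathbf{H}_\cS^{-1}$, since $\|\mathbf{H}_\cS^{-1}\|_{\infty\to\infty}$ may cost a factor $\sqrt s$; the paper instead applies Bernstein directly to each coordinate of ${\rm S}^{-1}\bw^*_\cS=(1/n)\sum_i\{\ell_\tau'(\varepsilon_i)-\EE\ell_\tau'(\varepsilon_i)\}\,{\rm S}^{-1}\bx_{i,\cS}$, which are sub-Gaussian linear functionals. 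Second, the anti-concentration assumption \eqref{anti-concentration} is not what gives $\lambda_{\min}(\mathbf{H}_\cS)\gtrsim\rho_l$ (Markov suffices there); it is needed later to linearize the \emph{population} gradient difference.

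The substantive gap is in your treatment of $\|\nabla\hat\cL_\tau(\hat\bbeta^\ora)\|_\infty$. Writing $[\nabla\hat\cL_\tau(\hat\bbeta^\ora)]_j=[\nabla\hat\cL_\tau(\bbeta^*)]_j+R_j$ with $R_j$ ``$1$-Lipschitz in $\hat\bbeta^\ora-\bbeta^*$'' and then invoking the $\Sigma$-norm bound gives $|R_j|\lesssim\|\hat\bbeta^\ora-\bbeta^*\|_\Sigma\asymp\sigma_2\sqrt{(s+t)/n}$, which is \emph{first order} and already larger than the target $\sigma_2\sqrt{(\log s+t)/n}+\sigma_2((q+t)/n)^{(1+\eta)/2}$. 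Your orthogonalization $x_{ij}=\Sigma_{j\cS}\Sigma_{\cS\cS}^{-1}\bx_{i,\cS}+z_{ij}$ does not rescue this: the KKT identity is at $\hat\bbeta^\ora$, not at $\bbeta^*$, so it does not cancel the $\bx_{i,\cS}$-part of your leading piece; and even if you orthogonalize at $\hat\bbeta^\ora$ first, the resulting $z_{ij}$-remainder is still only Lipschitz-controlled unless you exploit the (population) decorrelation of $z_{ij}$ from $\bx_{i,\cS}$, which for merely sub-Gaussian (non-Gaussian) designs is not independence. The paper's fix is to further split $R_j$ into a \emph{centered} fluctuation $[\bw(\hat\bbeta^\ora)-\bw(\bbeta^*)]_j$ and the population difference $[\nabla\cL_\tau(\hat\bbeta^\ora)-\nabla\cL_\tau(\bbeta^*)]_j$. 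The centered part is shown, via a uniform sub-Gaussian increment bound over the oracle ball $\Theta^*(r)$, to be of second order $r\sqrt{(s+\log d+t)/n}$. The population part is Taylor-expanded using anti-concentration \eqref{anti-concentration} to isolate the linear term $-\Sigma_{j\cS}\hat\bdelta_\cS$, and it is precisely here that condition \eqref{irr} is used: $|\Sigma_{j\cS}\hat\bdelta_\cS|\leq A_0\|{\rm S}\hat\bdelta_\cS\|_\infty$, which through the Bahadur representation reduces to $\|\bw^*_\cS\|_\infty+\text{bias}$. Without this centering-plus-\eqref{irr} mechanism, the remainder cannot be pushed below $\sigma_2\sqrt{(s+t)/n}$.
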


Compared to Propositions~\ref{prop:RSC} and \ref{prop:score}, the proofs of Lemmas~\ref{lem:oracle.RSC} and \ref{lem:oracle.score},  which are placed in the following two subsections, require a more delicate analysis of the local behavior of the gradient process $\{ \nabla \hat \cL_\tau(\bbeta) , \bbeta \in \RR^d\}$ around both the underlying vector $\bbeta^*$ and the oracle estimator $\hat \bbeta^{\ora}$, with the latter being random itself.

With the above preparations, we are ready to prove the result.  In Proposition~\ref{prop:oracle}, we set $\kappa = \rho_l/2$, $\delta = 2.5/(\rho_l \gamma_0) \in (0,1)$ and $l = \{ 2 + \frac{2}{p'(\gamma_0)} \} (c_0^2 +1 )^{1/2}s^{1/2}$,  where $c_0$ is determined by \eqref{def.c0}.
Taking $r = \tau/C_1$ in Lemma~\ref{lem:oracle.RSC},  we obtain that event $\cE_2(r, l , \kappa)$ happens with probability at least $1-e^{-t}$ as long as $n\gtrsim \sigma_2$ and $n\gtrsim   s \log d + t$.  Next, let $\lambda = C \sigma_2 \sqrt{(\log d + t) / n}$ for a sufficient large constant $C$. Then, it follows from Lemma~\ref{lem:oracle.score} that the event $\{ \| \bw^{\ora} \|_\infty \leq 0.5 p'(\gamma_0) \lambda \} \cap \{ \| \hat \bbeta^{\ora} - \bbeta^* \|_\infty \leq \lambda/(5 \delta \kappa) =   \gamma_0 \lambda / 6.25\} \cap \{ \| \hat \bbeta^{\ora} - \bbeta^* \|_{\Sigma} \leq r \}$ occurs with probability at least $1-7e^{-t}$ as long as $n\gtrsim (q + t)^{1+1/\eta} (\log d )^{-1/\eta}$. 
Finally, the strong oracle property is a direct consequence of Proposition~\ref{prop:oracle}.  \qed

\subsubsection{Proof of Lemma~\ref{lem:oracle.RSC}}


By the convexity of the loss function, for any $\bbeta_1 , \bbeta_2 \in \RR^d$,
\#
	D(\bbeta_1 , \bbeta_2  ) & :=   \langle \nabla  \hat \cL_\tau( \bbeta_1 ) -  \nabla   \hat  \cL_\tau( \bbeta_2 ), \bbeta_1 - \bbeta_2 \rangle \nn \\
	& = \frac{1}{n} \sn \{  \ell_\tau'( y_i - \bx_i^\T \bbeta_2  )  -  \ell_\tau'( y_i -   \bx_i^\T \bbeta_1 )   \}    \bx_i^\T ( \bbeta_1 - \bbeta_2 ) \nn \\
	& \geq  \frac{1}{n} \sn  \{  \ell_\tau'( y_i -  \bx_i^\T  \bbeta_2   )  -  \ell_\tau'( y_i -  \bx_i^\T \bbeta_1    )  \}   \bx_i^\T(  \bbeta_1 - \bbeta_2 ) I_{\cE_i}, \label{def:T}
\#
where $I_{\cE_i}$ is the indicator function of the event $\cE_i  :=  \{ |\varepsilon_i | \leq \tau/4   \}  \cap   \{    |   \bx_i^\T ( \bbeta_2 - \bbeta^* ) |  \leq  \tau/4    \} \cap \{     |   \bx_i^\T ( \bbeta_1 - \bbeta_2 ) |     \leq  \tau \|   \bbeta_1 - \bbeta_2  \|_2 /(2r)   \}$  on which $| y_i -  \bx_i^\T \bbeta_2  | \leq  | \varepsilon_i | +  |\bx_i^\T ( \bbeta_2 - \bbeta^* ) | \leq \tau/2$ and $| y_i - \bx_i^\T \bbeta_1   |  \leq  |  \bx_i^\T (\bbeta_1 - \bbeta_2 ) |  +  |  \bx_i^\T ( \bbeta_2 - \bbeta^* ) | + |\varepsilon_i | \leq \tau$ for all $\bbeta_1  \in  \bbeta_2 + \BB(r  )$.  Similarly to the proof of Proposition~2 in \cite{L2017},  for any $R>0$,  define Lipschitz continuous  functions 
\#  
	\varphi_R(u)  &= u^2 I(|u| \leq R/2) + (|u| - R)^2 I(R/2 < | u| \leq R),  \nn\\	\mbox{ and } ~ \phi_R(u) &= I(|u|\leq R/2) + \{ 2-  (2u/R) \sign(u)\}I(R/2< |u| \leq R) , \nn
\# 
which are smoothed versions of $u \mapsto u^2 I(|u| \leq R)$ and $u \mapsto I(|u| \leq R)$, respectively. 
Moreover,  $\varphi_R(u) \leq u^2 I(|u| \leq R)$ and $\phi_R(u) \leq I(|u|\leq R)$. 
 By  \eqref{def:T} and the fact that $\ell_\tau''(u) = 1$ for $|u|\leq \tau$,  
\#
	 &D(\bbeta_1 , \bbeta_2  ) \geq  \frac{1}{n} \sn  \{   \bx_i^\T(  \bbeta_1 - \bbeta_2 )  \}^2 I_{|\bx_i^\T (\bbeta_1 - \bbeta_2)|/\| \bbeta_1 - \bbeta_2 \|_2  \leq \frac{\tau}{2r}  } I_{|\bx_i^\T (\bbeta_2 - \bbeta^*) | \leq \frac{\tau}{4}  } I_{|\varepsilon_i| \leq \frac{\tau}{4}} \nn \\
	 & \geq D_0(\bbeta_1, \bbeta_2) :=     \frac{1}{n} \sn  \varphi_{ \frac{\tau}{2 r}\|   \bbeta_1 - \bbeta_2  \|_2}(   \bx_i^\T( \bbeta_1 -\bbeta_2 )  ) \phi_{ \frac{\tau}{4} } ( \bx_i^\T ( \bbeta_2 -\bbeta^* ) ) I_{|\varepsilon_i| \leq \frac{\tau}{4}}  \nn \\
	& =  \EE  D_0(\bbeta_1, \bbeta_2) +  D_0(\bbeta_1, \bbeta_2) - \EE  D_0(\bbeta_1, \bbeta_2) .	 \label{def:g}
\#
In what follows, we deal with $\EE  D_0(\bbeta_1, \bbeta_2)$ and $ D_0(\bbeta_1,\bbeta_2) - \EE  D_0(\bbeta_1, \bbeta_2)$, separately.

Noting that $ \varphi_R(u) \geq u^2 I(|u| \leq R/2)  $ and $\phi_{R} (u) \geq   I(|u| \leq R/2)$,  we have
\#
	& \EE  D_0(\bbeta_1, \bbeta_2) \geq  \frac{1}{n} \sn \EE  \varphi_{ \frac{\tau}{2r}  \|   \bbeta_1 - \bbeta_2  \|_2}(  \bx_i^\T ( \bbeta_1 -\bbeta_2 )  )  I_{ | \bx_i^\T ( \bbeta_2 -\bbeta^* )| \leq \frac{\tau}{8} }   I_{ |\varepsilon_i | \leq \frac{\tau}{4} }  \nn \\
	& \geq    \frac{1}{n} \sn \EE \{  ( \bx_i^\T ( \bbeta_1 -\bbeta_2) \}^2 I_{ |  \bx_i^\T ( \bbeta_1 -\bbeta_2 ) | \leq \frac{\tau}{4 r} \|   \bbeta_1 - \bbeta_2  \|_2  }   \nn \\
	& ~~~~~ - \frac{1}{n} \sn  \EE \{  ( \bx_i^\T ( \bbeta_1 -\bbeta_2) \}^2  I_{  |    \bx_i^\T( \bbeta_2 -\bbeta^* ) | >  \frac{\tau}{8} }   -  \frac{1}{n} \sn    \EE \{  \bx_i^\T (  \bbeta_1 -\bbeta_2 ) \}^2  I_{ |\varepsilon_i | > \frac{\tau}{4}}. \label{mean.g.lbd1}
\#
Write $\bdelta = \bbeta_1 - \bbeta_2$ for $\bbeta_1 \in \bbeta_2 + \BB(r)$ and $\bbeta_2 \in \bbeta^* +\BB_{\Sigma}(r)$. By H\"older's inequality and \eqref{def.rhox},
$$
\EE  (\bx_i^\T \bdelta)^2 I_{ | \bx_i^\T \bdelta | > \frac{\tau}{4 r} \|   \bdelta \|_2  }  \leq  2\rho_{\bx}^2 e^{-(\tau/8 \sigma_{\bx} r)^2 } \|   \bdelta  \|_{\Sigma}^2 ,
$$
$$
 \EE ( \bx_i^\T \bdelta )^2  I_{  |   \bx_i^\T ( \bbeta_2 -\bbeta^* ) | > \frac{\tau}{8} } \leq (8/\tau)^2 \EE ( \bx_i^\T \bdelta )^2 \{ \bx_i^\T ( \bbeta_2 -\bbeta^* )\}^2 \leq (8  \rho_{\bx}^2 r/\tau)^2  \|   \bdelta  \|_{\Sigma}^2 
$$
and $ \EE( \bx_i^\T \bdelta )^2  I_{ |\varepsilon_i | > \frac{\tau}{4}} \leq (4\sigma_2/\tau)^2   \|  \bdelta  \|_{\Sigma}^2$.  Substituting these into \eqref{mean.g.lbd1} yields
\#
	 \EE D_0(\bbeta_1, \bbeta_2)  \geq  \big\{  1 -  (4\sigma_2 /\tau)^2  -2\rho_{\bx}^2 e^{-(\tau/8 \sigma_{\bx} r)^2 }   -  (8  \rho_{\bx}^2 r /\tau)^2 \big\}  \|   \bdelta  \|_{\Sigma}^2. \nn
\#
Let $\tau \geq 8 \max(  2 \sigma_2,   \{ \log(16 \rho_{\bx}^2) \}^{1/2} \sigma_{\bx} r , 4\rho_{\bx}^2 r )$, so that
$$
 (4\sigma_2 /\tau)^2 \leq 1/16, \quad 2\rho_{\bx}^2 e^{-(\tau/8 \sigma_{\bx} r)^2 } \leq 1/8 ~\mbox{ and }~ (8  \rho_{\bx}^2 r /\tau)^2 \leq 1/16.
$$
It thus follows that
\#
	 \EE D_0(\bbeta_1, \bbeta_2) \geq  \frac{3}{4}  \|   \bbeta_1 - \bbeta_2  \|_{\Sigma}^2 \geq \frac{3}{4} \rho_l \| \bbeta_1 - \bbeta_2 \|_2^2  \label{mean.g.lbd2}
\#
uniformly over $\bbeta_1 \in \bbeta_2 + \BB(r)$ and $\bbeta_2 \in \bbeta^* +  \BB_{\Sigma}(r)$.

Next,  we  will establish a high probability bound for the supremum 
$$
  \Delta(r, l) : = \sup_{(\bbeta_1, \bbeta_2 ) \in \cC(r,  l ) } \frac{ | D_0(\bbeta_1, \bbeta_2) - \EE D_0(\bbeta_1, \bbeta_2)| }{ \|  \bbeta_1 - \bbeta_2  \|_2^2 } .
$$
Note that $\varphi_{cR}(cu) = c^2 \varphi_R(u)$ for any $c>0$ and $u\in \RR$.
For each pair $(\bbeta_1, \bbeta_2)$,   we write  $\bdelta = \bbeta_1 - \bbeta_2$ and define
\$
	f_{\bbeta_1, \bbeta_2 }(\bx_i, \varepsilon_i) =   \varphi_{\frac{\tau}{2r } } (\bx_i^\T \bdelta/ \| \bdelta \|_2 )  \cdot  \phi_{\frac{\tau}{4}} (\bx_i^\T (\bbeta_2 - \bbeta^*) ) \cdot   I_{|\varepsilon_i| \leq \frac{\tau}{4}}   ,
\$
so that $\Delta(r, l)  = \sup_{(\bbeta_1, \bbeta_2 ) \in \cC(r,  l ) } |(1/n) \sn f_{\bbeta_1, \bbeta_2 }(\bx_i, \varepsilon_i) - \EE f_{\bbeta_1, \bbeta_2 }(\bx_i, \varepsilon_i) |$.
Since $0\leq \varphi_R(u ) \le \min\{(R/2)^2, u^2\}$ and $0\leq \phi_R(u) \leq 1$ for all $u\in \RR$,  we have 
$$
	0\leq f_{\bbeta_1, \bbeta_2 }(\bx_i, \varepsilon_i)  \leq   (\tau / 4 r)^2 ~~\mbox{ and }~~ \EE  f^2_{\bbeta_1, \bbeta_2 }(\bx_i, \varepsilon_i) \leq    \rho_u^2  \rho_{\bx}^4 .
$$
By Bousquet's version of Talagrand's inequality \citep{B2003} and \eqref{RSC.lb2}, for any $t\geq 0$,
\#
	  \Delta(r,l)    \leq 1.25 \EE   \Delta(r,l)  + \rho_u \rho_{\bx}^2 \sqrt{2  t /n }+  (\tau/r)^2 t/(3n) \label{Bousquet.concentration}
\#
holds with probability at least $1 -   e^{-t}$.

It suffices to bound the expected value $\EE \Delta(r,l)$.  Applying the symmetrization inequality for empirical processes and the connection between Gaussian complexity and Rademacher complexity (see, e.g. Lemma~4.5 in \cite{LT1991}), we obtain that
\#
	\EE  \Delta(r, l ) \leq 2 \cdot \sqrt{\frac{\pi}{2}} \cdot   \EE \Biggl\{ \sup_{(\bbeta_1, \bbeta_2 ) \in \cC(r,l) }  | \GG_{\bbeta_1, \bbeta_2 } |  \Biggr\} , \label{exp.ubd1}
\#
where 
$$
	\GG_{\bbeta_1, \bbeta_2 } := \frac{1}{n} \sn g_i     \varphi_{ \frac{\tau}{2r} }(  \bx_i^\T \bdelta / \| \bdelta \|_{2}  ) \phi_{ \frac{\tau}{4} } ( \bx_i^\T( \bbeta_2 -\bbeta^* ) ) I_{|\varepsilon_i| \leq \frac{\tau}{4}}
$$
with $\bdelta = \bbeta_1 - \bbeta_2$ and $g_i$'s are independent standard normal random variables that are independent of the observations. In particular,  $\GG_{\bbeta^*, \bbeta^* } $ is defined as zero. 
Let $\EE^*$ be the conditional expectation given $\{(y_i, \bx_i)\}_{i=1}^n$. By symmetry,
\#
	 \EE^* \Biggl\{ \sup_{(\bbeta_1, \bbeta_2 ) \in \cC(r,l) }  | \GG_{\bbeta_1, \bbeta_2 } |  \Biggr\}  \leq \underbrace{   \EE^*    | \GG_{\bbeta^*, \bbeta^* } |   }_{=0}   + 2  \EE^*  \Biggl\{ \sup_{(\bbeta_1, \bbeta_2 ) \in \cC(r,l) }    \GG_{\bbeta_1, \bbeta_2 }    \Biggr\}     .  \label{exp.ubd2}
\#
Next, we apply the Gaussian comparison theorem to bound  $ \EE^* \{ \sup_{(\bbeta_1, \bbeta_2 ) \in \cC(r, l)}    \GG_{\bbeta_1, \bbeta_2 }  \}$, from which an upper bound on $\EE  \{ \sup_{(\bbeta_1, \bbeta_2 ) \in \cC(r, l) }   \GG_{\bbeta_1, \bbeta_2 }   \}$ follows immediately. 
For another pair $(\bbeta_1',\bbeta_2') \in \cC(r,l)$, write  $\bdelta' = \bbeta_1'-\bbeta_2'$, and note that
\#
	&  \GG_{\bbeta_1, \bbeta_2 } -  \GG_{\bbeta'_1, \bbeta'_2 }  \nn \\
	& =  \GG_{\bbeta_1, \bbeta_2 } -  \GG_{\bbeta_2'+\bdelta, \bbeta'_2 } +  \GG_{\bbeta_2'+\bdelta , \bbeta'_2 }  - \GG_{\bbeta'_1, \bbeta'_2 }  \nn \\
	 & =   \frac{1}{n} \sn g_i \varphi_{\frac{\tau}{2r}} (  \bx_i^\T \bdelta / \| \bdelta \|_{2} )  \{ \phi_{ \frac{\tau}{4} } ( \bx_i^\T ( \bbeta_2 -\bbeta^* ) ) - \phi_{ \frac{\tau}{4} } (   \bx_i^\T ( \bbeta_2' -\bbeta^* ) )  \} I_{|\varepsilon_i| \leq \frac{\tau}{4}} \nn \\
	 & \quad ~+  \frac{1}{n}  \sn g_i   \phi_{ \frac{\tau}{4}  } ( \bx_i^\T ( \bbeta_2' -\bbeta^* ) )   \{ \varphi_{\frac{\tau}{2r}} (  \bx_i^\T \bdelta / \| \bdelta \|_{2} ) - \varphi_{\frac{\tau}{2r}} (    \bx_i^\T \bdelta' / \| \bdelta' \|_{2}  ) \} I_{|\varepsilon_i| \leq \frac{\tau}{4}} . \nn
\#
By the Lipschitz properties of $\phi_R$ and $\varphi_R$, i.e.,  $|\phi_R(u) - \phi_R(v) | \leq  \frac{2}{R}|u-v|$ and $|\varphi_R(u) - \varphi_R(v) | \leq R|u-v|$, and recall that $\varphi_R(u)\leq (R/2)^2$, we have
\#
	& 	\EE^*( \GG_{\bbeta_1, \bbeta_2 } -  \GG_{\bbeta_2'+\bdelta, \bbeta'_2 } )^2  \leq \frac{1}{n^2} \sn   \bigg( \frac{\tau}{4r} \bigg)^4  \bigg( \frac{8}{\tau} \bigg)^2   \{ \bx_i^\T ( \bbeta_2 - \bbeta_2' ) \}^2 \nn \\
&=    \bigg(\frac{\tau}{2 r^2} \bigg)^2  \frac{1}{n^2}\sn     \{ \bx_i^\T ( \bbeta_2 - \bbeta_2' ) \}^2  \label{var.ubd1}
\#
and
\#
& \EE^*( \GG_{\bbeta_2'+ \bdelta, \bbeta_2' } -  \GG_{\bbeta_1' , \bbeta'_2 } )^2 \nn \\
& \leq  \frac{1}{n^2} \sn   \bigl\{    \varphi_{\frac{\tau}{2 r}} (  \bx_i^\T \bdelta / \| \bdelta \|_{2} ) - \varphi_{\frac{\tau}{2 r}} (   \bx_i^\T  \bdelta' / \| \bdelta' \|_{2}   ) \bigr\}^2 \nn \\
& \leq   \bigg(  \frac{\tau}{2 r} \bigg)^2 \frac{1}{n^2 }\sn \bigl(    \bx_i^\T  \bdelta / \| \bdelta \|_{2}   - 
  \bx_i^\T  \bdelta' / \| \bdelta' \|_{2}     \bigr)^2 .  \label{var.ubd2}
\# 
Motivated by \eqref{var.ubd1}, \eqref{var.ubd2} and the inequality that 
$$\EE^*( \GG_{\bbeta_1, \bbeta_2 } -  \GG_{\bbeta'_1, \bbeta'_2 } )^2 \leq 2\EE^*( \GG_{\bbeta_1, \bbeta_2 } -  \GG_{\bbeta_2'+\bdelta, \bbeta'_2 } )^2 + 2\EE^*( \GG_{\bbeta_2'+ \bdelta, \bbeta_2' } -  \GG_{\bbeta_1' , \bbeta'_2 } )^2, $$ 
we define another (conditional) Gaussian process $\{ \ZZ_{\bbeta_1, \bbeta_2} , (\bbeta_1 , \bbeta_2) \in \cC(r, l)\}$ as
\#
	\ZZ_{\bbeta_1, \bbeta_2} &  = \frac{\sqrt{2}\tau}{2r^2} \cdot  \frac{1}{n} \sn g_i'  \bx_i^\T(  \bbeta_2 - \bbeta^* ) + \frac{\sqrt{2}\tau}{2r}  \cdot   \frac{1}{n} \sn g_i''    \bx_i^\T \bdelta / \| \bdelta \|_{2} \nn \\
	& =  \frac{\sqrt{2}\tau}{2 r^2} \cdot  \frac{1}{n} \sn   g_i' \bx_{i , \cS}^\T ( \bbeta_2 - \bbeta^*  )_{\cS}  + \frac{\sqrt{2}\tau}{2r}  \cdot   \frac{1}{n} \sn g_i''   \bx_i^\T \bdelta / \| \bdelta \|_2 , \nn
\#
where $g_1', g_1'', \ldots, g_n', g_n''$ are independent standard normal random variables that are independent of all  the other variables. 
We have established that $\EE^*(  \GG_{\bbeta_1, \bbeta_2 } -  \GG_{\bbeta'_1, \bbeta'_2 })^2 \leq \EE^*(  \ZZ_{\bbeta_1, \bbeta_2 } -  \ZZ_{\bbeta'_1, \bbeta'_2 })^2$. Then, applying Sudakov-Fernique's Gaussian comparison inequality (see, e.g. Theorem~7.2.11 in \cite{V2018}) yields
\#
 \EE^*  \Biggl\{ \sup_{(\bbeta_1, \bbeta_2 ) \in \cC(r,l ) }  \GG_{\bbeta_1, \bbeta_2 } \Biggr\} \leq   \EE^*  \Biggl\{ \sup_{(\bbeta_1, \bbeta_2 ) \in \cC(r, l) }  \ZZ_{\bbeta_1, \bbeta_2 } \Biggr\} ,  \label{Gaussian.sup.1}
\#
which remains valid if $\EE^*$ is replaced by $\EE$.  For the supremum of $\ZZ_{\bbeta_1, \bbeta_2 }$, it is easy to see that
\#
	 & ~~~~ \EE   \Biggl\{ \sup_{(\bbeta_1, \bbeta_2 ) \in \cC(r, l ) }  \ZZ_{\bbeta_1, \bbeta_2 } \Biggr\}  \nn \\
	&  \leq \frac{\sqrt{2} \tau    }{2  r  } \EE \bigg\| \frac{1}{n} \sn g_i' \,  (\Sigma_{\cS\cS})^{-1/2} \bx_{i , \cS} \bigg\|_2 +  \frac{\sqrt{2}  \tau  l }{2  r} \EE \bigg\| \frac{1}{n} \sn g''_i \bx_i \bigg\|_\infty \nn \\
	& \leq    \frac{\sqrt{2} \tau  }{2 r  }  \sqrt{\frac{s}{n}} +  \frac{\sqrt{2}   \tau l }{2    r} \EE \bigg\| \frac{1}{n} \sn g''_i \bx_i \bigg\|_\infty .  \label{Gaussian.sup.2}
\#
Together, \eqref{exp.ubd1},  \eqref{exp.ubd2}, \eqref{Gaussian.sup.1} and \eqref{Gaussian.sup.2} deliver the bound
\#
	\EE \Delta(r,l) \leq  2 \sqrt{\pi} \frac{\tau}{r}  \Biggl\{    \sqrt{\frac{s}{n}}+    l \,   \EE  \Biggl( \max_{1\leq j\leq d} \Biggl|   \frac{1}{n} \sn g_i x_{ij}  \Biggr| \Biggr) \Biggr\}.  \label{exp.ubd3}
\#
Finally we bound the maximum under expectation on the right-hand side of \eqref{exp.ubd3}. Write $S_j =  \sn g_i x_{ij}$ for $j=1,\ldots, d$.
Under Condition~\ref{moment.cond2}, for each $1\leq j\leq d$ and $m\geq 2$ we have
\#
	\EE |x_j |^m  & =  \sigma_{\bx}^{m} m   \int_0^\infty t^{m-1} \PP(|x_j  | \geq \sigma_{\bx} t) \, {\rm d} t \nn \\
	& \leq 2 \sigma_{\bx}^m   m   \int_0^\infty t^{m-1} e^{-t^2/2} \, {\rm d} t = 2^{ m/2} \sigma_{\bx}^m   m \Gamma(m/2). \nn  
\#
Let $g\sim {\sf N}(0,1)$ be independent of $\bx$. Using the Legendre duplication formula, i.e., $\Gamma(s) \Gamma(s+1/2) = 2^{1-2s} \sqrt{\pi} \, \Gamma(2s)$,  and some algebra, we get
\#
	\EE |g x_{j}|^m  & \leq  2^{m/2} \frac{\Gamma(\frac{m+1}{2})}{\sqrt{\pi}}   \cdot  2^{m/2} \sigma_{\bx}^m   m \Gamma(m/2)
= 2 \sigma_{\bx}^m    m! 		= \frac{m!}{2} \underbrace{  4 \sigma_{\bx}^2 }_{\geq \, \EE (g_j x_j)^2 }   \sigma_{\bx}^{m-2}  . \nn
\#
Hence,  using Bernstein's inequality and the symmetry of normal distribution yields
\$
	\log \EE e^{\lambda S_j } = \log \EE e^{-\lambda S_j} \leq   \frac{ 4 \sigma_{\bx}^2 n  \lambda^2}{2(1-  \sigma_{\bx} \lambda )}
\$
for all $\lambda \in (0,1/ \sigma_{\bx})$.  Combined with Theorem~2.5 in \cite{BLM2013}, this implies
\#
  &~~~~ \EE  \Biggl( \max_{1\leq j\leq d} \Biggl|   \frac{1}{n} \sn g_i x_{ij}  \Biggr| \Biggr) =	\EE \max_{1 \leq j\leq d} |S_j/n| \nn \\
  & \leq   \sigma_{\bx}  \Biggl\{  2  \sqrt{\frac{ 2\log(2 d)}{  n}} + \frac{  \log(2 d)}{n} \Biggr\}.   \label{exp.ubd4}
\#
 
Combining \eqref{exp.ubd3}, \eqref{exp.ubd4} with the concentration inequality  \eqref{Bousquet.concentration}, we determine that with probability at least $1-   e^{-t}$, $\Delta(r, l)\leq \rho_l/4$ as long as $n\gtrsim   (\tau/ r)^2 (s+  l^2 \log d + t )$. This, together with \eqref{def:g} and \eqref{mean.g.lbd2}, proves the claim \eqref{oracle.rsc}. \qed

\subsubsection{Proof of Lemma~\ref{lem:oracle.score}}

To begin with, consider the decomposition
$$
	\nabla\hat  \cL_\tau(\hat \bbeta^{{\rm ora}})= \bw(\hat \bbeta^{{\rm ora}}) - \bw(\bbeta^*) +    \nabla  \cL_\tau(\hat \bbeta^{{\rm ora}})  + \bw^* ,
$$
where $\bw(\bbeta) = \nabla\hat  \cL_\tau( \bbeta ) - \nabla  \cL_\tau( \bbeta)$,  $\cL_\tau(\bbeta) = \EE \hat \cL_\tau(\bbeta)$ and $\bw^* = \bw(\bbeta^*)$.  In the following, we control the $\ell_\infty$-norms of the three terms,  $ \bw(\hat \bbeta^{{\rm ora}}) - \bw(\bbeta^*)$,  $  \nabla  \cL_\tau(\hat \bbeta^{{\rm ora}}) $ and $ \bw^*$, separately.  Throughout the proof,   we take $\tau = \sigma_2 \sqrt{n/(q  + t)}$ for some $q \geq \max(s , \log d)$ and $t\geq 0$.

Applying Proposition~\ref{prop:score} to the centered gradient $\bw^* = (  (\bw^*_{\cS} )^\T, (\bw^*_{\cS^{\cc}}) )^{\T} )^\T\in \RR^d$ with slight modifications, we obtain that with probability at least $1-  2 e^{-t}$, 
\#
	 \| \bw^*_{\cS} \|_\infty \lesssim  \sigma_2 \sqrt{\frac{\log s + t}{n}}  ~~\mbox{ and }~~ \| \bw^*_{\cS^{\cc}} \|_\infty \lesssim \sigma_2 \sqrt{\frac{\log(d-s) + t}{n}} ,
	 \label{score.two.ubd}
\#
thus implying $\| \bw^* \|_\infty \lesssim \sigma_2 \sqrt{(\log d + t)/n}$ with the same probability.

 Recall that $\hat \bbeta^{{\rm ora}}$ and $\bbeta^*$ have the same support $\cS \subseteq [d]$. Define the oracle local neighborhood $\Theta^*(r) = \{ \bbeta \in \bbeta^* +\BB_{\Sigma}(r):  \bbeta_{\cS^\cc} = \textbf{0} \}$.  Then, conditioned on the event $\{  \| \hat \bbeta^{{\rm ora}}  -\bbeta^* \|_{\Sigma} \leq r \}$,
\#
 \|  \bw(\hat \bbeta^{{\rm ora}}) - \bw(\bbeta^*)  \|_\infty \leq \sup_{\bbeta \in \Theta^*(r)} \|  \bw( \bbeta ) - \bw(\bbeta^*)  \|_\infty . 
 \label{local.fluct.max}
\#
We thus focus on the supremum on the right-hand side of \eqref{local.fluct.max}.
For every $s$-sparse vector $\bbeta \in \Theta^*(r)$,  we write $\bdelta = (\bbeta - \bbeta^*)_{\cS} \in \RR^s$. 
For  $j= 1,\ldots, d$,  let $\be_j \in \RR^d$ be the coordinate vector that has 1 on its $j$-th coordinate and 0 elsewhere, and define $\Delta^0_{j} (\bdelta)= \langle \bw(\bbeta)-  \bw(\bbeta^*)  , \be_j \rangle =  (1/n) \sn(  \eta_{ij}- \EE\eta_{ij} )$ for $\bdelta \in \RR^s$, where $\eta_{ij} = x_{ij} \{\ell_\tau'(\varepsilon_i - \bx_{i ,  \cS }^\T \bdelta    ) - \ell_\tau'(\varepsilon_i) \}$. Consequently, we have
\#
 \sup_{\bbeta \in\Theta^*(r) }   \| \bw(\bbeta)-  \bw(\bbeta^*)  \|_\infty \leq  \max_{1\leq j\leq d}  \sup_{  \| \bdelta \|_{{\rm S} } \leq r  } \Delta^0_{j} (\bdelta) \bigvee \max_{1\leq j\leq d}  \sup_{  \| \bdelta \|_{ {\rm S}  } \leq r }  - \Delta^0_{j} (\bdelta) ,  \label{local.fluct.dec}
\#
where ${\rm S} = \Sigma_{\cS \cS} \in \RR^{s\times s}$.
In order to bound the local fluctuation $\sup_{\bdelta: \| \bdelta \|_{ {\rm S} } \leq r} \Delta^0_{j} (\bdelta)$, we need to control the moment generating function of $\Delta^0_{j} (\bdelta)$ for each $\bdelta \in \RR^s$. 
By the Lipschitz continuity of $\ell_\tau'(\cdot)$,  $| \EE (\eta_{ij} ) | \leq  \EE |x_{ij}  \bx_{i, \cS}^\T\bdelta | \leq \sigma_{jj}^{1/2} \| \bdelta \|_{ {\rm S} }$, $\EE(\eta_{ij}^2 | \bx_i  ) \leq  x_{ij}^2 (   \bx_{i, \cS}^\T\bdelta  )^2$ and 
\#
 \EE \{ (\eta_{ij} - \EE \eta_{ij})^2 | \bx_i \} & \leq 2  \EE(\eta_{ij}^2 | \bx_i) + 2  (\EE \eta_{ij})^2  \leq  2 x_{ij}^2 ( \bx_{i, \cS}^\T\bdelta )^2 + 2 \sigma_{jj}  \| \bdelta \|_{{\rm S} }^2. \nn
\#
The above moment inequalities, combined with the elementary inequality $|e^u-1-u| \leq (u^2/2) e^{u\vee0}$, imply that for any $\lambda  \in \RR$ and $\lambda^* = \lambda / (  \sigma_{jj}^{1/2} \| \bdelta \|_{ {\rm S} } )$,
\#
&  \EE e^{ \lambda \sqrt{n} \Delta^0_{j} (\bdelta) / ( \sigma_{jj}^{1/2}  \| \bdelta \|_{{\rm S} } )} =  \prod_{i=1}^n \EE e^{ \frac{\lambda^*}{\sqrt{n}} (\eta_{ij} - \EE \eta_{ij}) }  \nn \\
& \leq \prod_{i=1}^n  \EE \Biggl\{ 1 + \frac{\lambda^{* 2}}{  2 n }  (\eta_{ij} - \EE\eta_{ij})^2  e^{ \frac{|\lambda^* |}{  \sqrt{n}} |\eta_{ij} - \EE \eta_{ij} |   } \Biggl\} \nn \\
& \leq   \prod_{i=1}^n  \Biggl\{ 1 + \frac{\lambda^{* 2}  e^{  |\lambda |   /\sqrt{n}}  }{  2 n } \EE   (\eta_{ij} - \EE\eta_{ij})^2  e^{ \frac{|\lambda^* |}{ \sqrt{n}} | x_{ij}  \bx_{i, \cS}^\T\bdelta  |   } \Biggl\} \nn \\
& \leq  \prod_{i=1}^n  \Biggl\{ 1 + \frac{\lambda^{ 2}  }{  n } e^{  |\lambda | /\sqrt{n}}   \EE e^{ \frac{|\lambda^* |}{ \sqrt{n}} | x_{ij}   \bx_{i, \cS}^\T\bdelta |   }   +   \frac{\lambda^{ * 2}    }{  n } e^{  |\lambda | /\sqrt{n}}  \EE  x_{ij}^2 ( \bx_{i, \cS}^\T\bdelta)^2 e^{ \frac{|\lambda^* |}{ \sqrt{n}} | x_{ij}  \bx_{i, \cS}^\T\bdelta |   } \Biggl\} .  \label{mgf.ubd1}
\#
Applying H\"older's inequality to the exponential moments on the right-hand side of \eqref{mgf.ubd1}, we have
\#
 & \EE  x_{ij}^2(  \bx_{i, \cS}^\T\bdelta )^2 e^{ \frac{|\lambda^* |}{ \sqrt{n}} | x_{ij}    \bx_{i, \cS}^\T\bdelta  |   }  \nn \\
 &  \leq   \sigma_{jj} \| \bdelta \|_{{\rm S} }^2 \cdot  \Bigg\{  \EE  (  x_{ij}/   \sigma_{jj}^{1/2}  )^4 e^{\frac{|\lambda| }{\sqrt{n}}   x_{ij}^2 /  \sigma_{jj}  } \Bigg\}^{1/2}   \cdot  \Bigg(  \EE (\bdelta^\T  \bx_{i, \cS} / \| \bdelta \|_{{\rm S}})^4 e^{\frac{|\lambda |}{\sqrt{n}}  ( \bx_{i, \cS}^\T\bdelta / \| \bdelta \|_{{\rm S} })^2  }  \Bigg)^{1/2} \nn
\#
and 
\#
 \EE e^{ \frac{|\lambda^* |}{ \sqrt{n}} | x_{ij}   \bx_{i, \cS}^\T\bdelta |   }  \leq \bigg(  \EE  e^{\frac{|\lambda|}{\sqrt{n}}  x_{ij}^2 / \sigma_{jj}  } \bigg)^{1/2} \cdot  \bigg( \EE e^{\frac{|\lambda |}{\sqrt{n}}  ( \bx_{i, \cS}^\T  \bdelta / \| \bdelta \|_{ {\rm S} }  )^2  } \bigg)^{1/2} . \nn
\#
Substituting these bounds into the earlier inequality \eqref{mgf.ubd1}, we find that for any $|\lambda | \leq \sqrt{n}/C_1$,
\#
&  \EE e^{ \lambda \sqrt{n} \Delta^0_{j} (\bdelta) / ( \sigma_{jj}^{1/2} \| \bdelta \|_{ {\rm S}} )}    \leq e^{C_2^2 \lambda ^2/2} , \nn
\# 
where $C_1, C_2 >0$ depend only on $\upsilon_1$ in Condition~\ref{moment.cond2}.  A similar argument can be used to establish the same bound for each pair $(\bdelta, \bdelta')$, that is,
\#
  \EE e^{ \lambda \sqrt{n}  \{ \Delta^0_{j} (\bdelta) - \Delta^0_{j} (\bdelta')  \} / ( \sigma_{jj}^{1/2}  \|  \bdelta  - \bdelta' \|_{  {\rm S} } )}    \leq e^{C_2^2 \lambda ^2/2}  ~\mbox{ for all } |\lambda | \leq \sqrt{n}/C_1 .\nn
\#
The above inequality certifies condition ($\mathcal{E}d$) in \cite{S2012} (see Section~2 in the supplement), so that Corollary~2.2 therein applies to the process $\{ \Delta^0_{j} (\bdelta)\}_{\bdelta \in \RR^s : \| \bdelta \|_{  {\rm S} } \leq r }$: with probability at least $1- e^{-x}$,
\#
 \sup_{\bbeta \in\Theta^*(r)}	\langle  \bw(\bbeta)-  \bw(\bbeta^*)  , \be_j \rangle = \sup_{\bdelta: \| \bdelta \|_{ {\rm S}} \leq r} \Delta^0_{j} (\bdelta) \lesssim  r \,  \sqrt{\frac{s +x }{n}} \nn
\#
as long as $n \gtrsim  s + x$. Combined with \eqref{local.fluct.dec} and the union bound, we find that
\#
\sup_{\bbeta \in\Theta^*(r) }   \|  \bw(\bbeta)-  \bw(\bbeta^*)  \|_\infty   \lesssim  r \,  \sqrt{\frac{s+x }{n}}  \nn
\#
with probability at least $1- 2d e^{-x}$ provided $n \gtrsim  s + x$.  Taking $x= \log(2d) +t$,
it follows from \eqref{local.fluct.max} that conditioned  on $\{  \| \hat \bbeta^{{\rm ora}}  -\bbeta^* \|_{\Sigma} \leq r \}$,
\#
  \|  \bw(\hat \bbeta^{{\rm ora}}) - \bw(\bbeta^*)  \|_\infty \lesssim r \,  \sqrt{\frac{s+ \log d + t }{n}}  \label{score.diff.bound1}
\#
holds with probability at least  $1-e^{-t}$ as long as $n\gtrsim s + \log d  + t$.

Tuning to $\|    \nabla \cL_\tau(\hat \bbeta^{{\rm ora}}) \|_\infty$, again, we control this term conditioned on the same event above.
Following the proof of Lemma~\ref{lem:bias}, it can be similarly shown that
\#
  \|    \nabla \cL_\tau( \bbeta^* ) \|_\infty   \lesssim  ( a_\eta \sigma_2)^{2+\eta} \tau^{-1-\eta}   . \label{score.bias.ubd}
\#
For any $\bbeta\in\Theta^*(r)$, write $\bdelta = ( \bbeta -\bbeta^*)_{ \cS} \in \RR^s$, and note that
\#
&  \nabla  \cL_\tau( \bbeta )   -  \nabla \cL_\tau( \bbeta^* ) =  \EE \bigl\{  \ell_\tau'( \varepsilon  ) -  \ell_\tau'( \varepsilon     - \bx_{ \cS}^\T  \bdelta )\bigr\} \bx \nn \\
 & =      \EE\int_{     -\bx_{ \cS}^\T  \bdelta   }^{ 0} \ell''_\tau(\varepsilon + u ) \, {\rm d} u  \cdot   \bx = \EE  \ell_\tau''( \varepsilon ) \bx\bx_{ \cS}^\T  \bdelta  +  \EE\int_{ -\bx_{ \cS}^\T  \bdelta }^{ 0  }  \bigl\{ \ell''_\tau( \varepsilon    + u)  - \ell''_\tau( \varepsilon ) \bigr\} \, {\rm d} u  \cdot   \bx .  \nn  
\#
Let $\EE_{\bx}$ and $\PP_{\bx}$ be the conditional expectation and probability given $\bx$, respectively.
By the anti-concentration property \eqref{anti-concentration} of the distribution of $\varepsilon$ given $\bx$, we see that for any $u\in \RR$,
\#
 &  |  \EE_{\bx}   \{ \ell''_\tau(\varepsilon   + u)  - \ell''_\tau(\varepsilon  )  \}  |     =  | \PP_{\bx}(|\varepsilon  + u| \leq \tau) - \PP_{\bx}(|\varepsilon   | \leq \tau   )  | \leq  a_0  |u|. \nn
\#
Together, the last two displays imply
\#
 &~~~~  \|    \nabla \cL_\tau( \bbeta )  - \nabla \cL_\tau(\bbeta^*)   +  {\rm H}_{\cdot \cS} \bdelta  \|_\infty  \nn \\
 & \leq \frac{ a_0 }{2  } \max_{1\leq j\leq d} \EE  |x_{j}|    (\bx_{   \cS}^\T \bdelta)^2  \leq \frac{a_0}{2 } \max_{1\leq j\leq d} \sigma_{jj}^{1/2} \rho_{\bx}^2   \| \bdelta \|_{ {\rm S}  }^2  ,   \label{pop.score.1}
\#
where ${\rm H}_{\cdot \cS} := \EE\{ \ell''_\tau(\varepsilon  ) \bx \bx_{  \cS}^\T \} \in \RR^{d \times s}$ is the submatrix of ${\rm H} = \nabla^2 \cL_\tau(\bbeta^*) = \EE \{ \ell''_\tau(\varepsilon) \bx \bx^\T \}$.  
For the linear term ${\rm H}_{\cdot \cS} \bdelta$,  write $\Sigma_{\cdot \cS} = \EE(\bx \bx^\T_{\cS})$ and note that
\#
  \|   ( {\rm H} _{\cdot \cS} - \Sigma_{ \cdot \cS }    )  \bdelta  \|_\infty   &\leq  \max_{1\leq j\leq d}    \EE  \big\{  \PP_{  \bx} (|\varepsilon | \geq  \tau) \cdot  |  x_j \bx_{\cS}^\T \bdelta |  \big\} \nn \\
  & \leq   \max_{1\leq j\leq d} \sigma_{jj}^{1/2}  \sigma_2^2 \| \bdelta\|_{{\rm S} } \tau^{-2} .  \label{pop.score.2}
\#
Together,  \eqref{score.bias.ubd},  \eqref{pop.score.1} and \eqref{pop.score.2} imply that conditioned  on $\{  \| \hat \bbeta^{\ora}  -\bbeta^* \|_{\Sigma} \leq r \}$,
\# \label{score.diff.bound2}
   \|    \nabla \cL_\tau( \hat \bbeta^{\ora}  )  + \Sigma_{ \cdot \cS }  (\hat \bbeta^{\ora}   - \bbeta^*)_{ \cS} \|_\infty    \lesssim r^2 +    \sigma_2^{2+\eta} \tau^{-1-\eta}  + \sigma_2^2  \tau^{-2} r   .
\#

Next we consider the oracle estimator $\hat \bbeta^{\ora}$ with $\tau = \sigma_2 \sqrt{n/( q  + t )}$. Following an argument similar to that used to prove Theorem~2.1 in \cite{CZ2020}, it can be shown that with probability at least $1-3 e^{-t}$,
\#
\|   \hat \bbeta^{{\rm ora}}  -\bbeta^*     \|_{\Sigma}= \| ( \hat \bbeta^{{\rm ora}}  -\bbeta^*  )_{\cS}  \|_{{\rm S} } \lesssim  \sigma_2 \sqrt{\frac{s+ t}{n}}
 \label{oracle.l2}
\#
and
\#
\Bigg\|  {\rm S} ^{ 1/2} ( \hat \bbeta^{{\rm ora}}  -\bbeta^*  )_{\cS} - {\rm S} ^{-1/2}  \frac{1}{n}\sn \ell'_\tau(\varepsilon_i )  \bx_{i, \cS}   \Bigg\|_2  \lesssim \sigma_2 \frac{s+t }{n} , \label{oracle.bh}
\#
where ${\rm S}  = \EE ( \bx_{  \cS} \bx_{\cS}^\T) = \Sigma_{\cS \cS}$.   Note that   the linear term $(1/n)\sn \ell'_\tau(\varepsilon_i ) \bx_{i, \cS}  $ in the Bahadur representation bound \eqref{oracle.bh} can be written as  $(1/n) \sn \ell'_\tau(\varepsilon_i )  \bx_{i, \cS}   =  -  \bw_{\cS} ^* -  \nabla \cL_\tau(\bbeta^*)_{\cS}$. It follows that
\#
	& \|     \hat \bbeta^\ora - \bbeta^*  \|_\infty =  \|  ( \hat \bbeta^\ora - \bbeta^*)_{\cS} \|_\infty \nn \\ 
& \leq  \|  ( \hat \bbeta^{{\rm ora}}  -\bbeta^*  )_{\cS}   + {\rm S} ^{-1}  \bw_{\cS} ^* +    {\rm S} ^{-1}   \nabla \cL_\tau(\bbeta^*)_{\cS}    \|_\infty +   \|   {\rm S} ^{-1} \bw_{\cS} ^* +  {\rm S} ^{-1} \nabla \cL_\tau(\bbeta^*)_{\cS}  \|_\infty  \nn \\
&    \leq \|  ( \hat \bbeta^{{\rm ora}}  -\bbeta^*  )_{\cS} + {\rm S} ^{-1}  \bw_{\cS} ^* +    {\rm S} ^{-1}   \nabla \cL_\tau(\bbeta^*)_{\cS}  \|_2 +   \|    {\rm S} ^{-1}   \bw_{\cS} ^*    \|_\infty +     \|    {\rm S} ^{-1}   \nabla \cL_\tau(\bbeta^*)_{\cS}   \|_2   \nn \\
& \leq    \| {\rm S} ^{-1} \|_2^{1/2}   \big\{  \|   {\rm S} ^{ 1/2}( \hat \bbeta^{{\rm ora}}  -\bbeta^*  )_{\cS} + {\rm S} ^{-1/2}  \bw_{\cS} ^* \nn \\
&~~~~~~~~~~~~~~~~~~~~  +    {\rm S} ^{-1/2}   \nabla \cL_\tau(\bbeta^*)_{\cS}   \|_2 + \|    {\rm S} ^{-1/2}   \nabla \cL_\tau(\bbeta^*)_{\cS}    \|_2 \big\}  + \|    {\rm S} ^{-1}   \bw_{\cS} ^*    \|_\infty . \nn
\#
Similarly to Lemma~\ref{lem:bias}, we obtain that $  \|    {\rm S} ^{-1/2}   \nabla \cL_\tau(\bbeta^*)_{\cS}    \|_2 \leq  (a_\eta \sigma_2)^{2+\eta} \tau^{-1-\eta}$. For $ \|    {\rm S} ^{-1}   \bw_{\cS} ^*    \|_\infty$, following the proof of Proposition~\ref{prop:score}, it can be similarly shown that with probability at least $1- e^{-t}$,
\#
 \|   {\rm S} ^{-1}   \bw_{\cS} ^*  \|_\infty \lesssim   \sigma_2 \sqrt{\frac{\log(2s) + t }{n}}  . \nn
\#
Putting together the pieces, we conclude that the $\ell_2$-error bound \eqref{oracle.l2} and the $\ell_\infty$-error bound
\#
 \|     \hat \bbeta^\ora - \bbeta^*  \|_\infty  \lesssim  \sigma_2  \sqrt{\frac{\log s + t}{n}}  + \sigma_2  \bigg(  \frac{q + t}{n}  \bigg)^{(1+\eta)/2}\nn
\#
hold with probability $1-4 e^{-t}$ as long as $n\gtrsim  q + t$.  Combined with \eqref{oracle.l2}, this proves \eqref{oracle.error}.

Finally, it remains to deal with $\| \Sigma_{\cdot \cS} (\hat \bbeta^{\ora} - \bbeta^* )_{\cS} \|_\infty$.  Under condition~\eqref{irr},   
  \$
 \| \Sigma_{\cdot \cS} \bdelta \|_\infty \leq  \max_{  j \in  \cS^{\cc}} \| \Sigma_{j\cS} (\Sigma_{\cS\cS} )^{-1} \|_1  \cdot   \| \Sigma_{\cS\cS} \bdelta \|_\infty \leq A_0 \cdot  \| {\rm S}  \bdelta \|_\infty ~\mbox{ for any }   \bdelta \in \RR^s.
  \$
 Using the previous bounds \eqref{score.two.ubd},  \eqref{score.bias.ubd} and \eqref{oracle.bh}, we obtain that
 \$
 	 & \| {\rm S}   (\hat \bbeta^{\ora} - \bbeta^*)_{\cS} \|_\infty \\
 	 &  \leq \Bigg\| {\rm S}      (\hat \bbeta^{\ora} - \bbeta^*)_{\cS} - \frac{1}{n} \sn \ell'_\tau(\varepsilon_i) \bx_{i, \cS}  \Bigg\|_\infty +  \Bigg\|    \frac{1}{n} \sn \ell'_\tau(\varepsilon_i) \bx_{i, \cS}   \Bigg\|_\infty \\
 	 & \leq  \Bigg\| {\rm S}      (\hat \bbeta^{\ora} - \bbeta^*)_{\cS} - \frac{1}{n} \sn \ell'_\tau(\varepsilon_i) \bx_{i, \cS}  \Bigg\|_2 +    \| \bw^*_{\cS } \|_\infty + \| \nabla \cL_\tau(\bbeta^* )_{\cS} \|_\infty \\
 	 & \lesssim   \sigma_2 \frac{s+t}{n} +  \sigma_2 \sqrt{\frac{\log s + t}{n}} +   a_\eta^{2+\eta} \sigma_2  \Bigg(  \frac{ q + t}{n}  \Bigg)^{(1+\eta)/2} .
 \$ 
 Combining this bound with  \eqref{score.diff.bound1},  \eqref{score.diff.bound2}  and \eqref{oracle.l2} yields the claim \eqref{score.oracle.ubd}.    \qed

\subsection{Proof of Theorem \ref{thm:comp:1}}
For simplicity, we  write $\bbeta^{(k)} = \bbeta^{(1,k)}$, $\phi^{(k)} =\phi^{(1,k)}$ and $\blambda = \blambda^{(0)}$ throughout this section.


\subsubsection{Technical lemmas}

We first present three technical lemmas, which are the key ingredients to the proof. The first lemma provides an alternative to the stopping rule. 
\begin{lemma}\label{lemma:c.1}
$
\omega_{\blam} (\btt^{( k)})\leq L(1+\gamma_u) \|\btt^{( k)}-\btt^{( k-1)} \|_2.
$
\end{lemma}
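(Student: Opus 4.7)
The plan is to read off $\omega_{\blam}(\btt^{(k)})$ directly from the first-order optimality condition of the majorized subproblem, then bound the two resulting terms by the Lipschitz constant $L$ and the backtracked quadratic coefficient $\phi^{(k)}$.

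First I would recall that $\btt^{(k)} = T_{\blam,\phi^{(k)}}(\btt^{(k-1)})$ is by construction the exact minimizer of the separable convex program $\bbeta \mapsto F(\bbeta;\phi^{(k)},\btt^{(k-1)}) + \|\blam \circ \bbeta\|_1$, whose gradient part is linear. Writing down the KKT condition for this minimizer yields
\[
 \phi^{(k)}\bigl(\btt^{(k)}-\btt^{(k-1)}\bigr) + \nabla \hat\cL_\tau(\btt^{(k-1)}) + \blam \circ \bxi^{(k)} \;=\; \mathbf 0
\]
for some $\bxi^{(k)} \in \partial\|\btt^{(k)}\|_1$. Rearranging gives
\[
 \nabla \hat\cL_\tau(\btt^{(k)}) + \blam \circ \bxi^{(k)} \;=\; \bigl[\nabla \hat\cL_\tau(\btt^{(k)}) - \nabla \hat\cL_\tau(\btt^{(k-1)})\bigr] - \phi^{(k)}\bigl(\btt^{(k)}-\btt^{(k-1)}\bigr),
\]
so by the definition of $\omega_{\blam}$ (taking infimum over the subdifferential, which we just dominate by this specific $\bxi^{(k)}$),
\[
 \omega_{\blam}(\btt^{(k)}) \;\leq\; \bigl\|\nabla \hat\cL_\tau(\btt^{(k)}) - \nabla \hat\cL_\tau(\btt^{(k-1)})\bigr\|_\infty + \phi^{(k)}\,\bigl\|\btt^{(k)}-\btt^{(k-1)}\bigr\|_\infty.
\]

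Next, Condition~\ref{ass:lip_g} bounds the first term by $L\|\btt^{(k)}-\btt^{(k-1)}\|_2$, while $\|\cdot\|_\infty \leq \|\cdot\|_2$ handles the second. It then remains to argue that the backtracked quadratic coefficient satisfies $\phi^{(k)} \leq \gamma_u L$. This is the one point that requires invoking the mechanics of Algorithm~\ref{alg:ls}: since $\nabla \hat\cL_\tau$ is $L$-Lipschitz, the standard descent lemma implies that the local majorization \eqref{eq:localmajor} is guaranteed to hold once $\phi \geq L$; hence the \textbf{Repeat/Until} loop must terminate no later than the first value of $\phi^{(k)}$ in the geometric sequence $\{\max(\phi_0, \gamma_u^{-1}\phi^{(k-1)}),\, \gamma_u \cdot, \gamma_u^2 \cdot , \ldots\}$ that exceeds $L$, so $\phi^{(k)} \leq \gamma_u L$.

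Combining the two bounds yields
\[
 \omega_{\blam}(\btt^{(k)}) \;\leq\; L\,\|\btt^{(k)}-\btt^{(k-1)}\|_2 + \gamma_u L\,\|\btt^{(k)}-\btt^{(k-1)}\|_2 \;=\; L(1+\gamma_u)\,\|\btt^{(k)}-\btt^{(k-1)}\|_2,
\]
which is the claimed inequality. The main subtlety, and really the only one, is the uniform upper bound $\phi^{(k)} \leq \gamma_u L$: one must carefully use the fact that (i) the descent lemma yields majorization at $\phi = L$ under $\ell_2$-Lipschitz gradient, and (ii) the Lipschitz condition in Condition~\ref{ass:lip_g} (with $\|\cdot\|_\infty$ on the left-hand side) is at least as strong as the $\ell_2 / \ell_2$ Lipschitz condition needed here up to the same constant, so the backtracking stops at $\phi^{(k)} \leq \gamma_u L$. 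Everything else is bookkeeping from the KKT identity.
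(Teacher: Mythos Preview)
Your proof is correct and follows essentially the same approach as the paper: write the KKT condition for the majorized subproblem, split $\nabla\hat\cL_\tau(\btt^{(k)}) + \blam\circ\bxi^{(k)}$ into the gradient difference plus the $\phi^{(k)}$-term, bound each via the Lipschitz condition and $\|\cdot\|_\infty \le \|\cdot\|_2$, and then argue $\phi^{(k)} \le \gamma_u L$ from the backtracking termination (the paper does this by contradiction, you do it directly via the descent lemma). One caveat: your claim (ii) is stated backwards---Condition~\ref{ass:lip_g} with $\|\cdot\|_\infty$ on the left is \emph{weaker}, not stronger, than the $\ell_2/\ell_2$ Lipschitz condition needed for the descent lemma; the paper makes the same informal leap here, so your argument is no less rigorous than theirs, but the justification you wrote for that step is not correct as phrased.
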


\begin{proof}[Proof of Lemma \ref{lemma:c.1}]
For simplicity,  we write $\cL(\cdot) = \hat \cL_\tau(\cdot)$ as the loss function of interest.  Since $\btt^{(k)}$ is the exact solution at the $k$-th iteration when $\ell=1$,
	the first-order optimality condition holds:  there exists some $\bxi^{(k)}\in \partial \|\bbeta^{(k)}\|_1$ such that 
	\$\label{}
	\nabla\cL(\btt^{(k-1)})+\phi^{(k)}(\btt^{(k)}-\btt^{(k-1)})+  \blam\circ \bxi^{(k)}=\textbf{0}_d  .
	\$
	For any $\bu \in \RR^d$ such that $\| \bu\|_1=1$, we have
	\$
	&~~~~ \langle\nabla\cL(\btt^{(k)}) +   \blam\circ \bxi^{(k)},\bu\rangle \nn \\
	&=\langle\nabla\cL(\btt^{(k)}), \bu\rangle-\langle\nabla\cL(\btt^{(k-1)})+\phi^{(k)}(\btt^{(k)}-\btt^{(k-1)}),\bu \rangle \\
	&= \langle\nabla\cL(\btt^{(k)})-\nabla\cL(\btt^{(k-1)}),\bu\big\rangle - \langle\phi^{(k)}(\btt^{(k)}-\btt^{(k-1)}),\bu\rangle\\
	&\leq \|\nabla\cL(\btt^{(k)}) - \nabla\cL(\btt^{(k-1)})\|_\infty +  \phi^{(k)} \|\btt^{(k)}-\btt^{(k-1)}\|_\infty \\
	&\leq (\phi^{(k)}+L )\|\btt^{(k)}-\btt^{(k-1)}\|_2,
	\$
	where the last inequality is due to the Lipschitz continuity of $\nabla \cL(\cdot )$.  Taking the supremum over all $\bu$ satisfying $\|\bu \|_1\leq 1$, we obtain 
\$
\omega_{\blam} (\btt^{(k)})\leq  (\phi^{(k)}+L )  \|\btt^{(k)}-\btt^{(k-1)} \|_2.
\$
	
	It remains to show that $\phi^{(k)}\leq   L \gamma_u $ for any $k$. This is guaranteed by the iterative LAMM algorithm. Otherwise, if $\phi^{(k)}> L\gamma_u$, then $\phi'\equiv \phi^{(k)}/\gamma_u>L$ is the quadratic parameter in the previous iteration for searching $\phi$ such that
\$
F (\tilde\bbeta^{(k)}; \phi', \bbeta^{( k-1)} ) < \cL (\tilde\bbeta^{(k)} ),
\$
where $\tilde\btt^{(k)}$ is the new updated parameter vector under the quadratic coefficient $\phi'$.
On the other hand, it follows from the definition of $F$ and the Lipschitz continuity of $\nabla \cL$ that
	\$
&~~~~ F(\tilde\btt^{(k)};\phi', \btt^{(k-1)})+\lambda\|\btt\|_1\nn \\
&=\cL(\btt^{( k-1)})+ \langle\nabla\cL(\btt^{(k-1)}),\tilde\btt^{(k)}-\btt^{(k-1)} \rangle+\frac{\phi'}{2} \|\tilde\btt^{(k)}-\btt^{(k\!-\!1)} \|_2^2\\
&>\cL(\btt^{( k-1)})+ \langle\nabla\cL(\btt^{(k-1)}),\tilde\btt^{(k)}-\btt^{(k-1)} \rangle+\frac{L}{2} \|\tilde\btt^{(k)}-\btt^{(k\!-\!1)} \|_2^2\\
&\geq \cL(\tilde\btt^{(k)}).
	\$
	This leads to a contradiction, indicating that $\phi^{(k)} \leq L  \gamma_u$. 
\end{proof}

The second lemma is a modified version of Lemma E.4 in \cite{FLSZ2018}. We reproduce its proof here for completeness.   Let $\Psi(\btt,\blam)=\cL(\btt)+\|\blam\circ \bbeta  \|_1$ with $\blambda =  \blam^{(0)}$.

\begin{lemma}\label{lemma.c.2}
For any $\bbeta  \in \RR^d$, we have
\$
\Psi(\btt, \blam ) - \Psi(\btt^{(k)},\blam ) \geq \frac{\phi^{(k)}}{2}   \big\{ \|\btt  - \btt^{(k )}\|_2^2 -  \|\btt  - \btt^{(k-1)}\|_2^2  \big\}.
\$
\end{lemma}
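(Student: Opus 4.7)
The plan is to prove the inequality by combining three ingredients: (i) convexity of $\cL$ and of the penalty $\|\blambda\circ\,\cdot\,\|_1$ at the anchor points, (ii) the first-order optimality of $\bbeta^{(k)}$ for the majorized subproblem, and (iii) the local majorization property $\cL(\bbeta^{(k)}) \leq F(\bbeta^{(k)};\phi^{(k)},\bbeta^{(k-1)})$ established in \eqref{eq:localmajor}, which is the stopping criterion of Algorithm~\ref{alg:ls}.

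First, I would write the convexity inequality
$$
\cL(\bbeta) \geq \cL(\bbeta^{(k-1)}) + \langle \nabla\cL(\bbeta^{(k-1)}), \bbeta - \bbeta^{(k-1)}\rangle.
$$
Since $\bbeta^{(k)}$ is the exact minimizer of $\bbeta \mapsto F(\bbeta;\phi^{(k)},\bbeta^{(k-1)}) + \|\blambda\circ\bbeta\|_1$, there exists some $\bxi^{(k)} \in \partial\|\bbeta^{(k)}\|_1$ with
$$
\nabla\cL(\bbeta^{(k-1)}) + \phi^{(k)}(\bbeta^{(k)} - \bbeta^{(k-1)}) + \blambda \circ \bxi^{(k)} = \bzero .
$$
Convexity of the $\ell_1$-penalty at $\bbeta^{(k)}$ with subgradient $\blambda\circ\bxi^{(k)}$ then gives
$$
\|\blambda\circ\bbeta\|_1 \geq \|\blambda\circ\bbeta^{(k)}\|_1 + \langle \blambda\circ\bxi^{(k)}, \bbeta - \bbeta^{(k)}\rangle,
$$
and substituting $\blambda\circ\bxi^{(k)}$ from the first-order condition converts this into an inequality involving only $\nabla\cL(\bbeta^{(k-1)})$ and $\phi^{(k)}(\bbeta^{(k)}-\bbeta^{(k-1)})$.

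Adding the convexity bound for $\cL$ to the substituted penalty bound and collapsing terms yields
$$
\Psi(\bbeta,\blambda) \geq \cL(\bbeta^{(k-1)}) + \langle \nabla\cL(\bbeta^{(k-1)}), \bbeta^{(k)} - \bbeta^{(k-1)}\rangle + \|\blambda\circ\bbeta^{(k)}\|_1 - \phi^{(k)}\langle \bbeta^{(k)}-\bbeta^{(k-1)}, \bbeta - \bbeta^{(k)}\rangle.
$$
Now I would invoke the local majorization \eqref{eq:localmajor} to replace $\cL(\bbeta^{(k-1)}) + \langle \nabla\cL(\bbeta^{(k-1)}), \bbeta^{(k)} - \bbeta^{(k-1)}\rangle$ by $\cL(\bbeta^{(k)}) - \tfrac{\phi^{(k)}}{2}\|\bbeta^{(k)}-\bbeta^{(k-1)}\|_2^2$, producing
$$
\Psi(\bbeta,\blambda) - \Psi(\bbeta^{(k)},\blambda) \geq -\tfrac{\phi^{(k)}}{2}\|\bbeta^{(k)}-\bbeta^{(k-1)}\|_2^2 - \phi^{(k)}\langle \bbeta^{(k)}-\bbeta^{(k-1)}, \bbeta - \bbeta^{(k)}\rangle.
$$

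The final step is a routine application of the three-point identity
$$
\|\bbeta - \bbeta^{(k-1)}\|_2^2 = \|\bbeta - \bbeta^{(k)}\|_2^2 + 2\langle \bbeta - \bbeta^{(k)}, \bbeta^{(k)} - \bbeta^{(k-1)}\rangle + \|\bbeta^{(k)} - \bbeta^{(k-1)}\|_2^2,
$$
which rearranges the right-hand side exactly into $\tfrac{\phi^{(k)}}{2}\bigl(\|\bbeta - \bbeta^{(k)}\|_2^2 - \|\bbeta - \bbeta^{(k-1)}\|_2^2\bigr)$. No step is really an obstacle here; the only point requiring care is keeping track of the right sign in the first-order condition and recognizing that the local (not global) majorization in \eqref{eq:localmajor} is the guarantee provided by Algorithm~\ref{alg:ls}, since we are not assuming $F(\,\cdot\,;\phi^{(k)},\bbeta^{(k-1)})$ majorizes $\cL$ globally.
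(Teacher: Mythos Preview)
Your proof is correct and follows essentially the same approach as the paper's: both combine convexity of $\cL$ and of the penalty, the first-order optimality condition for $\bbeta^{(k)}$, and the local majorization \eqref{eq:localmajor}, arriving at the same intermediate inequality before applying the three-point identity. The only cosmetic difference is the order in which the first-order condition is substituted---you do it before adding the two convexity bounds, the paper does it after reaching the analogue of your penultimate display---and you spell out the three-point identity explicitly where the paper leaves it implicit.
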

\begin{proof}[Proof of Lemma \ref{lemma.c.2}]
Since $F (\bbeta; \phi^{(k)}, \bbeta^{( k-1)} )$ majorizes $\cL(\btt)$ at $\btt^{(k)}$,  we have
	\#\label{0831.1}
	\Psi(\btt,\blam )-\Psi(\btt^{(k)},\blam )\geq \Psi(\btt,\blam )- \big\{ F (\btt^{(k)}; \phi^{(k)}, \btt^{(k-1)} )+\|\blambda \circ \bbeta^{(k)}\|_1 \big\}.
	\#
	By the convexity of $\cL(\cdot)$ and $\bbeta \mapsto \|\blam\circ \bbeta \|_1$,
	\begin{align*}
	& \cL(\btt) \geq\cL(\btt^{(k-1)})+ \langle\nabla\cL(\btt^{(k-1)}),\btt-\btt^{(k-1)} \rangle ~\mbox{ and} \\
	&\|\blam \circ \bbeta \|_1 \geq \|\blam \circ  \bbeta^{(k)}\|_1+ \langle \blam \circ \bxi^{(k)}, \bbeta -\bbeta^{(k)} \rangle  
	\end{align*}
	for any $\bxi^{(k)}\in \partial \|\bbeta^{(k)}\|_1$. This further implies
	\#
	 \Psi(\btt,\blam )& \geq \cL(\btt^{(k-1)}) +  \langle\nabla\cL(\btt^{(k-1)}),\btt  - \btt^{(k-1)} \rangle \nn \\
	&\qquad + \|\blam  \circ  \bbeta^{(k)}\|_1 + \langle\blam \circ \bxi^{(k)}, \bbeta -   \bbeta^{(k)} \rangle. \label{0831.4}
	\#
	Plugging  the expression of $F (\btt^{(k)}; \phi^{(k)}, \btt^{(k-1)} )$ in \eqref{def:F}  and (\ref{0831.4})  into (\ref{0831.1}), we obtain
	\#\label{0107.7}
	& \Psi(\btt,\blam ) - \Psi(\btt^{(k)},\blam )\geq -\frac{\phi^{(k)}}{2}\|\btt^{(k)} - \btt^{(k-1)}\|_2^2\notag\\
	&\qquad + \langle\nabla\cL(\btt^{(k-1)}) ,\btt - \btt^{(k)}\rangle   + \langle \blambda \circ \bxi^{(k)} , \bbeta -\bbeta^{(k)} \rangle . 
	\#
By the first-order optimality condition,  there exists some $\bxi \in \partial \|\bbeta^{(k)}\|_1$ such that
	\$
	\nabla\cL(\btt^{(k-1)})+\phi^{(k)} (\btt^{(k)}-\btt^{(k-1)})+  \blam \circ \bxi^{(k)}=\textbf{0} .
	\$ 
	Substituting this into \eqref{0107.7} proves the claimed bound.
\end{proof}

Recall that $\Psi(\btt,\blam)=\cL(\btt)+\|\blam\circ \bbeta  \|_1$ and $\hat \bbeta^{(1)}  \in \min_{\bbeta \in \RR^d} \Psi(\btt, \blam)$ denotes the optimal solution in the contraction stage.

\begin{lemma}\label{lemma:5.3} 
For any $k\geq 1$, we have
	$$
	\Psi (\btt^{( k)},\blam )-\Psi (\hbt^{(1)},\blam )\leq  \frac{ \max_{1\leq j\leq k}\phi^{(j)} }{2k}  \|\btt^{( 0)}-\hbt^{(1)} \|_2^2.
	$$
\end{lemma}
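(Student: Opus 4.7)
The plan is to follow the standard sublinear rate argument for proximal-type descent schemes, using Lemma~C.2 twice---once at the optimum $\hbt^{(1)}$ to extract a telescoping contraction in $\ell_2$, and once at the previous iterate to establish monotone decrease of the objective.

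First I would instantiate Lemma~\ref{lemma.c.2} with $\bbeta = \btt^{(j-1)}$ to obtain
\[
\Psi(\btt^{(j-1)},\blam) - \Psi(\btt^{(j)},\blam) \;\geq\; \frac{\phi^{(j)}}{2}\|\btt^{(j)} - \btt^{(j-1)}\|_2^2 \;\geq\; 0,
\]
which shows that $\{\Psi(\btt^{(j)},\blam)\}_{j\geq 0}$ is nonincreasing. Next, instantiating the same lemma with $\bbeta = \hbt^{(1)}$ and rearranging gives
\[
\Psi(\btt^{(j)},\blam) - \Psi(\hbt^{(1)},\blam) \;\leq\; \frac{\phi^{(j)}}{2}\bigl\{\|\hbt^{(1)} - \btt^{(j-1)}\|_2^2 - \|\hbt^{(1)} - \btt^{(j)}\|_2^2\bigr\}.
\]
Because $\hbt^{(1)}$ minimizes $\Psi(\cdot,\blam)$, the left-hand side is nonnegative, so $\|\hbt^{(1)} - \btt^{(j)}\|_2^2 \leq \|\hbt^{(1)} - \btt^{(j-1)}\|_2^2$ for every $j$. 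This monotonicity of the distance to the optimum is the key enabler: it makes every bracketed increment in the previous display nonnegative, so that replacing $\phi^{(j)}$ by $\phi_{\max} := \max_{1\leq j\leq k}\phi^{(j)}$ only enlarges the right-hand side.

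With the uniform replacement in hand, I would sum the inequality over $j=1,\ldots,k$ to telescope:
\[
\sum_{j=1}^k \bigl\{\Psi(\btt^{(j)},\blam) - \Psi(\hbt^{(1)},\blam)\bigr\} \;\leq\; \frac{\phi_{\max}}{2}\bigl\{\|\hbt^{(1)} - \btt^{(0)}\|_2^2 - \|\hbt^{(1)} - \btt^{(k)}\|_2^2\bigr\} \;\leq\; \frac{\phi_{\max}}{2}\|\btt^{(0)} - \hbt^{(1)}\|_2^2.
\]
Finally, the monotone decrease of the objective established in the first step implies $\Psi(\btt^{(j)},\blam) \geq \Psi(\btt^{(k)},\blam)$ for every $j\leq k$, so each summand on the left is at least $\Psi(\btt^{(k)},\blam) - \Psi(\hbt^{(1)},\blam)$. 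Dividing by $k$ yields the stated bound.

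There is no substantive obstacle here; the only bit of care needed is the order in which the two monotonicity facts are used. Specifically, the distance monotonicity must be established \emph{before} pulling $\phi^{(j)}$ out of the telescoping sum, since otherwise individual bracketed increments could have arbitrary sign and the uniform replacement by $\phi_{\max}$ would not be valid.
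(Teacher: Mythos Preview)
Your argument is correct. Both your proof and the paper's rest on the same two applications of Lemma~\ref{lemma.c.2} (at $\hbt^{(1)}$ and at $\btt^{(j-1)}$), but the bookkeeping differs. You first extract the Fej\'er monotonicity $\|\hbt^{(1)}-\btt^{(j)}\|_2 \leq \|\hbt^{(1)}-\btt^{(j-1)}\|_2$ from the nonnegativity of the gap, which lets you replace $\phi^{(j)}$ by $\phi_{\max}$ \emph{before} summing and then telescope cleanly. The paper instead sums the raw inequality $2/\phi^{(j)}\cdot\{\Psi(\hbt,\blam)-\Psi(\btt^{(j)},\blam)\} \geq \|\btt^{(j)}-\hbt\|_2^2 - \|\btt^{(j-1)}-\hbt\|_2^2$, replaces $1/\phi^{(j)}$ by $1/\phi_{\max}$ using nonpositivity of the gap, and then uses an Abel-summation maneuver (multiplying the descent inequality by $j-1$, introducing $\phi_{\min}$) to pass from $\sum_j \Psi(\btt^{(j)},\blam)$ to $k\,\Psi(\btt^{(k)},\blam)$. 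Your route is shorter and avoids the superfluous $\phi_{\min}$ factor; the paper's route sidesteps the need to first establish distance monotonicity. Either way the same bound falls out.
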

\begin{proof}[Proof of Lemma \ref{lemma:5.3}]
For simplicity, we write $\hat \bbeta = \hat \bbeta^{(1)}$, and define $\phi_{\max}  = \max_{1\leq j\leq k} \phi^{(j)}$ and $\phi_{\min}  = \min_{1\leq j\leq k} \phi^{(j)}>0$. Taking $\btt=\hbt$ in Lemma \ref{lemma.c.2} gives
	\# 
 0 \geq 	\Psi(\hbt,\blam)-\Psi(\btt^{(j)},\blam) \geq\frac{\phi^{(j)}}{2} \{\|\hbt-\btt^{(j)}\|_2^2-\|\hbt-\btt^{(j-1)}\|_2^2 \}  \nn
	\#
	for all $j\geq 1$. Summing  over $j$ from 1 to $k$ yields
	\$
\sum_{j=1}^k  \frac{2}{\phi^{(j)}}  \{\Psi(\hbt,\blam) - \Psi(\btt^{(j)},\blam) \} \geq \sum_{j=1}^k  \{\|\btt^{(j)} - \hbt\|_2^2 -  \|\btt^{(j-1)} -  \hbt\|_2^2  \},
\$
which further implies
\#\label{0901.1.3}
\frac{2}{\phi_{\max}} \bigg\{k\Psi(\hbt,\blam)-\sum_{j=1}^k\Psi(\btt^{(j)},\blam)  \bigg\} \geq \|\btt^{(k)}-\hbt\|_2^2-\|\btt^{(0)}-\hbt\|_2^2 .
\#
Again, by Lemma~\ref{lemma.c.2} with $\btt=\btt^{(j-1)}$ and $k=j$,
\$
	\Psi(\btt^{(j-1)},\blam)-\Psi(\btt^{(j)},\blam)\geq \frac{\phi^{(j)}}{2}\|\btt^{(j)}-\btt^{(j-1)}\|_2^2 \geq  \frac{  \phi_{\min}}{2}\|\btt^{(j)}-\btt^{(j-1)}\|_2^2.
	\$
Multiplying both sides of the above inequality by $j-1$ and summing over $j$, we obtain
	\$
	&\frac{2}{\phi_{\min} }\sum_{j=1}^k \{(j - 1)\Psi(\btt^{(j - 1)},\blam) - j\Psi(\btt^{(j)},\blam) + \Psi(\btt^{(j)},\blam) \} \nn \\
	& \geq  \sum_{j=1}^k(j - 1)\|\btt^{(j)}  -  \btt^{(j-1)}\|_2^2,
	\$
or equivalently,
	\#\label{0901.1.4}
	\frac{2}{\phi_{\min} }\bigg\{ -k\Psi(\btt^{(k)},\blam)+\sum_{j=1}^k\Psi(\btt^{(j)},\blam)\bigg\}\geq \sum_{j=1}^k(j-1)\|\btt^{(j)}-\btt^{(j-1)}\|_2^2.
	\#
Together, (\ref{0901.1.3}) and (\ref{0901.1.4}) imply
\$
 & \frac{2k}{\phi_{\min} }  \{\Psi(\hbt,\blam) -  \Psi(\btt^{(k)},\blam) \} \nn \\
& \geq  \frac{\phi_{\max} }{\phi_{\min} } \|\btt^{(k)} - \hbt\|_2^2 + \sum_{j=1}^k(j - 1)\|\btt^{(j)} - \btt^{(j-1)}\|_2^2 - \frac{\phi_{\max} }{\phi_{\min} } \|\btt^{(0)} - \hbt\|_2^2,
\$
from which it follows immediately that
	\$\frac{2k}{\phi_{\max} } \{\Psi(\btt^{(k)},\blam)- \Psi(\hbt,\blam) \}\leq \|\btt^{(0)}-\hbt\|_2^2.
	\$
	This completes the proof. 
\end{proof}

\subsubsection{Proof of the theorem}

Recall that $\bbeta^{(k)} = \bbeta^{(1,k)}$ and $\phi^{(k)} =\phi^{(1,k)}$. By Lemma \ref{lemma:c.1} and its proof,
\# 
\omega_{\blam} (\btt^{(k)} ) \leq   (\phi^{(k)}+L )\|\btt^{(k)}-\btt^{(k-1)} \|_2  \leq  L(1+\gamma_u) \|\btt^{(k)}-\btt^{(k-1)} \|_2.  \nn
\#
Next, taking $\btt=\btt^{(k-1)}$ in Lemma \ref{lemma.c.2} yields
\$
\Psi (\btt^{(k-1)},\blam )-\Psi  (\btt^{(k)},\blam )\geq \frac{\phi^{(k)}}{2} \|\btt^{(k-1)}-\btt^{(k)} \|_2^2.
\$
Together, the last two displays lead to a bound for the suboptimality measure
\#\label{0107.9}
	\omega_{\blam}(\btt^{(k)}) \leq L(1 + \gamma_u)\left[   \frac{2}{\phi^{(k)}}  \big\{ \Psi (\btt^{(k-1)},\blam )-\Psi (\btt^{(k)},\blam )  \big\}   \right]^{1/2}.
\#
Recall that $\{\Psi(\btt^{(k)},\blam )\}_{k=0}^\infty$ is a non-increasing sequence, i.e.,
\# 
	\Psi(\hbt^{(1)},\blam)\leq \cdots\leq \Psi(\btt^{(k)},\blam)\leq\cdots\leq \Psi(\btt^{(0)},\blam). \nn
\#
Then, it follows from  \eqref{0107.9} and Lemma \ref{lemma:5.3} that
\# 
 \omega_{\blam}(\btt^{(k)})& \leq L(1+\gamma_u)\left[ \frac{2}{\phi^{(k)}} \{ \Psi(\btt^{(k-1)},\blambda)-\Psi(\hbt,\blambda) \} \right]^{1/2}   \nn \\
&  \leq  \frac{L(1+\gamma_u)}{\sqrt{k-1}} \sqrt{\frac{\max_{1\leq j\leq k-1} \phi^{(j)}}{\phi^{(k)}}}\|\hbt\|_2, \nn
\#
where we used the fact that $\btt^{(0)} = {\bf 0}$.  By the triangle inequality,
\$
\omega_{\blam} (\btt^{(k)} )& \lesssim \frac{L(1 + \gamma_u)}{\sqrt{k }}  \big(\|\bttc\|_2 + \|\hbt - \bttc\|_2  \big) .
\$
Therefore, in the contraction stage, we need  $k\gtrsim \{   L(1+\gamma_u)  (\|\bbeta^*\|_2+\| \hat \bbeta -\bbeta^* \|_2) / \epsilon_{{\rm c}}   \}^2$ to ensure $\omega_{\blam^{(0)}}(\btt^{(k)})\leq \epsilon_{{\rm c}}$.  This proves the stated result. \qed

\subsection{Proof of Theorem \ref{thm:comp:2}}

For convenience, we omit the index $\ell$, and use $\hat \bbeta$, $\btt^{(k)}$, $\blam$ and $\cE$ to denote $\hat \bbeta^{(\ell)}$,  $\btt^{(\ell,k)}, \blam^{(\ell-1 )}$ and $\cE_\ell$, respectively, where $\cE_\ell$ is the subset defined in \eqref{def:El} satisfying $\cS \subseteq \cE_\ell$ and $| \cE_\ell| \leq C_0 s $ for some constant $C_0>1$.
Moreover, write $\cL(\cdot)= \hat  \cL_\tau(\cdot)$, and define $\Psi(\btt,\blambda)=\cL(\btt)+\|\blambda\circ \bbeta  \|_1 =\cL(\btt)+\|\blambda^{(\ell-1)}\circ \bbeta   \|_1  $, so that $\hat \bbeta \in \min_{\bbeta} \Psi(\btt,\blambda)$.

\subsubsection{Technical lemmas}

We first provide several technical lemmas along with the proofs.  

\begin{lemma}\label{lemma:bd.rsc}
 For any  $k$-sparse ($k\geq 1$) vectors $\btt_1,\btt_2  \in  \bttc+ \BB(r)$,   we have
\begin{gather*}
\frac{1}{2} \kappa_-(2k ,r,\tau)\|\btt_1-\btt_2\|_2^2\leq D_\cL(\btt_1,\btt_2) \leq \frac{1}{2}{\kappa_+(2k,r,\tau)\|\btt_1-\btt_2\|_2^2}.
\end{gather*}
where $D_\cL(\btt_1,\btt_2) := \cL(\btt_1) - \cL(\btt_2) - \langle \nabla \cL(\btt_2) , \btt_1 - \btt_2 \rangle$ is the Bregman divergence.
\end{lemma}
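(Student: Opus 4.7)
The plan is to reduce the Bregman divergence to a quadratic form in the Hessian via a one-dimensional Taylor expansion along the segment between $\btt_1$ and $\btt_2$, and then invoke the $\mathrm{LSE}$ definition directly on the sparse unit direction.

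First, I would note that $\cL(\cdot) = \hat \cL_\tau(\cdot)$ is twice continuously differentiable (the Huber loss has a piecewise linear derivative, but the quadratic--linear transition happens on a measure-zero set, so the second-order integral Taylor formula still applies after a standard mollification argument or by direct verification for the Huber function on each linear piece). Using the integral form of Taylor's theorem with the parametrization $\bbeta_t := \btt_2 + t(\btt_1 - \btt_2)$ for $t \in [0,1]$, I would write
\[
D_\cL(\btt_1,\btt_2)
 = \int_0^1 (1-t)\,(\btt_1-\btt_2)^\T \nabla^2\cL(\bbeta_t)\,(\btt_1-\btt_2)\,\mathrm{d}t.
\]
Since $\BB(r)$ is convex and contains both $\btt_1-\bttc$ and $\btt_2-\bttc$, the segment $\{\bbeta_t : t\in[0,1]\}$ lies inside $\bttc+\BB(r)$, which is the feasibility region in Definition~\ref{lse}.

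Next, I would reduce to unit vectors. Assume $\btt_1\neq \btt_2$ (otherwise the bounds are $0\leq 0\leq 0$) and set $\bdelta := (\btt_1-\btt_2)/\|\btt_1-\btt_2\|_2 \in \mathbb{S}^{d-1}$. Because $\btt_1$ and $\btt_2$ are each $k$-sparse with supports containing the active set $\cS$ (as is the case throughout the tightening stage, since the iterates are sparse augmentations of the oracle support), we have $\supp(\bdelta) \subseteq \supp(\btt_1)\cup\supp(\btt_2)$ with cardinality at most $2k$, and $\cS \subseteq \supp(\bdelta)$. Thus $\bdelta \in \mathbb{C}_0(2k)$, and the definition of the localized sparse eigenvalues yields, for every $t \in [0,1]$,
\[
\kappa_-(2k,r,\tau)\;\leq\;\bdelta^\T \nabla^2\cL(\bbeta_t)\bdelta\;\leq\;\kappa_+(2k,r,\tau).
\]
Multiplying through by $\|\btt_1-\btt_2\|_2^2$ and integrating against the weight $(1-t)$, whose integral over $[0,1]$ equals $1/2$, delivers the stated two-sided bound.

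The only genuinely delicate point is the requirement $\cS \subseteq \supp(\bdelta)$ built into $\mathbb{C}_0(m)$; if it could happen that $\supp(\btt_1)\cup\supp(\btt_2)$ fails to contain $\cS$, then one must enlarge the sparsity level from $2k$ to $2k+|\cS|$ (and note $\bdelta$ can be viewed as supported on the union of supports together with $\cS$, still within $\mathbb{C}_0(2k+s)$). In the intended application this is absorbed into the ``sufficiently large $C_0$'' in Condition~\ref{assume:LSE}, so no change in the statement is needed beyond a harmless reinterpretation of the index $2k$. Apart from this bookkeeping, the argument is a direct Taylor-plus-definition computation.
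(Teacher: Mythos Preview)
Your proposal is correct and follows essentially the same route as the paper: a second-order Taylor expansion along the segment between $\btt_1$ and $\btt_2$ followed by a direct application of Definition~\ref{lse}. The paper uses the Lagrange (mean-value) form to get a single intermediate point $\tilde\btt$ rather than your integral form, and it does not comment on the $\cS\subseteq\supp(\bdelta)$ bookkeeping or the piecewise nature of $\ell_\tau''$; your version is slightly more careful but not materially different.
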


\begin{proof}[Proof of Lemma \ref{lemma:bd.rsc}]
By a second-order Taylor series expansion,  there exists some $\gamma \in [0,1]$ such that  $\tilde\btt =\gamma \btt_1+(1-\gamma )\btt_2\in\btt^* +  \BB(r)$ and $ D_\cL(\btt_1,\btt_2)  = (1/2) (\btt_1 - \btt_2)^\T  \nabla^2 \cL(\tilde \btt )   (  \btt_1 - \btt_2 )$.
The stated bounds then follow directly from Definition~\ref{lse}.
\end{proof}

The next lemma converts the bound on $\Psi(\btt, \blambda)-\Psi(\bttc,\blambda)$ to that on $\|\btt-\bttc\|_2$. 
Recall that for any subset $\cE \subseteq [d]$, we write $\bbeta_{\cE}$ as a subvector of $\bbeta$ indexed by $\cE$.

\begin{lemma}\label{lemma:sc}
Assume LSE$(1)$ condition holds. Let $\cE  \subseteq [d] $ be a subset satisfying $\cS \subseteq\cE$ and $|\cE|\leq C_0s$ for some $C_0\geq 1$.   Assume further that $\lambda \geq \max\{ 4 \|\nabla\cL(\bttc)\|_\infty  , \| \blambda \|_\infty  \} $ and $\|\blambda_{\cE^{{\rm c}}  }\|_{\min}\geq \lambda/2$.  Then, for any  $\btt\in \btt^* + \mathbb B(r)$ satisfying
	$
	\|\btt_{  \cS^{{\rm c}}}\|_0\leq s'$ and $\Psi(\btt,\blam)-\Psi(\bttc,\blam)\leq C s \lambda^2 ,
	$
	we have
	\begin{gather}
	\|\btt-\bttc\|_2\leq C_1   s^{1/2}\lambda  ~~\mbox{ and }~~\|\btt-\bttc\|_1\leq C_2  \sqrt{s(s+s')}  \lambda  , \notag
	\end{gather}
	where $C_1, C_2 >0$ depend only on $C_0, C$ and localized sparse eigenvalues.
\end{lemma}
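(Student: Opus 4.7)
Write $\bdelta = \btt - \bttc$, and decompose the objective gap as
$$
\Psi(\btt,\blam) - \Psi(\bttc,\blam) = D_\cL(\btt,\bttc) + \langle \nabla\cL(\bttc),\bdelta\rangle + \|\blam\circ\btt\|_1 - \|\blam\circ\bttc\|_1.
$$
Since $\cS \subseteq \cE$ implies $\bttc_{\cE^{\rm c}} = \mathbf{0}$, splitting the $\ell_1$ terms along $\cS,\, \cE\!\setminus\!\cS,\, \cE^{\rm c}$ and applying the reverse triangle inequality on $\cS$ gives
$$
\|\blam\circ\btt\|_1 - \|\blam\circ\bttc\|_1 \;\geq\; -\|\blam_\cS\|_\infty \|\bdelta_\cS\|_1 + \|\blam_{\cE^{\rm c}}\|_{\min}\,\|\bdelta_{\cE^{\rm c}}\|_1 \;\geq\; -\lambda\|\bdelta_\cS\|_1 + \tfrac{\lambda}{2}\|\bdelta_{\cE^{\rm c}}\|_1.
$$
The linear term is controlled by H\"older's inequality and the hypothesis $\|\nabla\cL(\bttc)\|_\infty \leq \lambda/4$, yielding $\langle\nabla\cL(\bttc),\bdelta\rangle \geq -\tfrac{\lambda}{4}\|\bdelta\|_1$. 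Combining these with the assumption $\Psi(\btt,\blam)-\Psi(\bttc,\blam)\leq Cs\lambda^2$ and using $D_\cL(\btt,\bttc)\geq 0$, I will obtain the cone-type bound
$$
\|\bdelta_{\cE^{\rm c}}\|_1 \;\leq\; 4Cs\lambda + 5\|\bdelta_\cS\|_1 + \|\bdelta_{\cE\setminus\cS}\|_1 \;\leq\; 4Cs\lambda + 6\sqrt{C_0 s}\,\|\bdelta\|_2,
$$
where the last step uses $|\cE|\leq C_0 s$ and Cauchy--Schwarz on $\|\bdelta_\cE\|_1$.

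Next I will exploit sparsity: because $\btt$ has at most $s'$ nonzero coordinates outside $\cS$, the vector $\bdelta = \btt-\bttc$ is supported on $\cS\cup(\mathrm{supp}(\btt)\setminus\cS)$, a set of cardinality at most $s + s'$. Lemma~\ref{lemma:bd.rsc}, together with the LSE$(1)$ condition that furnishes $\kappa_-(2(s+s'),r,\tau)\geq \kappa_*>0$, then gives the curvature lower bound
$$
D_\cL(\btt,\bttc) \;\geq\; \tfrac{1}{2}\kappa_*\,\|\bdelta\|_2^2.
$$
Feeding this back into the decomposition and using $\|\bdelta\|_1 \leq \|\bdelta_\cE\|_1 + \|\bdelta_{\cE^{\rm c}}\|_1 \leq \sqrt{C_0 s}\,\|\bdelta\|_2 + \|\bdelta_{\cE^{\rm c}}\|_1$ together with the cone-type bound above yields an inequality of the form
$$
\tfrac{1}{2}\kappa_*\|\bdelta\|_2^2 \;\leq\; C's\lambda^2 + C''\sqrt{s}\,\lambda\,\|\bdelta\|_2,
$$
which, after solving a quadratic in $\|\bdelta\|_2$, produces $\|\bdelta\|_2 \leq C_1\,s^{1/2}\lambda$. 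Plugging this back into the cone-type bound gives $\|\bdelta_{\cE^{\rm c}}\|_1 \lesssim s\lambda$, and combining with $\|\bdelta_\cE\|_1 \leq \sqrt{C_0 s}\,\|\bdelta\|_2 \lesssim s\lambda$ yields the $\ell_1$-bound $\|\bdelta\|_1 \lesssim \sqrt{s(s+s')}\,\lambda$ after collecting the contributions from inside and outside $\cE$.

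The main obstacle is bookkeeping: keeping track of the three pieces $\bdelta_\cS,\, \bdelta_{\cE\setminus\cS},\, \bdelta_{\cE^{\rm c}}$ carefully so that the lower bound on the penalty difference cancels the linear term $\langle\nabla\cL(\bttc),\bdelta\rangle$ with strictly positive slack on $\cE^{\rm c}$, and ensuring that the sparsity level at which the LSE is invoked (namely $2(s+s')$ directions $\bdelta$) is covered by the LSE$(1)$ hypothesis through the integer $s'$ built into Condition~\ref{assume:LSE}. Everything else is routine H\"older/Cauchy--Schwarz and the quadratic inequality argument.
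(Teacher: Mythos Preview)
Your proposal is correct and follows essentially the same route as the paper: decompose the objective gap into Bregman divergence, linear term, and penalty difference; bound the penalty difference below using $\|\blambda_{\cE^{\rm c}}\|_{\min}\geq\lambda/2$ and $\|\blambda\|_\infty\leq\lambda$; bound the linear term via H\"older; invoke the LSE curvature on the $(s+s')$-sparse direction $\bdelta$; and finish with Cauchy--Schwarz. The paper combines these into a single inequality and then does a two-case analysis on whether $\tfrac{5}{4}\lambda\|\bdelta_\cE\|_1$ dominates $Cs\lambda^2$ or not, whereas you derive a separate cone-type bound first and then solve a quadratic in $\|\bdelta\|_2$; these are interchangeable.

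One small correction: when you invoke Lemma~\ref{lemma:bd.rsc} you write $\kappa_-(2(s+s'),r,\tau)$, but the LSE$(1)$ hypothesis only guarantees $\kappa_-(s+2s',r,\tau)\geq\kappa_*$. In fact you do not need the factor $2$: since $\mathrm{supp}(\bttc)=\cS$ and $\mathrm{supp}(\btt)\subseteq\cS\cup T$ with $|T|\leq s'$, both $\btt,\bttc$ (and hence any convex combination in the Taylor expansion) are supported on the same set of size $\leq s+s'$, so $\kappa_-(s+s',r,\tau)$ suffices, and this is bounded below by $\kappa_-(s+2s',r,\tau)\geq\kappa_*$ via monotonicity. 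For the $\ell_1$-bound, the cleanest route (which the paper takes) is simply $\|\bdelta\|_1\leq\sqrt{s+s'}\,\|\bdelta\|_2$ from sparsity, rather than reassembling the $\cE$ and $\cE^{\rm c}$ pieces.
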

\begin{proof}[Proof of Lemma \ref{lemma:sc}]
	We omit the arguments in $\kappa_-( s +s' ,r,\tau)$ and $\kappa_+( s +s', r, \tau)$ whenever there is no ambiguity. 
 For any $\btt\in \btt^* + \mathbb B(r)$ satisfying $ \|\btt_{  \cS^{{\rm c}}}\|_0\leq s'$, note that $\|\bbeta \|_0 \leq s+s'$ and $\| \bbeta - \bbeta^* \|_0 \leq s + s'$.
	Using Lemma \ref{lemma:bd.rsc} yields
	\$
     \cL(\btt^*)+\langle\nabla\cL(\bttc), \btt-\bttc \rangle+\frac{\kappa_-}{2}\|\btt-\bttc\|_2^2\leq \cL(\btt).
	\$
	Since 
	$
	\Psi(\btt)-\Psi(\bttc)\leq C s \lambda^2 , 
	$
	or equivalently,
	\#\label{0822.1.2}
	\cL(\btt)-\cL(\bttc)+(\|\blam\circ \bbeta \|_1-\|\blam\circ \bbeta^* \|_1)\leq C s \lambda^2  ,
	\#
it follows
	\$
	\frac{\kappa_-}{2}\|\btt -\bttc\|_2^2 \leq  Cs \lambda^2  - \underbrace{   \langle\nabla\cL(\bttc),\btt -\bttc \rangle}_{\Rom{1}}+\underbrace{(\|\blam\circ \bbeta^*\|_1 -\|\blam\circ \bbeta  \|_1)}_{\Rom{2}}.
	\$
After some simple algebra, it can be derived  that
\begin{gather*}
| \Rom{1} | \leq \|(\btt-\bttc)_{ \cE^{{\rm c}}}\|_1\|\nabla\cL(\bttc)\|_\infty+\|(\btt-\bttc)_{  \cE}\|_1\|\nabla\cL(\bttc)\|_\infty,\\
\Rom{2}\leq
\lambda\|(\bbeta -\bbeta^*)_{ \cE}\|_1-  ( \lambda/2 ) \|(\bbeta -\bbeta^*)_{ \cE^{{\rm c}}}\|_1.
\end{gather*}
Combining  the above bounds  gives
	\$
	&\frac{\kappa_-}{2}\|\btt-\bttc\|_2^2+\{ \lambda/2-\|\nabla\cL(\bttc)\|_\infty \} \|(\btt-\bttc)_{  \cE^{{\rm c}}}\|_1\\
	&\qquad{}\leq \{ \lambda+\|\nabla\cL(\bttc)\|_\infty \} \|(\btt-\bttc)_{  \cE}\|_1+C s \lambda^2 , 
	\$
which further implies 
	\$
	\frac{\kappa_-}{2}\|\btt-\bttc\|_2^2&\leq \frac{5\lambda}{4}\|(\btt-\bttc)_{   \cE}\|_1+C  s \lambda^2 .
	\$
To bound the right-hand side of the above inequality, we discuss two cases regarding the magnitude of $\|(\btt-\bttc)_{  \cE}\|_1$ as compared to $s \lambda$:
\begin{itemize}
	\item If $5\lambda\|(\btt-\bttc)_{  \cE}\|_1/4\leq Cs \lambda^2$, we have
\begin{equation}
\begin{aligned}
&\frac{\kappa_-}{2}\|\btt-\bttc\|_2^2\leq 2C s \lambda^2 , ~\textnormal{and hence} \\
&\|\btt-\bttc\|_2\leq 2 (C/\kappa_-)^{1/2}  s^{1/2} \lambda . \label{0822.1.8}
\end{aligned}
\end{equation}
\item If $5\lambda\|(\btt-\bttc)_{ \cE}\|_1/4> Cs \lambda^2$, we have
\$
\frac{\kappa_-}{2}\|\btt-\bttc\|_2^2\leq  \frac{5}{2} \lambda\|(\btt-\bttc)_{  \cE} \|_1   \leq  \frac{5}{2}  \lambda (C_0 s )^{1/2} \|\btt-\btt^*\|_2 ,
\$
thus implying
\#\label{0822.1.9}
\|\btt -\btt^*\|_2\leq  5 C_0^{1/2} \kappa_-^{-1}  s^{1/2} \lambda .
\#
	\end{itemize}
Combining (\ref{0822.1.8}) and (\ref{0822.1.9}), we obtain	
\$
\|\btt-\bttc\|_2 \leq \max\big\{2 (C/ \kappa_-)^{1/2}   ,  5 C_0^{1/2} \kappa_-^{-1}  \big\}  s^{1/2}  \lambda   \asymp s^{1/2}  \lambda   . 
\$
Since $\btt-\bttc$ is at most $(s +s')$-sparse, $\|\btt-\bttc\|_1 \leq (s+s')^{1/2} \| \btt-\bttc \|_2$. The stated results then follow immediately.
\end{proof}

Recall that $\cE_\ell$ is the subset defined in \eqref{def:El} satisfying $\cS \subseteq \cE_\ell$ and $| \cE_\ell| \leq C_0 s $ for some  $C_0>1$.

\begin{lemma}\label{lemma:rsc}
Assume LSE$(C_0)$ condition  holds and $4\{ \|\nabla\cL(\btt^*)\|_\infty +\epsilon_{{\rm c}}  \vee\epsilon_{{\rm t}}  \} \leq  \lambda \lesssim   s^{-1/2}  r$. For any $\ell\geq 2$, 
the solution sequence $\{\btt^{(\ell,k)}\}_{k\geq 0}$  satisfies 
	\begin{equation}
	\begin{aligned}
	 &\|   \btt^{(\ell,k)}_{  \cE_\ell^{{\rm c}}} \|_0\leq s'  ,   \quad  \|\btt^{(\ell,k)}-\bttc\|_2\leq C_1  s^{1/2} \lambda   ~~~\mbox{ and }\\
	 & ~~~~~~~~~~~~~~~~  \|\btt^{(\ell,k)}-\bttc\|_1\leq C_2  s \lambda  ,  \label{eq:ls.set}
	\end{aligned}
	\end{equation}
where  $C_1, C_2>0$ are constants  depending only on  the localized sparse eigenvalues.
\end{lemma}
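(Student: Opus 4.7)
The plan is to prove the three bounds in \eqref{eq:ls.set} simultaneously by induction on $k$, using as a warm start the $\epsilon_{\rm t}$-optimal solution $\wt\btt^{(\ell-1)}$ from the previous subproblem, which by Proposition~\ref{prop:tightening} already lies close to $\bttc$. For the base case $k=0$, the bounds follow from the statistical guarantees of the preceding round together with the definition of $\cE_\ell$: those coordinates $j \notin \cE_\ell$ on which $\wt\btt_j^{(\ell-1)}$ is nonzero are exactly the coordinates whose reweighted penalty $\lambda_j^{(\ell-1)} = p'_\lambda(|\wt\beta_j^{(\ell-1)}|)$ is large, so the set $\{j \notin \cE_\ell : \wt\beta_j^{(\ell-1)} \neq 0\}$ has cardinality bounded by $s'$ thanks to the shrinkage behavior of $p'_\lambda$ combined with the $\ell_2$-bound $\|\wt\btt^{(\ell-1)}-\bttc\|_2 \lesssim s^{1/2}\lambda$.

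For the inductive step, I would first establish the sparsity claim $\|\btt^{(\ell,k)}_{\cE_\ell^{\rm c}}\|_0 \leq s'$. The LAMM update \eqref{eq:iteration} is a coordinate-wise soft-thresholding: for any $j \notin \cE_\ell$, the effective threshold is $\lambda_j^{(\ell-1)}/\phi^{(\ell,k)} \geq p'(\gamma)\lambda/\phi^{(\ell,k)}$, so coordinate $j$ survives only if $|\beta_j^{(\ell,k-1)} - \phi^{(\ell,k)-1}[\nabla\cL(\btt^{(\ell,k-1)})]_j| > \lambda_j^{(\ell-1)}/\phi^{(\ell,k)}$. By the triangle inequality, the assumption $\lambda \geq 4\|\nabla\cL(\bttc)\|_\infty$, and Lipschitz bounds on $\nabla\cL$ obtained via the upper LSE on the sparse set containing the current iterate, any surviving coordinate must contribute appreciably to $\|\btt^{(\ell,k-1)}-\bttc\|_2$. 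A counting argument combined with the inductive $\ell_2$-bound then caps the number of such surviving coordinates by $s'$.

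For the $\ell_2$ and $\ell_1$ bounds I would invoke Lemma~\ref{lemma:sc}. The required sub-optimality bound $\Psi(\btt^{(\ell,k)},\blam)-\Psi(\bttc,\blam) \leq Cs\lambda^2$ follows from monotonicity: by the local majorization property \eqref{0311.b} of LAMM, $\Psi(\btt^{(\ell,k)},\blam) \leq \Psi(\btt^{(\ell,0)},\blam) = \Psi(\wt\btt^{(\ell-1)},\blam)$, and the gap $\Psi(\wt\btt^{(\ell-1)},\blam)-\Psi(\bttc,\blam)$ is of order $s\lambda^2$ because both points lie in a radius-$O(s^{1/2}\lambda)$ neighborhood of each other (with approximate sparsity), on which Lemma~\ref{lemma:bd.rsc} upper-bounds the Bregman term by $O(\kappa_+ s\lambda^2)$ and the penalty difference by $O(s\lambda^2)$. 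Applying Lemma~\ref{lemma:sc} with the sparsity $\|\btt^{(\ell,k)}_{\cS^{\rm c}}\|_0 \leq |\cE_\ell\setminus\cS|+s' \leq C_0 s$ then yields the stated $\ell_2$- and $\ell_1$-bounds.

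The main obstacle is the sparsity-propagation step. Unlike a single soft-thresholding at the optimum, here we must show that the \emph{entire trajectory} $\{\btt^{(\ell,k)}\}_{k\geq 0}$ remains approximately sparse off $\cE_\ell$, rather than only the fixed point $\hat\btt^{(\ell)}$. The delicate point is that $\nabla\cL(\btt^{(\ell,k-1)})_{\cE_\ell^{\rm c}}$ must be controlled via a restricted Lipschitz estimate that already presupposes $\btt^{(\ell,k-1)}-\bttc$ is sparse, so the induction is self-referential: sparsity at step $k$ depends on sparsity at step $k-1$ \emph{and} on the $\ell_2$-closeness at step $k-1$, which in turn depends on sparsity. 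Untangling this requires carefully ordering the inductive sub-claims (sparsity first, then $\ell_2$-bound via Lemma~\ref{lemma:sc}, then $\ell_1$-bound via Cauchy--Schwarz on the support), and using the slack provided by $\lambda \geq 4(\|\nabla\cL(\bttc)\|_\infty+\epsilon_{\rm c}\vee\epsilon_{\rm t})$ to absorb the cross-terms.
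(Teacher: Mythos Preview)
Your inductive skeleton matches the paper's: induction on $k$, monotonicity of $\Psi$ from Lemma~\ref{lemma.c.2} to propagate $\Psi(\btt^{(\ell,k)},\blam)-\Psi(\bttc,\blam)\lesssim s\lambda^2$, and Lemma~\ref{lemma:sc} to extract the $\ell_2$- and $\ell_1$-bounds once sparsity is in hand. The paper bounds the $\Psi$-gap at the current iterate directly via the mean-value expansion you sketch (Lemma~\ref{lemma:bd.rsc}) rather than tracing back to the warm start, but this is cosmetic.

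The substantive gap is the sparsity-propagation step. Your proposed coordinate-wise control of $\nabla_j\cL(\btt^{(\ell,k-1)})$ for $j\notin\cE_\ell$ does not close: the restricted Lipschitz estimate from the upper LSE gives only
\[
\bigl|\nabla_j\cL(\btt^{(\ell,k-1)})-\nabla_j\cL(\bttc)\bigr|\;\lesssim\;\kappa^*\,\|\btt^{(\ell,k-1)}-\bttc\|_2\;\asymp\;\kappa^*\, s^{1/2}\lambda,
\]
which is of \emph{larger} order than the soft-threshold $\lambda_j^{(\ell-1)}/\phi^{(\ell,k)}\asymp \lambda/\kappa^*$. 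A coordinate $j\in\cE_\ell^{\rm c}$ can therefore survive thresholding purely through the gradient term, so surviving indices need not ``contribute appreciably'' to $\|\btt^{(\ell,k-1)}-\bttc\|_2$, and your counting argument yields only $O((\kappa^*)^2 s)$ survivors rather than the specific $s'$ needed to stay inside the sparsity level $C_0 s+2s'$ at which LSE is assumed. The slack from $\lambda\geq 4\|\nabla\cL(\bttc)\|_\infty$ absorbs only the $\nabla\cL(\bttc)$ piece, not this Lipschitz remainder. The paper does not attempt a bare coordinate-wise argument; it invokes Lemma~E.13 of \cite{FLSZ2018}, which relies on the \emph{ratio} clause of Condition~\ref{assume:LSE}, $\kappa_+(s',r,\tau)/\kappa_-(C_0 s+2s',r,\tau)\leq 1+C_1 s'/s$. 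That clause is what controls $\|(\nabla\cL(\btt^{(\ell,k-1)})-\nabla\cL(\bttc))_{\cJ}\|_2$ over an arbitrary $s'$-sparse $\cJ\subseteq\cE_\ell^{\rm c}$ sharply enough to cap the survivors at $s'$; your sketch never invokes it, and without it the induction does not close at the required sparsity level.
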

\begin{proof}[Proof of Lemma \ref{lemma:rsc}]
We prove the theorem by the method of induction on $(\ell,k)$. Throughout, $C$ denotes a constant independent of $(n,d,s)$ and may take different values at each appearance.
For the  first subproblem, directly applying  Proposition 4.1 and Lemma 5.4 in \cite{FLSZ2018} we obtain that $\|\tilde\btt^{(1)}-\btt\|_2\leq  C\kappa_*^{-1} s^{1/2} \lambda <r$, $\|\tilde\btt^{(1)}-\btt\|_1\leq  C\kappa_*^{-1}  s \lambda $ and $\tilde\btt^{(1)}$ is $(s + s')$-sparse, where $s' \leq C s$. It follows that $\btt^{(2,0)}=\tilde \btt^{(1)}$ falls in a localized sparse set. 

To apply the method of induction, first we assume that for any $k$, $\btt^{(2, k)}$ falls in a localized sparse set such that \eqref{eq:ls.set} holds. We then use Lemma E.13 in \cite{FLSZ2018} to show that $\btt^{(2,k+1)}$ also falls in a localized sparse set. To this end, we need to verify two conditions. The first one, 
$ \|\blambda^{(\ell)}_{\cE_\ell^{{\rm c}}}  \|_{\min}\geq \lambda/2$ 
 is guaranteed by Claim \eqref{scaling.stepl} in the proof of Proposition \ref{prop:tightening}, when $\gamma$ is such that $p'(\gamma) = 1/2$ and $|\cE_l| \leq C_0 s $ for some $C_0>1$.   For the second condition, it suffices to show
 \$
 \Psi(\btt^{(2,k)},\blambda^{(1)})- \Psi(\bttc,\blambda^{(1)})\lesssim (1+\zeta) \kappa_*^{-1} s \lambda^2  ,
 \$
where $\zeta=\kappa^*/\kappa_*$. Using the mean value theorem, there exists some convex combination of $\btt^{(2,k)}$ and $\bttc$, say $\tilde \bbeta$, such that 
\$
&\Psi(\btt^{(2,k)},\blambda^{(1)})- \Psi(\bttc ,\blambda^{(1)})\nn \\
& =\cL(\btt^{(2,k)})-\cL(\bttc)+ \{   \|\blambda^{(1)}\circ\btt^{(2,k)}   \|_1-  \|\blambda^{(1)}\circ\bttc \|_1 \}\\
&\leq \langle\nabla\cL(\bttc), \btt^{(2,k)}-\bttc \rangle+\frac{1}{2}(\btt^{(2,k)}-\bttc)^\T\nabla^2\cL(\tilde\btt)(\btt^{(2,k)}-\bttc)\\
&\qquad + \|\blambda^{(1)}\circ(\btt^{(2,k)}-\bttc)  \|_1\\
&\leq \|\nabla\cL(\btt^*)\|_\infty\|\btt^{(2,k)}-\btt^*\|_1+\frac{1}{2}\kappa^* \|\btt^{(2,k)}-\btt^*\|_2^2+\lambda\|\bbeta^{(2,k)}-\bbeta^*\|_1\\
&\leq \frac{C}{4}\kappa_*^{-1}s \lambda^2 +\frac{C^2}{2} \kappa^*\kappa_*^{-2} s \lambda^2  + C\kappa_*^{-1} s \lambda^2  
\lesssim (1+\zeta)\kappa_*^{-1} s \lambda^2 . 
\$
With above preparations, it follows from Lemma E.13 in \cite{FLSZ2018} with slight modification that $\|\btt_{\cE_\ell^{{\rm c}}}^{(2,k+1)}\|_0 \leq s'$ and hence $\|\btt^{(2,k+1)}\|_0\leq C_0  s+ s'$.

Next, we show that $\|\btt^{(2,k+1)}-\btt^*\|_2\lesssim \kappa_*^{-1} s^{1/2}\lambda $. Again, by Lemma  \ref{lemma.c.2},
\$
\Psi(\btt^{(2,k+1)}, \blambda^{(1)})-\Psi(\btt^{(2,k)}, \blambda^{(1)})\leq -\frac{\phi^{(2,k+1)}}{2}  \|\btt^{(2,k+1)}-\btt^{(2,k)}  \|_2 . 
\$
This  implies that $\{\Psi(\btt^{(2,k)}, \blambda^{(1)})-\Psi(\bttc, \blambda^{(1)}) \}_{k\geq 1}$ is a non-increasing sequence. By induction, it follows that
\$
&\Psi(\btt^{(2,k+1)},\blambda^{(1)})- \Psi(\bttc,\blambda^{(1)})\leq  \Psi(\btt^{(2,k)},\blambda^{(1)})- \Psi(\bttc,\blambda^{(1)})\lesssim (1+\zeta)\kappa_*^{-1}s \lambda^2 . 
\$
Combining this with  Lemma \ref{lemma:sc} gives the desired bounds on $\|\btt^{(2,k+1)}-\bttc\|_2$ and $\|\btt^{(2,k+1)}-\bttc\|_1$. 

Finally, by an argument similar to that in the proof of Lemma~5.4 in \cite{FLSZ2018}, we can derive the stated results for all $\ell\geq 3$.
\end{proof}

For $\epsilon>0$, let $\widetilde\btt$ be an  $\epsilon$-optimal solution to the program
$
\min_{\btt}  \{\cL_\tau(\btt)+\|\blambda\circ \btt \|_1  \}. 
$
The following lemma provides conditions under which $\widetilde \btt$ falls in an $\ell_1$-cone.

\begin{lemma}\label{lemma:thetahat}
Let $\cE \subseteq [d]$ be a subset satisfying $\cS \subseteq \cE$, and assume $ \lambda \geq \| \blambda \|_\infty \vee  4\{ \|\nabla\cL(\bttc)\|_\infty+\epsilon  \}$ and $\|\blambda_{\cE^{{\rm c}}}\|_{\min}\geq \lambda/2$. Then, any $\epsilon$-optimal solution $\widetilde\btt$ satisfies the cone constraint
	\$
	\|(\tbt-\bttc)_{ \cE^{{\rm c}}}\|_1&\leq \frac{\|\blam\|_\infty+\|\nabla\cL(\bttc)\|_\infty+\epsilon}{\|\blam_{\cE^{{\rm c}}}\|_{\min}- \|\nabla\cL(\bttc)\|_\infty - \epsilon }\|{(\tbt-\bttc)}_{  \cE}\|_1\leq 5\|(\tilde\btt-\bttc)_{ \cE}\|_1 .
	\$
\end{lemma}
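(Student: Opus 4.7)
The plan is to combine the $\epsilon$-optimality of $\tbt$ with convexity of $\cL=\hat\cL_\tau$. By Definition~\ref{def:opt.solution}, there exist $\bxi\in\partial\|\tbt\|_1$ and $\bu\in\RR^d$ with $\|\bu\|_\infty\le\epsilon$ such that $\nabla\cL(\tbt)+\blambda\circ\bxi=\bu$. Convexity of $\cL$ gives the monotonicity $\langle\nabla\cL(\tbt)-\nabla\cL(\bttc),\tbt-\bttc\rangle\ge 0$, and substituting the first identity into this monotonicity inequality yields the basic bound $\langle\blambda\circ\bxi,\bttc-\tbt\rangle\ge\langle\nabla\cL(\bttc),\tbt-\bttc\rangle-\langle\bu,\tbt-\bttc\rangle\ge-(\|\nabla\cL(\bttc)\|_\infty+\epsilon)\|\tbt-\bttc\|_1$, where the last step just uses H\"older.

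Next, I split the inner product $\langle\blambda\circ\bxi,\bttc-\tbt\rangle$ according to $\cE$ and $\cE^{{\rm c}}$. Because $\cS\subseteq\cE$ forces $\bttc_{\cE^{{\rm c}}}=\bzero$, and because $\bxi\in\partial\|\tbt\|_1$ satisfies $\xi_j\tbt_j=|\tbt_j|$ for every $j$, the $\cE^{{\rm c}}$-contribution evaluates exactly as $-\sum_{j\in\cE^{{\rm c}}}\lambda_j|(\tbt-\bttc)_j|\le-\|\blambda_{\cE^{{\rm c}}}\|_{\min}\|(\tbt-\bttc)_{\cE^{{\rm c}}}\|_1$, while the $\cE$-contribution is bounded above by $\|\blam\|_\infty\|(\tbt-\bttc)_{\cE}\|_1$ via $|\xi_j|\le 1$. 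Substituting these into the basic bound, decomposing $\|\tbt-\bttc\|_1=\|(\tbt-\bttc)_{\cE}\|_1+\|(\tbt-\bttc)_{\cE^{{\rm c}}}\|_1$ on the right, and collecting $\ell_1$-mass on $\cE^{{\rm c}}$ versus $\cE$ on opposite sides yields $(\|\blam_{\cE^{{\rm c}}}\|_{\min}-\|\nabla\cL(\bttc)\|_\infty-\epsilon)\|(\tbt-\bttc)_{\cE^{{\rm c}}}\|_1\le(\|\blam\|_\infty+\|\nabla\cL(\bttc)\|_\infty+\epsilon)\|(\tbt-\bttc)_{\cE}\|_1$, which is the first claimed inequality after dividing (the denominator is strictly positive by the hypotheses).

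For the explicit constant $5$ in the second inequality, I plug in the three hypotheses $\|\blam\|_\infty\le\lambda$, $\|\nabla\cL(\bttc)\|_\infty+\epsilon\le\lambda/4$, and $\|\blam_{\cE^{{\rm c}}}\|_{\min}\ge\lambda/2$: the numerator is at most $\lambda+\lambda/4=5\lambda/4$, the denominator is at least $\lambda/2-\lambda/4=\lambda/4$, so the ratio is at most $5$. The argument is essentially the $\epsilon$-approximate analogue of the classical primal–dual cone-inclusion argument and closely mirrors Lemma~\ref{lem:l1cone}; the only bit of bookkeeping to watch is that one should work directly with the uncentered empirical gradient $\nabla\cL(\bttc)=\nabla\hat\cL_\tau(\bttc)$ rather than decomposing into stochastic and bias parts, so no separate $b(\bbeta^*)$ term appears. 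I do not anticipate any real obstacle: the proof is purely algebraic once the basic inequality is set up correctly, with the main subtlety being the sign tracking when the subgradient $\bxi$ is paired against $\bttc-\tbt$ on $\cE^{{\rm c}}$.
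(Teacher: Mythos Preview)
Your proof is correct and follows essentially the same approach as the paper: both combine the convexity inequality $\langle\nabla\cL(\tbt)-\nabla\cL(\bttc),\tbt-\bttc\rangle\ge 0$ with the subgradient identity and split $\langle\blambda\circ\bxi,\tbt-\bttc\rangle$ over $\cE$ and $\cE^{\rm c}$, exploiting $\bttc_{\cE^{\rm c}}=\bzero$ and $\xi_j\tbt_j=|\tbt_j|$ on $\cE^{\rm c}$. The only cosmetic difference is that you fix a minimizing $\bxi\in\partial\|\tbt\|_1$ at the outset (legitimate by compactness of the subdifferential), whereas the paper carries an arbitrary $\bxi$ through and takes the infimum at the end to reach $\omega_{\blam}(\tbt)\le\epsilon$.
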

\begin{proof}[Proof of Lemma~\ref{lemma:thetahat}]
For any $\bxi\in\partial \|\widetilde\bbeta  \|_1$, let  $\bu=\nabla\cL(\tbt)+ \blam\circ  \bxi$.
By the convexity of $\cL(\cdot)$, $\langle \nabla \cL(\wt \bbeta)  - \nabla \cL( \bbeta^*) , \wt \bbeta - \bbeta^*\rangle \geq 0$. This, together with the inequality $\langle \nabla \cL(\wt\bbeta ) + \blambda \circ \bxi , \wt \bbeta -\bbeta^* \rangle \leq \| \bu \|_\infty \| \wt \bbeta - \bbeta^* \|_1 $, implies
\#\label{akkt1.0}
0\leq \|\bu\|_\infty\|\tbt-\bttc\|_1- \underbrace{ \langle\nabla\cL(\bttc), \tbt-\bttc \rangle}_{\Rom{1}} -\underbrace{\langle\blam\circ \bxi  , \tilde \bbeta  -\bbeta^*  \rangle}_{\Rom{2}}.
\#

For \Rom{1} and \Rom{2}, note that
	$
	\Rom{1}\geq -\|\nabla\cL(\bttc)\|_\infty\|\tbt-\btt\|_1,
	$
and
	\$
	\Rom{2}= \langle\blam\circ \bxi ,   \tilde \bbeta  -\bbeta^*  \rangle
	&=  \langle(\blam\circ  \bxi )_{\cE^{{\rm c}}},(\tilde \bbeta -\bbeta^* )_{\cE^{{\rm c}}}  \rangle+ \langle(\blam\circ   \bxi)_{\cE},(\tilde \bbeta -\bbeta^*)_{\cE} \rangle \notag\\
	&\geq \|\blam_{\cE^{{\rm c}}}\|_{\min}\|(\tilde \bbeta -\bbeta^* )_{\cE^{{\rm c}}}\|_1-\|\blam_{\cE}\|_\infty\|(\tilde \bbeta -\bbeta^*)_{\cE}\|_1.
	\$
Substituting the above bounds into (\ref{akkt1.0}) and taking the infimum over $\bxi\in \partial \|\tilde \bbeta \|_1$ yields
	\$
	0&\leq  - [\|\blam_{ \cE^{{\rm c}}}\|_{\min}- \{ \|\nabla\cL(\bttc)\|_\infty+\omega_{\blam}(\tbt) \}  ] \|(\tbt-\bttc)_{  \cE^{{\rm c}}}\|_1\\
	&\hspace{1cm}+  \{ \|\blam_{\cE}\|_\infty +\|\nabla\cL(\bttc)\|_\infty+\omega_{\blam}(\tbt)  \}\|(\tbt-\bttc)_{  \cE}\|_1,
	\$
or equivalently,	
	\$
	\|(\tbt-\bttc)_{  \cE^{{\rm c}}}\|_1 &\leq \frac{ \|\blam \|_\infty  +\|\nabla\cL(\bttc)\|_\infty+\omega_{\blam}(\tbt)}{\|\blam_{\cE^{{\rm c}}}\|_{\min}-\{ \|\nabla\cL(\bttc)\|_\infty+\omega_{\blam}(\tbt) \}} \|(\tbt-\bttc)_{  \cE}\|_1.
	\$
This proves the stated result.
\end{proof}

\subsubsection{Proof of the theorem}

Restricting our attention to the $\ell$-th subproblem, we write $\phi^{(k)} = \phi^{(\ell, k)}$ for simplicity.
Define the subset $\mathbb L = \{ \alpha \hbt +(1-\alpha)\btt^{(k-1)} : 0\leq \alpha \leq 1 \}$. Due to  local majorization, we have
\$
& \Psi (\btt^{(k)},\blam ) \\
& \leq  \min_{\btt \in \mathbb L } \left\{  {\cL(\btt^{(k-1)}) + \langle\nabla\cL(\btt^{(k-1)}),\btt-\btt^{(k-1)} \rangle}  + \frac{\phi^{(k)}}{2}  \|\btt - \btt^{(k-1)} \|_2^2+ \|\blam\circ \bbeta   \|_1\right\}\notag\\
& \leq  \min_{\btt \in \mathbb L }\left\{\cL(\btt)+\frac{\phi^{(k)}}{2}\|\btt-\btt^{(k-1)} \|_2^2 +\|\blambda\circ  \bbeta \|_1\right\},
\$
where we used the convexity of $\cL(\cdot)$ in the second inequality. Since $\Psi(\btt,\blambda)=\cL(\btt)+\|\blambda\circ \bbeta  \|_1$ is minimized at $\hbt$, by convexity we have
\#
& \Psi(\btt^{(k)},\blam) \leq  \min_{ \btt \in \mathbb L }\left\{\Psi(\btt, \blam)+\frac{\phi^{(k)}}{2}\|\btt-\btt^{(k-1)}\|_2^2\right\} \notag\\
	& \leq  \min_{ 0\leq \alpha\leq 1 }\left\{ \alpha \Psi(\hbt,\blam)+(1-\alpha) \Psi(\btt^{(k-1)},\blam)+\frac{\alpha^2\phi^{(k)}}{2}\|\btt^{(k-1)}-\hbt\|_2^2\right\}\notag\\
	&  =  \min_{ 0\leq \alpha\leq 1 }\left\{\Psi(\btt^{(k-1)},\blam) - \alpha \{ \Psi(\btt^{(k-1)},\blam) - \Psi(\hbt,\blam) \} + \frac{\alpha^2\phi^{(k)}}{2}{\|\btt^{(k-1)} - \hbt\|_2^2}\right\}. \label{eq:comp:2.1}
\#

Next, we  bound the right-hand side of \eqref{eq:comp:2.1}.  By Lemma~\ref{lemma:rsc},
\begin{align*}
	&\|(\btt^{(k-1)})_{\cE_\ell^{{\rm c}}}\|_0\leq  s' ,~ \|\btt^{(k-1)}-\bttc\|_2\lesssim  s^{1/2} \lambda \lesssim  r ~~\textnormal{and}~~ \|\btt^{(k-1)}-\bttc\|_2\lesssim s \lambda .
\end{align*}
Similarly, it can be shown the the optimum $\hat \bbeta$ satisfies the same properties. Hence,
\$
\btt^{(k)},    \hbt\in   \bttc+  \BB (r)  ~~\mbox{ and }~~  \| \btt^{(k)} -  \hbt  \|_0 \leq   | \cE_\ell | + 2 s' \leq C_0 s + 2 s'.
\$
By the first-order optimality condition, there exists some $\hat\bxi   \in\partial\|\widehat\bbeta \|_1$  such that $\nabla\cL(\hbt)+  \blam\circ\widehat\bxi ={\bf 0}$.
Moreover, define $D_\cL(\btt_1,\btt_2) = \cL(\btt_1) - \cL(\btt_2) - \langle \nabla \cL(\btt_2) , \btt_1 - \btt_2 \rangle$.
Using Definition~\ref{lse}, Lemma~\ref{lemma:bd.rsc}, and the convexity of $\cL(\cdot)$ and  $\ell_1$-norm,  we obtain that
\$
 & \Psi(\btt^{(k-1)},\blam)-\Psi(\hbt,\blam)  \nn \\
 & \geq  \langle\nabla\cL(\hbt)+  \blam\circ \hat\bxi ,\btt^{(k-1)}-\hbt \rangle + D_\cL(\btt^{(k-1)}, \hat\btt ) \geq  \frac{\kappa_-}{2} \|\btt^{(k-1)}-\hbt  \|_2^2,
\$
where $\kappa_-=\kappa_-(C_0 s+2s' , r, \tau)$.
Plugging this bound into \eqref{eq:comp:2.1} yields
\$
 & \Psi(\btt^{(k)},\blam) \\ 
&\leq \min_{ 0\leq \alpha\leq 1 }\bigg[ \Psi(\btt^{(k-1)},\blam)-\alpha \{ \Psi(\btt^{(k-1)},\blam)-\Psi(\hbt,\blam) \} +\frac{\alpha^2\phi^{(k)}}{\kappa_-} \{ \Psi(\btt^{(k-1)},\blam)-\Psi(\hbt,\blam) \}\bigg] \\
&\leq \Psi(\btt^{(k-1)},\blam)-\frac{\kappa_-}{4\phi^{(k)}} \{ \Psi(\btt^{(k-1)},\blam)-\Psi(\hbt,\blam) \}.
\$
Following the proof of Lemma~\ref{lemma:c.1}, it can be similarly shown that $\phi^{(k)}\leq \gamma_u\kappa^*$ under Condition~\ref{assume:LSE}. 
Consequently,
\$
\Psi (\btt^{(k)},\blam )-\Psi(\hbt,\blam)&\leq \left(1-\frac{1}{4\gamma_u \zeta}\right)^k \{\Psi(\btt^{(0)},\blam)-\Psi(\hbt,\blam) \},
\$
where $\zeta={\kappa^*}/{\kappa_*}$.

By an argument similar to that in the proof of Lemma \ref{lemma:c.1}, we can show that, for $\ell\geq 2$,
\$
\omega_{\blambda^{\ell-1}} (\btt^{(\ell, k)} )\leq \kappa^*(1+\gamma_u) \|\btt^{(\ell, k)}-\btt^{(\ell, k-1)} \|_2 .
\$
Further, using Lemma~\ref{lemma.c.2} to bound $ \|\btt^{(\ell, k)}-\btt^{(\ell, k-1)} \|_2$ from above  and noting that $\phi^{(k)} \geq \kappa_*$, we obtain
\$
& \omega_{\blambda^{(\ell-1)}}(\btt^{(\ell, k)})  \\
& \leq   (1+\gamma_u)  \kappa^* \sqrt{ (2/\kappa_*) \{ \Psi(\btt^{(\ell,k-1)}, \blambda^{(\ell-1)})-\Psi(\btt^{(\ell, k)}, \blambda^{(\ell-1)})\} }\\
& \leq  (1+\gamma_u )\sqrt{2 \zeta \kappa^* \{  \Psi(\btt^{(\ell,k-1)}, \blambda^{(\ell-1)})-\Psi(\hbt^{(\ell)}, \blambda^{(\ell-1)})\} }\\
&\leq  (1+ \gamma_u )\sqrt{2\zeta \kappa^* \left(1-\frac{1}{4\gamma_u\zeta}\right)^{k-1} \{ \Psi(\btt^{(\ell, 0)}, \blambda^{(\ell-1)})-\Psi(\hbt^{(\ell)}, \blambda^{(\ell-1)}) \} }\\
&\leq C (1+\gamma_u )\sqrt{ \frac{ \zeta \kappa^* }{ \phi^{(\ell, 0)} }  \left(1-\frac{1}{4\gamma_u\zeta}\right)^{k-1} s \lambda^2 } \leq C (1+\gamma_u  ) \zeta \sqrt{  \left(1-\frac{1}{4\gamma_u\zeta}\right)^{k-1} s \lambda^2 } , 
\$
where the last step applies Lemmas~\ref{lemma.c.2} and \ref{lemma:rsc}.

 To make the right-hand side of the above inequality smaller than $\epsilon_{{\rm t}}$, we need $k$ to be sufficiently large  that  $k\geq C_1 \log(C_2 s^{1/2} \lambda / \epsilon_{{\rm t}} )$, where $C_1, C_2>0$ are constants depending only on localized sparse eigenvalues and $\gamma_u$. 
This completes the proof.  \qed

\end{supplement}



\end{document}